\def\th@plain{%
  \thm@notefont{}
  \itshape 
}
\def\th@definition{%
  \thm@notefont{}
  \normalfont 
}
\newtheorem{corollary}{Corollary}
\newtheorem{lemma}{Lemma}
\newtheorem{thm}{Theorem}
\newtheorem{proposition}{Proposition}
\newtheorem{assumption}{Assumption}
\numberwithin{equation}{section} 
\DeclareMathOperator*{\argmin}{argmin}
\begin{document}
\title{Uniform Inference in High-Dimensional Dynamic Panel Data Models}
\author{\textsc{Anders Bredahl Kock\thanks{%
Aarhus University and CREATES. Email: akock@creates.au.dk. Financial support from the Center for Research in the Econometric Analysis of Time Series (grant DNRF78) is gratefully acknowledged.
}} \and \textsc{Haihan Tang\thanks{ %
Cambridge University, Faculty of Economics. Email: hht23@cam.ac.uk.}}}
\date{\today} 
\maketitle


\begin{abstract}
We establish oracle inequalities for a version of the Lasso in high-dimensional fixed effects dynamic panel data models. The inequalities are valid for the coefficients of the dynamic and exogenous regressors. Separate oracle inequalities are derived for the fixed effects. Next, we show how one can conduct uniformly valid simultaneous inference on the parameters of the model and construct a uniformly valid estimator of the asymptotic covariance matrix which is robust to conditional heteroskedasticity in the error terms. Allowing for conditional heteroskedasticity is important in dynamic models as the conditional error variance may be non-constant over time and depend on the covariates. Furthermore, our procedure allows for inference on high-dimensional subsets of the parameter vector of an increasing cardinality. We show that the confidence bands resulting from our procedure are asymptotically honest and contract at the optimal rate. This rate is different for the fixed effects than for the remaining parts of the parameter vector. 
\end{abstract}

\section{Introduction}

Dynamic panel data models are widely used in economics and social sciences. They are extremely popular as workers, firms, and countries often differ due to unobserved factors. Furthermore, these units are often sampled repeatedly over time in many modern applications thus allowing to model the dynamic development of these. However, so far no work has been done on how to conduct inference in the high-dimensional dynamic fixed effects model
\begin{align}
\label{eqn dgp}
y_{i,t}=\sum_{l=1}^{L}\alpha_{l}y_{i,t-l}+x_{i,t}'\beta+\eta_{i}+\varepsilon_{i,t},\ i=1,...,N,\ \text{ and }\ t=1,...,T 
\end{align}
where the presence of $L$ lags of $y_{i,t}$ allows for autoregressive dependence of $y_{i,t}$ on its own past. $x_{i,t}$ is a $p_{x}\times 1$ vector of exogenous variables and $\eta_i, i=1,....,N$ are the $N$ individual specific fixed effects while $\varepsilon_{i,t}$ are idiosyncratic error terms.  Applications of panel data are widespread: ranging from wage regressions where one seeks to explain worker's salary, to models of economic growth determining the factors that impact growth over time of a panel of countries as in \cite{islam1995growth}. 

Recent years have witnessed a surge in availability of big data sets including many explanatory variables. For example, \cite{de2012genes} have considered the effect of genes on happiness/life satisfaction. Controlling for many genes simultaneously clearly results in a vast set of explanatory variables, hence calling for techniques which can handle such a setting. High-dimensionality may also arise out of a desire to control for flexible functional forms by including various transformations, such as cross products, of the available explanatory variables. In the specific context of panel data models \cite{andersonbentzendalgaardselaya2012} investigated the causal effect of lightning density on economic growth using a US panel data set. These authors had access to a big set of control variables compared to the sample size. For this reason, they decided to investigate the effect of lightning using several subsets of control variables instead of including all control variables simultaneously as one would ideally do. In this paper we show how one can achieve this ideal by proposing an inferential procedure for high-dimensional dynamic panel data models.

Much progress has also been made on the methodological side in the last decade. Among the most popular procedures is the Lasso of \cite{tibshirani1996} which sparked a lot of research on its properties. However, until recently, not much work had been done on inference in high-dimensional models for Lasso-type estimators as these possess a rather complicated distribution even in the low dimensional case, see \cite{knight2000}. This problem has been cleverly approached by unpenalized estimation after double selection by \cite{bellonichenchernozhukovhansen2012,belloni2014inference} or by desparsification in \cite{zhang2014confidence, vandegeerbuhlmannritovdezeure2014, javanmard2013confidence, caner2014asymptotically}. 

The focus in the above mentioned work has been almost exclusively on independent data and often on the plain linear regression model while high-dimensional panel data has not been treated. Exceptions are \cite{kock2013} and \cite{belloni2014inference} who have established oracle inequalities and asymptotically valid inference for a low-dimensional parameter in \textit{static} panel data models, respectively. \cite{caner2014adaptive} have studied the properties of penalized GMM, which can be used to estimate dynamic panel data models, in the case of fewer parameters than observations. To the best of our knowledge, no research has been conducted on inference in high-dimensional dynamic panel data models. Note that high-dimensionality may arise from three sources in the dynamic panel data model (\ref{eqn dgp}). These sources are the coefficients pertaining to the lagged left hand side variables ($\alpha_l$), the exogenous variables ($\beta$), as well as the fixed effects ($\eta_i$). In particular, we shall see that (joint) inference involving an $\eta_i$ behaves markedly different from inference only involving $\alpha_l$'s and $\beta$. Furthermore, panel data differ from the classic linear regression model in that one does not have independence across $t=1,...,T$ for any $i$ as consecutive observations in time can be highly correlated for any given individual. Ignoring this dependence may lead to gravely misleading inference even in low-dimensional panel data models. For that reason we shall make \textit{no} assumptions on this dependence structure across $t=1,...,T$ for the $x_{i,t}$. Static panel data models are a special case of (\ref{eqn dgp}) corresponding to $\alpha_l=0,\ l=1,...,L$.

Traditional approaches to inference in low-dimensional static panel data models have considered the $N$ fixed effects $\eta_i$ as nuisance parameters which have been removed by taking either first differences or demeaning the data over time for each individual $i$, see e.g., \cite{wooldridge2010econometric, arellano2003panel, baltagi2008econometric}. In this paper we take the stand that the fixed effects may be of intrinsic interest. Thus we do not remove them by first differencing or demeaning. This allows us to test hypothesis simultaneously involving $\alpha, \beta$ and $\eta$.

The two most common assumptions on the unobserved heterogeneities, $\eta_i$, are the random and fixed effects frameworks. In the former, the $\eta_i$ are required to be uncorrelated with the remaining explanatory variables while the latter does not impose any restrictions. Ruling out any correlation between the $\eta_i$ and the other covariates often unreasonable. In this paper we strike a middle ground between the random and fixed effects setting: we do not require zero correlation between the unobserved heterogeneities and the other covariates, however we shall impose that $(\eta_1,....,\eta_N)$ is weakly sparse in a sense to be made precise in Section \ref{weaksparsity}. We still refer to the $\eta_i$ as fixed effects as we treat them as parameters to be estimated as is common in fixed effects settings. However, the reader should keep in mind that our setting is actually intermediate between the random and fixed effects setting. 


In an interesting recent paper dealing with the the low-dimensional case, \cite{bonhomme2012grouped} have assumed a different type of structure, namely grouping, on the fixed effects. However, in the high-dimensional setting we are considering, weak sparsity works well as just explained.

Our inferential procedure is closest in spirit to the one in \cite{vandegeerbuhlmannritovdezeure2014}, which in turn builds on \cite{zhang2014confidence}, who cleverly used nodewise regressions to \textit{desparsify} the Lasso and to construct an approximate inverse of the non-invertible sample Gram matrix in the context of the linear regression model. In particular, we show how nodewise regressions can be used to construct one of the blocks of the approximate inverse of the empirical Gram matrix in dynamic panel data models. As opposed to \cite{vandegeerbuhlmannritovdezeure2014}, we do not require the inverse covariance matrix of the covariates to be exactly sparse. It suffices that the rows of the inverse covariance matrix are weakly sparse. Thus, none of its entries needs to be zero.

We contribute by first establishing an oracle inequality for a version of the Lasso in dynamic panel data models for all groups of parameters. As can be expected, the fixed effects turn out to behave differently than the remaining parameters. Next, we show how joint asymptotically gaussian inference may be conducted on the three types of parameters in (\ref{eqn dgp}). In particular, we show that hypotheses involving an increasing number of parameters can be tested and provide a uniformly consistent estimator of the asymptotic covariance matrix which is robust to conditional heteroskedasticity. Thus, we introduce a feasible procedure for inference in high-dimensional heteroskedastic dynamic panel data models. Allowing for conditional heteroskedasticity is important in dynamic models like the one considered here as the conditional variance is known to often depend on the current state of the process, see e.g. \cite{engle1982autoregressive}. Thus, assuming the error terms to be independent of the covariates with a constant variance is not reasonable. Next, we show that confidence bands constructed by our procedure are asymptotically honest (uniform) in the sense of \cite{li1989honest} over a certain subset of the parameter space. Finally, we show that the confidence bands have uniformly the optimal rate of contraction for all types of parameters. Thus, the honesty is not bought at the price of wide confidence bands as is the case for sparse estimators, c.f. \cite{potscher2009confidence}. Simulations reveal that our procedure performs well in terms of size, power, and coverage rate of the constructed intervals.

The rest of the paper is organized as follows. Section \ref{model} introduces the estimator and provides an oracle inequality for all types of parameters. Next, Section \ref{sec inference} shows how limiting gaussian inference may be be conducted and provides a feasible estimator of the covariance matrix which is robust to heteroskedasticity even in the case where the number of parameter estimates we seek the limiting distribution for diverges with the sample size. Section \ref{unif} shows that confidence intervals constructed by our procedure are honest and contract at the optimal rate for all types of parameters. Section \ref{monte} studies our estimator in Monte Carlo experiments while Section \ref{conclusion} concludes. All the proofs of our results are deferred to Appendix A; Appendix B contains further auxiliary lemmas needed in Appendix A.

\section{The Model}\label{model}

\subsection{Notation}

For $x \in \mathbb{R}^n$, let $\|x\|_0=\sum_{i=1}^{n}1(x_i\neq 0)$, $\|x\|=\sqrt{\sum_{i=1}^{n}x_i^2}$, $\|x\|_1=\sum_{i=1}^n| x_{i}|$ and $\|x\|_{\infty}=\max_{1\leq i\leq n}| x_{i}|$ denote the $\ell_0$, $\ell_2$, $\ell_1$ and $\ell_\infty$ norms, respectively. Let $e_m$ denote the unit column vector with $m$th entry being 1 in some Euclidean space whose dimension depends on the context. If the argument of $\|\cdot\|_{\infty}$ is a matrix, then $\|\cdot\|_{\infty}$ denotes the maximal absolute element of the matrix. For some generic set $R\subseteq \{1,\ldots, n\}$, let $x_R\in \mathbb{R}^{|R|}$ denote the vector obtained by extracting the elements of $x\in \mathbb{R}^n$ whose indices are in $R$, where $|R|$ denotes the cardinality of $R$; $R^c= \{1,\ldots, n\} \setminus R$. For an $n\times n$ matrix $A$, $A_R$ denotes the submatrix consisting of the rows and columns indexed by $R$. $\otimes$ is the Kronecker product. Let $a \vee b$ and $a \wedge b$ denote $\max(a,b)$ and $\min(a,b)$, respectively. For two real sequences $(a_n)$ and $(b_n)$, $a_n\lesssim b_n$ means that $a_n\leq Cb_n$ for some fixed, finite and positive constant $C$ for all $n\geq 1$. For two deterministic sequences $a_n$ and $b_n$ we write $a_n\asymp b_n$ if there exist constants $0< a_1\leq a_2$ such that $a_1b_n\leq a_n\leq a_2b_n$ for all $n\geq 1$. $\text{sgn}(\cdot)$ is the sign function. $\text{maxeval}(\cdot)$ and $\text{mineval}(\cdot)$ are the maximal and minimal eigenvalues of the argument, respectively. For some vector $x\in \mathbb{R}^n$, $\text{diag}(x)$ gives a $n\times n$ diagonal matrix with $x$ supplying the diagonal entries.

The model in (\ref{eqn dgp}) can be rewritten as
\begin{equation}
\label{eqn dgp2}
y_{i,t}= z_{i,t}'\alpha+\eta_{i}+\varepsilon_{i,t},\quad  i=1,...,N,\ t=1,...,T
\end{equation}
where $z_{i,t}:=(y_{i,t-1},\ldots, y_{i,t-L}, x_{i,t}')'$ and $\alpha:=(\alpha_1, \ldots, \alpha_{L}, \beta')'$ are $p\times 1$ vectors ($p=p_{x}+L$). Note that the dimensions of $L$, $p_{x}$ and $p$ can vary with dimensions $N$ and $T$ but in general we suppress this dependence where no confusion arises. We assume that initial observations $y_{i,0}, y_{i,-1}, \ldots, y_{i,1-L}$ are available for $i =1,...,N$.

The three sources of high-dimensionality in (\ref{eqn dgp2}) are $p_x$, $L$ and $N$ as all of these can be increasing sequences. Sometimes one thinks of the number of lags, $L$, as being fixed and in that case only two sources remain.  
Next, (\ref{eqn dgp2}) may be written more compactly as
\[y_i=Z_{i}'\alpha+\eta_{i}\iota+\varepsilon_{i},\]
where $Z_i:=(z_{i,1}, \ldots, z_{i,T})$ is a $p\times T$ matrix, $y_{i}:=(y_{i,1},\ldots,y_{i,T})'$, $\varepsilon_{i}:=(\varepsilon_{i,1},\ldots,\varepsilon_{i,T})'$, and $\iota$ is a $T\times 1$ vector of ones. Then, one can write
\[y=(Z\quad D)\left( \begin{array}{c}
\alpha \\
\eta
\end{array}\right) +\varepsilon 
=\Pi\gamma+\varepsilon, \]
where $Z:=(Z_1,\ldots,Z_N)'$, $y:=(y_1', \ldots, y_N')'$ and $\varepsilon:=(\varepsilon_{1}',\ldots,\varepsilon_{N}')'$. $\eta:=(\eta_1,\ldots, \eta_N)'$ contains the fixed effects, $D:=I_N\otimes \iota$, and $\Pi:=(Z, D)$. Finally, $\gamma:=(\alpha',\eta')'$ contains all $p+N$ parameters of the model. Thus the dynamic panel model (\ref{eqn dgp}) can be written more compactly as something resembling a linear regression model. There are several differences, however. First, blocks of rows in the data matrix $\Pi$ may be heavily dependent. Second, we shall  see that $\alpha$ and $\eta$ have markedly different properties as a result of the fact that the probabilistic properties of the blocks of a properly scaled version of the Gram matrix pertaining to $\Pi$ are very different. Third, imposing weak sparsity only on $\eta$ implies that the oracle inequalities which we use as a stepping stone towards inference do not follow directly from the technique in, e.g., \cite{bickelritovtsybakov2009}. In fact, we do not get explicit expressions for the upper bounds but instead characterize them as solutions to certain quadratic equations in two variables. 

\subsection{Weak Sparsity and the Panel Lasso}\label{weaksparsity}

Let $J_1=\cbr[0]{j: \alpha_j\neq 0, j=1,\ldots,p}$ denote the active set of lagged left hand side variables and $x_{i,t}$ with $1\leq s_1=|J_1|\leq p$. $\alpha$ is said to be (exactly or $\ell_0$) sparse when $s_1$ is small compared to $p$. Exact sparsity is by now a standard assumption in high-dimensional econometrics and statistics. The unobserved heterogeneity, $\eta$, is usually modeled as either random or fixed effects. The former rules out correlation between $\eta$ and the remaining covariates. This is often too restrictive. In the fixed effects approach no restrictions are imposed on correlation between $\eta$ and the covariates. As explained in the introduction, our fixed effects approach is in fact a middle ground between pure random and fixed effects approaches. We choose to call it a fixed effects approach as $\eta$ is treated as a parameter to be estimated. However, $\eta$ is not entirely unrestricted and assumed to be \text{weakly} sparse\footnote{The term weakly sparse is borrowed from \cite{negahban2012unified}.} in the sense
\[\sum_{i=1}^{N}|\eta_i|^{\nu}\leq E_N\]
for some $0<\nu<1$ and $E_N=E>0$. Weak sparsity does not require any of the fixed effects to be zero but instead restricts the "sum", $E$, of all the fixed effects. $E$ can be large in the sense that it tends to infinity but the smaller it is, the sharper will our results be. It is appropriate to stress that the fixed effects can not be entirely unrestricted -- that is why our setting is middle ground between random and fixed effects. Thus, our framework also excludes many models of interest. We believe, however, that our results provide a useful first step towards uniform inference in high-dimensional dynamic panel data models and we certainly allow for more correlation between $\eta$ and the covariates than the random effects assumption does.

Note that he presence of many control variables in a high-dimensional model leaves less variation to be explained by the unobserved heterogeneities and these are therefore likely to be small in magnitude making the weak sparsity assumption reasonable. Thus, weak sparsity actually becomes more reasonable the larger the number of control variables is.

Weak sparsity is a strict generalization of exact sparsity in the sense that if only $s_2$ elements of $\eta_i$ are non-zero and none of these exceeds a constant $K$, then $\sum_{i=1}^N|\eta_i|^\nu\leq s_2K^\nu$. Thus, $E=s_2K^\nu$ works. Alternatively, exact sparsity of $\eta$ can be handled as the boundary case $\nu=0$ upon defining $0^0=0$ such that $E$ will equal the number of non-zero entries of $\eta$.


\subsection{The Objective Function and Assumptions}

Our starting point for inference is the minimiser $\hat{\gamma}=(\hat{\alpha}', \hat{\eta}')'$ of the following panel Lasso objective function
\begin{align}
L(\gamma)= \left\|y-\Pi\gamma\right\|^{2}+2\lambda_{N}\|\alpha\|_1+2\frac{\lambda_N}{\sqrt{N}}\|\eta\|_1.\label{objfunc}
\end{align}
As usual $\lambda_{N}$ is a positive regularization sequence. Note that we penalize $\alpha$ and $\eta$ differently to reflect the fact that we have $NT$ observations to estimate $\alpha_j$ for $j=1,..., p$ while only $T$ observations are available to estimate each $\eta_i$. Penalizing the fixed effects is not new and was already done in \cite{koenker2004quantile} and \cite{galvao2010penalized} in a low dimensional panel-quantile model. Furthermore, the penalization fits well with the weak sparsity assumption on the fixed effects and may increase efficiency of $\hat{\alpha}$ as found in \cite{galvao2010penalized}.

For practical implementation it is very convenient that we only have one penalty parameter $\lambda_N$ instead of having separate penalty parameters for $\alpha$ and $\eta$. The minimization problem can be solved easily as it simply corresponds to a weighted Lasso with known weights. However, the probabilistic analysis of the properly scaled Gram matrix is different from the one for the standard Lasso as it must be broken into several steps. We now turn to the assumptions needed for our inferential procedure.

\begin{assumption}
\label{assu panel data}
\item $\{(x_{i,1}',\ldots, x_{i,T}',\varepsilon_i')\}_{i=1}^N$ is an independent sequence  and
\begin{align*}
\mathbb{E}[\varepsilon_{i,t}| y_{i,t-1},...,y_{i,1-L}, x_{i,t},...,x_{i,1}]=0 \qquad\text{ for }\qquad i=1,..., N,\ t=1,...,T.
\end{align*}
\end{assumption}
Assumption \ref{assu panel data} imposes independence across $i=1,...,N$ which is standard in the panel data literature, see e.g. \cite{wooldridge2010econometric} or \cite{arellano2003panel}. Note however, that we do not assume the data to be identically distributed across $i=1,...,N$. Assumption \ref{assu panel data} also implies, by iterated expectations, that the error terms form a martingale difference sequence with respect to the filtration generated by the variables in the above conditioning set and thus restricts the degree of dependence in the error terms across $t$ (in particular, they are uncorrelated).\footnote{It can also be verified that $\cbr[0]{\varepsilon_{i,t}}_{t=1}^T$ forms a martingale difference sequence with respect to the natural filtration for all $i=1,\ldots,N$. This is because the $\varepsilon_{i,t}$ are (linear) functions of the variables in the conditioning set in Assumption \ref{assu panel data}.} However, it still allows for considerable dependence over time, as higher moments than the first are not restricted.  Furthermore, the error terms need not be identically distributed over time for any individual. Note that the increasing number of lags of $y_{i,t}$ also whiten the error terms. We also note that Assumption \ref{assu panel data} does not rule out that the error terms are conditionally heteroskedastic. In particular, they may be $\textit{autoregressively}$ conditionally heteroskedastic (ARCH). In panel data terminology, both lags of $y_{i,t}$ and $x_{i,t}$ are called \textit{predetermined} or \textit{weakly exogenous}. Finally, one can of course also include lags of the $x_{i,t}$ as these are also weakly exogenous.

In order to introduce the next assumption define the scaled empirical Gram matrix
\begin{align*}
\Psi_{N}=S^{-1}\Pi'\Pi S^{-1}=\left( \begin{array}{cc}
\frac{1}{NT}Z'Z & \frac{1}{T\sqrt{N}}Z'D\\
\frac{1}{T\sqrt{N}}D'Z & \text{I}_N
\end{array}\right)\  \text{ where }\ S=\left( \begin{array}{cc}
\sqrt{NT}\text{I}_p & 0\\
0 & \sqrt{T}\text{I}_N
\end{array}\right)  
\end{align*}
When $p+N>NT$, $\Psi_{N}$ is singular. However, to conduct inference it suffices that a compatibility type condition tailored to the panel data structure is satisfied. To be precise, define for integers $r_1\in \{1,\ldots,p\}$ and $r_2\in \{1,\ldots, N\}$
\begin{equation*}
\kappa^2(A, r_1, r_2):=\min_{\substack{R_1 \subseteq \{1,\ldots, p\}, |R_1|\leq r_1\\R_2 \subseteq \{1,\ldots,N\}, |R_2|\leq r_2\\R:=R_1\cup R_2}} \min_{\substack{\delta\in \mathbb{R}^{p+N}\setminus \{0\}\\\|\delta_{R^c}\|_1\leq 4\|\delta_{R}\|_1}}\frac{\delta'A\delta}{\frac{1}{r_1+r_2}\|\delta_R\|_1^2}
\end{equation*}
\footnote{Here $R_1\cup R_2$ is understood as $R_1\cup (R_2+p)$ where the addition is elementwise.}which is reminiscent of the restricted eigenvalue condition in \cite{bickelritovtsybakov2009}. We will need $\kappa^2(\Psi_N, r_1, r_2)$ to be bounded away from zero for $r_1=s_1$ and $r_2$ being a sequence made precise in the Appendix A depending on the degree of weak sparsity of the fixed effects. To bound  $\kappa^2(\Psi_N, s_1, r_2)$ away from zero consider $\kappa^2(\Psi, s_1, r_2)$ where\footnote{$\Psi$ actually also depends on $N$ and $T$ but for brevity we are silent about this.}
\[\Psi=\left( \begin{array}{cc}
\Psi_Z & 0\\
0 & \text{I}_N
\end{array}\right) :=\left( \begin{array}{cc}
\frac{1}{NT}\sum_{i=1}^{N}\sum_{t=1}^{T}\mathbb{E}[z_{i,t}z_{i,t}'] & 0\\
0 & \text{I}_N
\end{array}\right). \]
 We will see that in order for $\kappa^2(\Psi_N, s_1, r_2)$ to be bounded away from zero it suffices that $\kappa^2(\Psi, s_1, r_2)$ is bounded away from zero and $\Psi_N$ being close to $\Psi$ in an appropriate sense. Writing $\delta=(\delta_1',\delta_2')'$, where $\delta_1\in \mathbb{R}^p$ and $\delta_2\in \mathbb{R}^N$, note that by the block diagonal structure of $\Psi$
\begin{align*}
\frac{\delta'\Psi\delta}{\frac{1}{r_1+r_2}\|\delta_R\|_1^2}
\geq
\frac{\delta_1'\Psi_Z\delta_1+\delta_{2,R_2}'\delta_{2,R_2}}{\delta_{1,R_1}'\delta_{1,R_1}+\delta_{2,R_2}'\delta_{2,R_2}}
\geq 
1\wedge \frac{\delta_{1}'\Psi_Z\delta_{1}}{\delta_{1,R_1}'\delta_{1,R_1}}.
\end{align*}
The above estimates are useful as they show that we really only have to consider minimization over the upper left submatrix $\Psi_Z$ in the definition of $\kappa^2(\Psi_N, s_1, r_2)$. To be precise,
\begin{align}
\min_{r_2\in\cbr[0]{1,...,N}}\kappa^2(\Psi, s_1, r_2)\geq 1\wedge \min_{\substack{R_1 \subseteq \{1,\ldots, p\}, |R_1|\leq s_1}}\frac{\delta_{1}'\Psi_Z\delta_{1	}}{\delta_{1,R_1}'\delta_{1,R_1}}=:\kappa_2^2(\Psi_Z,s_1)\label{re2}.
\end{align}
Thus, $\kappa_2^2(\Psi_Z,s_1)$ is a uniform lower bound for $\kappa^2(\Psi, s_1, r_2)$ and in order for $\kappa^2(\Psi, s_1, r_2)$ to be bounded away from zero it suffices to assume that

\begin{assumption}
\label{assu smallest eigenvalue of PsiZ}
$\kappa_2^2=\kappa^2\del[0]{\Psi_Z, s_1}$ is uniformly bounded away from zero.
\end{assumption}

Assumption \ref{assu smallest eigenvalue of PsiZ} is rather innocent as it is trivially satisfied when the $\Psi_Z$ is positive definite. Since $\Psi_Z$ is the \textit{population} second moment matrix of $z_{i,t}$ this is a rather innocent assumption which is typically imposed. Compatibility type conditions are standard in the literature and various versions and their interrelationship have been investigated in \cite{van2009conditions}.

\begin{assumption}
\label{assu subgaussian}
There exist positive constants $C$ and $K$ such that
\begin{enumerate}[(a)]
\item $\varepsilon_{i,t}$ are uniformly subgaussian; that is, $\mathbb{P}(|\varepsilon_{i,t}|\geq \epsilon)\leq \frac{1}{2}Ke^{-C\epsilon^2}$ for every $\epsilon\geq 0$, $i=1,\ldots, N$ and $t=1,\ldots,T$. 
\item $z_{i,t,l}$ are uniformly subgaussian; that is, $\mathbb{P}(|z_{i,t,l}|\geq \epsilon)\leq \frac{1}{2}Ke^{-C\epsilon^2}$ for every $\epsilon\geq 0$, $i=1,\ldots, N$, $t=1,\ldots,T$ and $l=1,\ldots,p$.
\end{enumerate}
\end{assumption} 
In the context of the plain static regression model it is common practice to assume the error terms as well as the covariates to be subgaussian. However, this assumption is not as innocent in the context of the dynamic panel data model (\ref{eqn dgp}) as $y_{i,t}$ is generated by the model and its properties are thus completely determined by those of $x_{i,t}, \varepsilon_{i,t}$ as well as the parameters of the model. Lemma \ref{lemma y inherit subgaussian} in Appendix A shows that $y_{i,t}$ is subgaussian if $x_{i,t}$ and $\varepsilon_{i,t}$ satisfy this property and the parameters are well-behaved. In particular, a wide class of (causal) stationary processes are included. Note also, that Assumption \ref{assu subgaussian} imposes subgaussianity of the initial values $y_{i,0},...,y_{i,1-L}$ for all $i=1,...,N$. \cite{caner2014asymptotically} have derived results similar to ours in a cross sectional setting without the sub-gaussianity assumption. However, the dimension of their model can not increase as fast as here.

\subsection{The Oracle Inequalities}
%
%

With the above assumptions in place we are ready to state our first result. Defining $\mathcal{F}(s_1,\nu,E):=\cbr[1]{ \alpha \in \mathbb{R}^{p}: \enVert[0]{\alpha}_{0} \leq s_1} \times \cbr[1]{\eta \in \mathbb{R}^{N}: \sum_{i=1}^{N}|\eta_i|^{\nu}\leq E}$, one has
\begin{thm}[Oracle inequalities]
\label{thm probabilistic oracle inequality}
Let Assumptions \ref{assu panel data} - \ref{assu subgaussian} hold. Then, choosing $\lambda_{N}=\sqrt{4MNT(\log (p\vee N))^3}$ for some $M>0$, the following inequalities are valid with probability at least
\begin{align*}
1-Ap^{1-BM^{1/3}}-AN^{1-BM^{1/3}}-A(p^2+pN)\exp\del[4]{{-B\cbr[3]{ N/\sbr[2]{s_1+E\del[2]{\frac{\lambda_N}{\sqrt{N}T}}^{-\nu}}^2} ^{1/3}}}
\end{align*}
for positive constants $A$ and $B$ and $s_1+E\del[2]{\frac{\lambda_N}{\sqrt{N}T}}^{-\nu}\lesssim \sqrt{N}$,
\begin{align*}
&\frac{1}{NT}\left\|\Pi(\hat{\gamma}-\gamma) \right\|^2 
\leq \frac{120\lambda^2_{N}s_1}{\kappa_2^2(NT)^2}+\del[2]{\frac{120}{\kappa_2^2}+20} \frac{\lambda_N}{\sqrt{N}NT}E\del[3]{\frac{\lambda_N}{\sqrt{N}T}} ^{1-\nu}
\\
&\|\hat{\alpha}-\alpha\|_1
\leq\frac{120\lambda_{N}s_1}{\kappa_2^2NT}+\del[2]{\frac{120}{\kappa_2^2}+20}\frac{1}{\sqrt{N}}E\del[3]{\frac{\lambda_N}{\sqrt{N}T}}^{1-\nu}\\
&\|\hat{\eta}-\eta\|_1 
\leq\frac{120\lambda_{N}s_1}{\kappa_2^2\sqrt{N}T}+\del[2]{\frac{120}{\kappa_2^2}+20}E\del[3]{\frac{\lambda_N}{\sqrt{N}T}}^{1-\nu}.
\end{align*}
Moreover, the above bounds are valid uniformly over
$\mathcal{F}(s_1,\nu,E)$. 
\end{thm}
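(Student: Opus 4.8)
The plan is to run the standard ``basic inequality'' argument for $\ell_1$-penalised least squares, adapted to the two-block, two-penalty structure of \eqref{objfunc} and to the fact that $\eta$ is only weakly sparse. Writing $\delta:=\hat\gamma-\gamma=(\delta_1',\delta_2')'$ and using that $\hat\gamma$ minimises $L$, the inequality $L(\hat\gamma)\le L(\gamma)$ together with $y=\Pi\gamma+\varepsilon$ gives
\[
\left\|\Pi\delta\right\|^2\le 2\varepsilon'\Pi\delta+2\lambda_N\left(\|\alpha\|_1-\|\hat\alpha\|_1\right)+2\tfrac{\lambda_N}{\sqrt N}\left(\|\eta\|_1-\|\hat\eta\|_1\right).
\]
The decisive simplification is to pass to the scaled coordinates $\tilde\delta:=S\delta$. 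Then $\|\Pi\delta\|^2=\tilde\delta'\Psi_N\tilde\delta$ and, because $\lambda_N\|\delta_1\|_1=\bar\lambda\|\tilde\delta_1\|_1$ and $\tfrac{\lambda_N}{\sqrt N}\|\delta_2\|_1=\bar\lambda\|\tilde\delta_2\|_1$ with $\bar\lambda:=\lambda_N/\sqrt{NT}$, the two penalties collapse to a single common weight. In the scaled coordinates the problem is thus an ordinary equal-weight Lasso with Gram matrix $\Psi_N$, and the only genuinely new feature is the weak sparsity of the $\eta$-block.

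The stochastic term is handled by H\"older, $2\varepsilon'\Pi\delta\le 2\|\varepsilon'\Pi S^{-1}\|_\infty\|\tilde\delta\|_1$, and I would work on the event $\mathcal A$ on which $\|\varepsilon'\Pi S^{-1}\|_\infty\le\bar\lambda/2$, equivalently $\|Z'\varepsilon\|_\infty\le\lambda_N/2$ and $\max_{i}|\sum_{t}\varepsilon_{i,t}|\le\lambda_N/(2\sqrt N)$. For the deterministic penalty differences I split by support. Exact sparsity of $\alpha$ gives $\|\alpha\|_1-\|\hat\alpha\|_1\le\|\delta_{1,J_1}\|_1-\|\delta_{1,J_1^c}\|_1$. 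For $\eta$ the new ingredient is a thresholding step: set $\tau:=\lambda_N/(\sqrt N T)$ and $R_2:=\{i:|\eta_i|>\tau\}$; weak sparsity yields $r_2:=|R_2|\le E\tau^{-\nu}$ and $\|\eta_{R_2^c}\|_1\le E\tau^{1-\nu}$, so that $\|\eta\|_1-\|\hat\eta\|_1\le\|\delta_{2,R_2}\|_1-\|\delta_{2,R_2^c}\|_1+2\|\eta_{R_2^c}\|_1$. The extra additive term $2\|\eta_{R_2^c}\|_1\le 2E\tau^{1-\nu}$ is exactly what prevents a clean cone condition of the \cite{bickelritovtsybakov2009} type and is the origin of the weak-sparsity contribution in every bound.

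Collecting these pieces on $\mathcal A$ and writing $R:=J_1\cup(R_2+p)$ gives the cone inequality
\[
\|\Pi\delta\|^2+\bar\lambda\|\tilde\delta_{R^c}\|_1\le 3\bar\lambda\|\tilde\delta_R\|_1+4\tfrac{\lambda_N}{\sqrt N}E\tau^{1-\nu}.
\]
From here I would argue by dichotomy: either the last term dominates $3\bar\lambda\|\tilde\delta_R\|_1$, in which case $\|\tilde\delta\|_1$ is immediately controlled and all three bounds follow, or it does not, in which case the restriction $\|\tilde\delta_{R^c}\|_1\le 4\|\tilde\delta_R\|_1$ in the definition of $\kappa^2(\Psi_N,s_1,r_2)$ holds --- the slack between the factor $3$ and the factor $4$ being precisely what absorbs the residual weak-sparsity term. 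Invoking the compatibility inequality $\|\tilde\delta_R\|_1^2\le\frac{s_1+r_2}{\kappa^2}\|\Pi\delta\|^2$ then turns the display into a quadratic inequality in the pair $(\|\Pi\delta\|,\|\tilde\delta_R\|_1)$ --- the ``two variables'' --- whose feasible region is bounded by the larger root. Reading off $\|\Pi\delta\|$ yields the prediction bound; substituting back into the cone inequality bounds $\|\tilde\delta\|_1$, and unscaling through $\|\hat\alpha-\alpha\|_1=\|\tilde\delta_1\|_1/\sqrt{NT}$ and $\|\hat\eta-\eta\|_1=\|\tilde\delta_2\|_1/\sqrt T$ delivers the two $\ell_1$ bounds.

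Two points remain. First, the bounds are stated with the population constant $\kappa_2^2=\kappa^2(\Psi_Z,s_1)$ rather than $\kappa^2(\Psi_N,s_1,r_2)$; bridging the two requires showing $\Psi_N$ is entrywise close to $\Psi$, i.e.\ controlling $\|\Psi_N-\Psi\|_\infty$ through concentration of $\tfrac1{NT}Z'Z$ and $\tfrac{1}{T\sqrt N}Z'D$ about their means, under the side condition $s_1+E\tau^{-\nu}\lesssim\sqrt N$ guaranteeing that this perturbation cannot destroy the restricted eigenvalue; the union bound over the $O(p^2+pN)$ entries produces the third term of the probability statement, while the first two terms come from union-bounding $\mathcal A$ over the $p$ coordinates of $Z'\varepsilon$ and the $N$ coordinates of $D'\varepsilon$. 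Second, since $\mathcal A$, the subgaussian constants, and $\kappa_2^2$ depend on $(\alpha,\eta)$ only through $(s_1,E,\nu)$ and the uniform constants of Assumptions \ref{assu panel data}--\ref{assu subgaussian}, the entire argument holds simultaneously over $\mathcal F(s_1,\nu,E)$, giving the asserted uniformity. I expect the main obstacle to be the concentration step: unlike the i.i.d.\ regression case, the summands $z_{i,t,l}\varepsilon_{i,t}$ are dependent across $t$ and $z_{i,t}$ is itself a generated dynamic regressor, so the tail bounds must combine the martingale-difference property of Assumption \ref{assu panel data} with the subgaussianity of Assumption \ref{assu subgaussian} to obtain sub-Weibull product concentration --- and it is this step that forces the unusual scaling $\lambda_N=\sqrt{4MNT(\log(p\vee N))^3}$ and the $M^{1/3}$ exponents in the probability bound. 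The secondary, conceptual obstacle is the weak-sparsity bookkeeping, which replaces the explicit closed-form oracle bound of the exactly sparse case by the solution of a quadratic inequality.
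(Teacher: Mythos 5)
Your proposal follows essentially the same route as the paper's proof: the basic inequality on the event $\|Z'\varepsilon\|_\infty\le\lambda_N/2$, $\|D'\varepsilon\|_\infty\le\lambda_N/(2\sqrt N)$, thresholding $\eta$ at $\Xi=\lambda_N/(\sqrt N T)$ to get a cone inequality with the additive weak-sparsity residual $4\lambda_N E\Xi^{1-\nu}/\sqrt N$, the dichotomy on whether that residual dominates (the paper's event $\mathcal{C}_N$), the compatibility condition in the $S$-rescaled coordinates, the resulting quadratic inequality, the entrywise perturbation bound linking $\kappa^2(\Psi_N,s_1,s_2)$ to $\kappa_2^2(\Psi_Z,s_1)$ under $s_1+E\Xi^{-\nu}\lesssim\sqrt N$, and the martingale/sub-Weibull concentration producing the $M^{1/3}$ exponents. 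The only (immaterial) deviation is that you phrase the quadratic in $(\|\Pi\delta\|,\|\tilde\delta_R\|_1)$ whereas the paper keeps the two block $\ell_1$-norms $\|\hat\alpha_{J_1}-\alpha_{J_1}\|_1$ and $\|\hat\eta_{J_2}-\eta_{J_2}\|_1$ as the two variables of its quadratic $x^2-ax+by^2-cy\le 0$; both yield the stated rates.
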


Theorem \ref{thm probabilistic oracle inequality} provides oracle inequalities for the prediction error as well as the estimation error of the parameter vectors. While these bounds are of independent interest, we primarily use them as means towards our ultimate end of conducting (joint) inference on $\alpha$ and $\eta$. We stress that the bounds in Theorem \ref{thm probabilistic oracle inequality} are finite sample bounds; they hold for any fixed values of $N$ and $T$. The novel feature of our oracle inequalities is that $E$, the "size" of $\eta$, is allowed to grow even when we want the upper bound of $\|\hat{\eta}-\eta\|_1$ go to zero. The special case of exact sparsity of $\eta$ corresponds to $\nu=0$ and $E$ being the sparsity index, say $s_2$, of $\eta$. 


We also note that the oracle inequalities are not obtained in an entirely standard manner as the mixture of exact and weak sparsity in dynamic panel data models calls for a different proof technique which yields the upper bounds as solutions to certain quadratic equations. Furthermore, we remark that in analogy to oracle inequalities in the plain linear regression model the number of covariates in $x_{i,t}$ ($p_x$) may increase at an exponential rate in $NT$ without hindering the right hand sides of the oracle inequalities in being small. Finally, we do not assume independence across $t=1,...,T$ for any individual thus altering the standard probabilistic analysis as well. Instead we use concentration inequalities for  martingales to obtain bounds almost as sharp as in the completely independent case. If one restricts the dependence structure of $\cbr[0]{x_{i,t}}_{t=1}^T$ for every $i=1,...,N$ to be, e.g., strongly mixing then one can use concentration inequalities for mixing processes such as in \cite{merlevede2011bernstein}. Restricting the dependence structure this way will allow $s_1$ and $E$ to increase faster. The focus on the $\ell_1$-norm in the oracle inequalities for $\alpha$ and $\eta$ is due to the fact that an upper bound in this norm will be particularly useful when developing our uniformly valid inference procedure in the following sections.   

\section{Inference}
\label{sec inference}

In this section we show how to conduct inference on $\gamma$ and first discuss how desparsification as proposed in \cite{vandegeerbuhlmannritovdezeure2014} works in our context.

\subsection{The Desparsified Lasso Estimator $\tilde{\gamma}$}
\label{sec desparsified lasso}

First, observe that $L(\gamma)$ in (\ref{objfunc}) is convex in $\gamma$ and in order for $\hat{\gamma}$ to be a minimiser of $L$, 0 must belong to the subdifferential of $L(\gamma)$ at $\hat{\gamma}$, i.e.
\[0\in\partial L(\hat{\gamma})=\left( \begin{array}{c}
-2Z'(y-\Pi\hat{\gamma})+2\lambda_{N}\hat{\kappa}_1\\
-2D'(y-\Pi\hat{\gamma})+2\frac{\lambda_N}{\sqrt{N}}\hat{\kappa}_2
\end{array}\right) \]
where $\hat{\kappa}_1$ and $\hat{\kappa}_2$ are $p\times 1$ and $N\times 1$ vectors, respectively, such that $\hat{\kappa}_{1j}\in [-1,1]$ with $\hat{\kappa}_{1j}=\text{sgn}(\hat{\alpha}_j)$ if $\hat{\alpha}_j\neq 0$ for $j=1,\ldots,p$. Similarly, $\hat{\kappa}_{2i}\in [-1,1]$ with $\hat{\kappa}_{2i}=\text{sgn}(\hat{\eta}_i)$ if $\hat{\eta}_i\neq 0$ for $i=1,\ldots,N$. Hence,
\begin{equation}
\label{eqn KKT condition}
-\Pi'\left( y-\Pi\hat{\gamma}\right)+\left( \begin{array}{c}
\lambda_{N}\hat{\kappa}_1\\
\frac{\lambda_N}{\sqrt{N}}\hat{\kappa}_2
\end{array}\right)=0.  
\end{equation}
Using that $y=\Pi\gamma+\varepsilon$ and multiplying by $S^{-1}$ from the left yields
\[\Psi_{N} S\left( \hat{\gamma}-\gamma\right) +S^{-1}\left( \begin{array}{c}
\lambda_{N}\hat{\kappa}_1\\
\frac{\lambda_N}{\sqrt{N}}\hat{\kappa}_2
\end{array}\right)=S^{-1}\Pi'\varepsilon.  \]
In order to derive the limiting distribution of $S(\hat{\gamma}-\gamma)$ one would usually proceed by isolating $S(\hat{\gamma}-\gamma)$ which implies inverting $\Psi_N$. However, when $p+N>NT$, $\Psi_N$ is not invertible. The idea of \cite{vandegeerbuhlmannritovdezeure2014} and \cite{javanmard2013confidence}  is to circumvent this problem by using an approximate inverse of $\Psi_{N}$ and controlling the asymptotic approximation error. Suppose that a matrix $\hat{\Theta}$ is a reasonable approximation to the inverse of $\Psi_N$. We shall explicitly construct $\hat{\Theta}$ in the next section. Then we may write
\begin{align*}
\hat{\gamma}=\gamma-S^{-1}\hat{\Theta}S^{-1}\left( \begin{array}{c}
\lambda_{N}\hat{\kappa}_1\\
\frac{\lambda_N}{\sqrt{N}}\hat{\kappa}_2
\end{array}\right)+S^{-1}\hat{\Theta}S^{-1}\Pi'\varepsilon-S^{-1}\Delta
\end{align*}
where $\Delta:=\del[1]{ \hat{\Theta}\Psi_{N}-\text{I}}S\del[0]{ \hat{\gamma}-\gamma}$ is the error resulting from using an approximate inverse $\hat{\Theta}$ of $\Psi_N$ as opposed to an exact inverse. The term $S^{-1}\hat{\Theta}S^{-1}\left( \begin{array}{c}
\lambda_{N}\hat{\kappa}_1\\
\frac{\lambda_N}{\sqrt{N}}\hat{\kappa}_2
\end{array}\right)$ in the above display is the bias incurred by $\hat{\gamma}$ due to shrinkage of the parameters in (\ref{objfunc}). As this bias term is known one may add it back to $\hat{\gamma}$ in order to define the debiased estimator 
\begin{align*}
\tilde{\gamma}=\hat{\gamma}+S^{-1}\hat{\Theta}S^{-1}\left( \begin{array}{c}
\lambda_{N}\hat{\kappa}_1\\
\frac{\lambda_N}{\sqrt{N}}\hat{\kappa}_2
\end{array}\right)
=\gamma+S^{-1}\hat{\Theta}S^{-1}\Pi'\varepsilon-S^{-1}\Delta
\end{align*}
The new estimator $\tilde{\gamma}$ is no longer sparse as it has added a bias correction terms to the sparse Lasso estimator $\hat{\gamma}$. Therefore, we will also refer to it as the \textit{desparsified} Lasso estimator in the dynamic panel context.
\begin{equation}
\label{eqn multivariate inference equnation}
S\left( \tilde{\gamma}-\gamma\right)=\hat{\Theta}S^{-1}\Pi'\varepsilon-\Delta,  
\end{equation}
For any $(p+N)\times 1$ vector $\rho$ with $\|\rho\|=1$ we shall study the asymptotic behaviour of 
\begin{equation}
\label{eqn univariate inference equnation}
\rho'S\left( \tilde{\gamma}-\gamma\right)
=
\rho'\hat{\Theta}S^{-1}\Pi'\varepsilon-\rho'\Delta. 
\end{equation}
A central limit theorem for $\rho'\hat{\Theta}S^{-1}\Pi'\varepsilon$ as well as asymptotic negligibility of $\rho'\Delta$ will yield asymptotically gaussian inference. Furthermore, we shall provide a uniformly consistent estimator of the asymptotic variance of $\rho'\hat{\Theta}S^{-1}\Pi'\varepsilon$ even in the presence of conditional heteroskedasicity. A leading special case of (\ref{eqn univariate inference equnation}) is when one is only interested in the asymptotic distribution of $\tilde{\gamma}_j$ corresponding to $\rho=e_j$ being the $j$th basis vector of $\mathbb{R}^{p+N}$. In general, we will be interested in the asymptotic distribution of a subset $H\subseteq\cbr[0]{1,...,p+N}$ of the indices of $\gamma$ with cardinality $h$ and shall show that asymptotically honest (uniformly valid) gaussian inference is possible in the presence of heteroskedasticity even for $h\to\infty$ and $H$ simultaneously involving elements of $\alpha$ and $\eta$.

\subsection{Construction of $\hat{\Theta}$}

\label{sec nodewise regression}
As is clear from the discussion above we need a good choice for $\hat{\Theta}$. In particular we shall show that
\[\hat{\Theta}=\left( \begin{array}{cc}
\hat{\Theta}_Z & 0\\
0 & \text{I}_N
\end{array}\right) \]
works well. Here $\hat{\Theta}_Z$ will be constructed using nodewise regressions as in \cite{vandegeerbuhlmannritovdezeure2014} and we show that this is possible even when the rows of $Z$ are not independent and identically distributed. The construction of $\hat{\Theta}_Z$ parallels the one in \cite{vandegeerbuhlmannritovdezeure2014} to a high extent but importantly for our context we do not need the rows of $\Psi_Z^{-1}$ to be sparse for the nodewise regressions to work well. We will discuss the importance of this, once we have properly constructed $\hat{\Theta}_Z$. First, define
\begin{equation}
\label{eqn nodewise regression lasso objective function}
\hat{\phi}_j= \argmin_{\delta \in \mathbb{R}^{p-1}}\left\lbrace \frac{1}{NT}\|z_j-Z_{-j}\delta\|^2+2\lambda_{node}\|\delta\|_1\right\rbrace, \qquad j=1,...,p,
\end{equation}
where $z_j$ is the $j$th column of $Z$, $Z_{-j}$ is the $NT\times (p-1)$ submatrix of $Z$ with $Z$'s $j$th column removed, and the $(p-1)\times 1$ vector $\hat{\phi}_j=\{\hat{\phi}_{j,l}: l=1,\ldots,p, l\neq j\}$. Thus, $\hat{\phi}_j$ is the Lasso estimator resulting from regressing $z_j$ on $Z_{-j}$. Next, define 
\[\hat{C}=\left( \begin{array}{cccc}
1 & -\hat{\phi}_{1,2} &\cdots  &-\hat{\phi}_{1,p}\\
-\hat{\phi}_{2,1} & 1  &\cdots  &-\hat{\phi}_{2,p}\\
\vdots&\vdots&\ddots&\vdots\\
-\hat{\phi}_{p,1} & -\hat{\phi}_{p,2} &\cdots  &1  \\
\end{array}\right) \]
and $\hat{\tau}_j^2= \frac{1}{NT}\|z_j-Z_{-j}\hat{\phi}_j\|^2+\lambda_{node}\|\hat{\phi}_j\|_1$ as well as $\hat{T}^2=\text{diag}(\hat{\tau}_1^2,\ldots,\hat{\tau}_p^2)$. Finally, we set $\hat{\Theta}_Z=\hat{T}^{-2}\hat{C}$. Let $\hat{C}_j$ denote the $j$th row of $\hat{C}$ and let $\hat{\Theta}_{Z,j}$ denote the $j$th row of $\hat{\Theta}_{Z}$ but both written as a $p\times 1$ vectors. Then, $\hat{\Theta}_{Z,j}=\hat{C}_j/\hat{\tau}_j^2$. For any $j=1,...,p$, the KKT condition for a minimum in (\ref{eqn nodewise regression lasso objective function}) are
\begin{align}
-\frac{1}{NT}Z_{-j}'(z_j-Z_{-j}\hat{\phi}_j)+\lambda_{node}w_j=0, \label{nodeKKT}
\end{align}
where $w_j$ is the subdifferential of $\|x\|_1$ evaluated at $\hat{\phi}_{j}$. Using this, the definition of $\hat{\tau}_j$, and $\hat{\phi}_j'w_j=\enVert[0]{\hat{\phi}_j}_1$ yields
\begin{align}
\hat{\tau}_j^2
=
\frac{1}{NT}(z_j-Z_{-j}\hat{\phi}_j)'(z_j-Z_{-j}\hat{\phi}_j)+\lambda_{node}\enVert[0]{\hat{\phi}_j}_1
=\frac{1}{NT}(z_j-Z_{-j}\hat{\phi}_j)'z_j.\label{align tauj2 obtained via KKT}
\end{align}
Thus, by the definition of $\hat{\Theta}_{Z,j}$, and as $\hat{\tau}_j^2$ is bounded away from zero (we shall later argue rigorously for this)
\begin{equation}
\label{eqn diagonal of empirical Gram times approximate inverse}
\frac{1}{NT}z_j'Z\hat{\Theta}_{Z,j}
=
1.
\end{equation}
Furthermore, the KKT conditions (\ref{nodeKKT}) can also be written as
\begin{equation}
\label{eqn nodewise regression KKT conditions}
\frac{1}{NT}Z_{-j}'(z_j-Z_{-j}\hat{\phi}_j)=\lambda_{node}w_j,
\end{equation}
which implies $\frac{1}{NT}Z_{-j}'Z\hat{\Theta}_{Z,j}=\lambda_{node}w_j/\hat{\tau}_j^2$. Combining with (\ref{eqn diagonal of empirical Gram times approximate inverse}) yields
\begin{equation}
\label{eqn extended KKT conditions}
\left\| \frac{1}{NT}Z'Z\hat{\Theta}_{Z,j}-e_j\right\|_{\infty}\leq \frac{\lambda_{node}}{\hat{\tau}_j^2},
\end{equation}
which together with an oracle inequality for $\enVert[0]{\hat{\gamma}-\gamma}_1$ provides an upper bound on the $j$th entry of $\Delta$ in (\ref{eqn univariate inference equnation}). In other words, (\ref{eqn extended KKT conditions}) will be used to show the required asymptotic negligibility of $\rho'\Delta$ in (\ref{eqn univariate inference equnation}) by arguments made rigorous in the appendix.

\subsection{Asymptotic Properties of the Approximate Inverse}

In order to show that $\rho'\hat{\Theta}S^{-1}\Pi'\varepsilon$ is asymptotically gaussian one needs to understand the limiting behaviour of $\hat{\Theta}$ constructed above. We show that $\hat{\Theta}$ is close to 
\[\Theta=\left( \begin{array}{cc}
\Theta_Z & 0\\
0 & \text{I}_N
\end{array}\right):=\left( \begin{array}{cc}
\Psi_Z^{-1} & 0\\
0 & \text{I}_N
\end{array}\right)\]
in an appropriate sense. To this end, note that by \cite{yuan2010}
\begin{equation}
\label{align inverse of the partitioned matrix}
\Theta_{Z,j,j}=\left[ \Psi_{Z,j,j}-\Psi_{Z,j,-j}\Psi_{Z,-j,-j}^{-1}\Psi_{Z,-j,j}\right]^{-1}\  \text{ and }\ 
\Theta_{Z,j,-j}=-\Theta_{Z,j,j}\Psi_{Z,j,-j}\Psi_{Z,-j,-j}^{-1},
\end{equation}
where $\Theta_{Z,j,j}$ is the $j$th diagonal entry of $\Theta_{Z}$, $\Theta_{Z,j,-j}$ is the $1\times (p-1)$ vector obtained by removing the $j$th entry of the $j$th row of $\Theta_Z$, $\Psi_{Z, -j,-j}$ is the submatrix of $\Psi_Z$ with the $j$th row and column removed, $\Psi_{Z,j,-j}$ is the $j$th row of $\Psi_Z$ with its $j$th entry removed, $\Psi_{Z,-j,j}$ is the $j$th column of $\Psi_Z$ with its $j$th entry removed. Next, let $z_{i,t,j}$ be the $j$th element of $z_{i,t}$ and $z_{i,t,-j}$ be all elements except the $j$th. Define the $ (p-1)\times 1$ vector
\[\phi_j:=\argmin_{\delta}\frac{1}{NT}\sum_{i=1}^{N}\sum_{t=1}^{T}\mathbb{E}[z_{i,t,j}-z_{i,t,-j}'\delta]^2\]
such that 
\begin{equation}
\label{eqn definition of phij}
\phi_j=\left( \frac{1}{NT}\sum_{i=1}^{N}\sum_{t=1}^{T}\mathbb{E}[z_{i,t,-j}z_{i,t,-j}']\right)^{-1}\left( \frac{1}{NT}\sum_{i=1}^{N}\sum_{t=1}^{T}\mathbb{E}[z_{i,t,-j}z_{i,t,j}]\right)=\Psi_{Z,-j,-j}^{-1}\Psi_{Z,-j,j}.
\end{equation}
Therefore, $\Theta_{Z,j,-j}=-\Theta_{Z,j,j}\phi_j'$ showing that $\Theta_{Z,j,-j}$ and $\phi_j'$ only differ by a multiplicative constant. In particular, $j$th row of $\Theta_Z$ is exactly sparse if and only if $\phi_j$ is exactly sparse. More generally, we shall exploit below that weak sparsity of one implies weak sparsity of the other. Furthermore, defining $\zeta_{j,i,t}:=z_{i,t,j}-z_{i,t,-j}'\phi_j$ we may write
\[z_{i,t,j}=z_{i,t,-j}'\phi_j+\zeta_{j,i,t}, \qquad \text{for }i=1,...,N,\quad  t=1,...,T.\]
where by the definition of $\phi_j$
\begin{equation}
\label{eqn regression error by construction}
\frac{1}{NT}\sum_{i=1}^{N}\sum_{t=1}^{T}\mathbb{E}[z_{i,t,-j}\zeta_{j,i,t}]=0.
\end{equation}
Thus, in light of Theorem \ref{thm probabilistic oracle inequality}, it is sensible that the Lasso estimator $\hat{\phi}_j$ defined in (\ref{eqn nodewise regression lasso objective function}) is close to the population regression coefficients $\phi_j$ (we shall make this more formal in Appendix A). Next, defining
\begin{align*}
\tau^2_j
:=
\mathbb{E}\sbr[2]{\frac{1}{NT}\sum_{i=1}^{N}\sum_{t=1}^{T}(z_{i,t,j}-z_{i,t,-j}'\phi_j)^2}=\Psi_{Z,j,j}-\Psi_{Z,j,-j}\Psi_{Z,-j,-j}^{-1}\Psi_{Z,-j,j}
=
\frac{1}{\Theta_{Z,j,j}}
\end{align*}
observe $\Theta_{Z,j,-j}=-\phi_j'/\tau_j^2$. Thus, we can write $\Theta_Z=T^{-1}C$ where $T=\text{diag}(\tau_1^2,...,\tau_p^2)$ and $C$ is defined similarly to $\hat{C}$ but with $\phi_j$ replacing $\hat{\phi}_j$ for $j=1,...,p$. Finally, let $\Theta_{Z,j}$ denote the $j$th row of $\Theta_Z$ written as a column vector. In Lemma \ref{lemma step stone to final theorem} below we will see that $\hat{\phi}_j$ and $\hat{\tau}_j^2$ are close to $\phi_j$ and $\tau_j^2$, respectively such that $\hat{\Theta}_{Z,j}$ is close to $\Theta_{Z,j}$ which is the desired control of $\hat{\Theta}_{Z,j}$. Write $\rho=(\rho_1',\rho_2')'$ with $\|\rho\|=1$, where $\rho_1\in \mathbb{R}^p$ and $\rho_2\in \mathbb{R}^N$. Hence define
\[H=H_1\cup \del[1]{H_2+p}:= \{j: \rho_{1j} \neq 0\}\cup \del[1]{\{i: \rho_{2i} \neq 0\}+p},\]
with $|H_1|=h_{1,N}=h_1$, $|H_2|=h_{2,N}=h_2$ and $|H|=h=h_1+h_2$. In dynamic panel data models it may not be reasonable to assume that the rows of the inverse second moment matrix $\Psi_Z^{-1}=\Theta_Z$, i.e. $\Theta_{Z,j}$ are sparse. Paralleling Section \ref{weaksparsity} we shall instead assume that the $\Theta_{Z,j}$ are weakly sparse and assume that
\begin{equation}
\sum_{k=1}^{p-1}|\phi_{j,k}|^{\vartheta}=\tau_j^{2\vartheta}\sum_{l\neq j}^{p}|\Theta_{Z,j,l}|^{\vartheta}\leq G_j \label{wsinv}
\end{equation}
for some $0<\vartheta<1$ and $G_j>0$. Define $\bar{G}:=\max_{j\in H_1}G_j$. 

\begin{assumption}
\label{assu more on eigen values}
\item \begin{enumerate}[(a)]
\item $\text{mineval}(\Psi_Z)$ is uniformly bounded away from zero and $\text{maxeval}(\Psi_Z)$ is uniformly bounded from above.
\item $\bar{G}\lambda_{node}^{1-\vartheta}=O(1)$. 
\item There exist positive constants $C$ and $K$ such that $\zeta_{j,i,t}$ are uniformly subgaussian; that is, $\mathbb{P}(|\zeta_{j,i,t}|\geq \epsilon)\leq \frac{1}{2}Ke^{-C\epsilon^2}$ for every $\epsilon>0$, $i=1,\ldots,N$, $t=1,\ldots,T$ and $j=1,\ldots,p$.
\end{enumerate}
\end{assumption}

Assumption \ref{assu more on eigen values}(a) is standard and strengthens Assumption \ref{assu smallest eigenvalue of PsiZ} slightly. Recall that the population matrix $\Psi_Z$ can can have full rank even when the empirical counterpart $\Psi_N$ has rank zero -- which it has when $p+N>NT$. Note that Assumption \ref{assu more on eigen values}(a) implies that $\tau_j^2$ is uniformly bounded away from zero as $\tau_j^2 = 1/\Theta_{Z,j,j}\geq 1/\text{maxeval}(\Theta_Z)=\text{mineval}(\Psi_Z)$. Similarly, $\tau_j^2\leq \text{maxeval}(\Psi_Z)$ implying that $\tau_j^2$ is bounded in (\ref{wsinv}). Therefore, weak sparsity of $\phi_{j,k}$ translates into weak sparsity of the rows of $\Theta$. Notice that we generalize the cross sectional results of \cite{vandegeerbuhlmannritovdezeure2014} by not imposing the inverse covariance (second moment matrix) of $z_{i,t}$ to have sparse rows. When $z_{i,t}$ is gaussian
exact sparsity of $\Psi_Z^{-1}$ is related to the notion of conditional independence: the $(j,k)$th entry of $\Psi_Z^{-1}$ being zero is equivalent to $z_{i,t,j}$ being independent of $z_{i,t,k}$ conditional on the remaining variables in $z_{i,t}$. This is hard to justify in dynamic panel data models. First, it does not sound reasonable for $x_{i,t}$'s to be mostly conditionally independent given the lagged variables. Second, adjacent lagged variables $y_{i,t-l}$ and $y_{i,t-l-1}$ $(l=1,\ldots, L+1)$ are not independent even after conditioning on all the other variables in $z_{i,t}$. In conclusion, it is important to relax the exact sparsity assumption on the rows of $\Theta_Z$ in the context of dynamic panel data models.

Part (b) restricts the rate of growth of $\bar{G}$. As we shall choose $\lambda_{node}\asymp \sqrt{\frac{\log^3(p)}{N}}$ it implies in particular that $\bar{G}=O\del[1]{(N/\log^3(p))^{\frac{1-\vartheta}{2}}}$.  Part (c) imposes subgaussianity on the error terms from the nodewise regressions.

\begin{lemma}
\label{lemma step stone to final theorem}
Let Assumptions \ref{assu panel data}, \ref{assu subgaussian} and \ref{assu more on eigen values} hold. Define $\lambda_{node}=\sqrt{16M(\log p)^3/N}$ for some $M>0$. Then, for $M$ sufficiently large,
\begin{align}
\max_{j\in H_1}|\hat{\tau}_j^2-\tau_j^2|& =O_p\del[3]{\bar{G}^{1/2}\sbr[3]{\frac{(\log p)^3}{N}}^{\frac{2-\vartheta}{4}}} \label{align lemma 2 1}\\
\max_{j\in H_1}\frac{1}{\hat{\tau}_j^2}&=O_p(1)\label{align lemma 2 2}\\
\max_{j\in H_1}\envert[3]{\frac{1}{\hat{\tau}_j^2}-\frac{1}{\tau_j^2}}&=O_p\del[3]{\bar{G}^{1/2}\sbr[3]{\frac{(\log p)^3}{N}}^{\frac{2-\vartheta}{4}}}\label{align lemma 2 3}\\
\max_{j\in H_1}\left\|  \hat{\Theta}_{Z,j}-\Theta_{Z,j}\right\|_1&=O_p\del[3]{\bar{G}\sbr[3]{\frac{(\log p)^3}{N}}^{\frac{1-\vartheta}{2}}}\label{align lemma 2 4}\\
\max_{j\in H_1}\enVert[1]{ \hat{\Theta}_{Z,j}-\Theta_{Z,j}}
&=O_p\del[3]{\bar{G}^{1/2}\sbr[3]{\frac{(\log p)^3}{N}}^{\frac{2-\vartheta}{4}}}\label{align lemma 2 5}\\
\max_{j\in H_1}\enVert[1]{ \hat{\Theta}_{Z,j}}_1&=O_p\del[3]{\bar{G}^{1/2}\sbr[3]{\frac{(\log p)^3}{N}}^{-\frac{\vartheta}{4}}}\label{align lemma 2 6}
\end{align}
\end{lemma}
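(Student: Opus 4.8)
The proof rests on first controlling the $p$ nodewise Lasso problems \eqref{eqn nodewise regression lasso objective function} uniformly and then propagating those bounds through the explicit formulas for $\hat\tau_j^2$ and $\hat\Theta_{Z,j}$. Observe that \eqref{eqn nodewise regression lasso objective function} is an ordinary Lasso of $z_j$ on $Z_{-j}$ whose population coefficient $\phi_j$ is weakly sparse by \eqref{wsinv} and whose error vector $\zeta_j:=(\zeta_{j,i,t})_{i,t}$ (stacked over $i,t$) is uniformly subgaussian by Assumption \ref{assu more on eigen values}(c) and satisfies the population orthogonality \eqref{eqn regression error by construction}. The plan is therefore to establish, as in the proof of Theorem \ref{thm probabilistic oracle inequality}, two high-probability events holding uniformly over $j=1,\ldots,p$: (i) the score bound $\max_{j}\|\frac{1}{NT}Z_{-j}'\zeta_j\|_\infty\le\lambda_{node}/2$, which is exactly what the choice $\lambda_{node}=\sqrt{16M(\log p)^3/N}$ with $M$ large delivers after a union bound over the $\lesssim p^2$ entries combined with a martingale Bernstein inequality for products of subgaussians; and (ii) a restricted eigenvalue constant for $\frac{1}{NT}Z_{-j}'Z_{-j}$ bounded away from zero uniformly in $j$, obtained from Assumption \ref{assu more on eigen values}(a) together with the uniform closeness $\max_j\|\frac{1}{NT}Z_{-j}'Z_{-j}-\Psi_{Z,-j,-j}\|_\infty=o_p(1)$, which is shown by the same concentration argument used to compare $\Psi_N$ with $\Psi$.

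On these events a weakly sparse Lasso oracle inequality --- the specialization of Theorem \ref{thm probabilistic oracle inequality} in which $\phi_j$ enters purely through weak sparsity --- yields, uniformly over $j\in H_1$, the prediction bound $\frac{1}{NT}\|Z_{-j}(\hat\phi_j-\phi_j)\|^2=O_p(\bar G\lambda_{node}^{2-\vartheta})$ and the $\ell_1$ bound $\|\hat\phi_j-\phi_j\|_1=O_p(\bar G\lambda_{node}^{1-\vartheta})$; invoking the restricted eigenvalue condition once more converts the prediction bound into $\|\hat\phi_j-\phi_j\|=O_p(\bar G^{1/2}\lambda_{node}^{(2-\vartheta)/2})$. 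These three bounds drive every display of the lemma.

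To obtain \eqref{align lemma 2 1} I would expand, using \eqref{align tauj2 obtained via KKT} and $z_j-Z_{-j}\hat\phi_j=\zeta_j-Z_{-j}(\hat\phi_j-\phi_j)$,
\[
\hat\tau_j^2-\tau_j^2=\Big(\tfrac{1}{NT}\|\zeta_j\|^2-\tau_j^2\Big)-\tfrac{2}{NT}\zeta_j'Z_{-j}(\hat\phi_j-\phi_j)+\tfrac{1}{NT}\|Z_{-j}(\hat\phi_j-\phi_j)\|^2+\lambda_{node}\|\hat\phi_j\|_1,
\]
and bound the four terms separately: the first concentrates uniformly as a mean-zero average of squared subgaussians; the cross term is at most $\|\frac{1}{NT}Z_{-j}'\zeta_j\|_\infty\|\hat\phi_j-\phi_j\|_1=O_p(\bar G\lambda_{node}^{2-\vartheta})$; the prediction term is of the same order; and $\lambda_{node}\|\hat\phi_j\|_1$ is handled through $\|\phi_j\|_1\lesssim\bar G$ (which follows from weak sparsity together with $\|\phi_j\|=O(1)$, since $\|\phi_j\|_1\le\|\phi_j\|^{1-\vartheta}\sum_k|\phi_{j,k}|^{\vartheta}$). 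Assumption \ref{assu more on eigen values}(b), which governs the interplay between $\bar G$ and $\lambda_{node}$, is what ensures each of these contributions stays within the stated bound $\bar G^{1/2}[(\log p)^3/N]^{(2-\vartheta)/4}$. Display \eqref{align lemma 2 2} then follows because $\tau_j^2=1/\Theta_{Z,j,j}\ge\text{mineval}(\Psi_Z)$ is bounded away from zero by Assumption \ref{assu more on eigen values}(a) while \eqref{align lemma 2 1} is $o_p(1)$; and \eqref{align lemma 2 3} follows from $|1/\hat\tau_j^2-1/\tau_j^2|=|\hat\tau_j^2-\tau_j^2|/(\hat\tau_j^2\tau_j^2)$ combined with \eqref{align lemma 2 1}, \eqref{align lemma 2 2} and the lower bound on $\tau_j^2$.

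The remaining displays are algebraic consequences of $\hat\Theta_{Z,j}=\hat C_j/\hat\tau_j^2$ and $\Theta_{Z,j}=C_j/\tau_j^2$. Writing
\[
\hat\Theta_{Z,j}-\Theta_{Z,j}=\tfrac{1}{\hat\tau_j^2}\big(\hat C_j-C_j\big)+C_j\Big(\tfrac{1}{\hat\tau_j^2}-\tfrac{1}{\tau_j^2}\Big),
\]
and noting that $\hat C_j-C_j$ equals $-(\hat\phi_j-\phi_j)$ off the $j$th coordinate, display \eqref{align lemma 2 5} follows since $\|C_j\|=O(1)$, so both terms are $O_p(\bar G^{1/2}\lambda_{node}^{(2-\vartheta)/2})$ by the $\ell_2$ bound on $\hat\phi_j-\phi_j$ and \eqref{align lemma 2 3}. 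For the $\ell_1$ bound \eqref{align lemma 2 4} the leading term is $\frac{1}{\hat\tau_j^2}\|\hat\phi_j-\phi_j\|_1=O_p(\bar G\lambda_{node}^{1-\vartheta})$, while the second term is shown to be of smaller order via $\|C_j\|_1\lesssim\bar G$ and Assumption \ref{assu more on eigen values}(b); and \eqref{align lemma 2 6} follows from $\|\hat\Theta_{Z,j}\|_1=(1+\|\hat\phi_j\|_1)/\hat\tau_j^2$ with $\|\hat\phi_j\|_1\le\|\phi_j\|_1+\|\hat\phi_j-\phi_j\|_1=O_p(\bar G)$, after Assumption \ref{assu more on eigen values}(b) is used to recast the bound in the stated form. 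The main obstacle throughout is the first step: because the rows of $Z$ are only temporally dependent through a martingale structure rather than independent, and the maximum runs over the growing number $p$ of nodewise regressions, one cannot quote the i.i.d. restricted eigenvalue and score bounds but must re-derive them via martingale Bernstein inequalities and union bounds --- which is also the source of the $(\log p)^3$ factor in $\lambda_{node}$ --- and the oracle inequality itself must be re-proved for the weakly (rather than exactly) sparse $\phi_j$.
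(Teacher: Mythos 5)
Your overall strategy is the one the paper uses: uniform nodewise oracle inequalities for the weakly sparse $\phi_j$ (score bound, compatibility for $\tfrac{1}{NT}Z_{-j}'Z_{-j}$ via closeness to $\Psi_{Z,-j,-j}$, truncation of $\phi_j$ at level $\lambda_{node}$), then algebraic propagation through $\hat\tau_j^2$ and $\hat\Theta_{Z,j}=\hat C_j/\hat\tau_j^2$. Your decomposition of $\hat\tau_j^2-\tau_j^2$ (expanding the full penalized objective, keeping the $\lambda_{node}\|\hat\phi_j\|_1$ term) differs harmlessly from the paper's, which instead starts from the KKT identity $\hat\tau_j^2=\tfrac{1}{NT}(z_j-Z_{-j}\hat\phi_j)'z_j$ and gets the cross terms $\tfrac{1}{NT}\zeta_j'Z_{-j}\phi_j$ and $\tfrac{1}{NT}(\hat\phi_j-\phi_j)'Z_{-j}'Z_{-j}\phi_j$ instead.

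There is, however, a genuine quantitative gap: your bound $\|\phi_j\|_1\lesssim \bar G$ is too crude to deliver the stated rates. The rates in \eqref{align lemma 2 1}, \eqref{align lemma 2 4}--\eqref{align lemma 2 6} require $\|\phi_j\|_1=O(\bar G^{1/2}\lambda_{node}^{-\vartheta/2})$ (this is exactly what \eqref{align lemma 2 6} asserts for $\|\hat\Theta_{Z,j}\|_1$, up to the factor $1/\hat\tau_j^2$). Having $\bar G=O(\bar G^{1/2}\lambda_{node}^{-\vartheta/2})$ is equivalent to $\bar G\lambda_{node}^{\vartheta}=O(1)$, and Assumption \ref{assu more on eigen values}(b) only gives $\bar G\lambda_{node}^{1-\vartheta}=O(1)$, which implies $\bar G\lambda_{node}^{\vartheta}=O(\lambda_{node}^{2\vartheta-1})$ — unbounded whenever $\vartheta<1/2$ (e.g.\ $\vartheta=1/4$ and $\bar G\asymp\lambda_{node}^{-3/4}$ gives $\bar G\lambda_{node}^{\vartheta}\asymp\lambda_{node}^{-1/2}\to\infty$). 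So the ``recasting via Assumption \ref{assu more on eigen values}(b)'' you invoke for \eqref{align lemma 2 6} fails, your bound $\lambda_{node}\|\hat\phi_j\|_1=O_p(\bar G\lambda_{node})$ for the penalty term in \eqref{align lemma 2 1} overshoots the target $O_p(\bar G^{1/2}\lambda_{node}^{1-\vartheta/2})$, and the term $\|C_j\|_1\,|1/\hat\tau_j^2-1/\tau_j^2|$ in your treatment of \eqref{align lemma 2 4} comes out as $O_p(\bar G^{3/2}\lambda_{node}^{1-\vartheta/2})$ rather than the claimed $O_p(\bar G\lambda_{node}^{1-\vartheta})$. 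The repair is the paper's sharper estimate: split the coordinates of $\phi_j$ at the threshold $\lambda_{node}$; the small ones contribute at most $G_j\lambda_{node}^{1-\vartheta}$ to the $\ell_1$ norm, while the at most $s_j^*\le G_j\lambda_{node}^{-\vartheta}$ large ones contribute at most $\sqrt{s_j^*}\,\|\phi_j\|=O(\bar G^{1/2}\lambda_{node}^{-\vartheta/2})$ by Cauchy--Schwarz together with $\|\phi_j\|=O(1)$ (which follows from Assumption \ref{assu more on eigen values}(a)). This yields $\|\phi_j\|_1=O(\bar G^{1/2}\lambda_{node}^{-\vartheta/2})$, after which every display goes through at the stated rate.
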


Lemma \ref{lemma step stone to final theorem} is used as a stepping stone towards the establishing asymptotically gaussian inference as provides the rate at which $\hat{\Theta}_{Z}$ approaches $\Theta_Z$ uniformly over $H_1$. Note that for $H_1=\cbr[0]{1,...,p}$, (\ref{align lemma 2 4}) provides an upper bound on the induced $\ell_\infty$-distance between $\hat{\Theta}_{Z}$ and $\Theta_Z$. However, we only need to control this distance for those indices corresponding to the parameters we seek the joint limiting distribution of. On the other hand, it should be stressed that the uniformity over $H_1$ of the above results is crucial in establishing the limiting gaussian inference and providing a feasible estimator of the covariance matrix of the parameter estimates. In case one is only interested in one entry of $\gamma$, $H_1$ reduces to a singleton if this entry is in $\alpha$. If this entry is in $\eta$, Lemma \ref{lemma step stone to final theorem} is actually superfluous as the lower right hand corners of $\hat{\Theta}$ and $\Theta$ are identical. 

\subsection{The Asymptotic Distribution of $\tilde{\gamma}$}

In this section we formalise the discussion in Section \ref{sec desparsified lasso} as Theorem \ref{thm Delta is small op 1}. To this end, define
\begin{align}
\Sigma_{\Pi\varepsilon}
=
\mathbb{E}(S^{-1}\Pi'\varepsilon\varepsilon'\Pi S^{-1})
=
\left( \begin{array}{cc}
\mathbb{E}\sbr[1]{ Z'\varepsilon\varepsilon'Z/\del[0]{NT}}& \mathbb{E}\sbr[1]{ Z'\varepsilon\varepsilon'D/\del[0]{ \sqrt{N}T}} \\
\mathbb{E}\sbr[1]{ D'\varepsilon\varepsilon'Z/\del[0]{ \sqrt{N}T} } & \mathbb{E}\sbr[1]{ D'\varepsilon\varepsilon'D/T}
\end{array}\right)
=
 \left( \begin{array}{cc}
\Sigma_{1,N}& \Sigma_{2,N}\\
\Sigma_{2,N}'& \Sigma_{3,N}
\end{array}\right)\nonumber.
\end{align}
and note that  
\[\Sigma_{1,N}=\mathbb{E}\left[ \frac{1}{NT}\sum_{i=1}^{N}\sum_{j=1}^{N}Z_i\varepsilon_i\varepsilon_j'Z_j'\right]= \frac{1}{NT}\sum_{i=1}^{N}\mathbb{E}\left[ Z_i\varepsilon_i\varepsilon_i'Z_i'\right]=\frac{1}{NT}\sum_{i=1}^{N}\sum_{t=1}^{T}\mathbb{E}\left[\varepsilon_{i,t}^2z_{i,t}z_{i,t}' \right], \]
where the second and third equality both follow from Assumption \ref{assu panel data}. Likewise, $\Sigma_{3,N}= \frac{1}{T}\sum_{i=1}^{N}\mathbb{E}\left[ d_i\varepsilon_i\varepsilon_i'd_i'\right]=\frac{1}{T}\sum_{i=1}^{N}\sum_{t=1}^{T}\mathbb{E}\left[\varepsilon_{i,t}^2d_{i,t}d_{i,t}' \right]=\text{diag}(\frac{1}{T}\sum_{t=1}^{T}\mathbb{E}[\varepsilon_{1,t}^2],...,\frac{1}{T}\sum_{t=1}^{T}\mathbb{E}[\varepsilon_{N,t}^2])$, where $d_i'$ is the $i$th $T\times N$ block of $D$, and $d_{i,t}$ is a $N\times 1$ zero vector with the $i$th entry replaced by $1$. In the same manner, $\Sigma_{2,N}= \frac{1}{\sqrt{N}T}\sum_{i=1}^{N}\mathbb{E}\left[ z_i\varepsilon_i\varepsilon_i'd_i'\right]=\frac{1}{\sqrt{N}T}\sum_{i=1}^{N}\sum_{t=1}^{T}\mathbb{E}\left[ \varepsilon_{i,t}^2z_{i,t}d_{i,t}'\right]$. In words, $\Sigma_{2,N}$ is a $p\times N$ matrix with its $i$th column being $\frac{1}{\sqrt{N}T}\sum_{t=1}^{T}\mathbb{E}[z_{i,t}\varepsilon_{i,t}^2]$. Finally, motivated by the above, define the feasible sample counterpart of $\Sigma_{\Pi\varepsilon}$ as
\[\hat{\Sigma}_{\Pi\varepsilon}=\left( \begin{array}{cc}
\hat{\Sigma}_{1,N}& \hat{\Sigma}_{2,N}\\
\hat{\Sigma}_{2,N}'& \hat{\Sigma}_{3,N}
\end{array}\right):=\left( \begin{array}{cc}
\frac{1}{NT}\sum_{i=1}^{N}\sum_{t=1}^{T}\hat{\varepsilon}_{i,t}^2z_{i,t}z_{i,t}'  & \frac{1}{\sqrt{N}T}\sum_{i=1}^{N}\sum_{t=1}^{T}\hat{\varepsilon}_{i,t}^2z_{i,t}d_{i,t}'\\
\frac{1}{\sqrt{N}T}\sum_{i=1}^{N}\sum_{t=1}^{T}\hat{\varepsilon}_{i,t}^2d_{i,t}z_{i,t}'&
 \frac{1}{T}\sum_{i=1}^{N}\sum_{t=1}^{T}\hat{\varepsilon}_{i,t}^2d_{i,t}d_{i,t}'\end{array}\right), \]
where $\hat{\varepsilon}_{i,t}:=y_{i,t}-z_{i,t}'\hat{\alpha}-\hat{\eta}_i$. One could also consider constructing $\hat{\varepsilon}_{i,t}$ based on the desparsified estimates. However, this would require running the nodewise regressions for all variables and not only those pertaining to the coefficients in the hypothesis being tested resulting in a much more computationally demanding procedure. The following assumptions are needed to establish the validity of asymptotically gaussian inference of our procedure.

\begin{assumption}
\label{assu rates sufficient}
Let $\tilde{p}:=p\vee N\vee T$ and assume
\begin{enumerate}[(a)]
\item 
\[\frac{(h_1\vee h_21\{h_1\neq 0\})^2\bar{G}^2\sbr[2]{\frac{(\log p)^3}{N}}^{-\vartheta}(\log\tilde{p})^7}{N}=o(1),\qquad   \frac{(\log (N\vee T))^31\{h_2\neq 0\}}{T}=o(1).\]
\item 
\[\frac{\del[2]{h_1^2\bar{G}^2\sbr[2]{\frac{(\log p)^3}{N}}^{-\vartheta}\vee Nh_2^2}  \left[ s_1\vee E\del[2]{\frac{(\log (p\vee N))^3}{T}}^{-\nu/2}\right] (\log\tilde{p})^5}{NT}=o(1).\]
\item 
\begin{align*}
& \frac{(h_1\vee h_2)\sbr[2]{\del[2]{\bar{G}\sbr[2]{\frac{(\log p)^3}{N}}^{-\vartheta/2}\vee (\log \tilde{p})^2}1\{h_1\neq 0\}\vee 1\{h_2\neq 0\}}\sbr[2]{s_1^2\vee E^2\del[2]{\frac{(\log (p\vee N))^3}{T}}^{-\nu}}(\log\tilde{p})^4}{N}=o(1).
\end{align*}
\item $\text{mineval}(\Sigma_{\Pi\varepsilon})$ is uniformly bounded away from zero and $\text{maxeval}(\Sigma_{1,N})$ is uniformly bounded from above.
\end{enumerate}
\end{assumption}

Assumption \ref{assu rates sufficient} is slightly stronger than what we actually need in order to prove Theorem \ref{thm Delta is small op 1} but it is less cluttered in terms of notation. Assumption \ref{assu rates sufficient} restricts the rate at which $p$, $T$, $s_1$, $E$, $\bar{G}$, $h_1$ and $h_2$ are allowed to increase as none of these are assumed to be bounded. First, note that $p=L+p_x$ only enters through its logarithm. Thus, we can allow for very high-dimensional models. Furthermore, $h_1$ as well as $h_2$ are allowed to increase with the sample size such that hypotheses of an increasing dimension involving $\alpha$ and $\eta$ \textit{simultaneously} can be tested. In the classical setting where one is only interested in testing hypotheses on $\alpha$ one has that $h_2=0$ such that Assumption \ref{assu rates sufficient} simplifies. The case of hypotheses only involving the fixed effects $\eta$ corresponds to $h_1=0$ and again the assumptions simplify. We also note that Assumption \ref{assu rates sufficient} requires $\bar{G}$ and $h_1$ necessarily to be $o(N^{\frac{1-\vartheta}{2}})$, $s_1$ necessarily to be $o(N^{1/2})$, $E$ necessarily to be $o(\sqrt{N(\log (p\vee N))^{3\nu}/T^{\nu}})$, and $h_2$ necessarily be $o(T^{1/2})$. The restrictions on $h_1$ and $h_2$, i.e. the number of common coefficients and fixed effects involved in the hypothesis (no fixed effects need to be zero), thus clearly encompass the classical setting where one tests only a fixed number of parameters ($h_1$ and $h_2$ fixed). The assumptions of \ref{assu rates sufficient} are satisfied if, for example, $p=N, T=N^{1/2},\nu=\vartheta=0.5, s_1=N^{1/4}, E=N^{1/6}$ and $\bar{G}=N^{1/7}$. Thus, while we allow these quantities to diverge, the rate at which they do so must be under control.


\begin{thm}
\label{thm Delta is small op 1}
Let Assumptions \ref{assu panel data}, \ref{assu subgaussian}, \ref{assu more on eigen values}, and \ref{assu rates sufficient} be satisfied. If, furthermore, $\{\varepsilon_{i,t}\}_{t=1}^T$ is an independent sequence for all $i=1,...,N$, then 
\begin{align}
\frac{\rho'S\left( \tilde{\gamma}-\gamma\right)}{\sqrt{\rho'\hat{\Theta}\hat{\Sigma}_{\Pi\varepsilon}\hat{\Theta}'\rho}}\xrightarrow{d}N(0,1),\label{asygauss}
\end{align}
where $\rho=(\rho_1',\rho_2')'$ is a $(p+N)\times 1$ vector, with $\|\rho\|=1$, $\rho_1\in \mathbb{R}^p$ and $\rho_2\in \mathbb{R}^N$. Moreover, 
\begin{equation}
\label{eqn uniform denominator}
\sup_{\gamma\in \mathcal{F}(s_1,\nu,E)}|\rho'\hat{\Theta}\hat{\Sigma}_{\Pi\varepsilon}\hat{\Theta}'\rho-\rho' \Theta \Sigma_{\Pi\varepsilon} \Theta'\rho|=o_p(1).
\end{equation}
Finally, for every fixed set $H\subseteq\cbr[0]{1,...,N+p}$ with cardinality $h$, we have
\begin{align}
[ S_H(\tilde{\gamma}_H-\gamma_H)]'\del[1]{ \hat{\Theta}\hat{\Sigma}_{\Pi\varepsilon}\hat{\Theta}'}^{-1}_H [ S_H(\tilde{\gamma}_H-\gamma_H)] \xrightarrow{d}\chi_h^2.\label{chi2}
\end{align}
\end{thm}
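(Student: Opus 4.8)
The plan is to work from the identity (\ref{eqn univariate inference equnation}),
\[
\rho'S(\tilde{\gamma}-\gamma)=\rho'\hat{\Theta}S^{-1}\Pi'\varepsilon-\rho'\Delta ,
\]
and to establish three things: (i) the remainder $\rho'\Delta$ is $o_p(1)$; (ii) the leading term $\rho'\hat{\Theta}S^{-1}\Pi'\varepsilon$ obeys a central limit theorem with asymptotic variance $\rho'\Theta\Sigma_{\Pi\varepsilon}\Theta'\rho$; and (iii) the plug-in estimator $\rho'\hat{\Theta}\hat{\Sigma}_{\Pi\varepsilon}\hat{\Theta}'\rho$ is consistent for that variance, uniformly over $\mathcal{F}(s_1,\nu,E)$. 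Since $\Theta=\mathrm{diag}(\Psi_Z^{-1},I_N)$ has eigenvalues bounded away from $0$ and $\infty$ by Assumption \ref{assu more on eigen values}(a) and $\mathrm{mineval}(\Sigma_{\Pi\varepsilon})$ is bounded away from zero by Assumption \ref{assu rates sufficient}(d), the limit $\rho'\Theta\Sigma_{\Pi\varepsilon}\Theta'\rho$ is itself bounded away from $0$ and $\infty$; hence it suffices to prove (i)--(iii) and combine them by Slutsky's theorem to obtain (\ref{asygauss}), with (\ref{eqn uniform denominator}) being exactly statement (iii).

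For (i), I would exploit the block structure
\[
\hat{\Theta}\Psi_N-I=\begin{pmatrix}\hat{\Theta}_Z\tfrac{1}{NT}Z'Z-I_p & \hat{\Theta}_Z\tfrac{1}{T\sqrt{N}}Z'D\\[2pt] \tfrac{1}{T\sqrt{N}}D'Z & 0\end{pmatrix},
\]
so that $\rho'\Delta$ decomposes into pieces acting on $\sqrt{NT}(\hat{\alpha}-\alpha)$ and on $\sqrt{T}(\hat{\eta}-\eta)$. For the $(1,1)$ piece I apply H\"older's inequality with the KKT bound (\ref{eqn extended KKT conditions}), which gives $\|(\tfrac{1}{NT}Z'Z\hat{\Theta}_Z'-I)\rho_1\|_\infty\le\max_{j\in H_1}(\lambda_{node}/\hat{\tau}_j^2)\,\|\rho_1\|_1=O_p(\lambda_{node}\sqrt{h_1})$ by (\ref{align lemma 2 2}) and $\|\rho_1\|_1\le\sqrt{h_1}$; multiplying by $\sqrt{NT}\|\hat{\alpha}-\alpha\|_1$ and inserting the oracle bound of Theorem \ref{thm probabilistic oracle inequality} yields a quantity controlled by Assumption \ref{assu rates sufficient}. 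The genuinely panel-specific difficulty lies in the off-diagonal pieces involving $Z'D$, which couple the $\alpha$- and $\eta$-blocks: here I would pair $\tfrac{1}{\sqrt{NT}}D'Z\hat{\Theta}_Z'\rho_1$ against $\|\hat{\eta}-\eta\|_1$, write its $i$th entry as a time-sum of $z_{i,t}'\hat{\Theta}_{Z,j}$, use the population identity $z_{i,t}'\Theta_{Z,j}=\zeta_{j,i,t}/\tau_j^2$ to reduce to the subgaussian nodewise residuals of Assumption \ref{assu more on eigen values}(c), absorb the replacement of $\hat{\Theta}$ by $\Theta$ through Lemma \ref{lemma step stone to final theorem}, and invoke a martingale concentration bound (legitimate since no independence across $t$ is assumed for the covariates). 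I expect this coupling term, together with the simultaneous presence of exact sparsity for $\alpha$ and weak sparsity for $\eta$ and the two different scalings $\sqrt{NT}$ versus $\sqrt{T}$, to be the main obstacle; the elaborate rate conditions of Assumption \ref{assu rates sufficient} are precisely calibrated so that every piece is $o_p(1)$.

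For (ii), I first replace $\hat{\Theta}$ by $\Theta$, the error $\rho'(\hat{\Theta}-\Theta)S^{-1}\Pi'\varepsilon$ being controlled by (\ref{align lemma 2 4}) together with a bound on $\|S^{-1}\Pi'\varepsilon\|_\infty$ obtained from the subgaussianity in Assumption \ref{assu subgaussian}. The remaining term $\rho'\Theta S^{-1}\Pi'\varepsilon=\sum_{i=1}^N\sum_{t=1}^T w_{i,t}\varepsilon_{i,t}$ is, under the additional assumption that $\{\varepsilon_{i,t}\}_{t=1}^T$ is independent for each $i$, a triangular array that is independent across $i$ and a martingale difference across $t$ by Assumption \ref{assu panel data}; I would apply a Lindeberg-type CLT, verifying the negligibility condition via the uniform subgaussianity of $\varepsilon_{i,t}$ and $z_{i,t}$, and identify the variance as $\rho'\Theta\Sigma_{\Pi\varepsilon}\Theta'\rho$ using the martingale-difference property to annihilate the cross-time covariances. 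For (iii) I split $\rho'\hat{\Theta}\hat{\Sigma}_{\Pi\varepsilon}\hat{\Theta}'\rho-\rho'\Theta\Sigma_{\Pi\varepsilon}\Theta'\rho$ into a contribution from $\hat{\Theta}-\Theta$ (handled by Lemma \ref{lemma step stone to final theorem}) and one from $\hat{\Sigma}_{\Pi\varepsilon}-\Sigma_{\Pi\varepsilon}$; the latter requires controlling $\hat{\varepsilon}_{i,t}^2-\varepsilon_{i,t}^2=\bigl(z_{i,t}'(\alpha-\hat{\alpha})+(\eta_i-\hat{\eta}_i)\bigr)(\hat{\varepsilon}_{i,t}+\varepsilon_{i,t})$ uniformly over $\mathcal{F}(s_1,\nu,E)$, for which the oracle inequalities of Theorem \ref{thm probabilistic oracle inequality} and the subgaussian moment bounds are the key inputs, the uniformity being inherited from the uniformity of those oracle bounds.

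Finally, for the chi-square statement (\ref{chi2}) with $H$ of fixed cardinality $h$, I would establish joint asymptotic normality of $S_H(\tilde{\gamma}_H-\gamma_H)$ by the Cram\'er--Wold device: for every fixed unit vector $\rho$ supported on $H$, the univariate argument (i)--(iii) gives $\rho'S_H(\tilde{\gamma}_H-\gamma_H)/\sqrt{\rho'\Theta\Sigma_{\Pi\varepsilon}\Theta'\rho}\xrightarrow{d}N(0,1)$, which is admissible precisely because $h$ is held fixed. Combining this with (\ref{eqn uniform denominator}), which forces the studentizing matrix $(\hat{\Theta}\hat{\Sigma}_{\Pi\varepsilon}\hat{\Theta}')_H$ to converge to a matrix with eigenvalues bounded away from $0$ and $\infty$, yields $(\hat{\Theta}\hat{\Sigma}_{\Pi\varepsilon}\hat{\Theta}')_H^{-1/2}S_H(\tilde{\gamma}_H-\gamma_H)\xrightarrow{d}N(0,I_h)$, and the quadratic form converges to $\chi_h^2$ by the continuous mapping theorem.
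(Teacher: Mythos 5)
Your proposal follows essentially the same route as the paper: the decomposition $\rho'S(\tilde\gamma-\gamma)=\rho'\hat\Theta S^{-1}\Pi'\varepsilon-\rho'\Delta$, negligibility of $\rho'\Delta$ via the nodewise KKT bound (\ref{eqn extended KKT conditions}) combined with the oracle inequalities, replacement of $\hat\Theta$ by $\Theta$ and of the plug-in variance by $\rho'\Theta\Sigma_{\Pi\varepsilon}\Theta'\rho$ using Lemma \ref{lemma step stone to final theorem} and the residual expansion $\hat\varepsilon_{i,t}-\varepsilon_{i,t}=-\pi_{i,t}'(\hat\gamma-\gamma)$, a martingale-difference CLT for the leading term (the paper uses McLeish's theorem, where the substantive verification is that the sum of squared increments converges to one), and Cram\'er--Wold plus continuous mapping for (\ref{chi2}). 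The only minor deviations are cosmetic (a different H\"older pairing in the $\Delta$ bound and handling the $D'Z\hat\Theta_{Z,j}$ coupling via the identity $z_{i,t}'\Theta_{Z,j}=\zeta_{j,i,t}/\tau_j^2$ rather than via $\|\hat\Theta_{Z,j}\|_1\|{\textstyle\frac{1}{T\sqrt N}}D'Z\|_\infty$), and neither changes the substance of the argument.
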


Theorem \ref{thm Delta is small op 1} provides sufficient conditions under which our procedure allows for asymptotically gaussian inference. We stress again that  hypotheses involving an increasing number of parameters can be tested and that the total number of parameters in the model may be much larger than the sample size. Furthermore, the error terms are allowed to be conditionally heteroskedastic and we provide a consistent estimator of the asymptotic covariance matrix even for the case of hypotheses involving an increasing number of parameters. Indeed, this estimator converges uniformly over $\mathcal{F}(s_1,\nu,E)$ even for high-dimensional covariance matrices which we use in Theorem \ref{thm uniform convergence} to establish the honesty (uniform validity) over this set of confidence intervals based on (\ref{asygauss}). \cite{vandegeerbuhlmannritovdezeure2014} have derived similar results in the setting of the homoskedastic linear cross sectional model for the case of inference on a low-dimensional parameter. Thus, our results can be seen as an extension to dynamic panel data models. We stress again that we relax their assumption of the the inverse covariance matrix $\Theta_Z$ being exactly sparse which is important in dynamic models like ours. Furthermore, relaxing the homoskedasticity assumption is important as volatility is known to vary over time in dynamic models, see e.g. \cite{engle1982autoregressive}, and the conditional volatility often depends on the state of the process. Theorem \ref{thm Delta is small op 1} is also related to \cite{belloni2014inference} who consider inference in static panel data models for a low-dimensional parameter of interest.  

The classical setup where one is only interested in inference on $\alpha$ corresponds to $\rho_2=0$ such that $\sqrt{NT}\rho_1'\left( \tilde{\alpha}-\alpha\right)$ is asymptotically gaussian with variance equal to the limit of $\rho_1' \Theta_Z \Sigma_{1,N} \Theta_Z'\rho_1$ (assumed to exist for illustration). If, furthermore, $\varepsilon_{i,t}$ is homoskedastic with variance $\sigma^2$ and independent of $z_{i,t}$ for all $i=1,...,N$ and $t=1,...,T$, it follows from the definition of $\Sigma_{1,N}$ that this variance equals the limit of $\sigma^2\rho_1'\Theta_Z\rho_1=\sigma^2\rho_1'\Psi_Z^{-1}\rho_1$. The leading special case where one is interested in testing a hypothesis on the $j$'th entry of $\alpha$ corresponds to $\rho_1=e_j$. Similar reasoning shows that in the case where one is testing hypotheses involving fixed effects only, corresponding to $\rho_1=0$, one has that $\rho_2'\sqrt{T}\left( \tilde{\eta}-\eta\right)$ is asymptotically gaussian with variance $\sigma^2$. This simple form of the variance follows from the asymptotic independence of the components of $\tilde{\eta}$. Note that the different rates of convergence for $\tilde{\alpha}$ and $\tilde{\eta}$ are in accordance with Theorem \ref{thm probabilistic oracle inequality}.

(\ref{chi2}) is a straightforward consequence of (\ref{asygauss}) and reveals that classical $\chi^2$ inference can be carried out in the usual manner. Thus, asymptotically valid $\chi^2$-inference can be performed in order to test a hypothesis on $h$ parameters simultaneously. Wald tests of general restrictions of the type $H_0: g(\gamma)=0$ (where $g:\mathbb{R}^{p+N} \to\mathbb{R}^h$ is differentiable in an open neighborhood around $\gamma$ and has derivative matrix of rank $h$) can now also be constructed in the usual manner, see e.g. \cite{davidson00} Chapter 12, even when $p+N>NT$ which has hitherto been impossible.

Finally, the independence assumption on $\varepsilon_{i,t}$ across $t$ is needed only if one tests hypotheses involving $\{\eta_i\}_{i=1}^N$ ($h_2\neq 0$). Weaker assumptions on the error terms, such as strong mixing, are possible at the expense of more involved expressions but will not be pursued here.

\section{Honest Confidence Intervals}\label{unif}

In this section we show that the confidence bands based on (\ref{asygauss}) are honest (uniformly valid) and contract at the optimal rate. The precise result is contained in the following theorem. 

\begin{thm}
\label{thm uniform convergence}
Let Assumptions \ref{assu panel data}, \ref{assu subgaussian}, \ref{assu more on eigen values}, and \ref{assu rates sufficient} be satisfied. Then, for all $\rho\in \mathbb{R}^{p+N}$ with $\|\rho\|=1$,
\begin{equation}
\label{eqn major uniform convergence}
\sup_{t\in \mathbb{R}}\sup_{\gamma\in \mathcal{F}(s_1,\nu,E)}\envert[4]{\mathbb{P}\del[4]{ \frac{\rho'S\left( \tilde{\gamma}-\gamma\right)}{\sqrt{\rho'\hat{\Theta}\hat{\Sigma}_{\Pi\varepsilon}\hat{\Theta}'\rho}}\leq t} - \Phi(t) }=o(1), 
\end{equation}
where $\Phi(\cdot)$ is the CDF of the standard normal distribution. Furthermore, define $\tilde{\sigma}_{\alpha,j}:=\sqrt{[ \hat{\Theta}_{Z}\hat{\Sigma}_{1,N}\hat{\Theta}_{Z}] _{jj}}$ and $\tilde{\sigma}_{\eta,i}:=\sqrt{[ \hat{\Sigma}_{3,N}]_{ii}}$ for $j=1,...,p$ and $i=1,...,N$, respectively. Then,
\begin{align}
&\liminf_{N\rightarrow\infty}\inf_{\gamma\in \mathcal{F}(s_1,\nu,E)}\mathbb{P}\del[2]{\alpha_j \in \sbr[2]{\tilde{\alpha}_j-z_{1-\delta/2}\frac{\tilde{\sigma}_{\alpha,j}}{\sqrt{NT}}, \tilde{\alpha}_j+z_{1-\delta/2}\frac{\tilde{\sigma}_{\alpha,j}}{\sqrt{NT}} } }\geq 1-\delta,\label{align honest confidence interval alpha}\\
&\liminf_{N\rightarrow\infty}\inf_{\gamma\in \mathcal{F}(s_1,\nu,E)}\mathbb{P}\del[2]{\eta_i \in \sbr[2]{\tilde{\eta}_i-z_{1-\delta/2}\frac{\tilde{\sigma}_{\eta,i}}{\sqrt{T}}, \tilde{\eta}_i+z_{1-\delta/2}\frac{\tilde{\sigma}_{\eta,i}}{\sqrt{T}} } }\geq 1-\delta,\label{align honest confidence interval eta}
\end{align}
for $j=1,...,p$ and $i=1,...,N$, respectively, where $z_{1-\delta/2}$ is the $1-\delta/2$ percentile of the standard normal distribution. Finally, letting $\text{diam}([a,b] )=b-a$ be the length (which coincides with the Lebesgue measure of $[a,b]$) of an interval $[a,b]$ in the real line, we have
\begin{align}
&\sup_{\gamma \in\mathcal{F}(s_1,\nu,E)}\text{diam}\del[2]{ \sbr[2]{\tilde{\alpha}_j-z_{1-\delta/2}\frac{\tilde{\sigma}_{\alpha,j}}{\sqrt{NT}}, \tilde{\alpha}_j+z_{1-\delta/2}\frac{\tilde{\sigma}_{\alpha,j}}{\sqrt{NT}} } }=O_p\del[2]{ \frac{1}{\sqrt{NT}}} ,\label{align confidence interval contract at right rate alpha}\\
&\sup_{\gamma \in\mathcal{F}(s_1,\nu,E)}\text{diam}\del[2]{ \sbr[2]{\tilde{\eta}_i-z_{1-\delta/2}\frac{\tilde{\sigma}_{\eta,i}}{\sqrt{T}}, \tilde{\eta}_i+z_{1-\delta/2}\frac{\tilde{\sigma}_{\eta,i}}{\sqrt{T}} } }=O_p\del[2]{ \frac{1}{\sqrt{T}}},\label{align confidence interval contract at right rate eta}
\end{align}
for $j=1,...,p$ and $i=1,...,N$, respectively.
\end{thm}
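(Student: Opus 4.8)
The plan is to deduce the honest statement (\ref{eqn major uniform convergence}) by upgrading the pointwise central limit theorem (\ref{asygauss}) of Theorem \ref{thm Delta is small op 1} to convergence that is uniform over $\mathcal{F}(s_1,\nu,E)$, and then to read off the coverage statements (\ref{align honest confidence interval alpha})--(\ref{align honest confidence interval eta}) and the diameter statements (\ref{align confidence interval contract at right rate alpha})--(\ref{align confidence interval contract at right rate eta}) by specialising $\rho$ to basis vectors. The device for the first step is the standard characterisation of honest convergence to a continuous limit law: since $\Phi$ is continuous and strictly increasing, $\sup_{t}\sup_{\gamma\in\mathcal{F}(s_1,\nu,E)}|\mathbb{P}_\gamma(\,\cdot\leq t)-\Phi(t)|\to0$ holds if and only if the studentised statistic converges in distribution to $N(0,1)$ along \emph{every} sequence $\gamma_N\in\mathcal{F}(s_1,\nu,E)$. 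P\'olya's theorem supplies the supremum over $t$, so the entire burden is to verify the distributional convergence along arbitrary parameter sequences.

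To carry this out I would revisit the decomposition (\ref{eqn univariate inference equnation}), $\rho'S(\tilde\gamma-\gamma)=\rho'\hat\Theta S^{-1}\Pi'\varepsilon-\rho'\Delta$, divided by the estimated standard error. The feasible studentised statistic differs from the infeasible, population-standardised main term $\rho'\Theta S^{-1}\Pi'\varepsilon/\sqrt{\rho'\Theta\Sigma_{\Pi\varepsilon}\Theta'\rho}$ through three pieces: the replacement of $\hat\Theta$ by $\Theta$ in the numerator, controlled uniformly in $\gamma$ by the rates of Lemma \ref{lemma step stone to final theorem}; the replacement of $\hat\Theta\hat\Sigma_{\Pi\varepsilon}\hat\Theta'$ by $\Theta\Sigma_{\Pi\varepsilon}\Theta'$ in the denominator, controlled by the uniform consistency (\ref{eqn uniform denominator}); and the remainder $\rho'\Delta$, which is $o_p(1)$ uniformly because the oracle inequalities of Theorem \ref{thm probabilistic oracle inequality} hold uniformly over $\mathcal{F}(s_1,\nu,E)$ and feed, via the extended KKT bound (\ref{eqn extended KKT conditions}), into a bound on $\Delta$. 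Each of these terms depends on $\gamma$ only through $(s_1,\nu,E)$ and through moment and eigenvalue quantities held uniform by Assumptions \ref{assu subgaussian}, \ref{assu more on eigen values} and \ref{assu rates sufficient}, and is therefore $o_p(1)$ along any sequence in $\mathcal{F}(s_1,\nu,E)$. It then remains to establish a martingale central limit theorem for the main term valid along every such sequence, which is exactly the content of the proof of Theorem \ref{thm Delta is small op 1}; Slutsky's lemma combines these into the sequencewise convergence and hence (\ref{eqn major uniform convergence}).

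For the coverage and diameter claims I would specialise (\ref{eqn major uniform convergence}). Taking $\rho=e_j$ with $j\leq p$ gives $\rho'S(\tilde\gamma-\gamma)=\sqrt{NT}(\tilde\alpha_j-\alpha_j)$, while the block structure of $\hat\Theta$ and $\hat\Sigma_{\Pi\varepsilon}$ collapses the quadratic form to $\rho'\hat\Theta\hat\Sigma_{\Pi\varepsilon}\hat\Theta'\rho=[\hat\Theta_Z\hat\Sigma_{1,N}\hat\Theta_Z]_{jj}=\tilde\sigma_{\alpha,j}^2$; taking $\rho=e_{p+i}$ gives $\rho'S(\tilde\gamma-\gamma)=\sqrt{T}(\tilde\eta_i-\eta_i)$ and $\rho'\hat\Theta\hat\Sigma_{\Pi\varepsilon}\hat\Theta'\rho=[\hat\Sigma_{3,N}]_{ii}=\tilde\sigma_{\eta,i}^2$, using that the lower-right block of $\hat\Theta$ is $I_N$. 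Hence the statistics in (\ref{eqn major uniform convergence}) are precisely $\sqrt{NT}(\tilde\alpha_j-\alpha_j)/\tilde\sigma_{\alpha,j}$ and $\sqrt{T}(\tilde\eta_i-\eta_i)/\tilde\sigma_{\eta,i}$. Since $\{\alpha_j\in[\,\cdots]\}=\{|\sqrt{NT}(\tilde\alpha_j-\alpha_j)/\tilde\sigma_{\alpha,j}|\leq z_{1-\delta/2}\}$, uniform convergence of the CDF to the symmetric $\Phi$ forces its probability to $\Phi(z_{1-\delta/2})-\Phi(-z_{1-\delta/2})=1-\delta$ uniformly in $\gamma$, giving (\ref{align honest confidence interval alpha}), and the identical argument gives (\ref{align honest confidence interval eta}). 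For the diameters, the lengths equal $2z_{1-\delta/2}\tilde\sigma_{\alpha,j}/\sqrt{NT}$ and $2z_{1-\delta/2}\tilde\sigma_{\eta,i}/\sqrt{T}$, so it suffices that $\tilde\sigma_{\alpha,j},\tilde\sigma_{\eta,i}=O_p(1)$ uniformly over $\mathcal{F}(s_1,\nu,E)$. This follows from (\ref{eqn uniform denominator}) and the uniform boundedness of the population variances: $[\Theta_Z\Sigma_{1,N}\Theta_Z]_{jj}\leq\text{maxeval}(\Sigma_{1,N})/\text{mineval}(\Psi_Z)^2$ and $[\Sigma_{3,N}]_{ii}=T^{-1}\sum_{t}\mathbb{E}[\varepsilon_{i,t}^2]$ are bounded by Assumptions \ref{assu more on eigen values}(a), \ref{assu rates sufficient}(d) and the subgaussianity of Assumption \ref{assu subgaussian}; the same quantities are bounded \emph{away from zero}, which keeps the standard errors positive so the intervals are well defined.

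I expect the main obstacle to be the uniformity of the central limit theorem for the main term along arbitrary sequences $\gamma_N$. Because $y_{i,t}$, and hence the regressor matrix $\Pi$, is generated by the model, the law of $\rho'\Theta S^{-1}\Pi'\varepsilon$ itself moves with $\gamma$, so the Lindeberg ratios underlying the martingale central limit theorem must be shown to vanish with bounds that do not depend on the particular sequence. This work is carried by the uniform subgaussianity of $\varepsilon_{i,t}$ and $z_{i,t}$ (Assumption \ref{assu subgaussian}) together with the uniform eigenvalue control (Assumption \ref{assu more on eigen values}(a)), which bound the summands and the normalising variance uniformly; checking that these combine into a Lindeberg condition holding simultaneously for all $\gamma\in\mathcal{F}(s_1,\nu,E)$ is the delicate step. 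The coverage and contraction conclusions are then comparatively routine consequences of the honest CDF convergence and the block algebra above.
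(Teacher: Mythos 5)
Your proposal is correct and follows essentially the same route as the paper: uniform (over $\mathcal{F}(s_1,\nu,E)$) negligibility of $\rho'\Delta$, uniform consistency of the variance estimator via (\ref{eqn uniform denominator}), asymptotic equivalence of the numerators, a martingale CLT for the infeasible statistic, and then specialisation to $\rho=e_j$ and $\rho=e_{p+i}$ for the coverage and diameter claims, the latter bounded through $\text{maxeval}(\Sigma_{1,N})/\text{mineval}(\Psi_Z)^2$ exactly as in the paper. The only cosmetic difference is the uniformisation device — you use the arbitrary-sequence characterisation plus P\'olya's theorem, whereas the paper runs an explicit $\epsilon$-sandwich of the CDF on high-probability events $A_{1,N},A_{2,N},A_{3,N}$ combined with continuity of $\Phi$ — and these are equivalent; you also correctly flag that the residual delicate point is that the limit law of the leading term must be attained with bounds not depending on $\gamma$, which both arguments settle by the uniformity of the subgaussianity and eigenvalue assumptions.
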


(\ref{eqn major uniform convergence}) reveals that the convergence to the normal distribution in Theorem \ref{thm Delta is small op 1} is uniform over $\mathcal{F}(s_1,\nu,E)$. Since the desparsified Lasso is not a sparse estimator this uniform convergence does not contradict the work of \cite{leebpotscher2005}. Next, (\ref{align honest confidence interval alpha}) is a direct consequence of (\ref{eqn major uniform convergence}) and reveals that the desparsified Lasso produces confidence bands which are \textit{honest} (uniform) over $\mathcal{F}(s_1,\nu,E)$. Honest confidence bands are important in practical applications of dynamic panel data models as they guarantee the existence of an $N_0$, not depending on $\gamma\in\mathcal{F}(s_1,\nu,E)$, such that $\sbr[1]{\tilde{\alpha}_j-z_{1-\delta/2}\frac{\tilde{\sigma}_{\alpha,j}}{\sqrt{NT}}, \tilde{\alpha}_j+z_{1-\delta/2}\frac{\tilde{\sigma}_{\alpha,j}}{\sqrt{NT}}}$ covers $\alpha_j$ with probability not much smaller than $1-\delta$. Here the important point is that one and the same $N_0$ guarantees this coverage, irrespective of the true value of $\gamma\in\mathcal{F}(s_1,\nu,E)$. On the other hand, pointwise consistent confidence bands only guarantee that 
\begin{align*}
\inf_{\gamma\in\mathcal{F}(s_1,\nu,E)}\liminf_{N\rightarrow\infty}\mathbb{P}\del[2]{\alpha_j \in \sbr[2]{\tilde{\alpha}_j-z_{1-\delta/2}\frac{\tilde{\sigma}_{\alpha,j}}{\sqrt{NT}}, \tilde{\alpha}_j+z_{1-\delta/2}\frac{\tilde{\sigma}_{\alpha,j}}{\sqrt{NT}} } }\geq 1-\delta,
\end{align*}
implying that the value of $N$ needed in order to guarantee a coverage of close to $1-\delta$ may depend on the \textit{unknown} true parameter. Thus, for some parameter values one may have to sample more data points to achieve the desired coverage than for others which is unfortunate as one does not know for which parameters this is the case. An honest confidence set $S_N$ for $\alpha_j$ can of course trivially be obtained by setting $S_N=\mathbb{R}$. However, this is clearly not very informative and therefore (\ref{align confidence interval contract at right rate alpha}) is reassuring as it guarantees that the length of the honest confidence interval contracts at the optimal rate. In particular, the confidence bands are uniformly narrow over $\mathcal{F}(s_1,\nu,E)$ in the sense that for any $\epsilon>0$ there exists an $M>0$ such that $\text{diam}\del[2]{ \sbr[2]{\tilde{\alpha}_j-z_{1-\delta/2}\frac{\tilde{\sigma}_{\alpha,j}}{\sqrt{NT}}, \tilde{\alpha}_j+z_{1-\delta/2}\frac{\tilde{\sigma}_{\alpha,j}}{\sqrt{NT}} } }\leq \frac{M}{\sqrt{NT}}$ for all $\gamma\in \mathcal{F}(s_1,\nu,E)$ with probability at least $1-\epsilon$. Therefore, our confidence bands are not only honest, they are also very \textit{informative} as they contract as fast as possible and this contraction is uniform over $\mathcal{F}(s_1,\nu,E)$. Since the desparsified  Lasso is not a sparse estimator, this fast contraction does not contradict inequality 6 in Theorem 2 of \cite{potscher2009confidence} who shows that honest confidence bands based on sparse estimators must be large. 

Similarly to the confidence bands pertaining to $\alpha$, the ones for the fixed effects are also honest and contract at the optimal rate. Note that this rate is again slower than the one for $\alpha$. It is also worth remarking that the above inference results are valid without any sort of lower bound on the non-zero coefficients as inference is not conducted after model selection.

\section{Monte Carlo}\label{monte}

In this section we investigate the finite sample properties of our estimator by means of simulations. All calculations are carried out in R using the \texttt{glmnet} package and $\lambda_N$ and $\lambda_{node}$ are chosen via BIC by the formula given in (9.4.9) in \cite{davidson00}. Cross validation was also considered, but this did not alter the results while being considerably slower. We leave it for future work to establish theoretical performance guarantees on these procedures in the setting of high-dimensional dynamic panel data models. 

The data generating process is (\ref{eqn dgp}) and in all experiments $(\alpha_1,\alpha_2,\alpha_3,\alpha_4)=(0.9, 0, 0, -0.3)$ such that the roots of the corresponding lag polynomial lie outside the unit disk. In practice, one might not know the true lag length and usually specifies a reasonably large number of lags (to test downwards). To reflect this in our simulations, we always included 5 lags but also experimented with more than 5 lags. The results were not sensitive to this.

For each $i=1,...,N$, the $x_{i,t}$ are generated according to the autoregressive structure
\[x_{i,t}=a_x x_{i,t-1}+e_{distur,i,t},\]
where the $e_{distur,i,t}$ are $p_x\times 1$ random disturbance vectors independent across $i$ and $t$. $a_x$ is an autoregressive scalar which controls the temporal dependence of $x_{i,t}$. For simplicity, we restrict $a_x$ to be the same across $i$. When $a_x=0$, we have temporal independence across $t$ for $x_{i,t}$. Since Assumption \ref{assu panel data} does not restrict any temporal dependence of $x_{i,t}$, we set $a_x=0.5$. Our simulation results are reasonably robust to the choice of $a_x$. The covariance matrix of $e_{distur,i,t}$ is chosen to have a Toeplitz structure with the $(i, j)$th entry equal to $\rho^{|i-j|}$ with $\rho=0.75$. We also experimented with other choices of $\rho$ which did not change the results dramatically. Furthermore, we also tried to let the covariance matrix of $e_{distur,i,t}$ be block-diagonal. Again, this did not alter our results.

We allow the fixed effect $\eta_i$ to depend on the initial observation of $x_i$:
\[\eta_i=x_{i,1}'b_{\eta}/\sqrt{\log p_x}\qquad i=1,\ldots,N,\]
where $b_{\eta}$ is a $p_x\times 1$ vector whose entries are drawn from standard normal and normalized to have unit $\ell_1$-norm. Note that $|\eta_i|\leq \|x_{i,1}/\sqrt{\log p_x}\|_{\infty}\|b_\eta\|_1= \|x_{i,1}/\sqrt{\log p_x}\|_{\infty}$. If $x_{i,1}$ is multivariate normal, then $\|x_{i,1}\|_{\infty}=O_p(\sqrt{\log p_x})$. In this sense, $\eta_i$ is bounded. However, $\eta$ is not necessarily weakly sparse and thus we also investigate how robust our results are to violations of this assumption. Of course our estimator performed much better in the truly weakly sparse setting than the setting we present here (results available upon request).
 
As our theory allows for heteroskedasticity, we also investigate the effect of this. To
be precise, we consider error terms of the form $\varepsilon_{i,t}=u_{i,t}\del[1]{x_{i,t,1}/\sqrt{2}+b_x x_{i,t,2}}$ where $u_{i,t}$ is independent of $y_{i,t-1},...,y_{i,1-L}$ and $x_{i,t},...,x_{i,1}$. $b_x$ is chosen such that the unconditional variance of $\varepsilon_{i,t}$ is the same as the one of $u_{i,t}$ which in turns equals the one from the homoskedastic case. A simple calculation reveals that $b_x=\del[1]{-\sqrt{2}\rho+\sqrt{2\rho^2+2-4a_x^2}}/2$. Note that $\varepsilon_{i,t}$ constructed this way satisfies Assumption \ref{assu panel data}. The reason we ensure that the unconditional variance is the same as in the homoskedastic case is that we do not want any findings in the heteroskedastic case to be driven by a plain change in the unconditional variance.

Our estimator is compared to the least squares oracle which only includes variables with non-zero coefficients on top of those variables we wish to test hypothesis about. Thus, it is an oracle which knows the relevant control variables. When sample size allows it, that is when $p+N\leq NT$, we also implement naive least squares including all variables. This estimator is numerically equivalent to the often used within estimator. Finally, we implemented the desparsified conservative Lasso of \cite{caner2014asymptotically}. However, this only improved the results slightly and so we do not report these results here. The number of Monte Carlo replications is 1,000 for all setups and we consider the performance of our estimator along the following dimensions:

\begin{enumerate}
\item Estimation error: We compute the root mean square errors (RMSE) of all procedures averaged over the Monte Carlo replications.
\item Coverage rate: We calculate the coverage rate of a gaussian confidence interval constructed as in Theorem \ref{thm uniform convergence}. This is done for three coefficients of regressors in $x_{i,t}$.
\item Length of confidence interval: We calculate the length of the three confidence intervals considered in point 2 above.
\item Size: We evaluate the size of the $\chi^2$-test in Theorem \ref{thm Delta is small op 1} for a hypothesis involving the same three parameters we construct confidence intervals for in point 2 above.
\item Power: We evaluate the power of the $\chi^2$-test in point 4 above. 
\end{enumerate}
All tests are carried out at the $5\%$ level of significance and all confidence intervals have a nominal coverage of $95\%$. Furthermore, as our results regarding estimation error are for the plain Lasso, the root mean square errors are reported for this instead of the desparsified Lasso. As our models are dynamic, we allow for a burn-in period of 1,000 observations when generating the data.

The following experiments were carried out

\begin{itemize}
\item Experiment 1: (moderate-dimensional setting): $N=20$ and $T=10$. $\beta$ is $100\times 1$ with five equidistant non-zero entries equaling one. Thus, $p=105$ and $s_1=7$. In total, $\gamma=(\alpha', \eta')'$ is $125\times 1$. The disturbances of $x_{i,t}$, $e_{distur,i,t}$, are gaussian and $\varepsilon_{i,t}$ are standard gaussian. We test the true hypothesis 
\begin{align*}
H_{0}:(\gamma_7, \gamma_{27}, \gamma_{47})=(0,0,0)
\end{align*}
by the $\chi^2_3$ test described in Theorem \ref{thm Delta is small op 1} in order to gauge the size of the test. The power is investigated by the hypothesis
\begin{align*}
H_{0}:(\gamma_7, \gamma_{27}, \gamma_{47})=(0.4,0,0).
\end{align*}
The following variations of this setting are considered
\begin{enumerate}[(a)]
\item The baseline case described so far.
\item Same as (a) but with heteroskedastic errors.
\item Same as (b) but $e_{distur,i,t}$ and $\varepsilon_{i,t}$ are $t$-distributed with 3 degrees of freedom. In this case, even $\eta_i$ may not be $O_p(1)$.
\end{enumerate}
\item Experiment 2: (high-dimensional setting). $N=20$ and $T=10$. $\beta$ is $400\times 1$ with five equidistant non-zero entries equaling one. Thus, $p=405$ and $s_1=7$. In total, $\gamma=(\alpha', \eta')'$ is $425\times 1$. The disturbances of $x_{i,t}$, $e_{distur,i,t}$, are gaussian and $\varepsilon_{i,t}$ are standard gaussian. We test the true hypothesis 
\begin{align*}
H_{0}:(\gamma_7, \gamma_{87}, \gamma_{167})=(0,0,0)
\end{align*}
by the $\chi^2_3$ test described in Theorem \ref{thm Delta is small op 1} in order to gauge the size of the test. The power is investigated by the hypothesis
\begin{align*}
H_{0}:(\gamma_7, \gamma_{87}, \gamma_{167})=(0.4,0,0).
\end{align*}
The following variations of this setting are considered
\begin{enumerate}[(a)]
\item The baseline case described so far.
\item Same as (a) but with heteroskedastic errors.
\item Same as (b) but $e_{distur,i,t}$ and $\varepsilon_{i,t}$ are $t$-distributed with 3 degrees of freedom. In this case, even $\eta_i$ may not be $O_p(1)$.
\end{enumerate}
\item Experiment 3: (increase $T$): As Experiment 2 but with $T=40$.
\item Experiment 4: (increase $N$): As Experiment 2 but with $N=40$. 

\item Experiment 5: (high-dimensional setting 2). $N=20$ and $T=40$. $\beta$ is $1005\times 1$ with 15 equidistant non-zero entries equaling one. Thus, $p=1010$ and $s_1=17$. In total, $\gamma=(\alpha', \eta')'$ is $1030\times 1$. The disturbances of $x_{i,t}$, $e_{distur,i,t}$, are gaussian and $\varepsilon_{i,t}$ are standard gaussian. We test the true hypothesis 
\begin{align*}
H_{0}:(\gamma_7, \gamma_{74}, \gamma_{141})=(0,0,0)
\end{align*}
by the $\chi^2_3$ test described in Theorem \ref{thm Delta is small op 1} in order to gauge the size of the test. The power is investigated by the hypothesis
\begin{align*}
H_{0}:(\gamma_7, \gamma_{74}, \gamma_{141})=(0.4,0,0).
\end{align*}
The following variations of this setting are considered
\begin{enumerate}[(a)]
\item The baseline case described so far.
\item Same as (a) but with heteroskedastic errors.
\item Same as (b) but $e_{distur,i,t}$ and $\varepsilon_{i,t}$ are $t$-distributed with 3 degrees of freedom. In this case, even $\eta_i$ may not be $O_p(1)$.
\end{enumerate}
\end{itemize}

\begin{table}[h] 
\centering 
\begin{tabular}{cccccccccccc}
\toprule
& &\multicolumn{2}{c}{ RMSE} & \multicolumn{3}{c}{Coverage}& \multicolumn{3}{c}{Length}&&\\
\cmidrule(lr){3-4} 
\cmidrule(lr){5-7}
\cmidrule(lr){8-10}
& & $\alpha$& $\eta$ & $\gamma_7$ & $\gamma_{27}$ & $\gamma_{47}$ &  $\gamma_7$ & $\gamma_{27}$ & $\gamma_{47}$ & Size & Power \\ 
\midrule
\multirow{3}{*}{\small1(a)}
& LS & 23.744 & 16.382 & 0.762 & 0.786 & 0.766 & 0.552 & 0.549 & 0.553 & 0.412 & 0.741 \\ 
&   DL & 3.061 & 8.528 & 0.892 & 0.918 & 0.884 & 0.395 & 0.396 & 0.396 & 0.150 & 0.852 \\ 
&   Ora & 0.523 & 7.226 & 0.933 & 0.939 & 0.919 & 0.402 & 0.403 & 0.404 & 0.074 & 0.907 \\ 
\midrule
\multirow{3}{*}{\small1(b)}
& LS & 23.796 & 16.444 & 0.732 & 0.760 & 0.747 & 0.548 & 0.543 & 0.547 & 0.453 & 0.747 \\ 
&  DL & 3.011 & 8.298 & 0.920 & 0.914 & 0.903 & 0.408 & 0.389 & 0.391 & 0.135 & 0.846 \\ 
&  Ora & 0.524 & 7.079 & 0.920 & 0.931 & 0.937 & 0.401 & 0.393 & 0.398 & 0.092 & 0.904 \\ 
\midrule
\multirow{3}{*}{\small1(c)}
& LS & 46.140 & 51.979 & 0.753 & 0.772 & 0.743 & 0.910 & 0.883 & 0.889 & 0.432 & 0.605 \\ 
&  DL & 4.747 & 23.939 & 0.912 & 0.901 & 0.883 & 0.619 & 0.544 & 0.567 & 0.159 & 0.632 \\ 
&  Ora & 1.200 & 23.596 & 0.907 & 0.937 & 0.913 & 0.662 & 0.617 & 0.617 & 0.091 & 0.651 \\ 
\bottomrule
\end{tabular}
\caption{\small Experiment 1. LS, DL and Ora: least squares including all variables, desparsified Lasso and least squares oracle. RMSE: root mean square error. Coverage: the coverage rate of the asymptotic 95\% confidence intervals. Length: the average length of the asymptotic 95\% confidence intervals. Size: size of the correct hypothesis $H_0: (\gamma_{7}, \gamma_{27},\gamma_{47})=(0,0,0)$. Power: the probability to reject the false $H_0: (\gamma_{7}, \gamma_{27},\gamma_{47})=(0.4,0,0)$.}
\label{Exp1}
\end{table}

Table \ref{Exp1} contains the results of experiment 1. 
Setting 1(a) reveals that the RMSE of the Lasso are lower than those for least squares including all variables but higher than those of least squares only including the relevant variables. This is the case for $\alpha$ as well as the fixed effects. Next, it is very encouraging that the coverage probabilities for the desparsified Lasso are close to the ones based on the oracle. The length of the confidence intervals are also comparable for those two procedures while the ones based on the within estimator are considerably wider while still having a lower coverage. The oracle and the desparsfied Lasso both produce tests which are a bit oversized but they are still much better than the within estimator. The same is true when it comes to power.

Experiment 1(b) adds heteroskedasticity to the error terms and none of the procedures is affected by this. 

In Panel 1(c) the random variable have heavy tails. Overall, and as expected, all procedures suffer from this. However, it is worth mentioning that the coverage rate of the confidence intervals does not decrease. Instead, the length of these intervals increases to reflect the larger uncertainty. The size of the significance test is not affected either while the power suffers. 
 
\begin{table}[h] 
\centering 
\begin{tabular}{cccccccccccc}
\toprule
& &\multicolumn{2}{c}{ RMSE} & \multicolumn{3}{c}{Coverage}& \multicolumn{3}{c}{Length}&&\\
\cmidrule(lr){3-4} 
\cmidrule(lr){5-7}
\cmidrule(lr){8-10}
& & $\alpha$& $\eta$ & $\gamma_7$ & $\gamma_{87}$ & $\gamma_{167}$ &  $\gamma_7$ & $\gamma_{87}$ & $\gamma_{167}$ & Size & Power \\ 
\midrule
\multirow{3}{*}{\small2(a)}
& LS &  &  &  &  &  &  &  &  &  &  \\ 
&   DL & 4.209 & 8.333 & 0.875 & 0.893 & 0.881 & 0.386 & 0.385 & 0.386 & 0.189 & 0.841 \\ 
&   Ora & 0.513 & 7.103 & 0.919 & 0.918 & 0.924 & 0.402 & 0.403 & 0.403 & 0.110 & 0.922 \\ 
\midrule
\multirow{3}{*}{\small2(b)}
& LS &  &  &  &  &  &  &  &  &  &  \\ 
&  DL & 4.165 & 8.322 & 0.896 & 0.872 & 0.861 & 0.407 & 0.379 & 0.381 & 0.189 & 0.825 \\ 
&  Ora & 0.535 & 7.074 & 0.906 & 0.913 & 0.929 & 0.401 & 0.396 & 0.397 & 0.101 & 0.899 \\ 
\midrule
\multirow{3}{*}{\small2(c)}
& LS &  &  &  &  &  &  &  &  &  &  \\ 
&  DL & 7.602 & 22.895 & 0.916 & 0.868 & 0.882 & 0.622 & 0.543 & 0.551 & 0.193 & 0.602 \\ 
&  Ora & 1.074 & 21.724 & 0.922 & 0.944 & 0.944 & 0.657 & 0.619 & 0.632 & 0.076 & 0.674 \\ 
\bottomrule
\end{tabular}
\caption{\small Experiment 2. LS, DL and Ora: least squares including all variables, desparsified Lasso and least squares oracle. RMSE: root mean square error. Coverage: the coverage rate of the asymptotic 95\% confidence intervals. Length: the average length of the asymptotic 95\% confidence intervals. Size: size of the correct hypothesis $H_0: (\gamma_{7}, \gamma_{87},\gamma_{167})=(0,0,0)$. Power: the probability to reject the false $H_0: (\gamma_{7}, \gamma_{87},\gamma_{167})=(0.4,0,0)$. }
\label{Exp2}
\end{table}

Next, we turn to experiment 2(a) which is high-dimensional. The results can be found in Table \ref{Exp2}. As expected, the estimation error is higher for the Lasso than for the oracle. However, it is encouraging that the confidence intervals produced by the desparsified Lasso have a coverage which is as almost as good as the one for the the oracle. In fact, the coverage rate is close to identical to the one in the above moderate-dimensional simulation. The length of the confidence bands based on the desparsifed Lasso is actually slightly lower than the ones based on the oracle which explains their slightly worse performance. The siginificance test is again a bit oversized for the oracle as well as the desparsified Lasso but the size is not far from the one in Table \ref{Exp1}. Power is also virtually unaffected by the increase in dimension.

Experiment 2(b) adds heteroskedasticity and the results are not affected by this. Finally, the addition of heavy tails in Experiment 2(c) makes the estimators less precise. However, and as in the moderate-dimensional setting above, the coverage remains high since the confidence bands get wider. The size of the significance test is unaffected while the power goes down for the oracle as well as the depsarsified Lasso.

\begin{table}[h] 
\centering 
\begin{tabular}{cccccccccccc}
\toprule
& &\multicolumn{2}{c}{ RMSE} & \multicolumn{3}{c}{Coverage}& \multicolumn{3}{c}{Length}&&\\
\cmidrule(lr){3-4} 
\cmidrule(lr){5-7}
\cmidrule(lr){8-10}
& & $\alpha$& $\eta$ & $\gamma_7$ & $\gamma_{87}$ & $\gamma_{167}$ &  $\gamma_7$ & $\gamma_{87}$ & $\gamma_{167}$ & Size & Power \\ 
\midrule
\multirow{3}{*}{\small3(a)}
& LS & 40.341 & 7.367 & 0.815 & 0.827 & 0.796 & 0.266 & 0.266 & 0.268 & 0.325 & 0.993 \\ 
&  DL & 1.208 & 2.121 & 0.931 & 0.918 & 0.925 & 0.190 & 0.190 & 0.190 & 0.098 & 1.000 \\ 
&  Ora & 0.223 & 3.208 & 0.943 & 0.945 & 0.933 & 0.186 & 0.187 & 0.187 & 0.052 & 1.000 \\ 
\midrule
\multirow{3}{*}{\small3(b)}
& LS & 40.351 & 7.376 & 0.800 & 0.823 & 0.813 & 0.267 & 0.265 & 0.267 & 0.318 & 0.993 \\ 
&  DL & 1.169 & 2.139 & 0.923 & 0.915 & 0.924 & 0.200 & 0.189 & 0.189 & 0.104 & 1.000 \\ 
&  Ora & 0.233 & 3.235 & 0.955 & 0.940 & 0.953 & 0.189 & 0.185 & 0.186 & 0.060 & 1.000 \\ 
\midrule
\multirow{3}{*}{\small3(c)}
& LS & 88.649 & 29.738 & 0.766 & 0.813 & 0.837 & 0.460 & 0.452 & 0.448 & 0.315 & 0.821 \\ 
&  DL & 2.630 & 7.689 & 0.931 & 0.922 & 0.913 & 0.359 & 0.309 & 0.319 & 0.090 & 0.926 \\ 
&  Ora & 0.610 & 10.759 & 0.930 & 0.963 & 0.951 & 0.329 & 0.299 & 0.302 & 0.056 & 0.962 \\ 
\bottomrule
\end{tabular}
\caption{\small Experiment 3.  LS, DL and Ora: least squares including all variables, desparsified Lasso and least squares oracle. RMSE: root mean square error. Coverage: the coverage rate of the asymptotic 95\% confidence intervals. Length: the average length of the asymptotic 95\% confidence intervals. Size: size of the correct hypothesis $H_0: (\gamma_{7}, \gamma_{87},\gamma_{167})=(0,0,0)$. Power: the probability to reject the false $H_0: (\gamma_{7}, \gamma_{87},\gamma_{167})=(0.4,0,0)$. }
\label{Exp3}
\end{table}

In Table \ref{Exp3}, $T$ has been increased to $40$ compared to Table \ref{Exp2}. This results in lower estimation errors for the Lasso as well as oracle assisted least squares. The coverage rates of the confidence bands also improve and get closer to the nominal rate. At the same time, the bands also get narrower. The size of the significance test also improves and the power of the oracle and the desparsifed Lasso is 1. As above, adding heteroskedasticity does not alter the results. The consequence of heavy tails are also the same: higher estimation error, no change in coverage of confidence bands, wider bands, unchanged size, but lower power.

\begin{table}[h] 
\centering 
\begin{tabular}{cccccccccccc}
\toprule
& &\multicolumn{2}{c}{ RMSE} & \multicolumn{3}{c}{Coverage}& \multicolumn{3}{c}{Length}&&\\
\cmidrule(lr){3-4} 
\cmidrule(lr){5-7}
\cmidrule(lr){8-10}
& & $\alpha$& $\eta$ & $\gamma_7$ & $\gamma_{87}$ & $\gamma_{167}$ &  $\gamma_7$ & $\gamma_{87}$ & $\gamma_{167}$ & Size & Power \\ 
\midrule
\multirow{3}{*}{\small4(a)}
& LS &  &  &  &  &  &  &  &  &  &  \\ 
&  DL & 2.145 & 14.002 & 0.924 & 0.891 & 0.920 & 0.261 & 0.262 & 0.263 & 0.123 & 0.988 \\ 
&  Ora & 0.351 & 13.570 & 0.936 & 0.934 & 0.930 & 0.282 & 0.283 & 0.285 & 0.065 & 0.999 \\ 
\midrule
\multirow{3}{*}{\small4(b)}
& LS &  &  &  &  &  &  &  &  &  &  \\ 
&  DL & 2.145 & 14.073 & 0.933 & 0.904 & 0.911 & 0.275 & 0.260 & 0.263 & 0.114 & 0.979 \\ 
&  Ora & 0.368 & 13.469 & 0.926 & 0.931 & 0.940 & 0.283 & 0.281 & 0.282 & 0.076 & 1.000 \\ 
\midrule
\multirow{3}{*}{\small4(c)}
& LS &  &  &  &  &  &  &  &  &  &  \\ 
&  DL & 4.305 & 42.303 & 0.917 & 0.899 & 0.903 & 0.456 & 0.386 & 0.390 & 0.139 & 0.825 \\ 
&  Ora & 0.782 & 41.269 & 0.926 & 0.934 & 0.926 & 0.465 & 0.428 & 0.435 & 0.087 & 0.870 \\ 
\bottomrule
\end{tabular}
\caption{\small Experiment 4.  LS, DL and Ora: least squares including all variables, desparsified Lasso and least squares oracle. RMSE: root mean square error. Coverage: the coverage rate of the asymptotic 95\% confidence intervals. Length: the average length of the asymptotic 95\% confidence intervals. Size: size of the correct hypothesis $H_0: (\gamma_{7}, \gamma_{87},\gamma_{167})=(0,0,0)$. Power: the probability to reject the false $H_0: (\gamma_{7}, \gamma_{87},\gamma_{167})=(0.4,0,0)$. }
\label{Exp4}
\end{table}

Table \ref{Exp4} increases $N$ to 40 compared to Table \ref{Exp2}. This results in more fixed effects to be estimated. Thus, it is not surprising that the estimation error for $\alpha$ goes down while the one for $\eta$ increases. The coverage rates of the oracle as well as the desparsified Lasso improve compared to Table \ref{Exp2}. However, the length of the confidence bands does not decrease as much as when $T$ was increased in the previous experiment. The size of the significance test decreases when increasing $N$ while power is close to 1. Adding heteroskedasticity has no consequences while the presence of heavy tails has the usual effect.

\begin{table}[h] 
\centering 
\begin{tabular}{cccccccccccc}
\toprule
& &\multicolumn{2}{c}{ RMSE} & \multicolumn{3}{c}{Coverage}& \multicolumn{3}{c}{Length}&&\\
\cmidrule(lr){3-4} 
\cmidrule(lr){5-7}
\cmidrule(lr){8-10}
& & $\alpha$& $\eta$ & $\gamma_7$ & $\gamma_{74}$ & $\gamma_{141}$ &  $\gamma_7$ & $\gamma_{74}$ & $\gamma_{141}$ & Size & Power \\ 
\midrule
\multirow{3}{*}{\small5(a)}
& LS &  &  &  &  &  &  &  &  &  &  \\ 
&   DL & 3.342 & 2.463 & 0.922 & 0.903 & 0.912 & 0.209 & 0.209 & 0.208 & 0.124 & 0.997 \\ 
&   Ora & 0.546 & 3.305 & 0.956 & 0.942 & 0.949 & 0.187 & 0.187 & 0.187 & 0.062 & 1.000 \\ 
\midrule
\multirow{3}{*}{\small5(b)}
& LS &  &  &  &  &  &  &  &  &  &  \\ 
&   DL & 3.327 & 2.432 & 0.913 & 0.916 & 0.896 & 0.218 & 0.207 & 0.208 & 0.127 & 0.998 \\ 
&   Ora & 0.556 & 3.261 & 0.942 & 0.951 & 0.921 & 0.190 & 0.186 & 0.186 & 0.065 & 1.000 \\ 
\midrule
\multirow{3}{*}{\small5(c)}
& LS &  &  &  &  &  &  &  &  &  &  \\ 
&   DL & 7.294 & 7.273 & 0.936 & 0.910 & 0.916 & 0.363 & 0.307 & 0.304 & 0.101 & 0.920 \\ 
&   Ora & 1.072 & 9.693 & 0.952 & 0.936 & 0.951 & 0.326 & 0.297 & 0.293 & 0.061 & 0.955 \\ 
\bottomrule
\end{tabular}
\caption{\small Experiment 5.  LS, DL and Ora: least squares including all variables, desparsified Lasso and least squares oracle. RMSE: root mean square error. Coverage: the coverage rate of the asymptotic 95\% confidence intervals. Length: the average length of the asymptotic 95\% confidence intervals. Size: size of the correct hypothesis $H_0: (\gamma_{7}, \gamma_{74},\gamma_{141})=(0,0,0)$. Power: the probability to reject the false $H_0: (\gamma_{7}, \gamma_{74},\gamma_{141})=(0.4,0,0)$. }
\label{Exp5}
\end{table}

Table \ref{Exp5} contains a setting with more than 1000 variables. The main message of the previous tables prevails even in this setting: the coverage of the Lasso-based confidence intervals is almost as good as the ones based on the oracle. On the other hand, the bands of the former are now slightly wider than the ones of the latter. Both procedures have power close to one while the Lasso-based  test is a bit oversized compared to the oracle based test. Heteroskedasticity does not affect the results. The consequences of heavy tails are the same as in the previous experiments.

\section{Conclusion}\label{conclusion}

This paper has considered inference in high-dimensional dynamic panel data models with fixed effects. In particular we have shown how hypotheses involving an increasing number of variables can be tested. These hypotheses can involve parameters from all groups of variables in the model simultaneously. As a stepping stone towards this inference we constructed a uniformly valid estimator of the covariance matrix of the parameter estimates which is robust towards conditional heteroskedasticity. We also stress that our theory does not require the inverse covariance matrix of the covariates to be exactly sparse.

Next, we showed that confidence bands based on our procedure are asymptotically honest and contract at the optimal rate. This rate of contraction depends on which type of parameter is under consideration. Simulations revealed that our procedure works well in finite samples. Future work may include relaxing the sparsity assumption on the inverse covariance matrix $\Theta_Z$ as well as extending our results to non-linear panel data models.

\section{Appendix A}

\subsection{Sufficient Conditions for $y_{i,t}$ to be Subgaussian}
The following Lemma provides sufficient conditions for $y_{i,t}$ to inherit the subgaussianity from the covariates and the error terms. It allows for a wide range of models but rules out dynamic panel data models which are explosive or contain unit roots.
\begin{lemma}
\label{lemma y inherit subgaussian}
Let $x_{i,t,k}$ and $\varepsilon_{i,t}$ be uniformly subgaussian for $i=1,...,N$, $t=1,...,T$ and $k=1,...,p_x$ and assume that $\enVert[0]{\beta}_1\leq C$ for some $C>0$ for all $N$ and $T$. Furthermore, $\max_{1\leq i\leq N}|\eta_i|$ is bounded uniformly in $N$ and $T$. Then, if all roots of $1-\sum_{j=1}^L\alpha_jz^j$ ($\alpha_1,\ldots,\alpha_L$ fixed) are outside the unit disc, $y_{i,t}$ is uniformly subgaussian for $i=1,...,N$ and $t=1,...,T$.
\end{lemma}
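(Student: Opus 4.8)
The plan is to write $y_{i,t}$ as a linear combination, with \emph{uniformly absolutely summable} coefficients, of random variables already known to be uniformly subgaussian, and then invoke the triangle inequality for the sub-Gaussian (Orlicz $\psi_2$) norm $\|X\|_{\psi_2}:=\inf\{t>0:\mathbb{E}e^{X^2/t^2}\leq 2\}$. This norm is equivalent to the tail characterisation of subgaussianity used throughout and, crucially, satisfies $\|\sum_k a_k X_k\|_{\psi_2}\leq\sum_k|a_k|\,\|X_k\|_{\psi_2}$ \emph{with no independence requirement}; this is exactly what allows us to avoid any assumption on the temporal dependence of the covariates. First I would treat the contemporaneous input $u_{i,t}:=x_{i,t}'\beta+\eta_i+\varepsilon_{i,t}$ and show it is uniformly subgaussian: by the triangle inequality $\|x_{i,t}'\beta\|_{\psi_2}\leq\sum_{k=1}^{p_x}|\beta_k|\,\|x_{i,t,k}\|_{\psi_2}\leq C\sup_{i,t,k}\|x_{i,t,k}\|_{\psi_2}$, which is finite \emph{uniformly in the growing dimension $p_x$} precisely because $\|\beta\|_1\leq C$; the term $\eta_i$ is bounded (hence trivially subgaussian with norm controlled by $\max_i|\eta_i|$), and $\varepsilon_{i,t}$ is uniformly subgaussian by hypothesis. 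Thus $\sup_{i,t}\|u_{i,t}\|_{\psi_2}\leq K_u<\infty$.

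Next I would invert the stable lag polynomial. Writing $\phi(z)=1-\sum_{j=1}^L\alpha_j z^j$, the hypothesis that all roots of $\phi$ lie outside the unit disc is the standard condition for a causal solution: $\phi(z)^{-1}=\sum_{k\geq0}\psi_k z^k$ with coefficients decaying geometrically, $|\psi_k|\leq C_\psi\rho^k$ for constants $C_\psi<\infty$ and $\rho\in(0,1)$ that depend \emph{only on the fixed} $\alpha_1,\dots,\alpha_L$ and are therefore automatically uniform in $N$ and $T$. (Equivalently, the companion matrix of $\alpha_1,\dots,\alpha_L$ has spectral radius below one, so $\|A^k\|\leq C_A\rho^k$ by Gelfand's formula.) Applying $\phi(z)^{-1}$ to the difference equation (\ref{eqn dgp}) yields the one-sided moving-average representation
\[
y_{i,t}=\sum_{k\geq0}\psi_k\,u_{i,t-k}.
\]

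Given the two ingredients above, the conclusion follows by one more application of the triangle inequality:
\[
\|y_{i,t}\|_{\psi_2}\leq\sum_{k\geq0}|\psi_k|\,\|u_{i,t-k}\|_{\psi_2}\leq\Bigl(C_\psi\textstyle\sum_{k\geq0}\rho^k\Bigr)K_u=\frac{C_\psi}{1-\rho}\,K_u<\infty,
\]
uniformly in $i$ and $t$, which is the claim. Equivalently, if one prefers to condition on the finite initial block rather than on the stationary representation, the companion identity $\mathbf{y}_{i,t}=A^{t}\mathbf{y}_{i,0}+\sum_{s=1}^{t}A^{t-s}\mathbf{u}_{i,s}$ gives the same bound: the total $\ell_1$ mass of the coefficients is at most $C_A/(1-\rho)+L\,C_A$, and the initial values $y_{i,0},\dots,y_{i,1-L}$ are uniformly subgaussian under Assumption~\ref{assu subgaussian}.

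I expect the only genuine subtlety to be uniformity bookkeeping rather than any single hard estimate: one must verify that every constant ($C$, $K_u$, $C_\psi$, $\rho$) is independent of $N$, $T$, and in particular of the growing dimension $p_x$. The growth of $p_x$ in $x_{i,t}'\beta$ is controlled solely through $\|\beta\|_1\leq C$ and the norm triangle inequality, never through an independence-based variance computation; the summation over the infinitely many lags is controlled by the geometric decay $|\psi_k|\leq C_\psi\rho^k$, and because the $\alpha_j$ are fixed this decay rate never degrades with the sample size. The condition that the roots lie strictly outside the unit disc is what rules out the boundary (unit-root) and explosive cases, in which $\sum_k|\psi_k|$ would diverge and the argument would fail.
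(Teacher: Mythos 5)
Your proposal is correct and follows essentially the same route as the paper: bound $\|u_{i,t}\|_{\psi_2}$ for $u_{i,t}=x_{i,t}'\beta+\eta_i+\varepsilon_{i,t}$ via the $\psi_2$ triangle inequality together with $\|\beta\|_1\leq C$, then exploit the geometric decay coming from the root condition (the paper phrases this through the companion form and the bound $\|F^j\|_{\ell_2}\leq(1-\delta)^j$ from Horn and Johnson, which is the same estimate as your $|\psi_k|\leq C_\psi\rho^k$). The only step you elide is the justification for pushing the $\psi_2$ triangle inequality through an \emph{infinite} sum, which the paper handles by citing the monotone convergence theorem for Orlicz norms; this is a standard fact and does not constitute a gap.
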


\begin{proof}[Proof of Lemma \ref{lemma y inherit subgaussian}]
Let $y_t=\sum_{j=1}^L\alpha_jy_{t-j}+u_t$ be an AR($L$) process with roots outside the unit disc. Write the companion form as $\xi_t=F\xi_{t-1}+v_t$. Then, by the monotone convergence theorem for Orlicz norms, see \cite{vandervaartWellner1996} exercise 6, page 105, $\enVert[1]{\enVert[0]{\xi_t}}_{\psi_2}\leq\enVert[1]{\sum_{j=1}^\infty \|F^j\|_{\ell_2}\enVert[0]{v_{t-j}}}_{\psi_2}=\sum_{j=1}^\infty\|F^j\|_{\ell_2}\enVert[1]{\enVert[0]{v_{t-j}}}_{\psi_2}=\sum_{j=1}^\infty\|F^j\|_{\ell_2}\enVert[0]{u_{t-j}}_{\psi_2}$, where $\|\cdot\|_{\ell_2}$ is the $\ell_2$ induced norm, and the last equality used that $v_t$ is $L\times 1$ with only one non-zero entry equaling $u_t$. By Corollary 5.6.14 in \cite{horn2012matrix} there exists a $1>\delta>0$ such that $\|F^j\|_{\ell_2}\leq (1-\delta)^j$ for $j$ sufficiently large. Thus, if $\enVert[0]{u_{t}}_{\psi_2}$ is uniformly bounded we conclude $\enVert[0]{y_t}_{\psi_2}\leq \enVert[1]{\enVert[0]{\xi_t}}_{\psi_2}\leq K$ for some $K>0$. Thus, in our context it suffices to show that $\enVert[0]{x_{i,t}'\beta+\eta_i+\varepsilon_{i,t}}_{\psi_2}$ is uniformly bounded as $y_{i,t}=\sum_{j=1}^{L}\alpha_{j}y_{i,t-j}+x_{i,t}'\beta+\eta_{i}+\varepsilon_{i,t}=\sum_{j=1}^{L}\alpha_{j}y_{i,t-j}+u_{i,t}$ with $u_{i,t}=x_{i,t}'\beta+\eta_i+\varepsilon_{i,t}$. But $\enVert[0]{x_{i,t}'\beta+\eta_i+\varepsilon_{i,t}}_{\psi_2}\leq \sum_{j=1}^p|\beta_j|\enVert[0]{x_{i,t,k}}_{\psi_2}+\enVert[0]{\eta_i}_{\psi_2}+\enVert[0]{\varepsilon_{i,t}}_{\psi_2}$ which is bounded by the assumptions made.
\end{proof}
 
\subsection{Proof of Theorem \ref{thm probabilistic oracle inequality}}

Before we proceed to the proof of Theorem \ref{thm probabilistic oracle inequality}, we introduce $\eta_i^*=\eta_i1\cbr[0]{|\eta_i|\geq \Xi}$ for some $\Xi>0$. We shall choose $\Xi=\frac{\lambda}{\sqrt{N}T}$. Next, $J_2=\cbr[0]{i:\eta_i^*\neq 0,\ i=1,...,N}$ and $s_2=|J_2|$. Introduce the events
\[\mathcal{A}_{N}=\left\lbrace \|Z'\varepsilon\|_{\infty}\leq \frac{\lambda_{N}}{2},\quad \|D'\varepsilon\|_{\infty}\leq \frac{\lambda_{N}}{2\sqrt{N}}\right\rbrace, \quad \mathcal{B}_{N}=\left\lbrace \kappa^2(\Psi_{N}, s_1,s_2)\geq \frac{\kappa_2^2}{2}\right\rbrace. \]
\begin{lemma}
\label{lemma first step}
On the event $\mathcal{A}_{N}$, the following inequalities are valid
\begin{equation}
\label{eqn handy inequality lemma}
\|\Pi(\hat{\gamma}-\gamma)\|^2+\lambda_{N}\|\hat{\alpha}-\alpha\|_1+\frac{\lambda_N}{\sqrt{N}}\|\hat{\eta}-\eta\|_1\leq 4\lambda_{N}\|\hat{\alpha}_{J_1}-\alpha_{J_1}\|_1+4\frac{\lambda_N}{\sqrt{N}}\|\hat{\eta}_{J_2}-\eta_{J_2}\|_1+4\frac{\lambda_N}{\sqrt{N}}E\Xi^{1-\nu};
\end{equation}
\begin{equation}
\label{eqn precondition of restricted eigenvalue condition}
\|\hat{\alpha}_{J_1^c}-\alpha_{J_1^c}\|_1+\frac{1}{\sqrt{N}}\|\hat{\eta}_{J_2^c}-\eta_{J_2^c}\|_1\leq 3\|\hat{\alpha}_{J_1}-\alpha_{J_1}\|_1+3\frac{1}{\sqrt{N}}\|\hat{\eta}_{J_2}-\eta_{J_2}\|_1+4\frac{1}{\sqrt{N}}E\Xi^{1-\nu}.
\end{equation}
\end{lemma}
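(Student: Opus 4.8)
The plan is to derive both inequalities from the optimality of $\hat{\gamma}$, using the thresholding device $\eta_i^*=\eta_i1\{|\eta_i|\geq\Xi\}$ to split the weakly sparse $\eta$ into an effectively exactly sparse part supported on $J_2$ plus a small tail on $J_2^c$ that is controlled by the weak sparsity budget $E$. First I would invoke the basic inequality $L(\hat{\gamma})\leq L(\gamma)$, which holds since $\hat{\gamma}$ minimizes (\ref{objfunc}). Substituting $y=\Pi\gamma+\varepsilon$ and expanding $\|y-\Pi\hat{\gamma}\|^2=\|\varepsilon-\Pi(\hat{\gamma}-\gamma)\|^2$, the term $\|\varepsilon\|^2$ cancels and one is left with
\begin{equation*}
\|\Pi(\hat{\gamma}-\gamma)\|^2+2\lambda_N\|\hat{\alpha}\|_1+2\frac{\lambda_N}{\sqrt{N}}\|\hat{\eta}\|_1\leq 2\varepsilon'\Pi(\hat{\gamma}-\gamma)+2\lambda_N\|\alpha\|_1+2\frac{\lambda_N}{\sqrt{N}}\|\eta\|_1.
\end{equation*}
Writing $\varepsilon'\Pi(\hat{\gamma}-\gamma)=\varepsilon'Z(\hat{\alpha}-\alpha)+\varepsilon'D(\hat{\eta}-\eta)$ and applying H\"older's inequality to each piece, the event $\mathcal{A}_N$ bounds the dual norms so that $2\varepsilon'\Pi(\hat{\gamma}-\gamma)\leq\lambda_N\|\hat{\alpha}-\alpha\|_1+\frac{\lambda_N}{\sqrt{N}}\|\hat{\eta}-\eta\|_1$. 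Adding $\lambda_N\|\hat{\alpha}-\alpha\|_1+\frac{\lambda_N}{\sqrt{N}}\|\hat{\eta}-\eta\|_1$ to both sides and transferring the penalty evaluated at $\hat{\gamma}$ to the right produces exactly the left-hand side of (\ref{eqn handy inequality lemma}).

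The next step is to split the $\ell_1$ terms across the active sets. For $\alpha$, since $\alpha_{J_1^c}=0$, the decomposition $\|\hat{\alpha}-\alpha\|_1=\|\hat{\alpha}_{J_1}-\alpha_{J_1}\|_1+\|\hat{\alpha}_{J_1^c}\|_1$ combined with the reverse triangle inequality $\|\alpha_{J_1}\|_1-\|\hat{\alpha}_{J_1}\|_1\leq\|\hat{\alpha}_{J_1}-\alpha_{J_1}\|_1$ causes the off-support contribution $\|\hat{\alpha}_{J_1^c}\|_1$ to cancel, leaving the term $4\lambda_N\|\hat{\alpha}_{J_1}-\alpha_{J_1}\|_1$. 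The genuinely new part is the $\eta$ block, where $J_2^c$ now carries small but \emph{nonzero} coordinates. On $J_2$ the same cancellation yields $4\frac{\lambda_N}{\sqrt{N}}\|\hat{\eta}_{J_2}-\eta_{J_2}\|_1$, whereas on $J_2^c$ I would bound the residual contribution crudely, via the triangle inequality, by $2\frac{\lambda_N}{\sqrt{N}}\|\eta_{J_2^c}\|_1$. This is the step I expect to demand the most care: the bookkeeping must produce precisely the tail $\|\eta_{J_2^c}\|_1$ with no leftover $\|\hat{\eta}_{J_2^c}-\eta_{J_2^c}\|_1$ surviving on the right, since that norm must remain on the left. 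Finally the weak sparsity assumption closes the argument, because for $i\in J_2^c$ one has $|\eta_i|<\Xi$, hence $|\eta_i|=|\eta_i|^\nu|\eta_i|^{1-\nu}\leq|\eta_i|^\nu\Xi^{1-\nu}$; summing gives $\|\eta_{J_2^c}\|_1\leq\Xi^{1-\nu}\sum_{i=1}^{N}|\eta_i|^\nu\leq E\Xi^{1-\nu}$, which delivers (\ref{eqn handy inequality lemma}).

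For (\ref{eqn precondition of restricted eigenvalue condition}) I would discard the nonnegative prediction term $\|\Pi(\hat{\gamma}-\gamma)\|^2$ from the left of (\ref{eqn handy inequality lemma}), then peel off $\lambda_N\|\hat{\alpha}_{J_1}-\alpha_{J_1}\|_1$ and $\frac{\lambda_N}{\sqrt{N}}\|\hat{\eta}_{J_2}-\eta_{J_2}\|_1$ from both sides, using $\|\hat{\alpha}-\alpha\|_1=\|\hat{\alpha}_{J_1}-\alpha_{J_1}\|_1+\|\hat{\alpha}_{J_1^c}-\alpha_{J_1^c}\|_1$ and the analogous split for $\eta$. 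Dividing through by $\lambda_N$ yields the stated cone-type inequality with factor $3$ and additive slack $4\frac{1}{\sqrt{N}}E\Xi^{1-\nu}$. This second bound is the precondition for invoking the compatibility constant $\kappa^2(\Psi_N,s_1,s_2)$: once the additive slack is shown (later in the proof) to be dominated by the support terms, the perturbation $\hat{\gamma}-\gamma$ falls into the cone $\|\delta_{R^c}\|_1\leq4\|\delta_R\|_1$ that defines $\kappa^2$, which is what converts the prediction bound into the estimation bounds of Theorem \ref{thm probabilistic oracle inequality}.
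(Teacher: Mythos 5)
Your proposal is correct and follows essentially the same route as the paper: the basic inequality from optimality of $\hat{\gamma}$, the dual-norm bound for $2\varepsilon'\Pi(\hat{\gamma}-\gamma)$ on $\mathcal{A}_N$, adding the $\ell_1$ error terms to both sides, exact cancellation on $J_1^c$ (where $\alpha$ vanishes), the bound $\|\eta_{J_2^c}\|_1-\|\hat{\eta}_{J_2^c}\|_1+\|\hat{\eta}_{J_2^c}-\eta_{J_2^c}\|_1\leq 2\|\eta_{J_2^c}\|_1\leq 2E\Xi^{1-\nu}$ on $J_2^c$, and then dropping the prediction term and peeling off the on-support norms to get the cone inequality. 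The only nit is a bookkeeping slip in your stated constant for the $J_2^c$ residual (the outer factor of $2$ makes it $4\frac{\lambda_N}{\sqrt{N}}\|\eta_{J_2^c}\|_1$, not $2\frac{\lambda_N}{\sqrt{N}}\|\eta_{J_2^c}\|_1$), which is immaterial since your final bound matches the lemma.
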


\begin{proof}
By the minimizing property of the Lasso,
\[\|  y-\Pi\hat{\gamma}\|^{2}+2\lambda_{N}\|\hat{\alpha}\|_1+2\frac{\lambda_N}{\sqrt{N}}\|\hat{\eta}\|_1\leq \left\|   y-\Pi\gamma\right\|^{2}+2\lambda_{N}\|\alpha\|_1+2\frac{\lambda_N}{\sqrt{N}}\|\eta\|_1\]
such that inserting $y=\Pi\gamma+\epsilon$ yields
\begin{align}
\|\Pi(\hat{\gamma}-\gamma)\|^2\leq 2\varepsilon'\Pi(\hat{\gamma}-\gamma)+2\lambda_{N}(\|\alpha\|_1-\|\hat{\alpha}\|_1)+2\frac{\lambda_N}{\sqrt{N}}(\|\eta\|_1-\|\hat{\eta}\|_1).\label{auxbound1}
\end{align}
Note that on $\mathcal{A}_N$
\begin{align*}
& 2\varepsilon'\Pi(\hat{\gamma}-\gamma) \leq 2\| \varepsilon'Z\|_{\infty}\|\hat{\alpha}-\alpha\|_1+2\| \varepsilon'D\|_{\infty}\|\hat{\eta}-\eta\|_1\leq \lambda_{N}\|\hat{\alpha}-\alpha\|_1+\frac{\lambda_N}{\sqrt{N}}\|\hat{\eta}-\eta\|_1.
\end{align*}
Using this and adding $\lambda_{N}\|\hat{\alpha}-\alpha\|_1+\frac{\lambda_N}{\sqrt{N}}\|\hat{\eta}-\eta\|_1$ to both sides of (\ref{auxbound1}) gives
\begin{align*}
&\|\Pi(\hat{\gamma}-\gamma)\|^2+\lambda_{N}\|\hat{\alpha}-\alpha\|_1+\frac{\lambda_N}{\sqrt{N}}\|\hat{\eta}-\eta\|_1\\ 
& \leq 2\lambda_{N}(\|\alpha\|_1-\|\hat{\alpha}\|_1+\|\hat{\alpha}-\alpha\|_1)+2\frac{\lambda_N}{\sqrt{N}}(\|\eta\|_1-\|\hat{\eta}\|_1+\|\hat{\eta}-\eta\|_1)\\
&\leq 2\lambda_{N}(\|\alpha_{J_1}\|_1-\|\hat{\alpha}_{J_1}\|_1+\|\hat{\alpha}_{J_1}-\alpha_{J_1}\|_1)+2\frac{\lambda_N}{\sqrt{N}}(\|\eta_{J_2}\|_1-\|\hat{\eta}_{J_2}\|_1+\|\hat{\eta}_{J_2}-\eta_{J_2}\|_1+2E\Xi^{1-\nu})\\
&\leq 4\lambda_{N}\|\hat{\alpha}_{J_1}-\alpha_{J_1}\|_1+4\frac{\lambda_N}{\sqrt{N}}\|\hat{\eta}_{J_2}-\eta_{J_2}\|_1+4\frac{\lambda_N}{\sqrt{N}}E\Xi^{1-\nu},
\end{align*}
where the second inequality is due to
\begin{align*}
&\|\eta_{J_2^c}\|_1-\|\hat{\eta}_{J_2^c}\|_1+\|\hat{\eta}_{J_2^c}-\eta_{J_2^c}\|_1\leq 2\|\eta_{J_2^c}\|_1=2\sum_{i=1}^{N}|\eta_i|1\{|\eta_i|<\Xi\}<2\Xi^{1-\nu}\sum_{i=1}^{N}|\eta_i|^{\nu}1\{|\eta_i|<\Xi\}\leq 2E\Xi^{1-\nu}.
\end{align*}
We proved (\ref{eqn handy inequality lemma}). (\ref{eqn precondition of restricted eigenvalue condition}) follows trivially from this. 
\end{proof}

\vspace{0.5cm}

\begin{lemma}[Deterministic oracle inequalities]
\label{lemma deterministic results}

Let Assumption \ref{assu smallest eigenvalue of PsiZ} hold. On the event $\mathcal{A}_{N}\cap \mathcal{B}_{N} $ one has for any positive constant $\lambda_{N}$,
\begin{align*}
&\left\|\Pi(\hat{\gamma}-\gamma) \right\|^2 
\leq \frac{120\lambda^2_{N}s_1}{\kappa_2^2NT}+\del[2]{\frac{120}{\kappa_2^2}+20} \frac{\lambda_N}{\sqrt{N}}E\del[3]{\frac{\lambda_N}{\sqrt{N}T}} ^{1-\nu}
\\
&\|\hat{\alpha}-\alpha\|_1
\leq\frac{120\lambda_{N}s_1}{\kappa_2^2NT}+\del[2]{\frac{120}{\kappa_2^2}+20}\frac{1}{\sqrt{N}}E\del[3]{\frac{\lambda_N}{\sqrt{N}T}}^{1-\nu}\\
&\|\hat{\eta}-\eta\|_1 
\leq\frac{120\lambda_{N}s_1}{\kappa_2^2\sqrt{N}T}+\del[2]{\frac{120}{\kappa_2^2}+20}E\del[3]{\frac{\lambda_N}{\sqrt{N}T}}^{1-\nu}.
\end{align*}
Moreover, the above bounds are valid uniformly over
$\mathcal{F}(s_1,\nu,E):=\cbr[1]{ \alpha \in \mathbb{R}^{p}: \enVert[0]{\alpha}_{0} \leq s_1} \times \cbr[1]{\eta \in \mathbb{R}^{N}: \sum_{i=1}^{N}|\eta_i|^{\nu}\leq E}$.
\end{lemma}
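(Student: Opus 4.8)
The plan is to run the standard oracle-inequality machinery on the event $\mathcal{A}_N\cap\mathcal{B}_N$, feeding the two bounds of Lemma~\ref{lemma first step} into the compatibility constant encoded in $\mathcal{B}_N$, but with a twist forced by the additive weak-sparsity term. First I would pass to the scaled error $\delta:=S(\hat\gamma-\gamma)$, so that by $\Psi_N=S^{-1}\Pi'\Pi S^{-1}$ one has $\delta'\Psi_N\delta=\|\Pi(\hat\gamma-\gamma)\|^2$, while $\delta=(\delta_1',\delta_2')'$ with $\delta_1=\sqrt{NT}(\hat\alpha-\alpha)$ and $\delta_2=\sqrt{T}(\hat\eta-\eta)$. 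With $R_1=J_1$, $R_2=J_2$ and $R=J_1\cup(J_2+p)$, the truncation level $\Xi=\lambda_N/(\sqrt{N}T)$ combined with weak sparsity gives $s_2=|J_2|\le E\Xi^{-\nu}$, since $s_2\Xi^{\nu}\le\sum_{i\in J_2}|\eta_i|^{\nu}\le E$; this is what controls the effective sparsity $s_1+s_2$ by $s_1+E\Xi^{-\nu}$, the quantity governing the probability statement of Theorem~\ref{thm probabilistic oracle inequality}.

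Next I would convert (\ref{eqn precondition of restricted eigenvalue condition}) into the cone condition required by $\kappa^2(\Psi_N,s_1,s_2)$. Factoring $\sqrt{NT}$ out of the scaled quantities, (\ref{eqn precondition of restricted eigenvalue condition}) becomes $\|\delta_{R^c}\|_1\le 3\|\delta_R\|_1+4\sqrt{T}E\Xi^{1-\nu}$, i.e.\ the homogeneous cone inequality perturbed by an additive term. The restricted-eigenvalue argument of \cite{bickelritovtsybakov2009} only applies when this additive term is dominated by $\|\delta_R\|_1$, so I would split into two cases. In the main case $4\sqrt{T}E\Xi^{1-\nu}\le\|\delta_R\|_1$ one gets $\|\delta_{R^c}\|_1\le 4\|\delta_R\|_1$, whence the compatibility condition active on $\mathcal{B}_N$ yields $\|\delta_R\|_1^2\le\frac{2(s_1+s_2)}{\kappa_2^2}\|\Pi(\hat\gamma-\gamma)\|^2$; in the complementary case $\|\delta_R\|_1$ is directly bounded by $4\sqrt{T}E\Xi^{1-\nu}$.

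The core step is to combine the resulting square-root bound on $\|\delta_R\|_1$ with (\ref{eqn handy inequality lemma}). Dropping the nonnegative $\ell_1$ terms on its left, (\ref{eqn handy inequality lemma}) reads $\|\Pi(\hat\gamma-\gamma)\|^2\le 4\lambda_N\big(\|\hat\alpha_{J_1}-\alpha_{J_1}\|_1+\tfrac{1}{\sqrt N}\|\hat\eta_{J_2}-\eta_{J_2}\|_1\big)+4\tfrac{\lambda_N}{\sqrt N}E\Xi^{1-\nu}$, and the compatibility bound rewrites the active-set term through $\sqrt{\|\Pi(\hat\gamma-\gamma)\|^2}$. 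This produces a quadratic inequality in $\sqrt{\|\Pi(\hat\gamma-\gamma)\|^2}$ — equivalently a coupled quadratic in the two active-set norms, which is exactly why the bounds are characterised as solutions of a quadratic rather than in closed form; solving it and using $(x+y)^2\le 2x^2+2y^2$ gives the prediction-error bound. A key simplification is that $\Xi=\lambda_N/(\sqrt N T)$ makes the $s_2$-contribution $\tfrac{\lambda_N^2 s_2}{\kappa_2^2 NT}$ collapse exactly into a multiple of $\tfrac{\lambda_N}{\sqrt N}E\Xi^{1-\nu}$ (using $s_2\le E\Xi^{-\nu}$), which is how the two terms of the final bound emerge. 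Returning to (\ref{eqn handy inequality lemma}) and isolating $\lambda_N\|\hat\alpha-\alpha\|_1$ and $\tfrac{\lambda_N}{\sqrt N}\|\hat\eta-\eta\|_1$ separately then yields the two estimation-error bounds; because of the different penalty weights in (\ref{objfunc}) the $\eta$-bound comes out as exactly $\sqrt N$ times the $\alpha$-bound, which is precisely the discrepancy between the two displayed inequalities.

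The hard part is precisely this additive $E\Xi^{1-\nu}$ term: it breaks the homogeneity of the cone inequality underlying the usual restricted-eigenvalue argument, so one cannot read off the bounds directly and is instead forced into the perturbed quadratic together with the two-case analysis. For uniformity over $\mathcal{F}(s_1,\nu,E)$ I would note that every right-hand side above depends on $\gamma$ only through $(s_1,E,\nu)$, while the controlling event can be made $\gamma$-free: since $\kappa^2(\Psi_N,s_1,\cdot)$ is nonincreasing in its second sparsity argument and $s_2\le E\Xi^{-\nu}$ for every $\eta$ with $\sum_{i=1}^N|\eta_i|^{\nu}\le E$, replacing $s_2$ by the deterministic $\lceil E\Xi^{-\nu}\rceil$ in the definition of $\mathcal{B}_N$ produces a single event on which all three bounds hold simultaneously for every $\gamma\in\mathcal{F}(s_1,\nu,E)$.
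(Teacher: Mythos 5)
Your proposal is correct and follows essentially the same route as the paper's proof: the case split on whether $4\sqrt{T}E\Xi^{1-\nu}$ is dominated by $\|\delta_R\|_1$ is exactly the paper's auxiliary event $\mathcal{C}_N$ versus $\mathcal{C}_N^c$, the compatibility bound is applied in the same reparametrised form, the resulting coupled quadratic in the two active-set norms is the paper's inequality $x^2-ax+by^2-cy\le 0$, and the bound $s_2\le E\Xi^{-\nu}$ with $\Xi=\lambda_N/(\sqrt{N}T)$ plus the observation that the right-hand sides depend on $\gamma$ only through $(s_1,\nu,E)$ delivers the final form and the uniformity exactly as in the paper.
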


\begin{proof}
By (\ref{eqn handy inequality lemma}) of Lemma \ref{lemma first step}, which is valid on $\mathcal{A}_{N}$,
\begin{equation}
\label{eqn upper bound for prediction error}
\|\Pi(\hat{\gamma}-\gamma)\|^2\leq 4\lambda_{N}\|\hat{\alpha}_{J_1}-\alpha_{J_1}\|_1+4\frac{\lambda_N}{\sqrt{N}}\|\hat{\eta}_{J_2}-\eta_{J_2}\|_1+4\frac{\lambda_N}{\sqrt{N}}E\Xi^{1-\nu}.
\end{equation}
Consider the auxiliary event
\[\mathcal{C}_N:=\cbr[2]{ \frac{1}{\sqrt{N}}E\Xi^{1-\nu}\leq \frac{1}{4}\|\hat{\alpha}_{J_1}-\alpha_{J_1}\|_1+\frac{1}{4\sqrt{N}}\|\hat{\eta}_{J_2}-\eta_{J_2}\|_1} .\] 
On the event $\mathcal{A}_N\cap\mathcal{C}_N$, from (\ref{eqn precondition of restricted eigenvalue condition}) of Lemma \ref{lemma first step}, we have
\begin{equation}
\label{eqn precondition of restricted eigenvalue condition on event Cn}
\|\hat{\alpha}_{J_1^c}-\alpha_{J_1^c}\|_1+\frac{1}{\sqrt{N}}\|\hat{\eta}_{J_2^c}-\eta_{J_2^c}\|_1\leq 4\|\hat{\alpha}_{J_1}-\alpha_{J_1}\|_1+4\frac{1}{\sqrt{N}}\|\hat{\eta}_{J_2}-\eta_{J_2}\|_1.
\end{equation}
In order to apply the compatibility condition, re-parametrise the vector $\delta$ in the definition of the compatibility condition as follows. Let $b^1$ and $b^2$ be $p\times 1$ and $N\times 1$ vectors, respectively, with $b=(b^{1'},b^{2'})'$ defined as
\[\del[3]{ \begin{array}{c}
b^1\\ b^2
 \end{array}}:=\del[3]{ \begin{array}{cc}
\text{I}_{p} & 0\\
0 & \sqrt{N}\text{I}_{N}
 \end{array}}\del[3]{\begin{array}{c}
\delta^1\\ \delta^2
 \end{array}} .\]
Hence, that $\kappa^2(\Psi_N, r_1, r_2)$ is bounded away from zero for integers $r_1 \in\{1,\ldots, p\}$ and $ r_2\in \{1,\ldots, N\}$ is equivalent to
\begin{equation}
\label{eqn equivalent modified compatibility condition}
\kappa^2(\Psi_{N}, r_1, r_2):=\min_{\substack{R_1 \subseteq \{1,\ldots, p\}, |R_1|\leq r_1\\R_2 \subseteq \{1,\ldots,N\}, |R_2|\leq r_2\\R:=R_1\cup R_2}} \min_{\substack{b\in \mathbb{R}^{p+N}\setminus \{0\},\\\|b_{R_1^c}^1\|_1+\frac{1}{\sqrt{N}}\|b_{R_2^c}^2\|_1\\\leq  4\|b_{R_1}^1\|_1+\frac{4}{\sqrt{N}}\|b_{R_2}^2\|_1}}\frac{\|\Pi b\|^2}{\frac{NT}{r_1+r_2}\enVert[3]{\del[3]{ \begin{array}{c}
b_{R_1}^1\\
b_{R_2}^2/\sqrt{N}
\end{array}} }_1^2}>0.
\end{equation}
By (\ref{eqn precondition of restricted eigenvalue condition on event Cn}), our estimator satisfies the constraint of the just introduced version of the compatibility condition and so 
\begin{align*}
\|\Pi(\hat{\gamma}-\gamma)\|^2 &\geq \frac{\kappa^2(\Psi_{N}, s_1,s_2)NT}{s_1+s_2}\enVert[3]{ \del[3]{ \begin{array}{c}
\hat{\alpha}_{J_1}-\alpha_{J_1}\\
(\hat{\eta}_{J_2}-\eta_{J_2})/\sqrt{N}
\end{array}} }_1^2 \\
&\geq \frac{\kappa^2(\Psi_{N}, s_1,s_2)NT}{s_1+s_2}\del[2]{\|\hat{\alpha}_{J_1}-\alpha_{J_1}\|_1^2+\frac{1}{N}\|\hat{\eta}_{J_2}-\eta_{J_2}\|_1^2}\\
&\geq \frac{\kappa_2^2NT}{2(s_1+s_2)}\del[2]{\|\hat{\alpha}_{J_1}-\alpha_{J_1}\|_1^2+\frac{1}{N}\|\hat{\eta}_{J_2}-\eta_{J_2}\|_1^2},
\end{align*}
where the last inequality is valid on $\mathcal{B}_{N}$. Hence, on $\mathcal{A}_N\cap\mathcal{B}_N\cap\mathcal{C}_N$ upon combining with (\ref{eqn upper bound for prediction error}) one has,
\begin{align*}
\frac{\kappa_2^2NT}{2(s_1+s_2)}\del[2]{\|\hat{\alpha}_{J_1}-\alpha_{J_1}\|_1^2+\frac{1}{N}\|\hat{\eta}_{J_2}-\eta_{J_2}\|_1^2} 
&\leq
4\lambda_{N}\|\hat{\alpha}_{J_1}-\alpha_{J_1}\|_1+\frac{4\lambda_{N}}{\sqrt{N}}\|\hat{\eta}_{J_2}-\eta_{J_2}\|_1+\frac{4\lambda_{N}}{\sqrt{N}}E\Xi^{1-\nu}\\
&\leq
5\lambda_{N}\|\hat{\alpha}_{J_1}-\alpha_{J_1}\|_1+\frac{5\lambda_{N}}{\sqrt{N}}\|\hat{\eta}_{J_2}-\eta_{J_2}\|_1,
\end{align*}
which, since $\kappa_2^2>0$ by Assumption \ref{assu smallest eigenvalue of PsiZ}, is equivalent to  
\begin{align*}
\|\hat{\alpha}_{J_1}-\alpha_{J_1}\|_1^2-\frac{10\lambda_{N}(s_1+s_2)}{\kappa_2^2NT}\|\hat{\alpha}_{J_1}-\alpha_{J_1}\|_1+\frac{1}{N}\|\hat{\eta}_{J_2}-\eta_{J_2}\|_1^2-
\frac{10\lambda_{N}(s_1+s_2)}{\kappa_2^2N^{3/2}T}\|\hat{\eta}_{J_2}\|_1\leq 0.
\end{align*}
Let $x=\|\hat{\alpha}_{J_1}-\alpha_{J_1}\|_1$, $y=\|\hat{\eta}_{J_2}-\eta_{J_2}\|_1$, $a=\frac{10\lambda_{N}(s_1+s_2)}{\kappa_2^2NT}$, $b=\frac{1}{N}$ and $c=\frac{10\lambda_{N}(s_1+s_2)}{\kappa_2^2N^{3/2}T}$. Thus one has 
\begin{equation}
\label{quadineq}
x^2-ax+by^2-cy\leq 0.
\end{equation}
First bound $x=\enVert[0]{\hat{\alpha}_{J_1}-\alpha_{J_1}}_1$. For every $y$ the values of $x$ that satisfy the above quadratic inequality form an interval in $\mathbb{R}_+$. The right end point of this interval is the desired upper bound on $x$. Clearly, by the solution formula for the roots of a second degree polynomial, this right end point is a decreasing function in $by^2-cy$. Hence, we first minimize the polynomial $by^2-cy$ to find the largest possible value of $x$ which satisfies (\ref{quadineq}). This yields $y=c/2b$ and the corresponding value of $by^2-cy$ is $-c^2/(4b)$. Hence, our desired upper bound on $x$ is the largest solution of $x^2-ax-\frac{c^2}{4b}\leq 0$. By the standard solution formula for the roots of a quadratic polynomial this yields 
\begin{align}
\enVert[0]{\hat{\alpha}_{J_1}-\alpha_{J_1}}_1=x\leq \frac{a+\sqrt{a^2+c^2/b}}{2}
\leq 
a+\frac{c}{2\sqrt{b}}.
\label{align quadratic solution for x}
\end{align}
Switching the roles of $x$ and $y$, one gets a similar bound on $y=\enVert[0]{\hat{\eta}_{J_2}-\eta_{J_2}}_1$, namely
\begin{align}
\enVert[0]{\hat{\eta}_{J_2}-\eta_{J_2}}_1=y\leq \frac{c+\sqrt{c^2+ba^2}}{2b}
\leq
\frac{c}{b}+\frac{a}{2\sqrt{b}}.
\label{align quadratic solution for y}
\end{align}
Inserting the definitions of $a, b$ and $c$ into (\ref{align quadratic solution for x}) and (\ref{align quadratic solution for y}), we get
\begin{equation}
\label{eqn l1 norm on J1 alpha}
\|\hat{\alpha}_{J_1}-\alpha_{J_1}\|_1\leq \frac{15\lambda_{N}(s_1+s_2)}{\kappa_2^2NT} 
\end{equation}
\begin{equation}
\label{eqn l1 norm on J2 eta}
\|\hat{\eta}_{J_2}-\eta_{J_2}\|_1\leq \frac{15\lambda_{N}(s_1+s_2)}{\kappa_2^2N^{1/2}T} .
\end{equation}
Therefore, on $\mathcal{A}_N\cap\mathcal{B}_N\cap\mathcal{C}_N$, it follows from (\ref{eqn handy inequality lemma}) that
\begin{align*}
&\left\|\Pi(\hat{\gamma}-\gamma) \right\|^2 
\leq
4\lambda_N\|\hat{\alpha}_{J_1}-\alpha_{J_1}\|_1+\frac{4\lambda_N}{\sqrt{N}} \|\hat{\eta}_{J_2}-\eta_{J_2}\|_1+\frac{4\lambda_N}{\sqrt{N}}E\Xi^{1-\nu}
\leq 
\frac{120\lambda^2_{N}(s_1+s_2)}{\kappa_2^2NT}+\frac{4\lambda_N}{\sqrt{N}}E\Xi^{1-\nu}
\\
&\|\hat{\alpha}-\alpha\|_1
\leq 4\|\hat{\alpha}_{J_1}-\alpha_{J_1}\|_1+\frac{4}{\sqrt{N}} \|\hat{\eta}_{J_2}-\eta_{J_2}\|_1+\frac{4}{\sqrt{N}}E\Xi^{1-\nu}
\leq 
\frac{120\lambda_{N}(s_1+s_2)}{\kappa_2^2NT}+\frac{4}{\sqrt{N}}E\Xi^{1-\nu}\\
&\|\hat{\eta}-\eta\|_1 
\leq 4\sqrt{N}\|\hat{\alpha}_{J_1}-\alpha_{J_1}\|_1+4\|\hat{\eta}_{J_2}-\eta_{J_2}\|_1+4E\Xi^{1-\nu}
\leq
\frac{120\lambda_{N}(s_1+s_2)}{\kappa_2^2\sqrt{N}T}+4E\Xi^{1-\nu}.
\end{align*}

On $\mathcal{A}_N\cap\mathcal{C}_N^c$ one has trivial oracle inequalities via (\ref{eqn handy inequality lemma}) of Lemma \ref{lemma first step}. To be precise,
\[\|\Pi(\hat{\gamma}-\gamma)\|^2< 20\lambda_N\frac{E\Xi^{1-\nu}}{\sqrt{N}},\quad \|\hat{\alpha}-\alpha\|_1< 20\frac{E\Xi^{1-\nu}}{\sqrt{N}},\quad \|\hat{\eta}-\eta\|_1< 20E\Xi^{1-\nu}.\]
These inequalities are valid on event $\mathcal{A}_N\cap \mathcal{B}_N \cap \mathcal{C}_N^c$ too. Synchronising constants, using that $(\mathcal{A}_N\cap \mathcal{B}_N \cap \mathcal{C}_N^c)\cup(\mathcal{A}_N\cap \mathcal{B}_N \cap \mathcal{C}_N)=\mathcal{A}_N\cap \mathcal{B}_N$, and recognising that
\[s_2:=\sum_{i=1}^{N}1\{|\eta_i|\geq \Xi\}=\sum_{i=1}^{N}1\{|\eta_i|^{\nu}\geq \Xi^{\nu}\}\leq E\Xi^{-\nu},\]
We arrive at
\begin{align*}
&\left\|\Pi(\hat{\gamma}-\gamma) \right\|^2 
\leq 
\frac{120\lambda^2_{N}(s_1+E\Xi^{-\nu})}{\kappa_2^2NT}+\frac{20\lambda_N}{\sqrt{N}}E\Xi^{1-\nu}
\\
&\|\hat{\alpha}-\alpha\|_1
\leq 
\frac{120\lambda_{N}(s_1+E\Xi^{-\nu})}{\kappa_2^2NT}+\frac{20}{\sqrt{N}}E\Xi^{1-\nu}\\
&\|\hat{\eta}-\eta\|_1 
\leq
\frac{120\lambda_{N}(s_1+E\Xi^{-\nu})}{\kappa_2^2\sqrt{N}T}+20E\Xi^{1-\nu}.
\end{align*}
The deterministic oracle inequalities follow upon choosing $\Xi=\frac{\lambda_N}{\sqrt{N}T}$.

To see the uniformity $\mathcal{F}(s_1,\nu,E)$, note that only properties $s_1$, $\nu$ and $E$ characterizing $\alpha$ and $\eta$ enter the deterministic oracle inequalities. Hence, the deterministic oracle inequalities are uniform over the set $\mathcal{F}(s_1,\nu,E)$. 
\end{proof}

\vspace{0.5cm}

For the proof of Lemma \ref{lemma lower bound on Ant} below, we shall use Orlicz norms as defined in \cite{vandervaartWellner1996}: Let $\psi$ be a non-decreasing, convex function with $\psi(0)=0$. Then, the Orlicz norm of a random variable $X$ is given by
\begin{align*}
\enVert{X}_\psi=\inf\left\{C>0:\mathbb{E}\psi\left(|X|/C\right)\leq 1\right\},
\end{align*}
where, as usual, $\inf \emptyset =\infty$. We will use Orlicz norms for  $\psi(x)=\psi_b(x)=e^{x^b}-1$ for various values of $b$. The following Lemma provides a lower bound on the probability of $\mathcal{A}_N$.

\begin{lemma}
\label{lemma lower bound on Ant}
Let $\lambda_{N}=\sqrt{4MNT(\log (p\vee N))^3}$ for some $M>0$. By Assumptions \ref{assu panel data} and \ref{assu subgaussian}, we have 
\[\mathbb{P}(\mathcal{A}_{N})\geq 1-Ap^{1-BM^{1/3}}-AN^{1-BM^{1/3}},\]
for positive constants $A$ and $B$.
\end{lemma}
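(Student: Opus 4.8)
The plan is to bound $\mathbb{P}(\mathcal A_N^c)$ by splitting $\mathcal A_N$ into its two defining events and then applying a coordinatewise union bound. First I would write
\[
\mathbb{P}(\mathcal A_N^c)\le \mathbb{P}\Big(\|Z'\varepsilon\|_\infty > \tfrac{\lambda_N}{2}\Big)+\mathbb{P}\Big(\|D'\varepsilon\|_\infty > \tfrac{\lambda_N}{2\sqrt N}\Big)\le \sum_{j=1}^p \mathbb{P}\Big(|W_j|>\tfrac{\lambda_N}{2}\Big)+\sum_{i=1}^N \mathbb{P}\Big(\big|{\textstyle\sum_{t=1}^T}\varepsilon_{i,t}\big|>\tfrac{\lambda_N}{2\sqrt N}\Big),
\]
where $W_j:=\sum_{i=1}^N\sum_{t=1}^T z_{i,t,j}\varepsilon_{i,t}$ is the $j$th entry of $Z'\varepsilon$ and $\sum_{t=1}^T\varepsilon_{i,t}$ is the $i$th entry of $D'\varepsilon$. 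The whole problem thus reduces to establishing a single coordinatewise tail of the form $\exp(-cM^{1/3}\log(p\vee N))=(p\vee N)^{-cM^{1/3}}$, after which the two sums deliver the $p^{1-BM^{1/3}}$ and $N^{1-BM^{1/3}}$ terms respectively, using that $(p\vee N)^{-cM^{1/3}}$ dominates both $p^{-cM^{1/3}}$ and $N^{-cM^{1/3}}$.

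For the coordinatewise bound on $W_j$, the key observation is the martingale structure: by Assumption \ref{assu panel data}, $z_{i,t,j}$ is measurable with respect to the conditioning field and $\mathbb{E}[\varepsilon_{i,t}\mid\cdot]=0$, so $\{z_{i,t,j}\varepsilon_{i,t}\}$ is a martingale difference array (across $t$, independent across $i$). Each increment is a product of two uniformly subgaussian variables by Assumption \ref{assu subgaussian}(a)--(b), hence uniformly subexponential with bounded $\psi_1$ Orlicz norm. I would then invoke a Bernstein/Freedman-type concentration inequality for martingale differences with subexponential increments (the auxiliary inequality in Appendix B): truncate the increments at a level $\asymp \log(p\vee N)$, justified because $\max_{i,t}|z_{i,t,j}\varepsilon_{i,t}|$ has $\psi_1$ Orlicz norm $\lesssim \log(NT)$ by the maximal inequality for Orlicz norms; control the predictable quadratic variation $\sum_{i,t}z_{i,t,j}^2\,\mathbb{E}[\varepsilon_{i,t}^2\mid\cdot]$ by a multiple of $NT$ on a high-probability event; and assemble the pieces into a tail of the form $\exp\!\big(-c\,((\lambda_N/2)^2/NT)^{1/3}\big)$. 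Plugging in $\lambda_N=\sqrt{4MNT(\log(p\vee N))^3}$ gives $(\lambda_N/2)^2/(NT)=M(\log(p\vee N))^3$, whose cube root is $M^{1/3}\log(p\vee N)$, so the per-coordinate bound is $\lesssim (p\vee N)^{-cM^{1/3}}$.

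The $D'\varepsilon$ part is strictly easier. Its $i$th coordinate $\sum_{t=1}^T\varepsilon_{i,t}$ is a martingale sum with merely subgaussian increments, a special case of the inequality above with $T$ in place of $NT$. Since $(\lambda_N/(2\sqrt N))^2/T=M(\log(p\vee N))^3$ as well, each coordinate again contributes $(p\vee N)^{-cM^{1/3}}$, and summing over $i=1,\dots,N$ produces the second term. Combining with the first union bound and absorbing constants yields $\mathbb{P}(\mathcal A_N^c)\le A\big(p^{1-BM^{1/3}}+N^{1-BM^{1/3}}\big)$.

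The hard part will be the martingale concentration step itself. Because Assumption \ref{assu panel data} imposes no temporal independence, I cannot treat $W_j$ as a sum of independent summands and must work through the martingale structure; in particular I must control the \emph{random} predictable quadratic variation $\sum_{i,t}z_{i,t,j}^2\,\mathbb{E}[\varepsilon_{i,t}^2\mid\cdot]$, which is itself a sum of heavy-tailed ($\psi_1$) terms and needs its own concentration bound to be pinned near $NT$. The product structure degrades the increments from subgaussian to subexponential, and this is precisely what turns the usual Gaussian-type exponent into the cube-root exponent $M^{1/3}$ and forces the $(\log(p\vee N))^3$ inflation of $\lambda_N$. Getting the truncation level, the quadratic-variation bound, and the Bernstein exponent to combine cleanly into a single $\exp(-c(x^2/NT)^{1/3})$ tail is the delicate step, and is where the Orlicz-norm machinery supplied in Appendix B carries the argument.
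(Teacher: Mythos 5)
Your proposal follows essentially the same route as the paper: a union bound over coordinates, the observation that each entry of $Z'\varepsilon$ and $D'\varepsilon$ is a zero-mean martingale under Assumption \ref{assu panel data}, the reduction of products of subgaussians to uniformly bounded $\psi_1$ Orlicz norms, and the cube-root-exponent martingale concentration bound $\exp(-c(\epsilon^2 n)^{1/3})$ that converts $\lambda_N^2/(4NT)=M(\log(p\vee N))^3$ into the $(p\vee N)^{-cM^{1/3}}$ per-coordinate tail. The only cosmetic difference is that the "delicate step" you anticipate (truncation and control of the predictable quadratic variation) is not needed here: the paper's Proposition \ref{prop adapation of Fan} (the Fan--Grama--Liu adaptation) delivers the tail directly from the uniform exponential moment condition $\sup_i\mathbb{E}[e^{D|X_i|}]\leq C_1$ on the martingale increments, so the lemma's proof is just a direct application of that black-box result.
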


\begin{proof}
Consider the event $\{\|Z'\varepsilon\|_{\infty}> \lambda_{N}/2\}$ first. To this end, let $z_{j,l}$ denote the $j$th entry of the $l$th column of $Z$, i.e. the $j$th entry of $(z_{1,1,l}, z_{1,2,l},\ldots,z_{1,T,l},z_{2,1,l},\ldots,z_{N,T,l})'$. Similarly, we write $\varepsilon_j$ for the $j$th entry of $\varepsilon$. Now note that $j\mapsto(\lceil\frac{j}{T}\rceil,j-\lfloor\frac{j}{T}\rfloor T)$ is a bijection from $\cbr{1,...,NT}$ to $\cbr{1,...,N}\times \cbr{1,...,T}$ where $\lfloor x\rfloor$ denotes the greatest integer strictly less than $x$ and $\lceil x\rceil$ the smallest integer greater than or equal to $x\in \mathbb{R}$. In case the $l$th column of $Z$ corresponds one of the lags of the left hand side variable, assume for concreteness the $k$th lag, define $\mathcal{F}_{n}=\sigma\del[1]{y_{\lceil\frac{j}{T}\rceil,j-\lfloor\frac{j}{T}\rfloor T},...,y_{\lceil\frac{j}{T}\rceil,j-\lfloor\frac{j}{T}\rfloor T-L}, \varepsilon_{\lceil\frac{j}{T}\rceil,j-\lfloor\frac{j}{T}\rfloor T}, 1\leq j\leq n}$ and $S_{n,l}=\sum_{j=1}^nz_{j,l}\varepsilon_j=\sum_{j=1}^ny_{\lceil\frac{j}{T}\rceil,j-\lfloor\frac{j}{T}\rfloor T-k}\varepsilon_{\lceil\frac{j}{T}\rceil,j-\lfloor\frac{j}{T}\rfloor T}$. Thus,
\begin{align*}
\mathbb{E}[S_{n,l}|\mathcal{F}_{n-1}]
&=
\sum_{j=1}^{n-1}y_{\lceil\frac{j}{T}\rceil,j-\lfloor\frac{j}{T}\rfloor T-k}\varepsilon_{\lceil\frac{j}{T}\rceil,j-\lfloor\frac{j}{T}\rfloor T}+\mathbb{E}\sbr[1]{y_{\lceil\frac{n}{T}\rceil,j-\lfloor\frac{n}{T}\rfloor T-k}\varepsilon_{\lceil\frac{n}{T}\rceil,n-\lfloor\frac{n}{T}\rfloor T}|\mathcal{F}_{n-1}}\\
&=
S_{n-1,l}+y_{\lceil\frac{n}{T}\rceil,j-\lfloor\frac{n}{T}\rfloor T-k}\mathbb{E}\sbr[1]{\varepsilon_{\lceil\frac{n}{T}\rceil,n-\lfloor\frac{n}{T}\rfloor T}|\mathcal{F}_{n-1}}.
\end{align*}
Using that $\del[1]{\lceil\frac{n}{T}\rceil,n-\lfloor\frac{n}{T}\rfloor T}$ is a unique pair $(i,t)\in\cbr{1,\ldots,N}\times\cbr{1,\ldots,T}$ we have that 
\begin{align*}
\mathbb{E}\sbr[1]{\varepsilon_{\lceil\frac{n}{T}\rceil,n-\lfloor\frac{n}{T}\rfloor T}|\mathcal{F}_{n-1}}
=
\mathbb{E}[\varepsilon_{i,t}|\mathcal{F}_{n-1}]
= 
\mathbb{E}[\varepsilon_{i,t}|\sigma(y_{i,s},\ldots,y_{i,1-L}, \varepsilon_{i,s},\ldots,\varepsilon_{i,1},1\leq s\leq t-1)]
\end{align*}
\footnote{For $t=1$, the last expression in the above display is to be read as absence of conditioning on the error terms.}where the last equality follows from the assumption of independence across $1\leq i\leq N$ (Assumption \ref{assu panel data}). By Assumption \ref{assu panel data}, this conditional expectation equals zero as the $\varepsilon_{i,s}$ are linear functions of $y_{i,s},\ldots,y_{i,s-L}$ and $x_{i,s}$. Thus, $S_{n,l}$ is a martingale with mean zero (the increments are martingale differences by the above argument). A similar argument applies when the $l$th column of $Z$  equals $\cbr[0]{x_{1,1,k},..., x_{1,T,k},x_{2,1,k},...,x_{N,T,k}}'$ for some $1\leq k\leq p_x$ such that every row of $Z'\varepsilon$ is a zero mean martingale.

Next, note that by Assumption \ref{assu subgaussian}, for all $1\leq j\leq NT$, $1\leq l\leq p$ and $\epsilon>0$, one has
\[\mathbb{P}(|z_{j,l}\varepsilon_{j}|\geq\epsilon)\leq \mathbb{P}(|z_{j,l}|\geq \sqrt{\epsilon})+\mathbb{P}(|\varepsilon_{j}|\geq \sqrt{\epsilon})\leq Ke^{-C\epsilon}. \]
It follows from Lemma 2.2.1 in \cite{vandervaartWellner1996} that $\|z_{j,l}\varepsilon_{j}\|_{\psi_1}\leq (1+K)/C$. Then, by the definition of the Orlicz norm, $\mathbb{E}\sbr[1]{ e^{C/(1+K)|z_{j,l}\varepsilon_{j}|} }\leq 2$. Now use Proposition \ref{prop adapation of Fan} in Appendix B with $D=C/(1+K)$, $\alpha=1/3$ and $C_1=2$ to conclude
\begin{align*}
\mathbb{P}\del[2]{\|Z'\varepsilon\|_{\infty}> \frac{\lambda_{N}}{2}}
\leq 
 \sum_{l=1}^{p}\mathbb{P}\del[3]{ \envert[3]{\sum_{j=1}^{NT}z_{j,l}\varepsilon_{j} }> \frac{\lambda_{N}}{2NT}NT }
=
pAe^{-B\log (p\vee N)M^{1/3}}\leq Ap^{1-BM^{1/3}}.
\end{align*}
Note also  that the upper bound of the preceding probability becomes arbitrarily small for sufficiently large $N$ and $M$ such that we also conclude
\begin{equation}
\label{eqn rates for Zepsilon infinity}
\|Z'\varepsilon\|_{\infty}=O_p(\lambda_{N}).
\end{equation}

Next, consider the event $\{\|D'\varepsilon\|_{\infty}> \lambda_{N}/(2\sqrt{N})\}$. Using Assumption \ref{assu panel data} a small calculation shows that all entries of $D'\varepsilon$ are zero mean martingales with respect to the natural filtration. As above, Assumption \ref{assu subgaussian} and Lemma 2.2.1 in \cite{vandervaartWellner1996} yield $\|\varepsilon_{i,t}\|_{\psi_2}\leq \del[1]{ \frac{1+K/2}{C}}^{1/2}$ such that by the second to last inequality on page 95 in \cite{vandervaartWellner1996} one has $\|\varepsilon_{i,t}\|_{\psi_1}\leq \|\varepsilon_{i,t}\|_{\psi_2}(\log2)^{-1/2}\leq  \del[1]{ \frac{1+K/2}{C}} ^{1/2}(\log2)^{-1/2}$ for all $i$ and $t$. Then using the definition of the Orlicz norm, $\mathbb{E}\sbr[1]{ \exp  \del[1]{  \del[1]{ \frac{C}{1+K/2}} ^{1/2}(\log2)^{1/2}|\varepsilon_{i,t}| } }\leq 2$ and Proposition \ref{prop adapation of Fan} in Appendix B with $D=\del[1]{\frac{C}{1+K/2}}^{1/2}(\log2)^{1/2}$, $\alpha=1/3$ and $C_1=2$ implies
\begin{align*}
\mathbb{P}\del[2]{\|D'\varepsilon\|_{\infty}> \frac{\lambda_{N}}{2\sqrt{N}} }
\leq
\sum_{i=1}^{N}\mathbb{P}\del[2]{ \envert[2]{\sum_{t=1}^{T}\varepsilon_{i,t} } > \frac{\lambda_N}{2\sqrt{N}T}T } \leq ANe^{-B(\log (p\vee N)M^{1/3}}
\leq
 AN^{1-BM^{1/3}}.
\end{align*}
Note also that the upper bound of the preceding probability becomes arbitrarily small for sufficiently large $N$ and $M$, such that we may also conclude
\begin{equation}
\label{eqn rates for Depsilon infinity}
\|D'\varepsilon\|_{\infty}=O_p\left( \frac{\lambda_{N}}{\sqrt{N}}\right).
\end{equation}
\end{proof}

\vspace{0.5cm}

The following lemma shows that $\kappa^2(\Psi_{N}, s_1, s_2)$ and $\kappa^2(\Psi, s_1, s_2)$ are close if $\Psi_{N}$ and $\Psi$ are in some sense close.

\begin{lemma}
\label{lemma difference between two matrices}
Let $A$ and $B$ be two positive semidefinite $(p+N)\times(p+N)$ matrices and $\delta:=\max_{1\leq i,j \leq p+N}|A_{ij}-B_{ij}|$. For any integers $r_1 \in\{1,\ldots, p\}$ and $r_2\in \{1,\ldots, N\}$, one has
\[\kappa^2(B, r_1,r_2)\geq \kappa^2(A, r_1,r_2) -\delta 25(r_1+r_2).\]
\end{lemma}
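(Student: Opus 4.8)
The plan is to reduce the statement to a pointwise perturbation estimate: I would show that for every fixed choice of index sets $R_1, R_2$ (hence $R = R_1 \cup R_2$) with $|R_1|\leq r_1$, $|R_2|\leq r_2$, and every nonzero $v \in \mathbb{R}^{p+N}$ satisfying the cone constraint $\|v_{R^c}\|_1 \le 4\|v_R\|_1$, the ratio being minimized in the definition of $\kappa^2(B, r_1, r_2)$ is bounded below by $\kappa^2(A, r_1, r_2) - 25\delta(r_1+r_2)$. The claimed inequality then follows by taking the minimum over all admissible pairs $(R, v)$. The observation that makes this work is that the feasibility region, namely the cardinality restrictions on $R_1, R_2$ together with the cone condition $\|v_{R^c}\|_1 \le 4\|v_R\|_1$, does not involve the matrix at all; hence any $(R, v)$ that is feasible for $B$ is automatically feasible for $A$.

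First I would split $v'Bv = v'Av + v'(B - A)v$ and control the perturbation term by $|v'(B-A)v| \le \sum_{i,j} |v_i||v_j|\,|(B-A)_{ij}| \le \delta \|v\|_1^2$, using the definition of $\delta$ as the maximal entrywise discrepancy between $A$ and $B$. This gives $v'Bv \ge v'Av - \delta\|v\|_1^2$, so dividing through by $\frac{1}{r_1+r_2}\|v_R\|_1^2$ yields
\[\frac{v'Bv}{\frac{1}{r_1+r_2}\|v_R\|_1^2} \ge \frac{v'Av}{\frac{1}{r_1+r_2}\|v_R\|_1^2} - \frac{\delta\|v\|_1^2}{\frac{1}{r_1+r_2}\|v_R\|_1^2}.\]
By the feasibility remark above, the first term on the right is at least $\kappa^2(A, r_1, r_2)$ by definition of the latter as a minimum.

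The remaining task is to bound the second term. Here I would invoke the cone condition to write $\|v\|_1 = \|v_R\|_1 + \|v_{R^c}\|_1 \le \|v_R\|_1 + 4\|v_R\|_1 = 5\|v_R\|_1$, whence $\|v\|_1^2 \le 25\|v_R\|_1^2$. Substituting this, the second term is at most $25\delta(r_1+r_2)$, completing the per-$(R,v)$ estimate. Taking the minimum over all admissible $(R, v)$ for $B$ delivers the stated bound.

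I do not expect a serious obstacle; the only points requiring care are the bookkeeping between the vector in the definition of $\kappa^2$ and the scalar $\delta=\max_{i,j}|A_{ij}-B_{ij}|$ of the lemma (the symbol $\delta$ is overloaded, which is why I use $v$ for the vector above), and checking that the factor $25$ arises precisely as the square of the constant $5 = 1 + 4$ produced by the cone condition. Note that positive semidefiniteness of $A$ and $B$ is not needed for the inequality itself; it only ensures that $\kappa^2$ is well defined, so the argument is essentially a robustness statement for the restricted-eigenvalue functional under entrywise perturbations.
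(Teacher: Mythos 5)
Your proposal is correct and follows essentially the same route as the paper's own proof: bound $|v'(A-B)v|\leq\delta\|v\|_1^2$, use the cone condition to get $\|v\|_1\leq 5\|v_R\|_1$ and hence the factor $25$, divide by $\frac{1}{r_1+r_2}\|v_R\|_1^2$, and minimize over the feasible set, which is the same for both matrices. No gaps.
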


\begin{proof}
Let $x$ be a $(p+N)\times 1$ non-zero vector, satisfying $\|x_{R^c}\|_1\leq 4\|x_{R}\|_1$ for $R=R_1\cup(R_2+p)$ where $R_1\subseteq\cbr[0]{1,...,p}$ with $|R_1|\leq r_1$, and $R_2\subseteq\cbr[0]{1,...,N}$ with $|R_2|\leq r_2$. Now,
\begin{align*}
&|x'Ax-x'Bx|=|x'(A-B)x|\leq \|x\|_1\|(A-B)x\|_{\infty}\leq \|x\|_1^2\delta =\delta\left( \|x_R\|_1+\|x_{R^c}\|_1\right) ^2\\
&\leq \delta \left( \|x_R\|_1+4\|x_R\|_1\right)^2\leq \delta 25\|x_R\|_1^2.
\end{align*}
Hence,
\[\frac{x'Bx}{\frac{1}{r_1+r_2}\|x_R\|_1^2}\geq \frac{x'Ax}{\frac{1}{r_1+r_2}\|x_R\|_1^2}-\delta 25(r_1+r_2)\geq\kappa^2(A,r_1,r_2) -\delta 25(r_1+r_2),\]
where the last inequality is true because of the definition of $\kappa^2(A,r_1,r_2)$. Minimising the left-hand side over non-zero $x$ satisfying $\|x_{R^c}\|_1\leq 4\|x_{R}\|_1$ yields the claim.
\end{proof}

\vspace{0.5cm}

Define
\[\tilde{\mathcal{B}}_{N}=\cbr[4]{ \max_{1\leq i,j\leq p+N}\left| \Psi_{N,ij}-\Psi_{ij}\right| \leq \frac{\kappa_2^2(\Psi_Z, s_1)}{50\sbr[2]{s_1+E\del[2]{\frac{\lambda_N}{\sqrt{N}T}}^{-\nu}}}}. \]
Setting $A=\Psi$, $B=\Psi_{N}$ it follows from Lemma \ref{lemma difference between two matrices} that $ \tilde{\mathcal{B}}_{N} \subseteq\mathcal{B}_{N}$ as $\kappa_2^2(\Psi_Z, s_1)\leq \kappa^2(\Psi, s_1, s_2)$ for all $s_2\in\cbr[0]{1,...,N}$ as argued prior to Assumption \ref{assu smallest eigenvalue of PsiZ}. Thus, we just need to find a lower bound on $\mathbb{P}(\tilde{\mathcal{B}}_{N})$ in order to prove Theorem \ref{thm probabilistic oracle inequality}.

\begin{lemma}
\label{lemma lower bound on tildeBnt} 
Let Assumptions \ref{assu panel data}, \ref{assu smallest eigenvalue of PsiZ} and \ref{assu subgaussian} hold. Assume that $s_1+E\del[2]{\frac{\lambda_N}{\sqrt{N}T}}^{-\nu}\lesssim \sqrt{N}$. Then, there exist positive constants $A, B$ such that 
\[\mathbb{P}(\mathcal{B}_N^c)\leq\mathbb{P}(\tilde{\mathcal{B}}_{N}^c)\leq A(p^2+pN)\exp\del[4]{{-B\cbr[3]{ N/\sbr[2]{s_1+E\del[2]{\frac{\lambda_N}{\sqrt{N}T}}^{-\nu}}^2} ^{1/3}}}.\]
\end{lemma}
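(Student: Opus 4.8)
The plan is to reduce the event $\tilde{\mathcal{B}}_N^c$ to entrywise deviations of the scaled Gram matrix and to control each deviation by a concentration inequality, exploiting independence across $i$. Write $d:=s_1+E\del[2]{\frac{\lambda_N}{\sqrt{N}T}}^{-\nu}$ and $\tau:=\frac{\kappa_2^2}{50 d}$, so that $\tilde{\mathcal{B}}_N=\cbr[0]{\max_{i,j}|\Psi_{N,ij}-\Psi_{ij}|\le\tau}$. Since $\kappa_2^2$ is bounded away from zero by Assumption \ref{assu smallest eigenvalue of PsiZ} and $d\lesssim\sqrt{N}$ by hypothesis, $\tau\asymp 1/d\gtrsim 1/\sqrt{N}$. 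By the block structure of $\Psi_N$ and $\Psi$, the difference $\Psi_N-\Psi$ vanishes identically in the lower-right $N\times N$ block, equals $\frac{1}{NT}\sum_{i=1}^N\sum_{t=1}^T(z_{i,t,j}z_{i,t,k}-\mathbb{E}[z_{i,t,j}z_{i,t,k}])$ in the upper-left $p\times p$ block, and equals $\frac{1}{T\sqrt{N}}\sum_{t=1}^T z_{i,t,j}$ in the off-diagonal $p\times N$ blocks. A union bound over the at most $p^2+2pN\lesssim p^2+pN$ nonzero entries then reduces everything to bounding, for each entry, the probability that its absolute value exceeds $\tau$; the factor $p^2+pN$ in the statement is exactly this count.

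First I would treat the upper-left block, which is the crux. Fix $(j,k)$ and set $\chi_{i,t}:=z_{i,t,j}z_{i,t,k}-\mathbb{E}[z_{i,t,j}z_{i,t,k}]$ and $W_i:=\sum_{t=1}^T\chi_{i,t}$. Products of uniformly subgaussian variables are uniformly subexponential, so by Assumption \ref{assu subgaussian} and Lemma 2.2.1 of \cite{vandervaartWellner1996} the centred $\chi_{i,t}$ have uniformly bounded $\psi_1$-Orlicz norm, and the triangle inequality for Orlicz norms gives $\enVert[0]{W_i}_{\psi_1}\lesssim T$ with no restriction on the temporal dependence of $\cbr[0]{z_{i,t}}_t$. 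The essential point is that $\chi_{i,t}$ is \emph{not} a martingale difference in $t$ (the covariates are predetermined, so the conditional mean of $\chi_{i,t}$ given the past is not zero); hence, unlike in Lemma \ref{lemma lower bound on Ant}, one cannot order the double sum as a single martingale. Instead I exploit independence across $i$ from Assumption \ref{assu panel data}: $\cbr[0]{W_i}_{i=1}^N$ is a mean-zero martingale difference sequence, so $\sum_{i=1}^N W_i$ is a martingale with $N$ increments of $\psi_1$-scale $\asymp T$. Applying Proposition \ref{prop adapation of Fan} with $\alpha=1/3$ and $D\asymp 1/T$ to this martingale at the level $NT\tau$ produces an exponent of order $\del[2]{\frac{(NT\tau)^2}{NT^2}}^{1/3}=(N\tau^2)^{1/3}\asymp\del[1]{N/d^2}^{1/3}$, so $\mathbb{P}(|\Psi_{N,jk}-\Psi_{jk}|>\tau)\le A\exp\del[1]{-B(N/d^2)^{1/3}}$. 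The naive alternative of applying the triangle inequality over all $NT$ summands would give only $\exp(-c/d)$, which is far too weak, so exploiting cross-sectional independence is indispensable here.

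Next I would handle the off-diagonal block. For fixed $(j,i)$ I split $\frac{1}{T\sqrt{N}}\sum_{t=1}^T z_{i,t,j}=\frac{1}{T\sqrt{N}}\sum_{t=1}^T\mathbb{E}[z_{i,t,j}]+\frac{1}{T\sqrt{N}}\sum_{t=1}^T(z_{i,t,j}-\mathbb{E}[z_{i,t,j}])$. The first term is deterministic and, since subgaussianity forces uniformly bounded means, is at most $C/\sqrt{N}$ in absolute value; under $d\lesssim\sqrt{N}$ this is at most $\tau/2$ for $N$ large, so it is negligible relative to the threshold. For the stochastic term there is no cross-sectional averaging available, but the triangle inequality for the $\psi_2$-norm gives $\psi_2$-scale $\lesssim T/(T\sqrt{N})=1/\sqrt{N}$ (again without any temporal dependence assumption), whence a subgaussian tail bound yields $\mathbb{P}(|\cdot|>\tau/2)\le 2\exp(-cN\tau^2)\le 2\exp(-cN/d^2)$, which is itself bounded by $A\exp(-B(N/d^2)^{1/3})$ because $N/d^2\gtrsim 1$. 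Collecting the $p^2$ upper-left bounds and the $2pN$ off-diagonal bounds in the union bound gives $\mathbb{P}(\tilde{\mathcal{B}}_N^c)\le A(p^2+pN)\exp\del[1]{-B(N/d^2)^{1/3}}$, and $\mathbb{P}(\mathcal{B}_N^c)\le\mathbb{P}(\tilde{\mathcal{B}}_N^c)$ follows from the inclusion $\tilde{\mathcal{B}}_N\subseteq\mathcal{B}_N$ already established via Lemma \ref{lemma difference between two matrices}.

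I expect the main obstacle to be the upper-left block: obtaining the sharp $(N/d^2)^{1/3}$ rate requires recognizing that the quadratic terms admit no usable martingale structure in $t$, so one must group over the independent index $i$ and feed $N$ increments of Orlicz scale $T$ into Proposition \ref{prop adapation of Fan} with the correct parameters. A secondary but genuine subtlety is verifying that the hypothesis $d\lesssim\sqrt{N}$ is exactly what makes the deterministic off-diagonal bias $O(1/\sqrt{N})$ dominated by the threshold $\tau\asymp 1/d$, so that comparing $\Psi_N$ to the block-diagonal $\Psi$ (rather than to $\mathbb{E}\Psi_N$) is legitimate.
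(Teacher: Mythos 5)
Your proposal follows essentially the same route as the paper: reduce $\tilde{\mathcal{B}}_N^c$ to entrywise deviations via the block structure and a union bound over $p^2+pN$ terms, handle the upper-left block by grouping the double sum over the independent index $i$ (each group having $\psi_1$-scale proportional to $T$, equivalently $O(1)$ after dividing by $T$) and feeding the $N$ resulting increments into Proposition \ref{prop adapation of Fan} with $\alpha=1/3$ to get the $(N/d^2)^{1/3}$ exponent, and handle the off-diagonal block by a subgaussian tail bound at scale $1/\sqrt{N}$; your observation that no martingale structure in $t$ is available for the quadratic terms is exactly why the paper also groups over $i$. The only divergence is in the off-diagonal block: you split off the deterministic mean and claim it is at most $\tau/2$ for large $N$, but since the mean bound $C/\sqrt{N}$ and the threshold $\tau\asymp 1/d\gtrsim 1/\sqrt{N}$ are of the \emph{same} order under $d\lesssim\sqrt{N}$, this comparison is a constants issue that does not hold automatically; the paper sidesteps it by bounding the $\psi_2$-norm of the entire uncentered average $\frac{1}{T\sqrt{N}}\sum_t z_{i,t,l}$ by $C'/\sqrt{N}$ and applying Markov directly, which yields a valid (possibly vacuous) inequality with no case distinction. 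Your version is easily patched — if $\tau\lesssim 1/\sqrt{N}$ then $N/d^2=O(1)$ and the claimed bound holds trivially by enlarging $A$ — but as written that step needs the extra case analysis.
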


\begin{proof}
Since the lower right $N\times N$ blocks of $\Psi_{N}$ and $\Psi$ are identical, it suffices to bound the entries of $\frac{1}{NT}Z'Z-\frac{1}{NT}\mathbb{E}[Z'Z]$ and $\frac{1}{T\sqrt{N}}Z'D$. A typical element of $\frac{1}{NT}Z'Z-\frac{1}{NT}\mathbb{E}[Z'Z]$ is of the form $\frac{1}{NT}\sum_{i=1}^{N}\sum_{t=1}^{T}( z_{i,t,l}z_{i,t,k}-\mathbb{E}[z_{i,t,l}z_{i,t,k}])$ for some $l,k\in \{1,\ldots,p\}$. By Assumption \ref{assu subgaussian} we have for every $\epsilon>0$
\[\mathbb{P}(|z_{i,t,l}z_{i,t,k}|\geq\epsilon)\leq \mathbb{P}(|z_{i,t,l}|\geq \sqrt{\epsilon})+\mathbb{P}(|z_{i,t,k}|\geq \sqrt{\epsilon})\leq Ke^{-C\epsilon}. \]
It follows from Lemma 2.2.1 in \cite{vandervaartWellner1996} that $\|z_{i,t,l}z_{i,t,k}\|_{\psi_1}\leq (1+K)/C$. Hence, by subadditivity of the Orlicz norm and Jensen's inequality
\begin{align*}
\enVert[4]{ \frac{1}{T}\sum_{t=1}^{T}\left( z_{i,t,l}z_{i,t,k}-\mathbb{E}[z_{i,t,l}z_{i,t,k}]\right)}_{\psi_1}
\leq
2\max_{1\leq t\leq T}\|z_{i,t,l}z_{i,t,k}\|_{\psi_1}\leq \frac{2(1+K)}{C}.
\end{align*}
Thus, by the definition of the Orlicz norm, $\mathbb{E} \exp \del[1]{\frac{C}{2(1+K)}\envert[1]{ \frac{1}{T}\sum_{t=1}^{T}( z_{i,t,l}z_{i,t,k}-\mathbb{E}[z_{i,t,l}z_{i,t,k}])}}\leq 2$. Using independence across $i$ (Assumption \ref{assu panel data}) to invoke Proposition \ref{prop adapation of Fan} in Appendix B with $D=\frac{C}{2(1+K)}$, $\alpha=1/3$ and $C_1=2$ such that for every $x\gtrsim \frac{1}{\sqrt{N}}$
\begin{equation}
\label{eqn block comparison 1}
\mathbb{P}\del[2]{  \envert[2]{ \sum_{i=1}^{N}\frac{1}{T}\sum_{t=1}^{T}( z_{i,t,l}z_{i,t,k}-\mathbb{E}[z_{i,t,l}z_{i,t,k}])}\geq Nx } \leq Ae^{-B(x^2N)^{1/3}},
\end{equation}
for positive constants $A$ and $B$.

Next, consider $\frac{1}{T\sqrt{N}}Z'D$. A typical element can be written as $\frac{1}{\sqrt{N}T}\sum_{t=1}^{T}z_{i,t,l}$ for some $i\in \{1,\ldots,N\}$ and $l\in \{1,\ldots,p\}$. By Assumption \ref{assu subgaussian}, we have $\mathbb{P}(|z_{i,t,l}|\geq \epsilon)\leq \frac{1}{2}Ke^{-C\epsilon^2}$ for all $\epsilon>0$ and it follows from Lemma 2.2.1 in \cite{vandervaartWellner1996} that $\|z_{i,t,l}\|_{\psi_2}\leq \del [1]{  \frac{1+K/2}{C}}^{1/2}$. Hence,
\begin{equation*}
\label{align M over squart root N}
\enVert[4]{ \frac{1}{\sqrt{N}T}\sum_{t=1}^{T}z_{i,t,l}}_{\psi_2}
\leq
\frac{1}{\sqrt{N}}\max_{1\leq t \leq T}\| z_{i,t,l}\|_{\psi_2}\leq \frac{1}{\sqrt{N}}\del[2]{ \frac{1+K/2}{C}} ^{1/2}=:\frac{C'}{\sqrt{N}}.
\end{equation*}
Thus, it follows by Markov's inequality, positivity and increasingness of $\psi_2(x)$, as well as $1\wedge \psi_2(x)^{-1}=1\wedge (e^{x^2}-1)^{-1}\leq 2e^{-x^2}$ that for any $x>0$
\begin{align}
\mathbb{P}\del [2]{ \envert[2]{ \frac{1}{\sqrt{N}T}\sum_{t=1}^{T}z_{i,t,l}} >x}
\leq 
1\wedge \frac{1}{e^{(x\sqrt{N}/C')^2}-1} 
 \leq
 2e^{-\frac{Nx^2}{C'^2}}\leq Ae^{-Bx^2N}\label{align block comparison 2},
\end{align}
where the last estimate follows by choosing $A$ and $B$ sufficiently large/small for (\ref{eqn block comparison 1}) and (\ref{align block comparison 2}) both to be valid. Setting $x=\frac{\kappa^2}{50\sbr[2]{s_1+E\del[2]{\frac{\lambda_N}{\sqrt{N}T}}^{-\nu}} }=\frac{\kappa^2}{50}\frac{1}{s_1+E\del[2]{\frac{\lambda_N}{\sqrt{N}T}}^{-\nu}}$, using that $\frac{1}{s_1+E\del[2]{\frac{\lambda_N}{\sqrt{N}T}}^{-\nu}}\gtrsim \frac{1}{\sqrt{N}}$ and $\kappa^2$ being bounded away from 0 (Assumption \ref{assu smallest eigenvalue of PsiZ}), we have
\begin{align*}
&\mathbb{P}(\mathcal{B}_{N}^c)
\leq
\mathbb{P}( \tilde{\mathcal{B}}_{N}^c)
=
\mathbb{P}\del[2]{\max_{1\leq i,j\leq p+N}| \Psi_{N,ij}-\Psi_{ij}| > x}\\
&\leq
 A(p^2+pN)\sbr[4]{\exp\del[4]{-B\cbr[4]{\sbr[4]{\frac{\kappa^2/50}{s_1+E\del[2]{\frac{\lambda_N}{\sqrt{N}T}}^{-\nu}}}^2N}^{1/3}}\vee  \exp\del[4] {-B{\sbr[4]{\frac{\kappa^2/50}{s_1+E\del[2]{\frac{\lambda_N}{\sqrt{N}T}}^{-\nu}}}^2N}}}\\
&\leq
A (p^2+pN)\exp\del[4]{{-B\cbr[3]{ N/\sbr[2]{s_1+E\del[2]{\frac{\lambda_N}{\sqrt{N}T}}^{-\nu}}^2} ^{1/3}}}
\end{align*}
where the last estimate has merged $(\kappa^2/50)^{2/3}$ into $B$.
\end{proof}

\vspace{0.5cm}

\begin{proof}[Proof of Theorem \ref{thm probabilistic oracle inequality}]
Theorem 1 follows by combining Lemmas \ref{lemma deterministic results}, \ref{lemma lower bound on Ant}, and  \ref{lemma lower bound on tildeBnt}.
\end{proof}

\vspace{0.5cm}

\begin{corollary}
\label{thm l1 norm consistency}
Let the conditions of Theorem \ref{thm probabilistic oracle inequality} hold. For large enough $M>0$ and assuming $\frac{(\log (p\vee N))^3\sbr[2]{s_1+E\del[2]{\frac{\lambda_N}{\sqrt{N}T}}^{-\nu}}^2}{N}=o(1)$, we have the following stochastic orders valid uniformly over
$\mathcal{F}(s_1,\nu,E)$. 
\[
\frac{1}{NT}\left\|\Pi(\hat{\gamma}-\gamma) \right\|^2 = O_p\del[3]{ s_1\del[3]{ \frac{\lambda_N}{NT}}^2}  + O_p\del[3]{ \frac{\lambda_N}{\sqrt{N}NT}E\del[2]{\frac{\lambda_N}{\sqrt{N}T}}^{1-\nu}},\]
\[\|\hat{\alpha}-\alpha\|_1=O_p\del[3]{  s_1\frac{\lambda_N}{NT}}+ O_p\del[3]{  \frac{1}{\sqrt{N}}E\del[2]{\frac{\lambda_N}{\sqrt{N}T}}^{1-\nu}},\]
\[\|\hat{\eta}-\eta\|_1 =O_p\del[3]{s_1 \frac{\lambda_N}{\sqrt{N}T}}  + O_p\del[3]{  E\del[2]{\frac{\lambda_N}{\sqrt{N}T}}^{1-\nu}}.\]
\end{corollary}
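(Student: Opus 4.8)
The plan is to read off the corollary as a direct probabilistic upgrade of the finite-sample bounds in Theorem \ref{thm probabilistic oracle inequality}. On the event $\mathcal{A}_N\cap\mathcal{B}_N$ all three deterministic inequalities of that theorem hold, and because $\kappa_2^2$ is bounded away from zero by Assumption \ref{assu smallest eigenvalue of PsiZ} the leading constants $120/\kappa_2^2$ and $120/\kappa_2^2+20$ are $O(1)$. Hence each right-hand side is, up to a fixed multiplicative constant, exactly the sum of the two sequences appearing in the claimed $O_p$ statements; for example $\frac{120\lambda_N s_1}{\kappa_2^2 NT}\lesssim s_1\frac{\lambda_N}{NT}$ and $\del[2]{\frac{120}{\kappa_2^2}+20}\frac{1}{\sqrt{N}}E\del[2]{\frac{\lambda_N}{\sqrt{N}T}}^{1-\nu}\lesssim \frac{1}{\sqrt{N}}E\del[2]{\frac{\lambda_N}{\sqrt{N}T}}^{1-\nu}$, and similarly for the prediction error and for $\hat{\eta}$. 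It therefore suffices to show that $\mathbb{P}(\mathcal{A}_N\cap\mathcal{B}_N)\to 1$, i.e. that the three terms subtracted in the probability bound of Theorem \ref{thm probabilistic oracle inequality} all vanish.

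First I would dispatch the two terms $Ap^{1-BM^{1/3}}$ and $AN^{1-BM^{1/3}}$: since $N\to\infty$ (so that $p\vee N\to\infty$), choosing $M$ large enough that $BM^{1/3}>1$ drives both to zero. The remaining, and only genuinely substantive, step is to control the third term $A(p^2+pN)\exp\del[2]{-B\{N/\bar{s}^2\}^{1/3}}$, where I abbreviate $\bar{s}:=s_1+E\del[2]{\frac{\lambda_N}{\sqrt{N}T}}^{-\nu}$. This is where the newly imposed assumption $\frac{(\log(p\vee N))^3\bar{s}^2}{N}=o(1)$ enters: it is equivalent to $(N/\bar{s}^2)^{1/3}/\log(p\vee N)\to\infty$, so that for every fixed $c>0$ one eventually has $(N/\bar{s}^2)^{1/3}\ge c\log(p\vee N)$ and hence $\exp\del[2]{-B\{N/\bar{s}^2\}^{1/3}}\le (p\vee N)^{-Bc}$. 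Combining this with $p^2+pN\le 2(p\vee N)^2$ and picking $c$ so that $Bc>2$ shows the third term is $\lesssim (p\vee N)^{2-Bc}\to 0$. I expect this to be the main (indeed essentially the only non-routine) obstacle: everything hinges on the rate assumption converting the stretched-exponential decay in $N/\bar{s}^2$ into a polynomial power of $p\vee N$ strong enough to beat the $(p\vee N)^2$ prefactor.

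Putting these together gives $\mathbb{P}(\mathcal{A}_N\cap\mathcal{B}_N)\to 1$, which upgrades the deterministic inequalities to the stated stochastic orders: for any $\epsilon>0$ the event on which the bounds hold has probability at least $1-\epsilon$ for $N$ large, and on it the left-hand sides are dominated by a fixed constant times the claimed orders, delivering the $O_p$ conclusions. Finally, uniformity over $\mathcal{F}(s_1,\nu,E)$ is inherited at no extra cost: the probability bound of Theorem \ref{thm probabilistic oracle inequality} does not depend on the particular $(\alpha,\eta)\in\mathcal{F}(s_1,\nu,E)$ (the underlying events $\mathcal{A}_N$ and $\tilde{\mathcal{B}}_N\subseteq\mathcal{B}_N$ carry probability bounds depending only on $s_1,\nu,E,p,N,T$), and the deterministic right-hand sides likewise depend only on $s_1,\nu,E$. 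Thus a single high-probability event simultaneously yields the bounds for every $\gamma\in\mathcal{F}(s_1,\nu,E)$, which is exactly uniform validity of the stochastic orders.
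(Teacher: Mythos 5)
Your proposal is correct and follows essentially the same route as the paper: both reduce the corollary to showing that the three terms in the probability bound of Theorem \ref{thm probabilistic oracle inequality} vanish, using large $M$ for the first two and the new rate condition for the third, and both inherit uniformity from the fact that the deterministic bounds and the probability bound depend on $\gamma$ only through $s_1,\nu,E$. The only difference is that you spell out explicitly how $\frac{(\log(p\vee N))^3\bar{s}^2}{N}=o(1)$ converts the stretched-exponential factor into a polynomial power of $p\vee N$ beating the $(p^2+pN)$ prefactor, a step the paper simply asserts.
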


\begin{proof}[Proof of Corollary \ref{thm l1 norm consistency}]
Given positive constants $A$ and $B$, $Ap^{1-BM^{1/3}}$ and $AN^{1-BM^{1/3}}$ become arbitrarily small for large enough $M>0$. By $\frac{(\log (p\vee N))^3\sbr[2]{s_1+E\del[2]{\frac{\lambda_N}{\sqrt{N}T}}^{-\nu}}^2}{N}=o(1)$, $A(p^2+pN)\exp\del[4]{{-B\cbr[3]{ N/\sbr[2]{s_1+E\del[2]{\frac{\lambda_N}{\sqrt{N}T}}^{-\nu}}^2} ^{1/3}}}\rightarrow 0$ as $N \rightarrow \infty$. Thus the lower bound on the probability in Theorem  \ref{thm probabilistic oracle inequality} goes to one as $N \rightarrow \infty$ for large enough $M>0$ and the conclusion follows from Theorem  \ref{thm probabilistic oracle inequality}.
\end{proof}

\subsection{Proof of Lemma \ref{lemma step stone to final theorem}}

The following lemma gives the rates of the uniform prediction and estimation errors for nodewise regression. 

\begin{lemma}
\label{lemma big O p oracle inequalities for nodewise regression}
Let Assumptions \ref{assu panel data}, \ref{assu subgaussian} and \ref{assu more on eigen values} hold. Let $\lambda_{node}=\sqrt{16M(\log p)^3/N}$ for some $M>0$. For $M$ sufficiently large, we have
\begin{align}
\label{eqn nodewise regression oracle inequality prediction}
\max_{j\in H_1}\frac{1}{NT}\|Z_{-j}(\hat{\phi}_j-\phi_j)\|^2&=O_p\left( \bar{G}\lambda_{node}^{2-\vartheta}\right)\\
\label{eqn nodewise regression oracle inequality estimation}
\max_{j\in H_1}\|\hat{\phi}_{j}-\phi_{j}\|_1&=O_p\left( \bar{G}\lambda_{node}^{1-\vartheta}\right) \\
\max_{j\in H_1}\frac{1}{NT}\|Z_{-j}'\zeta_j\|_{\infty}&=O_p(\lambda_{node})\label{eqn order for event D}.
\end{align}
\end{lemma}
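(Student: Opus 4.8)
The plan is to treat each nodewise regression in (\ref{eqn nodewise regression lasso objective function}) as a weakly sparse Lasso problem and to mimic the argument behind Theorem \ref{thm probabilistic oracle inequality}, with the population projection error $\zeta_j$ playing the role that $\varepsilon$ played there and the weak sparsity bound (\ref{wsinv}) on $\phi_j$ playing the role that weak sparsity of $\eta$ played before. The one genuinely new ingredient is the probabilistic control of the score $\frac{1}{NT}Z_{-j}'\zeta_j$ uniformly over $j\in H_1$, which is exactly the content of (\ref{eqn order for event D}); the other two displays will then follow by a deterministic argument on the event where this score is small and a compatibility condition holds.

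First I would establish (\ref{eqn order for event D}). A typical entry of $\frac{1}{NT}Z_{-j}'\zeta_j$ is $\frac{1}{NT}\sum_{i=1}^{N}\sum_{t=1}^{T}z_{i,t,k}\zeta_{j,i,t}$ for $k\neq j$, and by (\ref{eqn regression error by construction}) it has mean zero. Since $z_{i,t,k}$ is subgaussian (Assumption \ref{assu subgaussian}) and $\zeta_{j,i,t}$ is subgaussian (Assumption \ref{assu more on eigen values}(c)), the product is subexponential, so by Lemma 2.2.1 in \cite{vandervaartWellner1996} its $\psi_1$-Orlicz norm is uniformly bounded. Exactly as in the proof of Lemma \ref{lemma lower bound on tildeBnt}, subadditivity of the Orlicz norm and Jensen's inequality bound $\|\frac{1}{T}\sum_{t=1}^{T}(z_{i,t,k}\zeta_{j,i,t}-\mathbb{E}[z_{i,t,k}\zeta_{j,i,t}])\|_{\psi_1}$ uniformly, and then independence across $i$ (Assumption \ref{assu panel data}) lets me invoke Proposition \ref{prop adapation of Fan} in Appendix B to obtain a Bernstein-type tail bound for the centred average over $i$. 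A union bound over the at most $|H_1|(p-1)\le p^2$ pairs $(j,k)$ then yields $\max_{j\in H_1}\frac{1}{NT}\|Z_{-j}'\zeta_j\|_{\infty}=O_p(\lambda_{node})$, the cube-of-log factor in $\lambda_{node}=\sqrt{16M(\log p)^3/N}$ absorbing the logarithmic cost of the union bound. In contrast to the proof of Lemma \ref{lemma lower bound on Ant}, no martingale difference structure of $\zeta_{j,i,t}$ in $t$ is needed here; independence across $i$ together with the subexponential Orlicz bound suffices, just as for the $Z'Z$ block in Lemma \ref{lemma lower bound on tildeBnt}.

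Next I would transfer a compatibility/restricted eigenvalue condition to each nodewise design. By eigenvalue interlacing, $\text{mineval}(\Psi_{Z,-j,-j})\ge \text{mineval}(\Psi_Z)$, which is bounded away from zero uniformly in $j$ by Assumption \ref{assu more on eigen values}(a); combining this with Lemma \ref{lemma difference between two matrices} and the sup-norm closeness of $\frac{1}{NT}Z'Z$ to $\Psi_Z$ established inside the proof of Lemma \ref{lemma lower bound on tildeBnt} delivers a compatibility constant for $\frac{1}{NT}Z_{-j}'Z_{-j}$ bounded away from zero with probability tending to one, uniformly over $j\in H_1$, at the weak-sparsity level $s_j\asymp G_j\lambda_{node}^{-\vartheta}$ that the rate conditions keep under control. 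On the intersection of these two high-probability events I would then run the deterministic weakly sparse argument of Lemmas \ref{lemma first step} and \ref{lemma deterministic results} for a single block of parameters: thresholding the entries of $\phi_j$ at level $\lambda_{node}$, the weak sparsity bound (\ref{wsinv}) implies that at most $G_j\lambda_{node}^{-\vartheta}$ coordinates exceed the threshold while the tail $\ell_1$-mass of the rest is $O(G_j\lambda_{node}^{1-\vartheta})$. Feeding this into the basic inequality gives prediction error of order $\lambda_{node}^{2}\cdot G_j\lambda_{node}^{-\vartheta}=G_j\lambda_{node}^{2-\vartheta}$ and $\ell_1$-error of order $\lambda_{node}\cdot G_j\lambda_{node}^{-\vartheta}=G_j\lambda_{node}^{1-\vartheta}$; replacing $G_j$ by $\bar{G}=\max_{j\in H_1}G_j$ and using that both underlying events hold uniformly over $j\in H_1$ produces (\ref{eqn nodewise regression oracle inequality prediction}) and (\ref{eqn nodewise regression oracle inequality estimation}).

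I expect the main obstacle to be the \emph{simultaneous} control over all $j\in H_1$: the score bound (\ref{eqn order for event D}) and the compatibility condition must hold for every submatrix $Z_{-j}$ at once while retaining the sharp rate $\lambda_{node}\asymp\sqrt{(\log p)^3/N}$, so the $O(p^2)$ union bounds may cost only logarithmically. This is precisely where the subexponential concentration of Proposition \ref{prop adapation of Fan} and the cube-of-log in $\lambda_{node}$ are indispensable, and coupling this uniformity with the weak-sparsity thresholding so that the powers $2-\vartheta$ and $1-\vartheta$ emerge cleanly is the crux of the argument.
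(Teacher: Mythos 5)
Your proposal matches the paper's own proof in all essentials: the same centring of $\frac{1}{NT}Z_{-j}'\zeta_j$ via (\ref{eqn regression error by construction}), the same subexponential Orlicz-norm argument with Proposition \ref{prop adapation of Fan} and a union bound over at most $h_1p\le p^2$ pairs for (\ref{eqn order for event D}), the same transfer of the compatibility condition to $\frac{1}{NT}Z_{-j}'Z_{-j}$ via sup-norm closeness to $\Psi_Z$ and a perturbation lemma, and the same thresholding of $\phi_j$ at level $\lambda_{node}$ exploiting (\ref{wsinv}) to get $s_j^*\le G_j\lambda_{node}^{-\vartheta}$ and tail $\ell_1$-mass $G_j\lambda_{node}^{1-\vartheta}$, which fed into the standard Lasso basic inequality yields exactly the rates $\bar{G}\lambda_{node}^{2-\vartheta}$ and $\bar{G}\lambda_{node}^{1-\vartheta}$. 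The only cosmetic difference is that the paper invokes the oracle-inequality template of van de Geer (2011, Section 6.2.3) where you cite the analogous deterministic argument of Lemmas \ref{lemma first step} and \ref{lemma deterministic results}; these are the same technique.
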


\begin{proof}
We say that a $(p-1)\times (p-1) $ matrix $A$ satisfies the \textit{compatibility condition} $CC(r)$ for some integer $r \in\{1,\ldots, p-1\}$ if
\begin{equation*}
\kappa^2\left(A, r\right) :=\min_{\substack{R \subseteq \{1,\ldots, p-1\}\\ |R|\leq r}} \min_{\substack{\delta\in \mathbb{R}^{p-1}\setminus  \{0\}\\\|\delta_{R^c}\|_1\leq 3\|\delta_R\|_1}}\frac{\delta'A \delta}{\frac{1}{r}\|\delta_R\|_1^2}>0.
\end{equation*}
Define a $(p-1)\times 1$ vector $\phi_j^*$ such that
\[\phi_{j,k}^*:=\phi_{j,k}1\{|\phi_{j,k}|\geq \lambda_{node}\}\qquad k=1,\ldots,p-1\]
and its active set $J_j^*$ as well as its sparsity index $s_j^*$
\[J_j^*:=\{k: \phi_{j,k}^*\neq 0, k=1,\ldots,p-1\}\qquad 1\leq s_j^*:=|J_j^*|\leq p-1.\]
Consider the events
\[\mathcal{D}_N=\cbr[2]{ \max_{j\in H_1}\frac{1}{NT}\|Z_{-j}'\zeta_j\|_{\infty}\leq \frac{\lambda_{node}}{4}},\]
\[ \mathcal{E}_{N,j}=\cbr[3]{ \kappa^2\del[2]{ \frac{1}{NT}Z_{-j}'Z_{-j}, s_j^*}\geq \frac{\kappa^2( \Psi_{Z,-j,-j}, s_j^*)}{2}},\]
and
\[\mathcal{F}_N=\cbr[3]{\enVert[3]{\frac{1}{NT}Z'Z-\Psi_{Z}}_{\infty}\leq \lambda_{node}}.\]
Using the same technique as in Section 6.2.3 of \cite{vandegeer2011}, we arrive at the following oracle inequality, which is almost the same as the one on the top of p111 of \cite{vandegeer2011}: for each $j\in H_1$, on $\mathcal{D}_N\cap \mathcal{E}_{N,j}$
\begin{align}
\frac{1}{NT}\|Z_{-j}(\hat{\phi}_j-\phi_j)\|^2+\lambda_{node}\|\hat{\phi}_j-\phi_j\|_1&\leq \frac{3}{NT}\|Z_{-j}(\phi_j^*-\phi_j)\|^2+\frac{48\lambda_{node}^2s_j^*}{\kappa^2(\frac{1}{NT}Z_{-j}'Z_{-j}, s_j^*)}+\lambda_{node}\|\phi_j^*-\phi_j\|_1\notag\\
&\leq \frac{3}{NT}\|Z_{-j}(\phi_j^*-\phi_j)\|^2+\frac{96\lambda_{node}^2s_j^*}{\kappa^2(\Psi_{Z}, s_j^*)}+\lambda_{node}\|\phi_j^*-\phi_j\|_1\label{align three terms p111 van de geer 2011}
\end{align}
where the second inequality is due to event  $\mathcal{E}_{N,j}$ and that $\kappa^2\left( \Psi_{Z,-j,-j}, r\right)\geq\kappa^2\left( \Psi_{Z}, r\right)$ for all $j=1,...,p$ and $r=1,...,p-1$. 

We now bound the three terms on the right hand side of (\ref{align three terms p111 van de geer 2011}). Let $b_j:=\phi_j^*-\phi_j$.
\begin{align*}
&\frac{1}{NT}\|Z_{-j}(\phi_j^*-\phi_j)\|^2=b_j'\Psi_{Z,-j,-j}b_j+b_j'\del[3]{\frac{1}{NT}Z_{-j}'Z_{-j}-\Psi_{Z,-j,-j}}b_j\\
&\leq \text{maxeval}(\Psi_{Z,-j,-j})\|b_j\|^2+\enVert[3]{\frac{1}{NT}Z_{-j}'Z_{-j}-\Psi_{Z,-j,-j}}_{\infty}\|b_j\|_1^2\leq \text{maxeval}(\Psi_{Z})\|b_j\|^2+\lambda_{node}\|b_j\|_1^2
\end{align*}
where the last inequality holds on event $\mathcal{F}_N$. Note that
\[\|b_j\|^2=\sum_{k=1}^{p-1}|\phi_{j,k}|^21\{|\phi_{j,k}|< \lambda_{node}\}\leq \lambda_{node}^{2-\vartheta}\sum_{k=1}^{p-1}|\phi_{j,k}|^{\vartheta}1\{|\phi_{j,k}|< \lambda_{node}\}\leq G_j\lambda_{node}^{2-\vartheta}.\]
\begin{equation}
\label{eqn bj l1 norm}
\|b_j\|_1=\sum_{k=1}^{p-1}|\phi_{j,k}|1\{|\phi_{j,k}|< \lambda_{node}\}\leq \lambda_{node}^{1-\vartheta}\sum_{k=1}^{p-1}|\phi_{j,k}|^{\vartheta}1\{|\phi_{j,k}|< \lambda_{node}\}\leq G_j\lambda_{node}^{1-\vartheta}.
\end{equation}
\begin{equation}
\label{eqn sj star rate}
1\leq s_j^*=\sum_{k=1}^{p-1}1\{|\phi_{j,k}|\geq \lambda_{node}\}=\sum_{k=1}^{p-1}1\{|\phi_{j,k}|^{\vartheta}\geq \lambda_{node}^{\vartheta}\}\leq G_j\lambda_{node}^{-\vartheta}.
\end{equation}
Thus, for each $j\in H_1$, on $ \mathcal{D}_N\cap \mathcal{E}_{N,j}\cap\mathcal{F}_N$
\begin{align*}
&\frac{1}{NT}\|Z_{-j}(\hat{\phi}_j-\phi_j)\|^2+\lambda_{node}\|\hat{\phi}_j-\phi_j\|_1\\
&\leq \text{maxeval}(\Psi_{Z})G_j\lambda_{node}^{2-\vartheta}+G_j^2\lambda_{node}^{3-2\vartheta}+\frac{96}{\kappa^2(\Psi_{Z}, s_j^*)}G_j\lambda_{node}^{2-\vartheta}+G_j\lambda_{node}^{2-\vartheta}\\
&=\del[3]{\text{maxeval}(\Psi_{Z})+\frac{96}{\kappa^2(\Psi_{Z}, s_j^*)}+1}G_j\lambda_{node}^{2-\vartheta}+G_j^2\lambda_{node}^{3-2\vartheta}
\end{align*}
from where we can extract two oracle inequalities
\[\frac{1}{NT}\|Z_{-j}(\hat{\phi}_j-\phi_j)\|^2\leq \del[3]{\text{maxeval}(\Psi_{Z})+\frac{96}{\kappa^2(\Psi_{Z}, s_j^*)}+1}G_j\lambda_{node}^{2-\vartheta}+G_j^2\lambda_{node}^{3-2\vartheta},\]
\[\|\hat{\phi}_j-\phi_j\|_1\leq \del[3]{\text{maxeval}(\Psi_{Z})+\frac{96}{\kappa^2(\Psi_{Z}, s_j^*)}+1}G_j\lambda_{node}^{1-\vartheta}+G_j^2\lambda_{node}^{2-2\vartheta}.\]
As the oracle inequalities in the above display are valid simultaneously on $\mathcal{D}_N\cap (\cap_{j\in H_1} \mathcal{E}_{N,j})\cap\mathcal{F}_N$ we conclude that
\[
\max_{j\in H_1}\frac{1}{NT}\|Z_{-j}(\hat{\phi}_j-\phi_j)\|^2\leq \del[3]{\text{maxeval}(\Psi_{Z})+\frac{96}{\min_{j\in H_1}\kappa^2(\Psi_{Z}, s_j^*)}+1}\bar{G}\lambda_{node}^{2-\vartheta}+\bar{G}^2\lambda_{node}^{3-2\vartheta},\]
\begin{align}
\max_{j\in H_1}\|\hat{\phi}_j-\phi_j\|_1\leq \del[3]{\text{maxeval}(\Psi_{Z})+\frac{96}{\min_{j\in H_1}\kappa^2(\Psi_{Z}, s_j^*)}+1}\bar{G}\lambda_{node}^{1-\vartheta}+\bar{G}^2\lambda_{node}^{2-2\vartheta},\label{prerate}
\end{align}
on  $ \mathcal{D}_N\cap (\cap_{j\in H_1} \mathcal{E}_{N,j})\cap\mathcal{F}_N$. 

Next, we establish a lower bound on the probability of $\mathcal{D}_N\cap (\cap_{j\in H_1} \mathcal{E}_{N,j})\cap\mathcal{F}_N$. Consider $\mathcal{D}_N$ first. A typical element of $Z_{-j}'\zeta_j$ is of the form $\sum_{i=1}^{N}\sum_{t=1}^{T}z_{i,t,l}\zeta_{j,i,t}$ for some $l\neq j$. By (\ref{eqn regression error by construction}), one has $\frac{1}{NT}\sum_{i=1}^{N}\sum_{t=1}^{T}z_{i,t,l}\zeta_{j,i,t}=\frac{1}{NT}\sum_{i=1}^{N}\sum_{t=1}^{T}( z_{i,t,l}\zeta_{j,i,t}-\mathbb{E}[z_{i,t,l}\zeta_{j,i,t}])$ for $l\neq j$. By Assumptions \ref{assu subgaussian} and \ref{assu more on eigen values}(c), it holds for any $\epsilon>0$ that
\[\mathbb{P}(|z_{i,t,l}\zeta_{j,i,t}|>\epsilon)\leq \mathbb{P}(|z_{i,t,l}|>\sqrt{\epsilon})+\mathbb{P}(|\zeta_{j,i,t}|>\sqrt{\epsilon})\leq Ke^{-C\epsilon}. \]
such that Lemma 2.2.1 in \cite{vandervaartWellner1996} yields that $\|z_{i,t,l}\zeta_{j,i,t}\|_{\psi_1}\leq (1+K)/C$. Therefore, by Jensen's inequality and subadditivity of the Orlicz norm 
\begin{align*}
&\left\| \frac{1}{T}\sum_{t=1}^{T}\left( z_{i,t,l}\zeta_{j,i,t}-\mathbb{E}[z_{i,t,l}\zeta_{j,i,t}]\right)\right\| _{\psi_1}
\leq 
2\max_{1\leq t\leq T}\|z_{i,t,l}\zeta_{j,i,t}\|_{\psi_1}\leq \frac{2(1+K)}{C}.
\end{align*}
Using the definition of the Orlicz norm $\mathbb{E}\exp \del[1]{\frac{C}{2(1+K)}\envert[1]{ \frac{1}{T}\sum_{t=1}^{T}\del[0]{ z_{i,t,l}\zeta_{j,i,t}-\mathbb{E}[z_{i,t,l}\zeta_{j,i,t}]}}}\leq 2$. Using independence across $i$ (Assumption \ref{assu panel data}) to invoke Proposition \ref{prop adapation of Fan} in Appendix B with $D=C/(1+K)$, $\alpha=1/3$, $C_1=2$ and $\epsilon=\lambda_{node}/4\gtrsim \frac{1}{\sqrt{N}}$, we conclude (using $h_1\leq p$)
\begin{align*}
\mathbb{P}\del[2]{\max_{j\in H_1}\frac{1}{NT}\|Z_{-j}'\zeta_j\|_{\infty}> \epsilon }
&\leq 
h_1p\mathbb{P}\del[2]{ \envert[2]{ \sum_{i=1}^{N}\frac{1}{T}\sum_{t=1}^{T}(z_{i,t,l}\zeta_{j,i,t}-\mathbb{E}[z_{i,t,l}\zeta_{j,i,t}])} > \epsilon N }\\
&\leq
 Ah_1pe^{-B(\epsilon^2N)^{1/3}}
\leq
Ap^2e^{-BM^{1/3}\log p}=Ap^{2-BM^{1/3}}
\end{align*}
for positive constants $A$ and $B$. The upper bound of the preceding probability becomes arbitrarily small for $M$ sufficiently large such that
\[\max_{j\in H_1}\frac{1}{NT}\|Z_{-j}'\zeta_j\|_{\infty}=O_p(\lambda_{node}),\]
which is (\ref{eqn order for event D}).
In order to provide a lower bound on the probability of $\left( \cap_{j\in H_1} \mathcal{E}_{N,j}\right)$ define the event 
\[\tilde{\mathcal{E}}_{N,j}:=\cbr[3]{ \max_{1\leq l,k\leq p-1}\envert[3]{ \left[ \frac{1}{NT}Z_{-j}'Z_{-j}\right]_{lk} -[ \Psi_{Z,-j,-j}]_{lk}} \leq \frac{\kappa^2(\Psi_{Z,-j,-j}, s_j^*)}{32s_j^*}} \subseteq \mathcal{E}_{N,j} \]
by Proposition \ref{lemma difference between two matrices again} in Appendix B with $A=\Psi_{Z,-j,-j}$, $B=\frac{1}{NT}Z_{-j}'Z_{-j}$, $r=s_j^*$ and $\delta =\frac{\kappa^2(\Psi_{Z,-j,-j}, s_j^*)}{32s_j^*}$.
Observe that the relation
\begin{align*}
&\max_{1\leq l,k\leq p-1}\envert[3]{ \left[ \frac{1}{NT}Z_{-j}'Z_{-j}\right]_{lk} -\left[ \Psi_{Z,-j,-j}\right]_{lk} }\leq \max_{1\leq l,k\leq p}\envert[3]{ \left[ \frac{1}{NT}Z'Z\right]_{lk} -\left[ \Psi_{Z}\right]_{lk} } \\
&\leq \frac{\kappa^2(\Psi_{Z}, \max_{j\in H_1}s_j^*)}{32\bar{G}\lambda_{node}^{-\vartheta}}\leq \frac{\kappa^2(\Psi_{Z,-j,-j}, s_j^*)}{32s_j^*},
\end{align*}
implies $\mathcal{E}_N:=\cbr[2]{\max_{1\leq l,k\leq p}\envert[1]{ [ \frac{1}{NT}Z'Z]_{lk} -[ \Psi_{Z}]_{lk}}\leq \frac{\kappa^2(\Psi_{Z},  \max_{j\in H_1}s_j^*)}{32\bar{G}\lambda_{node}^{-\vartheta}}}\subseteq \tilde{\mathcal{E}}_{N,j}\subseteq \mathcal{E}_{N,j}$ for all $j\in H_1$ and hence $\mathcal{E}_N\subseteq \cap_{j\in H_1}\mathcal{E}_{N,j}$. It remains to provide a lower bound on $\mathbb{P}(\mathcal{E}_N)$. A typical element of $\frac{1}{NT}Z'Z-\Psi_{Z}$ is of the form $\frac{1}{NT}\sum_{i=1}^{N}\sum_{t=1}^{T}( z_{i,t,l}z_{i,t,k}-\mathbb{E}[z_{i,t,l}z_{i,t,k}])$ for some $l,k\in \{1,\ldots,p\}$. Invoking (\ref{eqn block comparison 1}) with $x=\frac{\kappa^2(\Psi_{Z},  \max_{j\in H_1}s_j^*)}{32\bar{G}\lambda_{node}^{-\vartheta}}\gtrsim \frac{1}{\sqrt{N}}$ (using $\bar{G}\lambda_{node}^{1-\vartheta}=O(\log^{3/2}p)$, implied by Assumption \ref{assu more on eigen values}(b))
\[\mathbb{P}\del[2]{ \envert[2]{ \frac{1}{NT}\sum_{i=1}^{N}\sum_{t=1}^{T}\del[0]{ z_{i,t,l}z_{i,t,k}-\mathbb{E}[z_{i,t,l}z_{i,t,k}]}}\geq x }\leq Ae^{-B(x^2N)^{1/3}},\]
for positive constants $A$ and $B$. Therefore,
\begin{align*}
\mathbb{P}( \mathcal{E}_N^c)=\mathbb{P}\left(\max_{1\leq l,k\leq p} \envert[3]{\left[  \frac{1}{NT}Z'Z\right] _{lk} -[ \Psi_{Z}]_{lk} }\geq x \right)
\leq
 p^2Ae^{-B(x^2N)^{1/3}}.
\end{align*}
The upper bound of the preceding probability becomes arbitrarily small for $M$ sufficiently large (using $\bar{G}\lambda_{node}^{1-\vartheta}=O(1)$, implied by Assumption \ref{assu more on eigen values}(b)).
In a similar manner, invoke (\ref{eqn block comparison 1}) with $x=\lambda_{node}=\sqrt{\frac{16M(\log p)^3}{N}}\gtrsim\frac{1}{\sqrt{N}}$ $(M>0)$,
\[\mathbb{P}(\mathcal{F}_N^c)=\mathbb{P}\del[2]{\max_{1\leq l,k\leq p}\envert[2]{ \left[ \frac{1}{NT}Z'Z\right]_{lk} -\left[ \Psi_{Z}\right]_{lk}} \geq x } \leq  Ap^2e^{-B(x^2N)^{1/3}}=Ap^{2-BM^{1/3}},\]
for positive constants $A$ and $B$, letting $B$ absorb the extra constants. The upper bound of the preceding probability becomes arbitrarily small for sufficiently large $N$ and $M$. We also have
\begin{align}
\left\| \frac{Z'Z}{NT}-\Psi_Z\right\|_{\infty}=O_p(\lambda_{node})=O_p\del[2]{ \sqrt{\frac{(\log p)^3}{N}}}\label{Fcal}.
\end{align}

Lastly, use Assumption \ref{assu more on eigen values}(b) in the display (\ref{prerate})to get the claimed orders.
\end{proof}

\vspace{0.5cm}

\begin{proof}[Proof of Lemma \ref{lemma step stone to final theorem}]
Recall (\ref{align tauj2 obtained via KKT}) and use $z_j=Z_{-j}\phi_j+\zeta_j$:
\begin{align*}
\hat{\tau}_j^2
&=\frac{1}{NT}\zeta_j'\zeta_j+\frac{1}{NT}\zeta_j'Z_{-j}\phi_j-\frac{1}{NT}(\hat{\phi}_j-\phi_j)'Z_{-j}'\zeta_j-\frac{1}{NT}(\hat{\phi}_j-\phi_j)'Z_{-j}'Z_{-j}\phi_j.
\end{align*}
Thus,
\begin{align}
\max_{j\in H_1}|\hat{\tau}_j^2-\tau_j^2|&\leq \max_{j\in H_1}\envert[2]{ \frac{1}{NT}\zeta_j'\zeta_j-\tau_j^2} +\max_{j\in H_1}\envert[2]{ \frac{1}{NT}\zeta_j'Z_{-j}\phi_j} \nonumber\\
&\quad+\max_{j\in H_1}\envert[2]{ \frac{1}{NT}(\hat{\phi}_j-\phi_j)'Z_{-j}'\zeta_j} +\max_{j\in H_1}\envert[2]{ \frac{1}{NT}(\hat{\phi}_j-\phi_j)'Z_{-j}'Z_{-j}\phi_j}. \label{align four terms to bound tauhat2 and tau2}
\end{align}
Consider the first term on the right of the inequality in (\ref{align four terms to bound tauhat2 and tau2}). By Assumption \ref{assu more on eigen values}(c), we have for all $\epsilon>0$, $\mathbb{P}(|\zeta_{j,i,t}^2|\geq\epsilon)= \mathbb{P}(|\zeta_{j,i,t}|\geq\sqrt{\epsilon})\leq \frac{1}{2}Ke^{-C\epsilon}$. It follows from Lemma 2.2.1 in \cite{vandervaartWellner1996} that $\|\zeta_{j,i,t}^2\|_{\psi_1}\leq (1+K/2)/C$. Therefore, by Jensen's inequality and subadditivity of the Orlicz norm 
\begin{align*}
&\left\| \frac{1}{T}\sum_{t=1}^{T}\left( \zeta_{j,i,t}^2-\mathbb{E}[\zeta_{j,i,t}^2]\right)\right\| _{\psi_1}\leq 2\max_{1\leq t\leq T}\|\zeta_{j,i,t}^2\|_{\psi_1}\leq \frac{2+K}{C}.
\end{align*}
Using the definition of the Orlicz norm, $\mathbb{E} \exp \del[1]{\frac{C}{2+K}\envert[1]{ \frac{1}{T}\sum_{t=1}^{T}\del[0]{ \zeta_{j,i,t}^2-\mathbb{E}[\zeta_{j,i,t}^2]}}} \leq 2$. Using independence across $i=1,\ldots, N$ (Assumption \ref{assu panel data}) to invoke Proposition \ref{prop adapation of Fan} in Appendix B with $D=C/(2+K)$, $\alpha=1/3$ and $C_1=2$ for $x\gtrsim \frac{1}{\sqrt{N}}$,
\[\mathbb{P}\del[2]{ \envert[2]{\frac{1}{N} \sum_{i=1}^{N}\frac{1}{T}\sum_{t=1}^{T}\del[0]{ \zeta_{j,i,t}^2-\mathbb{E}[\zeta_{j,i,t}^2]}}\geq x } \leq Ae^{-B(x^2N)^{1/3}},\]
for positive constants $A$ and $B$. Setting $x=\sqrt{\frac{M(\log h_1)^3}{N}}$ for some $M>0$, we have
\begin{align*}
&\mathbb{P}\del[3]{ \max_{j\in H_1}\envert[2]{ \frac{1}{N}\sum_{i=1}^{N}\frac{1}{T}\sum_{t=1}^{T}(\zeta_{j,i,t}^2-\mathbb{E}[\zeta_{j,i,t}^2])} \geq \sqrt{\frac{M(\log h_1)^3}{N}}} \\
\leq &\sum_{j\in H_1}\mathbb{P}\del[3]{ \envert[2]{ \frac{1}{N}\sum_{i=1}^{N}\frac{1}{T}\sum_{t=1}^{T}(\zeta_{j,i,t}^2-\mathbb{E}[\zeta_{j,i,t}^2])} \geq \sqrt{\frac{M(\log h_1)^3}{N}}}\leq Ah_1^{1-BM^{1/3}}.
\end{align*}
Recognising that the upper bound of the preceding probability becomes arbitrarily small for sufficiently large $N$ and $M$, we have 
\[\max_{j\in H_1}\envert[2]{ \frac{1}{NT}\zeta_j'\zeta_j-\tau_j^2}=O_p\del[2]{ \sqrt{\frac{(\log h_1)^3}{N}}}= O_p(\lambda_{node}). \]
Now consider the second term on the right of the inequality in (\ref{align four terms to bound tauhat2 and tau2}). Recall that
\[C=\left( \begin{array}{cccc}
1 & -\phi_{1,2} &\cdots  &-\phi_{1,p}\\
-\phi_{2,1} & 1  &\cdots  &-\phi_{2,p}\\
\vdots&\vdots&\ddots&\vdots\\
-\phi_{p,1} & -\phi_{p,2} &\cdots  &1  \\
\end{array}\right) \]
such that $C_j$ is the $j$th row of $C$ but written as a $p\times 1$ vector. Then 
\begin{align}
&\max_{j\in H_1}\|\phi_j\|_1=\max_{j\in H_1}\|\phi_j^*-\phi_j-\phi_j^*\|_1\leq \max_{j\in H_1}\|\phi_j^*-\phi_j\|_1+\max_{j\in H_1}\|\phi_j^*\|_1\leq \bar{G}\lambda_{node}^{1-\vartheta}+\max_{j\in H_1}\|\phi_j^*\|_1\notag\\
&\leq \bar{G}\lambda_{node}^{1-\vartheta}+\max_{j\in H_1}\sqrt{s_j^*}\|\phi_j^*\|\leq \bar{G}\lambda_{node}^{1-\vartheta}+\max_{j\in H_1}\sqrt{s_j^*}\|\phi_j\|\leq \bar{G}\lambda_{node}^{1-\vartheta}+\max_{j\in H_1}\sqrt{s_j^*}\|C_j\|\notag \\
&\leq \bar{G}\lambda_{node}^{1-\vartheta}+\max_{j\in H_1}\sqrt{s_j^*}\sqrt{\frac{C_j'\Psi_Z C_j}{\text{mineval}(\Psi_Z)}}=\bar{G}\lambda_{node}^{1-\vartheta}+\max_{j\in H_1}\frac{\sqrt{s_j^*}\sqrt{ \Psi_{Z,j,j}-\Psi_{Z,j,-j}\Psi_{Z,-j,-j}^{-1}\Psi_{Z,-j,j}}}{\sqrt{\text{mineval}(\Psi_Z)}}\notag\\
&\leq \bar{G}\lambda_{node}^{1-\vartheta}+\max_{j\in H_1}\frac{\sqrt{s_j^*}\sqrt{ \Psi_{Z,j,j}}}{\sqrt{\text{mineval}(\Psi_Z)}}\leq \bar{G}\lambda_{node}^{1-\vartheta}+\max_{j\in H_1}\frac{\sqrt{s_j^*}\sqrt{ \text{maxeval}(\Psi_Z)}}{\sqrt{\text{mineval}(\Psi_Z)}}=O(\bar{G}^{1/2}\lambda_{node}^{-\vartheta/2})\label{align l1 phij}
\end{align}
where the second inequality is due to (\ref{eqn bj l1 norm}), the second equality is due to (\ref{eqn definition of phij}), the seventh inequality is due to that Assumption \ref{assu more on eigen values}(a) implies that $\Psi_{Z,-j,-j}^{-1}$ is positive definite for all $j\in H_1$, and the last equality is due to (\ref{eqn sj star rate}) and Assumption \ref{assu more on eigen values}(b)). Now, 
\[\max_{j\in H_1}\left| \frac{1}{NT}\zeta_j'Z_{-j}\phi_j\right|\leq \max_{j\in H_1}\del[3]{\left\| \frac{1}{NT}\zeta_j'Z_{-j}\right\|_{\infty} \left\| \phi_j\right\|_1}=O_p(\lambda_{node})O(\bar{G}^{1/2}\lambda_{node}^{-\vartheta/2})=O_p(\bar{G}^{1/2}\lambda_{node}^{1-\vartheta/2}), \]
where the first equality is due to (\ref{eqn order for event D}).

The third term in (\ref{align four terms to bound tauhat2 and tau2}) is bounded as
\begin{align*}
\max_{j\in H_1}\envert[2]{\frac{1}{NT}(\hat{\phi}_j-\phi_j)'Z_{-j}'\zeta_j}&\leq \max_{j\in H_1}\del[2]{ \left\| \hat{\phi}_j-\phi_j\right\|_1 \left\| \frac{1}{NT}Z_{-j} '\zeta_j\right\|_{\infty}}=O_p(\bar{G}\lambda_{node}^{2-\vartheta}),
\end{align*}
where the equality is due to (\ref{eqn nodewise regression oracle inequality estimation}) and (\ref{eqn order for event D}).

To bound the fourth term on the right of the inequality in (\ref{align four terms to bound tauhat2 and tau2}), recall (\ref{eqn nodewise regression KKT conditions}) and manipulate to get $\frac{1}{NT}Z_{-j}'Z_{-j}(\hat{\phi}_j-\phi_j)=\frac{1}{NT}Z_{-j}'\zeta_j-\lambda_{node}w_j$. Thus,
\begin{align*}
\left\| \frac{1}{NT}(\hat{\phi}_j-\phi_j)'Z_{-j}'Z_{-j}\right\|_{\infty}&\leq \left\| \frac{1}{NT}Z_{-j}'\zeta_j\right\|_{\infty}+\lambda_{node}\|w_j\|_{\infty}=O_p(\lambda_{node}),
\end{align*}
where the equality is due to (\ref{eqn order for event D}). Thus,
\begin{align*}
\max_{j\in H_1}\left| \frac{1}{NT}(\hat{\phi}_j-\phi_j)'Z_{-j}'Z_{-j}\phi_j\right| &\leq \max_{j\in H_1}\left\| \frac{1}{NT}(\hat{\phi}_j-\phi_j)'Z_{-j}'Z_{-j}\right\|_{\infty}\max_{j\in H_1}\|\phi_j\|_1= O_p(\bar{G}^{1/2}\lambda_{node}^{1-\vartheta/2}),
\end{align*}
where the last equality is due to (\ref{align l1 phij}). Summing up all four terms on the right of the inequality in (\ref{align four terms to bound tauhat2 and tau2}), we get
\begin{align}
\max_{j\in H_1}|\hat{\tau}_j^2-\tau_j^2|&\leq O_p\left( \lambda_{node}\right)+O_p(\bar{G}^{1/2}\lambda_{node}^{1-\vartheta/2})+O_p(\bar{G}\lambda_{node}^{2-\vartheta})\nonumber=O_p(\bar{G}^{1/2}\lambda_{node}^{1-\vartheta/2})=o_p(1),\nonumber
\end{align}
where the first equality is due to that $O_p(\bar{G}^{1/2}\lambda_{node}^{1-\vartheta/2})$ dominates $O_p(\bar{G}\lambda_{node}^{2-\vartheta})$ by Assumption \ref{assu more on eigen values}(b), and the second equality is also due to Assumption \ref{assu more on eigen values}(b). This establishes (\ref{align lemma 2 1}).

We now prove (\ref{align lemma 2 2}). We first recall
\begin{align}
\tau^2_j&=\mathbb{E}\sbr[2]{\frac{1}{NT}\sum_{i=1}^{N}\sum_{t=1}^{T}(z_{i,t,j}-z_{i,t,-j}'\phi_j)^2}=\Psi_{Z,j,j}-\Psi_{Z,j,-j}\Psi_{Z,-j,-j}^{-1}\Psi_{Z,-j,j}=\frac{1}{\Theta_{Z,j,j}},\nonumber
\end{align}
Furthermore,
\[\Theta_{Z,j,j}\equiv \frac{e_j'\Theta_Z e_j}{\|e_j\|^2}\leq \max_{\delta\in \mathbb{R}^p\setminus\{0\}}\frac{\delta'\Theta_Z \delta}{\|\delta\|^2}=\text{maxeval}(\Theta_Z)=\frac{1}{\text{mineval}(\Psi_Z)}.\]
The preceding inequality is uniform in $j$. Thus, $\min_{j\in H_1}\tau^2_j\geq \text{mineval}(\Psi_Z)$, which is uniformly bounded away from zero by Assumption \ref{assu more on eigen values}(a). Therefore,
\[\min_{j\in H_1}\hat{\tau}_j^2=\min_{j\in H_1}(\hat{\tau}_j^2-\tau_j^2+\tau_j^2)\geq \min_{j\in H_1}\tau_j^2-\max_{j\in H_1}|\hat{\tau}_j^2-\tau_j^2|\geq \text{mineval}(\Psi_Z)-o_p(1).\]
Hence, we conclude that $\min_{j\in H_1}\hat{\tau}_j^2$ is bounded away from zero for $N$ large enough and $\max_{j\in H_1}\frac{1}{\hat{\tau}_j^2}=O_p(1)$ which establishes (\ref{align lemma 2 2}).

Hence,
\begin{align}
&\max_{j\in H_1}\envert[2]{\frac{1}{\hat{\tau}_j^2}-\frac{1}{\tau_j^2}}\leq \frac{\max_{j\in H_1}|\tau_j^2-\hat{\tau}_j^2|}{\min_{j\in H_1}\tau_j^2}\cdot\max_{j\in H_1}\frac{1}{\hat{\tau}_j^2}=\max_{j\in H_1}|\tau_j^2-\hat{\tau}_j^2|O(1)O_p(1)=O_p(\bar{G}^{1/2}\lambda_{node}^{1-\vartheta/2}),\nonumber
\end{align}
which establishes (\ref{align lemma 2 3}).

We can now bound $\max_{j\in H_1}\|\hat{\Theta}_{Z,j}-\Theta_{Z,j}\|_1$. Use the definition of $C_j$ and (\ref{align inverse of the partitioned matrix}) to recognise that $\Theta_{Z,j}=C_j\Theta_{Z,j,j}=C_j/\tau_j^2$.
\begin{align*}
&\max_{j\in H_1}\left\| \hat{\Theta}_{Z,j}-\Theta_{Z,j}\right\|_1= \max_{j\in H_1}\left\| \frac{\hat{C}_j}{\hat{\tau}_j^2}-\frac{C_j}{\tau_j^2}\right\|_1= \max_{j\in H_1}\left| \frac{1}{\hat{\tau}_j^2}-\frac{1}{\tau_j^2}\right|+\max_{j\in H_1}\left\| \frac{\hat{\phi}_j}{\hat{\tau}_j^2}-\frac{\phi_j}{\tau_j^2}\right\|_1\\
&\leq \max_{j\in H_1}\left| \frac{1}{\hat{\tau}_j^2}-\frac{1}{\tau_j^2}\right|+\max_{j\in H_1}\left\| \frac{\hat{\phi}_j}{\hat{\tau}_j^2}-\frac{\phi_j}{\hat{\tau}_j^2}\right\|_1 +\max_{j\in H_1}\left\| \frac{\phi_j}{\hat{\tau}_j^2}-\frac{\phi_j}{\tau_j^2}\right\|_1\\
&= \max_{j\in H_1}\envert[2]{ \frac{1}{\hat{\tau}_j^2}-\frac{1}{\tau_j^2}}+\max_{j\in H_1}\frac{1}{\hat{\tau}_j^2}\left\|\hat{\phi}_j -\phi_j\right\|_1 +\max_{j\in H_1}\|\phi_j\|_1\envert[2]{ \frac{1}{\hat{\tau}_j^2}-\frac{1}{\tau_j^2}}=O_p(\bar{G}\lambda_{node}^{1-\vartheta}),
\end{align*}
which establishes (\ref{align lemma 2 4}). Next, we bound $\max_{j\in H_1}\|\hat{\Theta}_{Z,j}-\Theta_{Z,j}\|$. Since
\[\left| (\hat{C}_j-C_j)'\frac{Z'Z}{NT} (\hat{C}_j-C_j) - (\hat{C}_j-C_j)'\Psi_Z (\hat{C}_j-C_j)\right|\leq \left\| \frac{Z'Z}{NT}-\Psi_Z\right\|_{\infty}\left\| \hat{C}_j-C_j\right\|_1^2, \]
\begin{equation}
\label{eqn AB term in from l1 to l2}
\max_{j\in H_1} \left| (\hat{C}_j-C_j)'\Psi_Z (\hat{C}_j-C_j)\right|\leq \max_{j\in H_1}\left| (\hat{C}_j-C_j)'\frac{Z'Z}{NT} (\hat{C}_j-C_j) \right| +\left\| \frac{Z'Z}{NT}-\Psi_Z\right\|_{\infty}\max_{j\in H_1}\left\| \hat{C}_j-C_j\right\|_1^2.
\end{equation}
Consider the first term on the right hand side of (\ref{eqn AB term in from l1 to l2}).
\begin{align*}
\max_{j\in H_1}\envert[2]{ (\hat{C}_j-C_j)'\frac{Z'Z}{NT} (\hat{C}_j-C_j) }&=\max_{j\in H_1}\frac{1}{NT}\left\| Z(\hat{C}_j-C_j)\right\|^2=\max_{j\in H_1}\frac{1}{NT}\left\| Z_{-j}(\hat{\phi}_j-\phi_j)\right\|^2=O_p(\bar{G}\lambda_{node}^{2-\vartheta}), 
\end{align*}
where the last equality is due to (\ref{eqn nodewise regression oracle inequality prediction}). Next, consider the second term on the right of the inequality (\ref{eqn AB term in from l1 to l2}).  We have
\[\left\| \frac{Z'Z}{NT}-\Psi_Z\right\|_{\infty}\max_{j\in H_1}\left\| \hat{C}_j-C_j\right\|_1^2=\left\| \frac{Z'Z}{NT}-\Psi_Z\right\|_{\infty}\max_{j\in H_1}\left\| \hat{\phi}_j-\phi_j\right\|_1^2=O_p\del[2]{\bar{G}^2\lambda_{node}^{3-2\vartheta}} ,\]
where the first equality is due to the definitions of $\hat{C}_j$ and $C_j$, and the second equality is due to (\ref{eqn nodewise regression oracle inequality estimation}) and (\ref{Fcal}). Adding up the two terms, we have
\[\max_{j\in H_1} \left| (\hat{C}_j-C_j)'\Psi_Z (\hat{C}_j-C_j)\right|\leq O_p(\bar{G}\lambda_{node}^{2-\vartheta})+O_p\del[2]{\bar{G}^2\lambda_{node}^{3-2\vartheta}} =O_p(\bar{G}\lambda_{node}^{2-\vartheta}),\]
where the last equality is due to Assumption \ref{assu more on eigen values}(b). Since $\max_{j\in H_1}\envert[0]{(\hat{C}_j-C_j)'\Psi_Z(\hat{C}_j-C_j) }\geq \text{mineval}(\Psi_Z)\max_{j\in H_1}\|\hat{C}_j-C_j\|^2$ and $\text{mineval}(\Psi_Z)$ is uniformly bounded away from zero we have $\max_{j\in H_1}\|\hat{\phi}_j-\phi_j\|= \max_{j\in H_1}\|\hat{C}_j-C_j\|= O_p(\bar{G}^{1/2}\lambda_{node}^{1-\vartheta/2})$. Then,
\begin{align*}
&\max_{j\in H_1}\enVert[2]{ \hat{\Theta}_{Z,j}-\Theta_{Z,j}}= \max_{j\in H_1}\enVert[2]{ \frac{\hat{C}_j}{\hat{\tau}_j^2}-\frac{C_j}{\tau_j^2}}\leq  \max_{j\in H_1}\envert[2]{ \frac{1}{\hat{\tau}_j^2}-\frac{1}{\tau_j^2}}+\max_{j\in H_1}\enVert[2]{ \frac{\hat{\phi}_j}{\hat{\tau}_j^2}-\frac{\phi_j}{\tau_j^2}}\\
&\leq \max_{j\in H_1}\envert[2]{\frac{1}{\hat{\tau}_j^2}-\frac{1}{\tau_j^2}}+\max_{j\in H_1}\frac{1}{\hat{\tau}_j^2}\left\|\hat{\phi}_j -\phi_j\right\| +\max_{j\in H_1}\|\phi_j\|\envert[2]{ \frac{1}{\hat{\tau}_j^2}-\frac{1}{\tau_j^2}}=O_p(\bar{G}^{1/2}\lambda_{node}^{1-\vartheta/2}),
\end{align*}
where in the last equality we have used that $\max_{j\in H_1}\|\phi_j\|=O(1)$, which follows from inspecting the arguments in (\ref{align l1 phij}). We have hence established (\ref{align lemma 2 5}). Finally, recall that $\Theta_{Z,j}=C_j\Theta_{Z,j,j}=C_j/\tau_j^2$. Thus,
\begin{align}
\max_{j\in H_1}\|\Theta_{Z,j}\|_1&=\max_{j\in H_1}|1/\tau_j^2|+\max_{j\in H_1}\|\phi_j\|_1\max_{j\in H_1}1/\tau_j^2=O(\bar{G}^{1/2}\lambda_{node}^{-\vartheta/2}).\label{align rates for max ThetaZj l1 norm}
\end{align}
Therefore,
\[\max_{j\in H_1}\left\| \hat{\Theta}_{Z,j}\right\|_1\leq \max_{j\in H_1}\left\| \hat{\Theta}_{Z,j}-\Theta_{Z,j}\right\|_1+\max_{j\in H_1}\|\Theta_{Z,j}\|_1=O_p(\bar{G}\lambda_{node}^{1-\vartheta})+O(\bar{G}^{1/2}\lambda_{node}^{-\vartheta/2})=O_p(\bar{G}^{1/2}\lambda_{node}^{-\vartheta/2}), \]
where the last equality is due to Assumption \ref{assu more on eigen values}(b).
\end{proof}

\subsection{Proof of Theorem \ref{thm Delta is small op 1}}

\begin{proof}[Proof of Theorem \ref{thm Delta is small op 1}]

The following assumption is implied by Assumption \ref{assu rates sufficient}.\footnote{To be precise, Assumption \ref{assu rates sufficient}(a) implies Assumption \ref{assu rates}(a) by recognising that $h_1\geq 1$ if $h_1\neq 0$, and $\bar{G}\sbr[2]{\frac{(\log p)^3}{N}}^{-\vartheta/2}\geq s_j^*\geq 1$. Assumption \ref{assu rates sufficient}(b) implies Assumption \ref{assu rates}(b) by recognising that $\sqrt{\frac{(\log(p\vee T))^7}{N}}=o(1)$ and $ \sqrt{\frac{(\log(N\vee T))^3}{T}}=o(1)$, implied by Assumption \ref{assu rates sufficient}(a) provided $h_1\neq 0$ and $h_2\neq 0$, respectively. Last, Assumption \ref{assu rates sufficient}(c) implies Assumption \ref{assu rates}(c).} However, as Assumption \ref{assu rates sufficient} is much simpler, we have chosen to use the latter in the main text even though it is slightly less general than the following assumption. Note again how the assumptions simplifies when either $h_1$ or $h_2$ equals 0.

\begin{assumption}
\label{assu rates}
\item \begin{enumerate}[(a)]
\item \begin{enumerate}[(i)]
\item $\frac{h_1^2\bar{G}^2\sbr[2]{\frac{(\log p)^3}{N}}^{-\vartheta}(\log(p\vee T))^5}{N}=o(1)$;
\item $\frac{h_1h_2\bar{G}\sbr[2]{\frac{(\log p)^3}{N}}^{-\vartheta/2}(\log (p\vee N\vee T))^3}{N}=o(1)$;
\item $\frac{h_1\bar{G}^2\sbr[2]{\frac{(\log p)^3}{N}}^{-\vartheta}(\log p)^3(\log (p\vee N))^3}{N}=o(1)$;
\item $\frac{h_1\bar{G}\sbr[2]{\frac{(\log p)^3}{N}}^{-\vartheta/2}(\log(N\vee T))^2(\log p)^2}{NT}=o(1)$;
\item $ \frac{(\log(N\vee T))^21\{h_2\neq 0\}}{T}=o(1)$.
\end{enumerate}
\item Let
\[a:=\frac{\left[ s_1\vee E\del[2]{\frac{(\log (p\vee N))^3}{T}}^{-\nu/2}\right] (\log(p\vee N))^3}{NT}.\]
\begin{enumerate}[(i)]
\item $h_1^2\bar{G}^2\sbr[2]{\frac{(\log p)^3}{N}}^{-\vartheta}\del[3]{1\vee\sqrt{\frac{(\log(p\vee T))^7}{N}}}a=o(1)$;
\item $h_1\bar{G}\sbr[2]{\frac{(\log p)^3}{N}}^{-\vartheta/2}\log(p\vee N\vee T)a=o(1)$;
\item $h_1h_2\bar{G}\sbr[2]{\frac{(\log p)^3}{N}}^{-\vartheta/2}(\log(p\vee N\vee T))^2a=o(1)$; 
\item $\sqrt{h_1h_2\bar{G}\sbr[2]{\frac{(\log p)^3}{N}}^{-\vartheta/2}N\log(p\vee N\vee T)}a=o(1)$;
\item $Nh_2^2\del[2]{1\vee \sqrt{\frac{(\log N)^3}{T}}}a=o(1)$.
\end{enumerate}
\item 
\begin{align*}
& \frac{(h_1\vee h_2)(\log(p\vee N))^3\sbr[2]{s_1^2\vee E^2\del[2]{\frac{(\log (p\vee N))^3}{T}}^{-\nu}}b}{N}=o(1),
\end{align*}
where $b:=\sbr[2]{\del[2]{\bar{G}\sbr[2]{\frac{(\log p)^3}{N}}^{-\vartheta/2}\log(p\vee N)\vee (\log p)^3}1\{h_1\neq 0\}}\vee \sbr[2]{\log(p\vee N)1\{h_2\neq 0\}}$.
\item $\text{mineval}(\Sigma_{\Pi\varepsilon})$ is uniformly bounded away from zero and $\text{maxeval}(\Sigma_{1,N})$ is uniformly bounded from above.
\end{enumerate}
\end{assumption}

We show that 
\begin{align*}
t=\frac{\rho'S\left( \tilde{\gamma}-\gamma\right)}{\sqrt{\rho'\hat{\Theta}\hat{\Sigma}_{\Pi\varepsilon}\hat{\Theta}'\rho}}\xrightarrow{d}N(0,1).
\end{align*}
To this end, note that by (\ref{eqn univariate inference equnation}) one may write $t=t_1+t_2$, where
\begin{align*}
t_1=\frac{\rho'\hat{\Theta}S^{-1}\Pi'\varepsilon}{\sqrt{\rho'\hat{\Theta}\hat{\Sigma}_{\Pi\varepsilon}\hat{\Theta}'\rho}}\ \text{ and }\ t_2=\frac{-\rho'\Delta}{\sqrt{\rho'\hat{\Theta}\hat{\Sigma}_{\Pi\varepsilon}\hat{\Theta}'\rho}}.
\end{align*}
Defining 
\begin{align*}
t_1'=\frac{\rho'\Theta S^{-1}\Pi'\varepsilon}{\sqrt{\rho'\Theta\Sigma_{\Pi\varepsilon}\Theta'\rho}}
\end{align*}
it suffices to show that $t_1'\xrightarrow{d}N(0,1)$, $t_1'-t_1=o_p(1)$, and $t_2=o_p(1)$. In the sequel we first show that $t_1-t_1'=o_p(1)$, then $t_1'\xrightarrow{d}N(0,1)$ and finally $t_2=o_p(1)$. To show that $t_1-t_1'=o_p(1)$, it suffices to show that the denominators as well as the numerators of $t_1$ and $t_1'$ are asymptotically equivalent since 
\begin{align}
&\rho'\Theta \Sigma_{\Pi\varepsilon}\Theta'\rho \geq \text{mineval}(\Sigma_{\Pi\varepsilon})\left( \text{mineval}(\Theta)\right)^2= \frac{\text{mineval}(\Sigma_{\Pi\varepsilon})}{\left( \text{maxeval}(\Psi)\right)^2}\label{align denominator of t1 prime bounded away from zero}
\end{align}
which is uniformly bounded away from zero by Assumptions \ref{assu more on eigen values}(a) and \ref{assu rates}(d).

\subsubsection{Denominators of $t_1$ and $t_1'$}

We first show that the denominators of $t_1$ and $t_1'$ are asymptotically equivalent, i.e.,
\begin{equation}
\label{eqn consistency covariance matrix denominator equivalence}
|\rho'\hat{\Theta}\hat{\Sigma}_{\Pi\varepsilon}\hat{\Theta}'\rho-\rho' \Theta \Sigma_{\Pi\varepsilon} \Theta'\rho|=o_p(1).
\end{equation}
Write
\begin{align}
&\left| (\rho_1', \rho_2')\left( \begin{array}{cc}
\hat{\Theta}_Z\hat{\Sigma}_{1,N}\hat{\Theta}_Z' & \hat{\Theta}_Z\hat{\Sigma}_{2,N}\\
\hat{\Sigma}_{2,N}'\hat{\Theta}_Z' & \hat{\Sigma}_{3,N}
\end{array}\right)\left( \begin{array}{c}
\rho_1\\
\rho_2
\end{array}\right)-  (\rho_1', \rho_2')\left( \begin{array}{cc}
\Theta_Z\Sigma_{1,N}\Theta_Z' & \Theta_Z\Sigma_{2,N}\\
\Sigma_{2,N}'\Theta_Z' & \Sigma_{3,N}
\end{array}\right)\left( \begin{array}{c}
\rho_1\\
\rho_2
\end{array}\right) \right| \nonumber\\
&\leq |\rho_1'\hat{\Theta}_Z\hat{\Sigma}_{1,N}\hat{\Theta}_Z'\rho_1-\rho_1'\Theta_Z\Sigma_{1,N}\Theta_Z'\rho_1|\label{align consistency covariance denominator 1}\\
&\quad+2|\rho_1'\hat{\Theta}_Z\hat{\Sigma}_{2,N}\rho_2-\rho_1'\Theta_Z\Sigma_{2,N}\rho_2|\label{align consistency covariance denominator 2}\\
&\quad+|\rho_2'\hat{\Sigma}_{3,N}\rho_2-\rho_2'\Sigma_{3,N}\rho_2|.\label{align consistency covariance denominator 3}
\end{align}
To establish (\ref{eqn consistency covariance matrix denominator equivalence}), we show that (\ref{align consistency covariance denominator 1}), (\ref{align consistency covariance denominator 2}) and (\ref{align consistency covariance denominator 3}) are $o_p(1)$, respectively. 

\subsubsection*{(\ref{align consistency covariance denominator 1}) is $o_p(1)$:} 

Define $\tilde{\Sigma}_{1,N}:=\frac{1}{NT}\sum_{i=1}^{N}\sum_{t=1}^{T}\varepsilon_{i,t}^2z_{i,t}z_{i,t}'$. To show that  (\ref{align consistency covariance denominator 1}) is $o_p(1)$, it suffices to show that
\begin{align}
|\rho_1'\hat{\Theta}_Z\hat{\Sigma}_{1,N}\hat{\Theta}_Z'\rho_1-\rho_1'\hat{\Theta}_Z\tilde{\Sigma}_{1,N}\hat{\Theta}_Z'\rho_1|&=o_p(1)\label{align consistency covariance denominator 11}\\
|\rho_1'\hat{\Theta}_Z\tilde{\Sigma}_{1,N}\hat{\Theta}_Z'\rho_1-\rho_1'\hat{\Theta}_Z\Sigma_{1,N}\hat{\Theta}_Z'\rho_1|&=o_p(1)\label{align consistency covariance denominator 12}\\
|\rho_1'\hat{\Theta}_Z\Sigma_{1,N}\hat{\Theta}_Z'\rho_1-\rho_1'\Theta_Z\Sigma_{1,N}\Theta_Z'\rho_1|&=o_p(1).\label{align consistency covariance denominator 13}
\end{align}
We prove (\ref{align consistency covariance denominator 11}) first. Note that
\[|\rho_1'\hat{\Theta}_Z\hat{\Sigma}_{1,N}\hat{\Theta}_Z'\rho_1-\rho_1'\hat{\Theta}_Z\tilde{\Sigma}_{1,N}\hat{\Theta}_Z'\rho_1|\leq \enVert[0]{ \hat{\Sigma}_{1,N}-\tilde{\Sigma}_{1,N}}_{\infty}\enVert[0]{ \hat{\Theta}_{Z}'\rho_1}_1^2.  \]
First,
\begin{align}
&\enVert[0]{ \hat{\Theta}_{Z}'\rho_1}_1= \enVert[4]{\sum_{j\in H_1}\hat{\Theta}_{Z,j}\rho_{1j}}_1 \leq  \sum_{j\in H_1}|\rho_{1j}|\left\|\hat{\Theta}_{Z,j}\right\|_1 =O_p(h_1^{1/2}\bar{G}^{1/2}\lambda_{node}^{-\vartheta/2}),\label{align ThetaZ rho1 l1 norm}
\end{align}
where the last equality is due to (\ref{align lemma 2 6}). We now bound $\left\| \hat{\Sigma}_{1,N}-\tilde{\Sigma}_{1,N}\right\|_{\infty}$. Since $\hat{\varepsilon}_{i,t}=y_{i,t}-z_{i,t}'\hat{\alpha}-\hat{\eta}_i=\varepsilon_{i,t}-z_{i,t}'(\hat{\alpha}-\alpha)-(\hat{\eta}_i-\eta_i)=:\varepsilon_{i,t}-\pi_{i,t}(\hat{\gamma}-\gamma)$, substituting for $\hat{\varepsilon}_{i,t}$, we have
\begin{align}
&\left\| \hat{\Sigma}_{1,N}-\tilde{\Sigma}_{1,N}\right\|_{\infty}=\enVert[4]{ \frac{1}{NT}\sum_{i=1}^{N}\sum_{t=1}^{T}\hat{\varepsilon}_{i,t}^2z_{i,t}z_{i,t}' -\frac{1}{NT}\sum_{i=1}^{N}\sum_{t=1}^{T}\varepsilon_{i,t}^2z_{i,t}z_{i,t}'}_{\infty}\nonumber\\
&\leq  2\enVert[4]{ \frac{1}{NT}\sum_{i=1}^{N}\sum_{t=1}^{T}z_{i,t}z_{i,t}'\varepsilon_{i,t}\pi_{i,t}'(\hat{\gamma}-\gamma)}_{\infty}+\enVert[4]{\frac{1}{NT}\sum_{i=1}^{N}\sum_{t=1}^{T}z_{i,t}z_{i,t}'[ \pi_{i,t}'(\hat{\gamma}-\gamma)]^2}_{\infty}.  \label{align consistency covariance denominator 11a}
\end{align} 
Consider the first term of (\ref{align consistency covariance denominator 11a}). A typical element of $\frac{1}{NT}\sum_{i=1}^{N}\sum_{t=1}^{T}z_{i,t}z_{i,t}'\varepsilon_{i,t}\pi_{i,t}'(\hat{\gamma}-\gamma)$ is
\begin{align}
&\frac{1}{NT}\sum_{j=1}^{NT}z_{j,l}z_{j,k}\varepsilon_{j}\pi_{j}'(\hat{\gamma}-\gamma) \leq \frac{1}{NT}\del[3]{ \sum_{j=1}^{NT}z_{j,l}^2z_{j,k}^2\varepsilon_{j}^2} ^{1/2}\del[3]{ \sum_{j=1}^{NT} [ \pi_{j}'(\hat{\gamma}-\gamma)]^2 }^{1/2} \nonumber\\
&= \del[3]{ \frac{1}{NT} \sum_{i=1}^{N}\sum_{t=1}^{T}z_{i,t,l}^2z_{i,t,k} ^2\varepsilon_{i,t}^2}^{1/2}\del[2]{ \frac{1}{NT}\left\| \Pi(\hat{\gamma}-\gamma) \right\|^2}^{1/2}\label{align 6 subgaussian times prediction}
\end{align}
for some $l,k\in \{1,\ldots,p\}$, where the inequality is due to Cauchy-Schwarz inequality. Use independence across $i$ (Assumption \ref{assu panel data}) and subgaussianity (Assumption \ref{assu subgaussian}) to invoke Proposition \ref{prop product of L subgaussian} in Appendix B, such that
\begin{equation*}
\max_{1\leq l\leq p}\max_{1\leq k\leq p}\envert[2]{ \frac{1}{NT}\sum_{i=1}^{N}\sum_{t=1}^{T}(z_{i,t,l}^2z_{i,t,k}^2\varepsilon_{i,t}^2-\mathbb{E}[z_{i,t,l}^2z_{i,t,k}^2\varepsilon_{i,t}^2] )}=O_p\del[2]{ \sqrt{\frac{(\log(p^2T))^7}{N}}}
\end{equation*}
and
\begin{equation*}
\max_{1\leq l\leq p}\max_{1\leq k\leq p}\max_{1\leq i\leq N}\max_{1\leq t\leq T}\mathbb{E}[ z_{i,t,l}^2z_{i,t,k}^2\varepsilon_{i,t}^2]\leq A=O(1)
\end{equation*}
for some positive constant $A$. Then, by the triangle inequality,
\begin{align}
&\max_{1\leq l\leq p}\max_{1\leq k\leq p}\envert[2]{ \frac{1}{NT}\sum_{i=1}^{N}\sum_{t=1}^{T}z_{i,t,l}^2z_{i,t,k}^2\varepsilon_{i,t}^2}=O_p\del[3]{\sqrt{\frac{(\log(p\vee T))^7}{N}}}+O(1).\label{align log p vee T 7 rate}
\end{align}
Combining (\ref{align 6 subgaussian times prediction}) and (\ref{align log p vee T 7 rate}), we have
\begin{align}
&\enVert[4]{ \frac{1}{NT}\sum_{i=1}^{N}\sum_{t=1}^{T}z_{i,t}z_{i,t}'\varepsilon_{i,t}\pi_{i,t}'(\hat{\gamma}-\gamma)}_{\infty}=O_p\del[3]{\frac{(\log(p\vee T))^{7/4}}{N^{1/4}}\vee 1}\del[2]{ \frac{1}{NT}\left\| \Pi(\hat{\gamma}-\gamma) \right\|^2}^{1/2}.\label{align consistency covariance denominator 11a actual result}
\end{align}

We now consider the second term of (\ref{align consistency covariance denominator 11a}). A typical element of $\frac{1}{NT}\sum_{i=1}^{N}\sum_{t=1}^{T}z_{i,t}z_{i,t}'[ \pi_{i,t}'(\hat{\gamma}-\gamma)]^2$ is $\frac{1}{NT}\sum_{i=1}^{N}\sum_{t=1}^{T}z_{i,t,l}z_{i,t,k}[ \pi_{i,t}'(\hat{\gamma}-\gamma)]^2\leq \max_{1\leq i\leq N}\max_{1\leq t\leq T}|z_{i,t,l}z_{i,t,k}|\frac{1}{NT}\|\Pi(\hat{\gamma}-\gamma)\|^2$ for some $l,k\in \{1,\ldots,p\}$. Recall that we have proved in the proof of Lemma \ref{lemma lower bound on tildeBnt} that $\|z_{i,t,l}z_{i,t,k}\|_{\psi_1}\leq (1+K)/C$. Using the definition of the Orlicz norm, we have $\mathbb{E} e^{\frac{C}{1+K}|z_{i,t,l}z_{i,t,k}|}\leq 2$. Using Markov's inequality, we have for any $\epsilon>0$ 
\begin{align*}
&\mathbb{P}\left( \max_{1\leq l\leq p}\max_{1\leq k\leq p}\max_{1\leq i\leq N}\max_{1\leq t\leq T}|z_{i,t,l}z_{i,t,k}|\geq \epsilon\right) \leq \sum_{l=1}^{p}\sum_{k=1}^{p}\sum_{i=1}^{N}\sum_{t=1}^{T}\frac{\mathbb{E} e^{\frac{C}{1+K}|z_{i,t,l}z_{i,t,k}|} }{e^{\frac{C}{1+K}\epsilon}}\leq 2NTp^2e^{-\frac{C}{1+K}\epsilon}.
\end{align*}
Set $\epsilon=M\log(p^2NT)$ for some $M>0$ and note that the upper bound of the preceding probability becomes arbitrarily small for $N$ and $M$ sufficiently large. Thus,
\[\max_{1\leq l\leq p}\max_{1\leq k\leq p}\max_{1\leq i\leq N}\max_{1\leq t\leq T}|z_{i,t,l}z_{i,t,k}|=O_p(\log(p^2NT))\]
and we get
\begin{align}
&\left\| \frac{1}{NT}\sum_{i=1}^{N}\sum_{t=1}^{T}z_{i,t}z_{i,t}'[ \pi_{i,t}'(\hat{\gamma}-\gamma)]^2\right\|_{\infty}=O_p(\log(p\vee N\vee T))\frac{1}{NT}\|\Pi(\hat{\gamma}-\gamma)\|^2. \label{align consistency covariance denominator 11b actual result}
\end{align}
Combining (\ref{align consistency covariance denominator 11a actual result}) and (\ref{align consistency covariance denominator 11b actual result}), conclude
\begin{align*}
&\left\| \hat{\Sigma}_{1,N}-\tilde{\Sigma}_{1,N}\right\|_{\infty}\\
&=O_p\del[3]{\frac{(\log(p\vee T))^{7/4}}{N^{1/4}}\vee 1}\del[2]{ \frac{1}{NT}\left\| \Pi(\hat{\gamma}-\gamma) \right\|^2}^{1/2}+O_p(\log(p\vee N\vee T))\frac{1}{NT}\|\Pi(\hat{\gamma}-\gamma)\|^2.
\end{align*}
Therefore, combining the preceding rates with (\ref{align ThetaZ rho1 l1 norm}) one gets
\begin{align*}
&|\rho_1'\hat{\Theta}_Z\hat{\Sigma}_{1,N}\hat{\Theta}_Z'\rho_1-\rho_1'\hat{\Theta}_Z\tilde{\Sigma}_{1,N}\hat{\Theta}_Z'\rho_1|\\
&= O_p(h_1\bar{G}\lambda_{node}^{-\vartheta})O_p\del[3]{\frac{(\log(p\vee T))^{7/4}}{N^{1/4}}\vee 1}\left[ \frac{1}{NT}\left\| \Pi(\hat{\gamma}-\gamma) \right\|^2\right]^{1/2}\\
&\qquad+O_p(h_1\bar{G}\lambda_{node}^{-\vartheta})O_p(\log(p\vee N\vee T))\frac{1}{NT}\|\Pi(\hat{\gamma}-\gamma)\|^2\\
&=o_p(1),
\end{align*}
where the last equality is also due to Assumption \ref{assu rates}(b)(i)-(ii), which establishes (\ref{align consistency covariance denominator 11}).

Next, turn to (\ref{align consistency covariance denominator 12}). Note that
\[|\rho_1'\hat{\Theta}_Z\tilde{\Sigma}_{1,N}\hat{\Theta}_Z'\rho_1-\rho_1'\hat{\Theta}_Z\Sigma_{1,N}\hat{\Theta}_Z'\rho_1|\leq \enVert[1]{ \tilde{\Sigma}_{1,N}-\Sigma_{1,N}}_{\infty}\enVert[1]{ \hat{\Theta}_{Z}'\rho_1}_1^2.  \]
Given (\ref{align ThetaZ rho1 l1 norm}), we only need to consider $\enVert[0]{ \tilde{\Sigma}_{1,N}-\Sigma_{1,N}}_{\infty}$. Using independence across $i$ (Assumption \ref{assu panel data}) and subgaussianity (Assumption \ref{assu subgaussian}) to invoke Proposition \ref{prop product of L subgaussian} in Appendix B such that
\begin{align}
\enVert[1]{\tilde{\Sigma}_{1,N}-\Sigma_{1,N}}_{\infty}&=
\max_{1\leq l\leq p}\max_{1\leq k\leq p}\envert[2]{ \frac{1}{NT}\sum_{i=1}^{N}\sum_{t=1}^{T}(z_{i,t,l}z_{i,t,k}\varepsilon_{it}^2-\mathbb{E}[z_{i,t,l}z_{i,t,k}\varepsilon_{i,t}^2] )}=O_p\del[3]{ \sqrt{\frac{(\log(p^2T))^5}{N}}}. \label{align Sigma1Ntilde -Sigma1N sup}
\end{align}
Thus,
\begin{align*}
&|\rho_1'\hat{\Theta}_Z\tilde{\Sigma}_{1,N}\hat{\Theta}_Z'\rho_1-\rho_1'\hat{\Theta}_Z\Sigma_{1,N}\hat{\Theta}_Z'\rho_1|=O_p\left( \sqrt{\frac{(\log(p\vee T))^5}{N}}h_1\bar{G}\lambda_{node}^{-\vartheta}\right)=o_p(1),
\end{align*}
where the last equality is due to Assumption \ref{assu rates}(a)(i), establishing (\ref{align consistency covariance denominator 12}).

To prove (\ref{align consistency covariance denominator 13}) invoke Lemma \ref{lemma vandegeer 6.1} in Appendix B:
\begin{align*}
&|\rho_1'\hat{\Theta}_Z\Sigma_{1,N}\hat{\Theta}_Z'\rho_1-\rho_1'\Theta_Z\Sigma_{1,N}\Theta_Z'\rho_1|\leq \|\Sigma_{1,N}\|_{\infty}\|(\hat{\Theta}_{Z}'-\Theta_Z')\rho_1\|_1^2+2\|\Sigma_{1,N}\Theta_Z'\rho_1\|\|(\hat{\Theta}_{Z}'-\Theta_Z')\rho_1\|\\
&\leq \|\Sigma_{1,N}\|_{\infty}\|(\hat{\Theta}_{Z}'-\Theta_Z')\rho_1\|_1^2+2\text{maxeval}(\Sigma_{1,N})\|\Theta_Z'\rho_1\|\|(\hat{\Theta}_{Z}'-\Theta_Z')\rho_1\|.
\end{align*}
First, note that $\|\Sigma_{1,N}\|_{\infty}$ is uniformly bounded as every entry is an average of uniformly bounded population moments (see Proposition \ref{prop product of L subgaussian} in Appendix B). 
\begin{align}
&\|(\hat{\Theta}_{Z}'-\Theta_Z')\rho_1\|_1\leq \sum_{j\in H_1} \enVert[1]{ \hat{\Theta}_{Z,j}-\Theta_{Z,j}}_1|\rho_{1j}|\leq \max_{j\in H_1}\enVert[1]{ \hat{\Theta}_{Z,j}-\Theta_{Z,j}}_1\sqrt{h_1}\notag\\
&=O_p\del[3]{\bar{G}\sbr[3]{\frac{(\log p)^3}{N}}^{\frac{1-\vartheta}{2}}\sqrt{h_1}}=o_p(1),\label{align for the use of comparing numerators of t1 and t1 prime}
\end{align}
where the first equality is due to (\ref{align lemma 2 4}), and the last equality is due to Assumption \ref{assu rates}(a)(i). Next, $\|\Theta_Z'\rho_1\|\leq \text{maxeval}(\Theta_Z)\|\rho_1\|\leq \text{maxeval}(\Theta_Z)=1/\text{mineval}(\Psi_Z)$, which is uniformly bounded from above by Assumption \ref{assu more on eigen values}(a). Furthermore,
\begin{align*}
&\|(\hat{\Theta}_{Z}'-\Theta_Z')\rho_1\|=\enVert[2]{\sum_{j\in H_1}(\hat{\Theta}_{Z,j}-\Theta_{Z,j})\rho_{1j}}\leq \sum_{j\in H_1} \left\| \hat{\Theta}_{Z,j}-\Theta_{Z,j}\right\||\rho_{1j}|\\
&\leq  \max_{j\in H_1}\enVert[1]{ \hat{\Theta}_{Z,j}-\Theta_{Z,j}}\sqrt{h_1}=O_p\del[3]{\bar{G}^{1/2}\sbr[3]{\frac{(\log p)^3}{N}}^{\frac{2-\vartheta}{4}}\sqrt{h_1}}=o_p(1),
\end{align*}
where the second last equality is due to (\ref{align lemma 2 5}), and the last equality is due to (\ref{align for the use of comparing numerators of t1 and t1 prime}). Thus, we have established (\ref{align consistency covariance denominator 13}) concluding the proof of (\ref{align consistency covariance denominator 1}) is $o_p(1)$.

\subsubsection*{(\ref{align consistency covariance denominator 2}) is $o_p(1)$:}

Define $\tilde{\Sigma}_{2,N}:=\frac{1}{\sqrt{N}T}\sum_{i=1}^{N}\sum_{t=1}^{T}\varepsilon_{i,t}^2z_{i,t}d_{i,t}'$. It suffices to show
\begin{align}
|\rho_1'\hat{\Theta}_Z\hat{\Sigma}_{2,N}\rho_2-\rho_1'\hat{\Theta}_Z\tilde{\Sigma}_{2,N}\rho_2|&=o_p(1)\label{align consistency covariance denominator 21}\\
|\rho_1'\hat{\Theta}_Z\tilde{\Sigma}_{2,N}\rho_2-\rho_1'\hat{\Theta}_Z\Sigma_{2,N}\rho_2|&=o_p(1)\label{align consistency covariance denominator 22}\\
|\rho_1'\hat{\Theta}_Z\Sigma_{2,N}\rho_2-\rho_1'\Theta_Z\Sigma_{2,N}\rho_2|&=o_p(1).\label{align consistency covariance denominator 23}
\end{align}

Consider (\ref{align consistency covariance denominator 21}) first. Note that
\begin{align*}
&|\rho_1'\hat{\Theta}_Z\hat{\Sigma}_{2,N}\rho_2-\rho_1'\hat{\Theta}_Z\tilde{\Sigma}_{2,N}\rho_2|\leq \enVert[2]{ \rho_1'\hat{\Theta}_Z\left( \hat{\Sigma}_{2,N}-\tilde{\Sigma}_{2,N}\right)}_{\infty}\left\|   \rho_2\right\|_1\\
&\leq \enVert[1]{ \rho_1'\hat{\Theta}_Z}_1 \enVert[1]{  \hat{\Sigma}_{2,N}-\tilde{\Sigma}_{2,N}}_{\infty}\sqrt{h_2}=O_p\del[1]{ \sqrt{h_1h_2\bar{G}\lambda_{node}^{-\vartheta}}}\enVert[1]{ \hat{\Sigma}_{2,N}-\tilde{\Sigma}_{2,N}}_{\infty},
\end{align*}
where the last equality is due to (\ref{align ThetaZ rho1 l1 norm}). In addition,
\begin{align}
&\left\| \hat{\Sigma}_{2,N}-\tilde{\Sigma}_{2,N}\right\|_{\infty}=\enVert[4]{ \frac{1}{\sqrt{N}T}\sum_{i=1}^{N}\sum_{t=1}^{T}\hat{\varepsilon}_{i,t}^2z_{i,t}d_{i,t}' -\frac{1}{\sqrt{N}T}\sum_{i=1}^{N}\sum_{t=1}^{T}\varepsilon_{i,t}^2z_{i,t}d_{i,t}'}_{\infty}\nonumber\\
&\leq  2\enVert[4]{ \frac{1}{\sqrt{N}T}\sum_{i=1}^{N}\sum_{t=1}^{T}z_{i,t}d_{i,t}'\varepsilon_{i,t}\pi_{i,t}'(\hat{\gamma}-\gamma)}_{\infty}+\enVert[4]{ \frac{1}{\sqrt{N}T}\sum_{i=1}^{N}\sum_{t=1}^{T}z_{i,t}d_{i,t}'[ \pi_{i,t}'(\hat{\gamma}-\gamma)]^2}_{\infty} \label{align consistency covariance denominator 21a}
\end{align} 

Consider the first term of (\ref{align consistency covariance denominator 21a}). A typical element of $\frac{1}{\sqrt{N}T}\sum_{i=1}^{N}\sum_{t=1}^{T}z_{i,t}d_{i,t}'\varepsilon_{i,t}\pi_{i,t}'(\hat{\gamma}-\gamma)$ is 
\begin{align*}
&\frac{1}{\sqrt{N}T}\sum_{j=1}^{NT}z_{j,l}d_{j,k}\varepsilon_{j}\pi_{j}'(\hat{\gamma}-\gamma) \leq \frac{1}{\sqrt{N}T}\del[4]{ \sum_{j=1}^{NT}z_{j,l}^2d_{j,k}^2\varepsilon_{j}^2}^{1/2}\del[4]{ \sum_{j=1}^{NT} [ \pi_{j}'(\hat{\gamma}-\gamma) ]^2 }^{1/2} \\
&= \del[4]{\frac{1}{T} \sum_{i=1}^{N}\sum_{t=1}^{T}z_{i,t,l}^2d_{i,t,k}^2\varepsilon_{i,t}^2} ^{1/2}\frac{1}{\sqrt{NT}}\left\| \Pi(\hat{\gamma}-\gamma) \right\|= \del[4]{\frac{1}{T} \sum_{t=1}^{T}z_{k,t,l}^2\varepsilon_{k,t}^2}^{1/2}\frac{1}{\sqrt{NT}}\left\| \Pi(\hat{\gamma}-\gamma) \right\|
\end{align*}
for some $l\in \{1,\ldots,p\}$ and $k\in \{1,\ldots,N\}$ where the inequality is due to Cauchy-Schwarz inequality. By subgaussianity, Assumption \ref{assu subgaussian}, we can use the same technique as in (\ref{align bound exponential moments}) in Proposition \ref{prop product of L subgaussian} in Appendix B to prove $\mathbb{E} e^{D\left| \frac{1}{T}\sum_{t=1}^{T}z_{i,t,l}^2\varepsilon_{i,t}^2\right| ^{1/2}}\leq BT$ for positive constants $D, B$. Using Markov's inequality, we have for $\epsilon>0$
\begin{align*}
&\mathbb{P}\del[3]{ \max_{1\leq l\leq p}\max_{1\leq k\leq N}\envert[2]{ \frac{1}{T}\sum_{t=1}^{T}z_{k,t,l}^2\varepsilon_{k,t}^2}\geq \epsilon} \leq
 \sum_{l=1}^{p}\sum_{k=1}^{N}\frac{\mathbb{E} e^{D\left| \frac{1}{T}\sum_{t=1}^{T}z_{k,t,l}^2\varepsilon_{k,t}^2\right|^{1/2}} }{e^{D\epsilon^{1/2}}}\leq BpNTe^{-D\epsilon^{1/2}}.
\end{align*}
Set $\epsilon=M(\log(pNT))^2$ for some $M>0$ and note that the upper bound of the preceding probability becomes arbitrarily small for $N$ and $M$ sufficiently large. Thus, $\max_{1\leq l\leq p}\max_{1\leq k\leq N}\left| \frac{1}{T}\sum_{t=1}^{T}z_{k,t,l}^2\varepsilon_{k,t}^2\right|=O_p((\log(pNT))^2)$. Therefore,
\begin{align}
&\enVert[4]{ \frac{1}{\sqrt{N}T}\sum_{i=1}^{N}\sum_{t=1}^{T}z_{i,t}d_{i,t}'\varepsilon_{i,t}\pi_{i,t}'(\hat{\gamma}-\gamma)}_{\infty}\nonumber\leq \del[4]{\max_{1\leq l\leq p}\max_{1\leq k\leq N}\frac{1}{T}\sum_{t=1}^{T}z_{k,t,l}^2\varepsilon_{k,t}^2} ^{1/2}\frac{1}{\sqrt{NT}}\left\| \Pi(\hat{\gamma}-\gamma) \right\|\nonumber\\
&\leq O_p( \log(pNT)) \frac{1}{\sqrt{NT}}\left\| \Pi(\hat{\gamma}-\gamma) \right\|. \label{align consistency covariance denominator 21a actual result}
\end{align}
Now consider the second term of (\ref{align consistency covariance denominator 21a}). A typical element of $\frac{1}{\sqrt{N}T}\sum_{i=1}^{N}\sum_{t=1}^{T}z_{i,t}d_{i,t}'[ \pi_{i,t}'(\hat{\gamma}-\gamma)]^2$ is
\begin{align*}
\frac{1}{\sqrt{N}T}\sum_{i=1}^{N}\sum_{t=1}^{T}z_{i,t,l}d_{i,t,k}[ \pi_{i,t}'(\hat{\gamma}-\gamma)]^2&\leq \max_{1\leq i\leq N}\max_{1\leq t\leq T}\sqrt{N}|z_{i,t,l}d_{i,t,k}|\frac{1}{NT}\|\Pi(\hat{\gamma}-\gamma)\|^2 \\
\leq \max_{1\leq t\leq T}\sqrt{N}|z_{k,t,l}|\frac{1}{NT}\|\Pi(\hat{\gamma}-\gamma)\|^2
\end{align*}
for some $l\in \{1,\ldots,p\}$, $k\in \{1,\ldots,N\}$. Using Markov's inequality, we have for any $\epsilon>0$
\begin{align*}
&\mathbb{P}\del[2]{\max_{1\leq l\leq p}\max_{1\leq k\leq N}\max_{1\leq t\leq T}|z_{k,t,l}|\geq \epsilon} \leq  \sum_{l=1}^{p}\sum_{k=1}^{N}\sum_{t=1}^{T}\mathbb{P}\left(|z_{k,t,l}|\geq \epsilon\right)\leq pNT\frac{K}{2}e^{-C\epsilon^2}.
\end{align*}
Set $\epsilon=\sqrt{M\log(pNT)}$ for some $M>0$ to see that the upper bound of the preceding probability becomes arbitrarily small for $N$ and $M$ sufficiently large. Thus, $\max_{1\leq l\leq p}\max_{1\leq k\leq N}\max_{1\leq t\leq T}|z_{k,t,l}|=O_p(\sqrt{\log(pNT)})$. In total,
\begin{align}
&\enVert[4]{ \frac{1}{\sqrt{N}T}\sum_{i=1}^{N}\sum_{t=1}^{T}z_{i,t}d_{i,t}'[ \pi_{i,t}'(\hat{\gamma}-\gamma)]^2}_{\infty}\leq \max_{1\leq l\leq p}\max_{1\leq k\leq N}\max_{1\leq t\leq T}\sqrt{N}|z_{k,t,l}|\frac{1}{NT}\|\Pi(\hat{\gamma}-\gamma)\|^2\nonumber\\
&=O_p(\sqrt{N\log(pNT)})\frac{1}{NT}\|\Pi(\hat{\gamma}-\gamma)\|^2.\label{align consistency covariance denominator 21b actual result}
\end{align}
Therefore, combining (\ref{align consistency covariance denominator 21a actual result}) and (\ref{align consistency covariance denominator 21b actual result})
\begin{align*}
&|\rho_1'\hat{\Theta}_Z\hat{\Sigma}_{2,N}\rho_2-\rho_1'\hat{\Theta}_Z\tilde{\Sigma}_{2,N}\rho_2|\leq \enVert[1]{ \hat{\Sigma}_{2,N}-\tilde{\Sigma}_{2,N}}_{\infty} O_p(\sqrt{h_1h_2\bar{G}\lambda_{node}^{-\vartheta}})\\
& =O_p\del[1]{\sqrt{h_1h_2\bar{G}\lambda_{node}^{-\vartheta}} \log(pNT)} \frac{1}{\sqrt{NT}}\left\| \Pi(\hat{\gamma}-\gamma) \right\|+O_p\del[1]{ \sqrt{h_1h_2\bar{G}\lambda_{node}^{-\vartheta}N\log(pNT)}} \frac{1}{NT}\|\Pi(\hat{\gamma}-\gamma)\|^2\\
&=o_p(1),
\end{align*}
where the last equality is due to Assumption \ref{assu rates}(b)(iii)-(iv), which establishes (\ref{align consistency covariance denominator 21}).

Next, turn to (\ref{align consistency covariance denominator 22}). Note that
\[|\rho_1'\hat{\Theta}_Z\tilde{\Sigma}_{2,N}\rho_2-\rho_1'\hat{\Theta}_Z\Sigma_{2,N}\rho_2|\leq \enVert[1]{ \tilde{\Sigma}_{2,N}-\Sigma_{2,N}}_{\infty}\enVert[1]{ \hat{\Theta}_{Z}'\rho_1}_1\sqrt{h_2}.  \]
Given (\ref{align ThetaZ rho1 l1 norm}), it suffices to consider
\begin{align*}
\enVert[1]{ \tilde{\Sigma}_{2,N}-\Sigma_{2,N}}_{\infty}&=\max_{1\leq l\leq p}\max_{1\leq k\leq N}\envert[3]{ \frac{1}{\sqrt{N}T}\sum_{i=1}^{N}\sum_{t=1}^{T}(z_{i,t,l}d_{i,t,k}\varepsilon_{i,t}^2-\mathbb{E}[z_{i,t,l}d_{i,t,k}\varepsilon_{i,t}^2] )}\\
&=\max_{1\leq l\leq p}\max_{1\leq k\leq N}\envert[3]{ \frac{1}{\sqrt{N}T}\sum_{t=1}^{T}(z_{k,t,l}\varepsilon_{k,t}^2-\mathbb{E}[z_{k,t,l}\varepsilon_{k,t}^2] )}.
\end{align*}
By subgaussianity, Assumption \ref{assu subgaussian}, we can use the same technique as in (\ref{align bound exponential moments}) in Proposition \ref{prop product of L subgaussian} in Appendix B to prove $\mathbb{E} e^{D| \frac{1}{T}\sum_{t=1}^{T}(z_{k,t,l}\varepsilon_{k,t}^2-\mathbb{E}[z_{k,t,l}\varepsilon_{k,t}^2] )| ^{2/3}}\leq BT$ for some positive constant $B$. Using Markov's inequality, we have for any $\epsilon>0$
\begin{align*}
&\mathbb{P}\del[3]{ \max_{1\leq l\leq p}\max_{1\leq k\leq N}\envert[2]{ \frac{1}{T}\sum_{t=1}^{T}(z_{k,t,l}\varepsilon_{k,t}^2-\mathbb{E}[z_{k,t,l}\varepsilon_{k,t}^2] )} \geq \epsilon} \leq  \sum_{l=1}^{p}\sum_{k=1}^{N}\mathbb{P}\del[3]{\envert[2]{ \frac{1}{T}\sum_{t=1}^{T}(z_{k,t,l}\varepsilon_{k,t}^2-\mathbb{E}[z_{k,t,l}\varepsilon_{k,t}^2] )} \geq \epsilon}\\
&\leq \sum_{l=1}^{p}\sum_{k=1}^{N}\frac{\mathbb{E} e^{D\left| \frac{1}{T}\sum_{t=1}^{T}(z_{k,t,l}\varepsilon_{k,t}^2-\mathbb{E}[z_{k,t,l}\varepsilon_{k,t}^2] )\right|^{2/3} }}{e^{D\epsilon^{2/3}}} \leq BpNTe^{-D\epsilon^{2/3}}.
\end{align*}
Set $\epsilon=\sqrt{M(\log(pNT))^3}$ for some $M>0$ and note that the upper bound of the preceding probability becomes arbitrarily small for $N$ and $M$ sufficiently large. Thus,
\[\max_{1\leq l\leq p}\max_{1\leq k\leq N}\envert[2]{\frac{1}{T}\sum_{t=1}^{T} (z_{k,t,l}\varepsilon_{k,t}^2-\mathbb{E}[z_{k,t,l}\varepsilon_{k,t}^2] )}=O_p\del[1]{ \sqrt{(\log(pNT))^3}} \]
and so
\begin{align}
\enVert[1]{ \tilde{\Sigma}_{2,N}-\Sigma_{2,N}}_{\infty}&=
\frac{1}{\sqrt{N}}\max_{1\leq l\leq p}\max_{1\leq k\leq N}\envert[2]{\frac{1}{T}\sum_{t=1}^{T} (z_{k,t,l}\varepsilon_{k,t}^2-\mathbb{E}[z_{k,t,l}\varepsilon_{k,t}^2] )}=O_p\del[2]{ \sqrt{\frac{(\log(pNT))^3}{N}}} . \label{align Sigma2Ntilde -Sigma2N sup}
\end{align}
In total,
\begin{align*}
&|\rho_1'\hat{\Theta}_Z\tilde{\Sigma}_{2,N}\rho_2-\rho_1'\hat{\Theta}_Z\Sigma_{2,N}\rho_2|=O_p\del[2]{  \sqrt{\frac{(\log(p\vee N\vee T))^3h_1h_2\bar{G}\lambda_{node}^{-\vartheta}}{N}}}=o_p(1),
\end{align*}
where the last equality is due to Assumption \ref{assu rates}(a)(ii), establishing (\ref{align consistency covariance denominator 22}).

We now establish (\ref{align consistency covariance denominator 23}).
\begin{align*}
&|\rho_1'\hat{\Theta}_Z\Sigma_{2,N}\rho_2-\rho_1'\Theta_Z\Sigma_{2,N}\rho_2| \leq \|\Sigma_{2,N}\|_{\infty}\|(\hat{\Theta}_{Z}'-\Theta_Z')\rho_1\|_1\sqrt{h_2}\\
&=\|\Sigma_{2,N}\|_{\infty}O_p(\bar{G}\lambda_{node}^{1-\vartheta}\sqrt{h_1h_2})=O\del[0]{1/\sqrt{N}} O_p(\bar{G}\lambda_{node}^{1-\vartheta}\sqrt{h_1h_2})=o_p(1),
\end{align*}
where the first equality is due to (\ref{align for the use of comparing numerators of t1 and t1 prime}), the second equality is due to the definition of $\Sigma_{2,N}$ and (\ref{eqn product of L subgaussian bounded uniform expectation}), and the last equality is due to Assumption \ref{assu rates}(a)(ii) and \ref{assu more on eigen values}b). Thus, we have established (\ref{align consistency covariance denominator 23}), concluding the proof of that (\ref{align consistency covariance denominator 2}) is $o_p(1)$.

\subsubsection*{(\ref{align consistency covariance denominator 3}) is $o_p(1)$:}

We now prove that (\ref{align consistency covariance denominator 3}) is $o_p(1)$. First,
\begin{align*}
&|\rho_2'\hat{\Sigma}_{3,N}\rho_2-\rho_2'\Sigma_{3,N}\rho_2| \leq \enVert[1]{ \hat{\Sigma}_{3,N}-\Sigma_{3,N}}_{\infty}h_2\leq h_2\del[1]{\enVert[1]{ \hat{\Sigma}_{3,N}-\tilde{\Sigma}_{3,N}}_{\infty}+\enVert[1]{ \tilde{\Sigma}_{3,N}-\Sigma_{3,N}}_{\infty}} ,
\end{align*}
where $\tilde{\Sigma}_{3,N}:=\frac{1}{T}\sum_{i=1}^{N}\sum_{t=1}^{T}\varepsilon_{i,t}^2d_{i,t}d_{i,t}'$. We consider $\enVert[1]{\hat{\Sigma}_{3,N}-\tilde{\Sigma}_{3,N}}_{\infty}$ first.
\begin{align}
&\left\| \hat{\Sigma}_{3,N}-\tilde{\Sigma}_{3,N}\right\|_{\infty}=\enVert[4]{ \frac{1}{T}\sum_{i=1}^{N}\sum_{t=1}^{T}\hat{\varepsilon}_{i,t}^2d_{i,t}d_{i,t}' -\frac{1}{T}\sum_{i=1}^{N}\sum_{t=1}^{T}\varepsilon_{i,t}^2d_{i,t}d_{i,t}'}_{\infty}\nonumber\\
&\leq  2\enVert[4]{ \frac{1}{T}\sum_{i=1}^{N}\sum_{t=1}^{T}d_{i,t}d_{i,t}'\varepsilon_{i,t}\pi_{i,t}'(\hat{\gamma}-\gamma)}_{\infty}+\enVert[4]{ \frac{1}{T}\sum_{i=1}^{N}\sum_{t=1}^{T}d_{i,t}d_{i,t}'[ \pi_{i,t}'(\hat{\gamma}-\gamma)]^2}_{\infty}. \label{align consistency covariance denominator 31a}
\end{align} 
Consider the first term of (\ref{align consistency covariance denominator 31a}). A typical element of $\frac{1}{T}\sum_{i=1}^{N}\sum_{t=1}^{T}d_{i,t}d_{i,t}'\varepsilon_{i,t}\pi_{i,t}'(\hat{\gamma}-\gamma)$ is
\begin{align*}
& \frac{1}{T}\sum_{j=1}^{NT}d_{j,l}d_{j,k}\varepsilon_{j}\pi_{j}'(\hat{\gamma}-\gamma) \leq \frac{1}{T}\del[3]{ \sum_{j=1}^{NT}d_{j,l}^2d_{j,k}^2\varepsilon_{j}^2} ^{1/2}\del[3]{\sum_{j=1}^{NT} [ \pi_{j}'(\hat{\gamma}-\gamma) ]^2}^{1/2} \\
&= \del[3]{\frac{1}{T} \sum_{i=1}^{N}\sum_{t=1}^{T}d_{i,t,l}^2d_{i,t,k}^2\varepsilon_{i,t}^2} ^{1/2}\frac{1}{\sqrt{T}}\left\| \Pi(\hat{\gamma}-\gamma) \right\|= \del[3]{\frac{1}{T} \sum_{t=1}^{T}\varepsilon_{k,t}^2} ^{1/2}\frac{1}{\sqrt{T}}\left\| \Pi(\hat{\gamma}-\gamma) \right\|
\end{align*}
for some $l, k\in \{1,\ldots,N\}$, where the inequality is due to Cauchy-Schwarz inequality. By Assumption \ref{assu subgaussian} we have $\mathbb{P}( |\varepsilon_{i,t}^2|\geq\epsilon)\leq \mathbb{P}( |\varepsilon_{i,t}|\geq \epsilon^{1/2})\leq \frac{1}{2}Ke^{-C\epsilon}$ for every $\epsilon> 0$. It follows from Lemma 2.2.1 in \cite{vandervaartWellner1996} that $\|\varepsilon_{i,t}^2\|_{\psi_1}\leq (1+K/2)/C$ for all $i$ and $t$. Hence, by subadditivity of the Orlicz norm and Jensen's inequality, $\| \varepsilon_{i,t}^2-\mathbb{E}[\varepsilon_{i,t}^2]\| _{\psi_1}\leq 2\|\varepsilon_{i,t}^2\|_{\psi_1}\leq (2+K)/C$. Using the definition of the Orlicz norm, we have $\mathbb{E}\exp (\frac{C}{2+K}|  \varepsilon_{i,t}^2-\mathbb{E}[\varepsilon_{i,t}^2]|) \leq 2$. Use independence of $\varepsilon_{i,t}$ across $t$ to invoke Proposition \ref{prop adapation of Fan} in Appendix B for $D=\frac{C}{2+K}$ and $\alpha=1/3$ to conclude
\begin{align*}
\mathbb{P}\del[3]{\envert[2]{ \sum_{t=1}^{T}(\varepsilon_{i,t}^2-\mathbb{E}[\varepsilon_{i,t}^2] )} \geq T\epsilon }\leq Ae^{- B(\epsilon^{2}T)^{1/3}},
\end{align*}
for positive constants $A$ and $B$. Setting $\epsilon=\sqrt{\frac{M(\log N)^3}{T}}$ for some $M>0$ $\left( \epsilon \gtrsim \frac{1}{\sqrt{T}}\right) $, one has
\begin{align*}
&\mathbb{P}\del[3]{\max_{1\leq k\leq N}\envert[2]{ \sum_{t=1}^{T}(\varepsilon_{k,t}^2-\mathbb{E}[\varepsilon_{k,t}^2] )} \geq T\epsilon }  \leq \sum_{k=1}^{N}\mathbb{P}\del[3]{ \envert[2]{ \sum_{t=1}^{T}(\varepsilon_{k,t}^2-\mathbb{E}[\varepsilon_{k,t}^2] )} \geq T\epsilon }\leq AN^{1-BM^{1/3}}.
\end{align*}
The upper bound of the preceding probability becomes arbitrarily small for $N$ and $M$ sufficiently large. Hence,
\begin{equation}
\label{eqn rates for demeaned e squared}
\max_{1\leq k\leq N}\envert[2]{ \frac{1}{T}\sum_{t=1}^{T}(\varepsilon_{k,t}^2-\mathbb{E}[\varepsilon_{k,t}^2] )}=O_p\del[3]{\sqrt{\frac{(\log N)^3}{T}}}. 
\end{equation}
Furthermore, since $\max_{1\leq k\leq N}\max_{1\leq t\leq T}\mathbb{E}[ \varepsilon_{k,t}^2]\leq\max_{1\leq k\leq N}\max_{1\leq t\leq T}\| \varepsilon_{k,t}^2\|_{\psi_1}\leq (1+K/2)/C=O(1)$
\begin{align}
\max_{1\leq k\leq N}\envert[2]{ \frac{1}{T}\sum_{t=1}^{T}\varepsilon_{k,t}^2}
&\leq \max_{1\leq k\leq N}\envert[2]{ \frac{1}{T}\sum_{t=1}^{T}(\varepsilon_{k,t}^2-\mathbb{E}[\varepsilon_{k,t}^2])}+\max_{1\leq k\leq N} \max_{1\leq t\leq T}\mathbb{E}[\varepsilon_{k,t}^2]=O_p\del[2]{\sqrt{\frac{(\log N)^3}{T}}}+O(1).\label{align T average of varepsilon square}
\end{align}
Therefore,
\begin{align}
&\enVert[4]{ \frac{1}{T}\sum_{i=1}^{N}\sum_{t=1}^{T}d_{i,t}d_{i,t}'\varepsilon_{i,t}\pi_{i,t}'(\hat{\gamma}-\gamma)}_{\infty}= O_p\del[2]{\frac{(\log N)^{3/4}}{T^{1/4}}\vee 1}\frac{1}{\sqrt{T}}\| \Pi(\hat{\gamma}-\gamma) \|. \label{align consistency covariance denominator 31a actual result}
\end{align}
Now consider the second term of (\ref{align consistency covariance denominator 31a}). A typical element of $\frac{1}{T}\sum_{i=1}^{N}\sum_{t=1}^{T}d_{i,t}d_{i,t}'[ \pi_{i,t}'(\hat{\gamma}-\gamma)]^2$ is
\begin{align}
&\frac{1}{T}\sum_{i=1}^{N}\sum_{t=1}^{T}d_{i,t,l}d_{i,t,k}[ \pi_{i,t}'(\hat{\gamma}-\gamma)]^2\leq \max_{1\leq i\leq N}\max_{1\leq t\leq T}|d_{i,t,l}d_{i,t,k}|\frac{1}{T}\|\Pi(\hat{\gamma}-\gamma)\|^2=\frac{1}{T}\|\Pi(\hat{\gamma}-\gamma)\|^2, \label{align consistency covariance denominator 31b actual result}
\end{align}
uniformly over $l, k\in \{1,\ldots,N\}$. Combining  (\ref{align consistency covariance denominator 31a actual result}) and (\ref{align consistency covariance denominator 31b actual result}), we have
\begin{align}
\label{eqn consistency covariance denominator 31}
&\left\| \hat{\Sigma}_{3,N}-\tilde{\Sigma}_{3,N}\right\|_{\infty}=O_p\del[2]{\frac{(\log N)^{3/4}}{T^{1/4}}\vee 1}\frac{1}{\sqrt{T}}\| \Pi(\hat{\gamma}-\gamma) \|+\frac{1}{T}\|\Pi(\hat{\gamma}-\gamma)\|^2.
\end{align}

Next, consider $\left\| \tilde{\Sigma}_{3,N}-\Sigma_{3,N}\right\|_{\infty}$.
\begin{align}
&\left\| \tilde{\Sigma}_{3,N}-\Sigma_{3,N}\right\|_{\infty}=\max_{1\leq l\leq N}\max_{1\leq k\leq N}\envert[2]{ \frac{1}{T}\sum_{i=1}^{N}\sum_{t=1}^{T}(\varepsilon_{i,t}^2d_{i,t,l}d_{i,t,k} -\mathbb{E}[\varepsilon_{i,t}^2d_{i,t,l}d_{i,t,k}])}\nonumber\\
&=\max_{1\leq k\leq N}\envert[2]{ \frac{1}{T}\sum_{t=1}^{T}(\varepsilon_{k,t}^2 -\mathbb{E}[\varepsilon_{k,t}^2])}=O_p\del[3]{ \sqrt{\frac{(\log N)^3}{T}}},\label{align consistency covariance denominator 32}
\end{align} 
where the last equality is due to (\ref{eqn rates for demeaned e squared}). Summing up (\ref{eqn consistency covariance denominator 31}) and (\ref{align consistency covariance denominator 32}) yields
\begin{align*}
&|\rho_2'\hat{\Sigma}_{3,N}\rho_2-\rho_2'\Sigma_{3,N}\rho_2|\\
&= h_2O_p\del[2]{\frac{(\log N)^{3/4}}{T^{1/4}}\vee 1}\frac{1}{\sqrt{T}}\| \Pi(\hat{\gamma}-\gamma) \|+h_2\frac{1}{T}\|\Pi(\hat{\gamma}-\gamma)\|^2+O_p\del[2]{ h_2\sqrt{\frac{(\log N)^3}{T}}}\\
&=o_p(1),
\end{align*}
where the last equality is due to Assumptions \ref{assu rates}(b)(v), which, in turns, implies that (\ref{align consistency covariance denominator 3}) is $o_p(1)$. 

\bigskip

Thus, we have proved (\ref{eqn consistency covariance matrix denominator equivalence}). (\ref{eqn uniform denominator}) then follows trivially since the conclusions of Theorem \ref{thm probabilistic oracle inequality} and Corollary \ref{thm l1 norm consistency} are uniform over the set $\mathcal{F}(s_1,\nu,E)$ and the true parameter vector only entered the above arguments when these results were used.

\subsubsection{Numerators of $t_1$ and $t_1'$}

We now show that the numerators of $t_1$ and $t_1'$ are asymptotically equivalent, i.e.,
\begin{equation}
\label{eqn consistency covariance matrix numerator equivalence}
|\rho'\hat{\Theta}S^{-1}\Pi'\varepsilon-\rho'\Theta S^{-1}\Pi'\varepsilon|=o_p(1).
\end{equation}
Note that 
\begin{align*}
&|\rho'\hat{\Theta}S^{-1}\Pi'\varepsilon-\rho'\Theta S^{-1}\Pi'\varepsilon|  \leq \|\rho'(\hat{\Theta}-\Theta)\|_1\|S^{-1}\Pi'\varepsilon\|_{\infty}=\|\rho_1'(\hat{\Theta}_Z-\Theta_Z)\|_1\|S^{-1}\Pi'\varepsilon\|_{\infty}\\
&=  O_p(\bar{G}\lambda_{node}^{1-\vartheta}\sqrt{h_1}) \del[2]{ \frac{1}{\sqrt{NT}}\left\| Z'\varepsilon\right\|_{\infty}\vee  \frac{1}{\sqrt{T}}\left\| D'\varepsilon\right\|_{\infty}} = O_p(\bar{G}\lambda_{node}^{1-\vartheta}\sqrt{h_1}) O_p\del[0]{ \sqrt{(\log (p\vee N))^3}}=o_p(1),
\end{align*}
where the second equality is due to (\ref{align for the use of comparing numerators of t1 and t1 prime}), and the third equality is due to (\ref{eqn rates for Zepsilon infinity}) and (\ref{eqn rates for Depsilon infinity}), and the last equality is due to Assumption \ref{assu rates}(a)(iii). 

\subsubsection{$t_1'\xrightarrow{d}N(0,1)$}

We now prove that $t_1'$ is asymptotically distributed as a standard normal by verifying (i)-(iii) of Theorem \ref{thm mcleish clt} in Appendix B. Note that 
\[t_1':=\frac{\rho'\Theta S^{-1}\Pi'\varepsilon}{\sqrt{\rho' \Theta \Sigma_{\Pi\varepsilon} \Theta'\rho}}=\frac{\rho'\Theta S^{-1}\sum_{i=1}^{N}\sum_{t=1}^{T}\del[2]{ \begin{array}{c}
z_{i,t}\varepsilon_{i,t}\\
d_{i,t}\varepsilon_{i,t}\\
\end{array}} }{\sqrt{\rho' \Theta \Sigma_{\Pi\varepsilon} \Theta'\rho}}=\frac{\rho'\Theta S^{-1}\sum_{j=1}^{k}\del[2]{ \begin{array}{c}
z_{j}\varepsilon_{j}\\
d_{j}\varepsilon_{j}\\
\end{array}} }{\sqrt{\rho' \Theta \Sigma_{\Pi\varepsilon} \Theta'\rho}},\]
where $k:=NT$. In the proof of Lemma \ref{lemma lower bound on Ant}, we have shown that $t_1'$ is a martingale difference array with variance 
\begin{align*}
\text{var}(t_1')=\mathbb{E}[t_1'^2]=
\frac{\rho'\Theta S^{-1}\mathbb{E}[\Pi'\varepsilon\varepsilon'\Pi]S^{-1}\Theta'\rho}{\rho'\Theta\Sigma_{\Pi\varepsilon}\Theta'\rho}=1
\end{align*}
where we have used the definition of $\Sigma_{\Pi\varepsilon}$. We have already shown in (\ref{align denominator of t1 prime bounded away from zero}) that the denominator of $t_1'$ is uniformly bounded away from zero. Thus, verifying that $t_1'$ satisfies (i) and (ii) of Theorem \ref{thm mcleish clt} in Appendix B is equivalent to verifying that the numerator of $t_1'$ satisfies (i) and (ii) of Theorem \ref{thm mcleish clt}. First, note that
\begin{align}
&\left\| \rho_1'\Theta_{Z}\right\|_1= \enVert[4]{\sum_{j\in H_1}\rho_{1j}\Theta_{Z,j}'}_1 \leq  \sum_{j\in H_1}|\rho_{1j}|\left\|\Theta_{Z,j}'\right\|_1 =O(\sqrt{h_1\bar{G}\lambda_{node}^{-\vartheta}}), \label{align ThetaZ rho1 nonhat l1 norm}
\end{align}
where the last equality is due to (\ref{align rates for max ThetaZj l1 norm}). Next,
\begin{align*}
&\envert[2]{\rho'\Theta S^{-1}\del[2]{ \begin{array}{c}
z_{i,t}\varepsilon_{i,t}\\
d_{i,t}\varepsilon_{i,t}\\
\end{array}}}
\leq \envert[2]{ \rho_1'\Theta_Z\frac{z_{i,t}\varepsilon_{i,t}}{\sqrt{NT}}}
+\envert[2]{ \frac{\rho_{2,i}\varepsilon_{i,t}}{\sqrt{T}}}\leq \left\| \rho_1'\Theta_Z\right\|_1\max_{1\leq l\leq p}\envert[2]{ \frac{z_{i,t,l}\varepsilon_{i,t}}{\sqrt{NT}}} +\frac{\|\rho_2\|_\infty|\varepsilon_{i,t}|}{\sqrt{T}}\\
&\lesssim \sqrt{h_1\bar{G}\lambda_{node}^{-\vartheta}}\max_{1\leq l\leq p}\envert[2]{ \frac{z_{i,t,l}\varepsilon_{i,t}}{\sqrt{NT}}} +\frac{\|\rho_2\|_\infty|\varepsilon_{i,t}|}{\sqrt{T}},
\end{align*}
where the last inequality due to (\ref{align ThetaZ rho1 nonhat l1 norm}). We have already shown in the proof of Lemma \ref{lemma lower bound on Ant} that $z_{i,t,l}\varepsilon_{i,t}$ has uniformly bounded $\psi_1$-Orlicz norm. The same is the case for $\varepsilon_{i,t}$. Hence,
\begin{align*}
&\enVert[2]{ \sqrt{h_1\bar{G}\lambda_{node}^{-\vartheta}}\max_{1\leq l\leq p}\envert[2]{ \frac{z_{i,t,l}\varepsilon_{i,t}}{\sqrt{NT}}} +\frac{\|\rho_2\|_\infty|\varepsilon_{i,t}|}{\sqrt{T}}}_{\psi_1}\leq \sqrt{\frac{h_1\bar{G}\lambda_{node}^{-\vartheta}}{NT}}\enVert[2]{ \max_{1\leq l\leq p}z_{i,t,l}\varepsilon_{i,t} }_{\psi_1}+\frac{\|\rho_2\|_\infty}{\sqrt{T}}\left\| \varepsilon_{i,t}\right\|_{\psi_1}\\
&\lesssim\sqrt{\frac{h_1\bar{G}\lambda_{node}^{-\vartheta}}{NT}} \log(1+p) \max_{1\leq l\leq p}\left\|z_{i,t,l}\varepsilon_{i,t} \right\|_{\psi_1}+\frac{\|\rho_2\|_\infty}{\sqrt{T}}\left\| \varepsilon_{i,t}\right\|_{\psi_1}\lesssim\sqrt{\frac{h_1\bar{G}\lambda_{node}^{-\vartheta}}{NT}} \log(1+ p) +\frac{\|\rho_2\|_\infty}{\sqrt{T}},
\end{align*}
for all $i$ and $T$, where the first rate inequality is due to Lemma 2.2.2 in \cite{vandervaartWellner1996}. Using Lemma 2.2.2 in \cite{vandervaartWellner1996} one more time,
\begin{align*}
&\enVert[2]{\max_{1\leq i\leq N}\max_{1\leq t\leq T}\envert[2]{ \rho'\Theta S^{-1}\del[2]{ \begin{array}{c}
z_{i,t}\varepsilon_{i,t}\\
d_{i,t}\varepsilon_{i,t}\\
\end{array}}}}_{\psi_1}\lesssim\log(1+NT)\sbr[4]{ \sqrt{\frac{h_1\bar{G}\lambda_{node}^{-\vartheta}}{NT}} \log(1+p) +\frac{\|\rho_2\|_\infty}{\sqrt{T}}}=o(1),
\end{align*}
where the last equality is due to Assumption \ref{assu rates}(a)(iv)-(v). Since $\|U\|_{L_r}\leq r!\|U\|_{\psi_1}$ for any random variable $U$ (\cite{vandervaartWellner1996}, p95), we conclude that (i) and (ii) of Theorem \ref{thm mcleish clt} are satisfied.  

We now verify (iii) of Theorem \ref{thm mcleish clt}. That is, 
\begin{align*}
\frac{\sum_{j=1}^{k_N}\sbr[2]{\rho'\Theta S^{-1}\del[2]{ \begin{array}{c}
z_{j}\varepsilon_{j}\\
d_{j}\varepsilon_{j}\\
\end{array}}}^2 }{\rho' \Theta \Sigma_{\Pi\varepsilon} \Theta'\rho}
=
\frac{\rho'\Theta \del[2]{\begin{array}{cc}
\tilde{\Sigma}_{1,N}& \tilde{\Sigma}_{2,N}\\
\tilde{\Sigma}_{2,N}'& \tilde{\Sigma}_{3,N}\\
\end{array}}\Theta'\rho }{\rho' \Theta \Sigma_{\Pi\varepsilon} \Theta'\rho}\xrightarrow{p}1.
\end{align*}
Since we have already shown in (\ref{align denominator of t1 prime bounded away from zero}) that the denominator of $t_1'$ is uniformly bounded away from zero, it suffices to show
\begin{equation}
\label{eqn verify iii of mclesh clt}
\envert[2]{\rho'\Theta \del[2]{ \begin{array}{cc}
\tilde{\Sigma}_{1,N}& \tilde{\Sigma}_{2,N}\\
\tilde{\Sigma}_{2,N}'& \tilde{\Sigma}_{3,N}\\
\end{array}}\Theta'\rho - \rho' \Theta \Sigma_{\Pi\varepsilon} \Theta'\rho} =o_p(1).
\end{equation}
The left-hand side of (\ref{eqn verify iii of mclesh clt}) can be bounded by
\begin{align}
&\envert[2]{\rho'\Theta \del[2]{ \begin{array}{cc}
\tilde{\Sigma}_{1,N}& \tilde{\Sigma}_{2,N}\\
\tilde{\Sigma}_{2,N}'& \tilde{\Sigma}_{3,N}\\
\end{array}}\Theta'\rho - \rho' \Theta \Sigma_{\Pi\varepsilon} \Theta'\rho}\nonumber\\
&\leq |\rho_1'\Theta_Z\tilde{\Sigma}_{1,N}\Theta_Z'\rho_1-\rho_1'\Theta_Z\Sigma_{1,N}\Theta_Z'\rho_1|\label{align asymptotic normality 1}\\
&\quad+2|\rho_1'\Theta_Z\tilde{\Sigma}_{2,N}\rho_2-\rho_1'\Theta_Z\Sigma_{2,N}\rho_2|\label{align asymptotic normality 2}\\
&\quad+|\rho_2'\tilde{\Sigma}_{3,N}\rho_2-\rho_2'\Sigma_{3,N}\rho_2|.\label{align asymptotic normality 3}
\end{align}
Thus, we establish that (\ref{align asymptotic normality 1}),  (\ref{align asymptotic normality 2}) and  (\ref{align asymptotic normality 3}) are $o_p(1)$. Consider (\ref{align asymptotic normality 1}) first. 
\begin{align*}
&|\rho_1'\Theta_Z\tilde{\Sigma}_{1,N}\Theta_Z'\rho_1-\rho_1'\Theta_Z\Sigma_{1,N}\Theta_Z'\rho_1|\leq \enVert[0]{ \tilde{\Sigma}_{1,N}-\Sigma_{1,N}}_{\infty}\enVert[0]{ \Theta_Z'\rho_1}_1^2\\
&= O_p\del[3]{ \sqrt{\frac{(\log(p^2T))^5}{N}}}O(h_1\bar{G}\lambda_{node}^{-\vartheta})=o_p(1)
\end{align*}
where the first equality is due to (\ref{align ThetaZ rho1 nonhat l1 norm}) and (\ref{align Sigma1Ntilde -Sigma1N sup}), and the last equality is due to Assumption \ref{assu rates}(a)(i). Now consider (\ref{align asymptotic normality 2}). 
\begin{align*}
&|\rho_1'\Theta_Z\tilde{\Sigma}_{2,N}\rho_2-\rho_1'\Theta_Z\Sigma_{2,N}\rho_2|\leq \enVert[0]{ \tilde{\Sigma}_{2,N}-\Sigma_{2,N}}_{\infty}\enVert[0]{ \Theta_Z'\rho_1}_1\|\rho_2\|_1\\
&= O_p\del[4]{ \sqrt{\frac{(\log(pNT))^3 h_1h_2\bar{G}\lambda_{node}^{-\vartheta}}{N}}} =o_p(1), 
\end{align*}
where the first equality is due to (\ref{align Sigma2Ntilde -Sigma2N sup}), and the last equality is due to Assumption \ref{assu rates}(a)(ii). Finally, consider (\ref{align asymptotic normality 3}). 
\begin{align*}
&|\rho_2'\tilde{\Sigma}_{3,N}\rho_2-\rho_2'\Sigma_{3,N}\rho_2|\leq \enVert[0]{ \tilde{\Sigma}_{3,N}-\Sigma_{3,N}}_{\infty}\| \rho_2\|_1^2= O_p\del[2]{ \sqrt{\frac{(\log N)^3}{T}}}O(h_2)=o_p(1), 
\end{align*}
where the first equality is due to (\ref{align consistency covariance denominator 32}), and the last equality is due to Assumption \ref{assu rates}(b)(v). Therefore, we have established (\ref{eqn verify iii of mclesh clt}) and $t_1'$ is asymptotically standard gaussian.

\subsubsection{$t_2=o_p(1)$}

Last, we prove that $t_2=o_p(1)$. Since the denominator of $t_2$ is bounded away from zero by a positive constant with probability approaching one by (\ref{align denominator of t1 prime bounded away from zero}) and (\ref{eqn consistency covariance matrix denominator equivalence}), it suffices to show $\rho'\Delta=o_p(1)$. 
\begin{align*}
& |\rho'\Delta|=\envert[2]{ \sum_{j\in H}\rho_j\Delta_j} \leq \sqrt{h}\max_{j\in H}|\Delta_j|\leq \sqrt{h}\| S( \hat{\gamma}-\gamma)\|_1\max_{j\in H} \enVert[1]{ \hat{\Theta}_j'\Psi_N-\text{I}_{p+N,j}'}_{\infty} \\
& = \sqrt{h}\| S\left( \hat{\gamma}-\gamma\right)\|_1\del[3]{\max_{j\in H_1}\enVert[3]{ \del[3]{\begin{array}{c}
\frac{1}{NT}Z'Z\hat{\Theta}_{Z,j}-e_j\\
\frac{1}{T\sqrt{N}}D'Z\hat{\Theta}_{Z,j} 
\end{array}} }_{\infty} \vee \max_{i\in H_2}\enVert[3]{\del[3]{\begin{array}{c}
\frac{1}{T\sqrt{N}}Z'De_i\\
0
\end{array}}}_{\infty}} \\
& = \sqrt{h}\| S( \hat{\gamma}-\gamma)\|_1\del[3]{\max_{j\in H_1}\del[3]{ \enVert[2]{  \frac{1}{NT}Z'Z\hat{\Theta}_{Z,j}-e_j}_{\infty}\vee \enVert[2]{ \frac{1}{T\sqrt{N}}D'Z \hat{\Theta}_{Z,j}}_{\infty}}\vee \max_{i\in H_2}\enVert[3]{
\frac{1}{T\sqrt{N}}Z'D}_{\infty}}\\
& \leq \sqrt{h}\| S( \hat{\gamma}-\gamma)\|_1\del[3]{\max_{j\in H_1}\del[3]{ \enVert[2]{  \frac{1}{NT}Z'Z\hat{\Theta}_{Z,j}-e_j}_{\infty}\vee \enVert[1]{\hat{\Theta}_{Z,j}}_1\enVert[2]{ \frac{1}{T\sqrt{N}}D'Z }_{\infty}}\vee \max_{i\in H_2}\enVert[3]{
\frac{1}{T\sqrt{N}}Z'D}_{\infty}}
\end{align*}
where $\hat{\Theta}_j$ is the $j$th row of $\hat{\Theta}$ but written as a $(p+N)\times 1$ vector, and $\text{I}_{p+N,j}$ is the $j$th row of $\text{I}_{p+N}$ but written as a $(p+N)\times 1$ vector. Note that
\[\max_{j\in H_1} \enVert[2]{\frac{1}{NT}Z'Z\hat{\Theta}_{Z,j}-e_j}_{\infty} \leq \max_{j\in H_1}\frac{\lambda_{node}}{\hat{\tau}_j^2}=O_p(\lambda_{node}),\]
where the inequality is due to the extended KKT conditions (\ref{eqn extended KKT conditions}), and the equality is due to (\ref{align lemma 2 2}). Recall that by (\ref{align block comparison 2}) we have that for every $\epsilon>0$
\[\mathbb{P}\del[2]{ \max_{1\leq i\leq N}\max_{1\leq l\leq p}\envert[2]{ \frac{1}{\sqrt{N}T}\sum_{t=1}^{T}z_{i,t,l}} \geq \epsilon} \leq \sum_{i=1}^{N}\sum_{l=1}^{p}\mathbb{P}\del[2]{\envert[2]{ \frac{1}{\sqrt{N}T}\sum_{t=1}^{T}z_{i,t,l}} \geq \epsilon } \leq ApNe^{-B\epsilon^2N},\]
for positive constants $A,B$. Setting $\epsilon=\sqrt{\frac{M\log (pN)}{N}}$ ($M>0$) makes the upper bound of the preceding inequality arbitrarily small for sufficiently large $N$ and $M$, such that
\[\enVert[1]{\hat{\Theta}_{Z,j}}_1\enVert[2]{ \frac{1}{T\sqrt{N}}D'Z}_{\infty}=O_p\del[3]{ \sqrt{\frac{\bar{G}\lambda_{node}^{-\vartheta}\log (pN)}{N}}} .\]
Thus, $|\rho'\Delta|=o_p(1)$ by Assumption \ref{assu rates}(c). For later reference,
\begin{equation}
\label{eqn uniform rho Delta rate}
\sup_{\gamma\in \mathcal{F}(s_1,\nu,E)}|\rho'\Delta|=o_p(1)
\end{equation}
by the same reasoning leading to the uniform validity of (\ref{eqn uniform denominator}). 
\end{proof}

\subsection{Proof of Theorem \ref{thm uniform convergence}}

\begin{proof}[Proof of Theorem \ref{thm uniform convergence}]
For every $\epsilon>0$, define
\begin{align*}
A_{1,N}&:=\cbr[3]{ \sup_{\gamma\in \mathcal{F}(s_1,\nu,E)}|\rho'\Delta|<\epsilon}\qquad A_{2,N}:=\cbr[4]{ \sup_{\gamma\in \mathcal{F}(s_1,\nu,E)}  \envert[4]{ \frac{\sqrt{\rho'\hat{\Theta}\hat{\Sigma}_{\Pi\varepsilon}\hat{\Theta}'\rho}}{\sqrt{\rho'\Theta\Sigma_{\Pi\varepsilon}\Theta'\rho}}-1}<\epsilon } \\
A_{3,N}&:=\cbr[1]{ |\rho'\hat{\Theta}S^{-1}\Pi'\varepsilon-\rho'\Theta S^{-1}\Pi'\varepsilon|<\epsilon}. 
\end{align*}
By (\ref{eqn uniform rho Delta rate}), (\ref{eqn uniform denominator}), (\ref{align denominator of t1 prime bounded away from zero}) and (\ref{eqn consistency covariance matrix numerator equivalence}), the probabilities of the preceding three events all tend to one. Thus, for every $t\in \mathbb{R}$,
\begin{align*}
&\envert[4]{ \mathbb{P}\del[4]{ \frac{\rho'S\left( \tilde{\gamma}-\gamma\right)}{\sqrt{\rho'\hat{\Theta}\hat{\Sigma}_{\Pi\varepsilon}\hat{\Theta}'\rho}}\leq t} - \Phi(t) }\\
&\leq \envert[4]{ \mathbb{P}\del[4]{ \frac{\rho'\hat{\Theta}S^{-1}\Pi'\varepsilon}{\sqrt{\rho'\hat{\Theta}\hat{\Sigma}_{\Pi\varepsilon}\hat{\Theta}'\rho}}-\frac{\rho'\Delta}{\sqrt{\rho'\hat{\Theta}\hat{\Sigma}_{\Pi\varepsilon}\hat{\Theta}'\rho}}\leq t, A_{1,N},A_{2,N},A_{3,N} } - \Phi(t) }+\mathbb{P}\left(\cup_{i=1}^3 A_{i,N}^c \right). 
\end{align*}

We consider $\mathbb{P}\del[2]{ \frac{\rho'\hat{\Theta}S^{-1}\Pi'\varepsilon}{\sqrt{\rho'\hat{\Theta}\hat{\Sigma}_{\Pi\varepsilon}\hat{\Theta}'\rho}}-\frac{\rho'\Delta}{\sqrt{\rho'\hat{\Theta}\hat{\Sigma}_{\Pi\varepsilon}\hat{\Theta}'\rho}}\leq t, A_{1,N},A_{2,N},A_{3,N} }$ first.
\begin{align*}
&\mathbb{P}\del[4]{ \frac{\rho'\hat{\Theta}S^{-1}\Pi'\varepsilon}{\sqrt{\rho'\hat{\Theta}\hat{\Sigma}_{\Pi\varepsilon}\hat{\Theta}'\rho}}-\frac{\rho'\Delta}{\sqrt{\rho'\hat{\Theta}\hat{\Sigma}_{\Pi\varepsilon}\hat{\Theta}'\rho}}\leq t, A_{1,N},A_{2,N},A_{3,N} }\\
&\leq \mathbb{P}\del[3]{ \frac{\rho'\Theta S^{-1}\Pi'\varepsilon}{\sqrt{\rho'\Theta\Sigma_{\Pi\varepsilon}\Theta'\rho}}\leq t(1+\epsilon)+\frac{\epsilon+\epsilon}{\sqrt{\rho'\Theta\Sigma_{\Pi\varepsilon}\Theta'\rho}}}\leq \mathbb{P}\del[3]{ \frac{\rho'\Theta S^{-1}\Pi'\varepsilon}{\sqrt{\rho'\Theta\Sigma_{\Pi\varepsilon}\Theta'\rho}}\leq t(1+\epsilon)+2D\epsilon}
\end{align*}
for some positive constant $D$, where the first and second inequalities are due to the fact that $\rho'\Theta\Sigma_{\Pi\varepsilon}\Theta'\rho$ is uniformly bounded away from zero, see (\ref{align denominator of t1 prime bounded away from zero}). Since the last inequality in the above does not depend on $\gamma$,
\begin{align*}
&\sup_{\gamma\in \mathcal{F}(s_1,\nu,E)}\mathbb{P}\del[4]{ \frac{\rho'\hat{\Theta}S^{-1}\Pi'\varepsilon}{\sqrt{\rho'\hat{\Theta}\hat{\Sigma}_{\Pi\varepsilon}\hat{\Theta}'\rho}}-\frac{\rho'\Delta}{\sqrt{\rho'\hat{\Theta}\hat{\Sigma}_{\Pi\varepsilon}\hat{\Theta}'\rho}}\leq t, A_{1,N},A_{2,N},A_{3,N} }\\
&\leq \mathbb{P}\del[3]{ \frac{\rho'\Theta S^{-1}\Pi'\varepsilon}{\sqrt{\rho'\Theta\Sigma_{\Pi\varepsilon}\Theta'\rho}}\leq t(1+\epsilon)+2D\epsilon}.
\end{align*}
By the asymptotic normality of $t_1'$, for $N$ sufficiently large,
\begin{align*}
&\sup_{\gamma\in \mathcal{F}(s_1,\nu,E)}\mathbb{P}\del[4]{ \frac{\rho'\hat{\Theta}S^{-1}\Pi'\varepsilon}{\sqrt{\rho'\hat{\Theta}\hat{\Sigma}_{\Pi\varepsilon}\hat{\Theta}'\rho}}-\frac{\rho'\Delta}{\sqrt{\rho'\hat{\Theta}\hat{\Sigma}_{\Pi\varepsilon}\hat{\Theta}'\rho}}\leq t, A_{1,N},A_{2,N},A_{3,N} }\leq \Phi(t(1+\epsilon)+2D\epsilon)+\epsilon.
\end{align*}
As the above arguments are valid for every $\epsilon>0$, we can use the continuity of $q\mapsto \Phi(q)$ to conclude that for every $\delta>0$, one can choose $\epsilon$ sufficiently small such that
\begin{equation}
\label{eqn sup 3A probability upper bound}
\sup_{\gamma\in \mathcal{F}(s_1,\nu,E)}\mathbb{P}\del[4]{ \frac{\rho'\hat{\Theta}S^{-1}\Pi'\varepsilon}{\sqrt{\rho'\hat{\Theta}\hat{\Sigma}_{\Pi\varepsilon}\hat{\Theta}'\rho}}-\frac{\rho'\Delta}{\sqrt{\rho'\hat{\Theta}\hat{\Sigma}_{\Pi\varepsilon}\hat{\Theta}'\rho}}\leq t, A_{1,N},A_{2,N},A_{3,N} }\leq \Phi(t)+\delta+\epsilon.
\end{equation}
We next find a lower bound for $\mathbb{P}\del[2]{ \frac{\rho'\hat{\Theta}S^{-1}\Pi'\varepsilon}{\sqrt{\rho'\hat{\Theta}\hat{\Sigma}_{\Pi\varepsilon}\hat{\Theta}'\rho}}-\frac{\rho'\Delta}{\sqrt{\rho'\hat{\Theta}\hat{\Sigma}_{\Pi\varepsilon}\hat{\Theta}'\rho}}\leq t, A_{1,N},A_{2,N},A_{3,N} }$.
\begin{align*}
&\mathbb{P}\del[4]{ \frac{\rho'\hat{\Theta}S^{-1}\Pi'\varepsilon}{\sqrt{\rho'\hat{\Theta}\hat{\Sigma}_{\Pi\varepsilon}\hat{\Theta}'\rho}}-\frac{\rho'\Delta}{\sqrt{\rho'\hat{\Theta}\hat{\Sigma}_{\Pi\varepsilon}\hat{\Theta}'\rho}}\leq t, A_{1,N},A_{2,N},A_{3,N}}\\
&\geq \mathbb{P}\del[3]{ \frac{\rho'\Theta S^{-1}\Pi'\varepsilon}{\sqrt{\rho'\Theta\Sigma_{\Pi\varepsilon}\Theta'\rho}}\leq t(1-\epsilon)-\frac{\epsilon+\epsilon}{\sqrt{\rho'\Theta\Sigma_{\Pi\varepsilon}\Theta'\rho}},A_{1,N},A_{2,N},A_{3,N}}\\
&\geq \mathbb{P}\del[3]{\frac{\rho'\Theta S^{-1}\Pi'\varepsilon}{\sqrt{\rho'\Theta\Sigma_{\Pi\varepsilon}\Theta'\rho}}\leq t(1-\epsilon)-2D\epsilon,A_{1,N},A_{2,N},A_{3,N}}\\
&\geq \mathbb{P}\del[3]{ \frac{\rho'\Theta S^{-1}\Pi'\varepsilon}{\sqrt{\rho'\Theta\Sigma_{\Pi\varepsilon}\Theta'\rho}}\leq t(1-\epsilon)-2D\epsilon} +\mathbb{P}( \cap_{i=1}^3A_{i,N})-1
\end{align*}
for some positive constant $D$, where the first and second inequalities are due to the fact that $\rho'\Theta\Sigma_{\Pi\varepsilon}\Theta'\rho$ is uniformly bounded away from zero, see (\ref{align denominator of t1 prime bounded away from zero}). Since the last inequality in the above display does not depend on $\gamma$, and $\mathbb{P}( \cap_{i=1}^3A_{i,N})$ can be made arbitrarily close to one for sufficiently large $N$,
\begin{align*}
&\inf_{\gamma\in \mathcal{F}(s_1,\nu,E)}\mathbb{P}\del[4]{ \frac{\rho'\hat{\Theta}S^{-1}\Pi'\varepsilon}{\sqrt{\rho'\hat{\Theta}\hat{\Sigma}_{\Pi\varepsilon}\hat{\Theta}'\rho}}-\frac{\rho'\Delta}{\sqrt{\rho'\hat{\Theta}\hat{\Sigma}_{\Pi\varepsilon}\hat{\Theta}'\rho}}\leq t, A_{1,N},A_{2,N},A_{3,N} }\\
&\geq \mathbb{P}\del[3]{ \frac{\rho'\Theta S^{-1}\Pi'\varepsilon}{\sqrt{\rho'\Theta\Sigma_{\Pi\varepsilon}\Theta'\rho}}\leq t(1-\epsilon)-2D\epsilon}-\epsilon.
\end{align*}
By the asymptotic normality of $t_1'$, for $N$ sufficiently large,
\begin{align*}
&\inf_{\gamma\in \mathcal{F}(s_1,\nu,E)}\mathbb{P}\del[4]{ \frac{\rho'\hat{\Theta}S^{-1}\Pi'\varepsilon}{\sqrt{\rho'\hat{\Theta}\hat{\Sigma}_{\Pi\varepsilon}\hat{\Theta}'\rho}}-\frac{\rho'\Delta}{\sqrt{\rho'\hat{\Theta}\hat{\Sigma}_{\Pi\varepsilon}\hat{\Theta}'\rho}}\leq t, A_{1,N},A_{2,N},A_{3,N} }\geq \Phi\left(  t(1-\epsilon)-2D\epsilon\right) -2\epsilon.
\end{align*}
As the above arguments are valid for every $\epsilon>0$, we can use the continuity of $q\mapsto \Phi(q)$ to conclude that for every $\delta>0$, one can choose $\epsilon$ sufficiently small such that
\begin{equation}
\label{eqn inf 3A probability lower bound}
\inf_{\gamma\in \mathcal{F}(s_1,\nu,E)}\mathbb{P}\del[4]{ \frac{\rho'\hat{\Theta}S^{-1}\Pi'\varepsilon}{\sqrt{\rho'\hat{\Theta}\hat{\Sigma}_{\Pi\varepsilon}\hat{\Theta}'\rho}}-\frac{\rho'\Delta}{\sqrt{\rho'\hat{\Theta}\hat{\Sigma}_{\Pi\varepsilon}\hat{\Theta}'\rho}}\leq t, A_{1,N},A_{2,N},A_{3,N} }\geq \Phi\left(  t\right) -\delta-2\epsilon.
\end{equation}
Thus, by (\ref{eqn sup 3A probability upper bound}), (\ref{eqn inf 3A probability lower bound}) and the fact that $\sup_{\gamma\in \mathcal{F}(s_1,\nu,E)}\mathbb{P}(\cup_{i=1}^3A_{i,N}^c)=\mathbb{P}(\cup_{i=1}^3A_{i,N}^c)=o(1)$, we have proved (\ref{eqn major uniform convergence}) (the uniformity over $t\in\mathbb{R}$ follows from the fact that $\Phi(t)$ is continuous). To see (\ref{align honest confidence interval alpha}), note that
\begin{align*}
&\mathbb{P}\del[2]{\alpha_j \notin \sbr[2]{\tilde{\alpha}_j-z_{1-\delta/2}\frac{\tilde{\sigma}_{\alpha,j}}{\sqrt{NT}},\tilde{\alpha}_j+z_{1-\delta/2}\frac{\tilde{\sigma}_{\alpha,j}}{\sqrt{NT}}}}=\mathbb{P}\del[3]{\envert[3]{ \frac{\sqrt{NT}(\tilde{\alpha}_j-\alpha_j)}{\tilde{\sigma}_{z,j}}}> z_{1-\delta/2}} \\
&\leq1-\mathbb{P}\del[3]{ \frac{\sqrt{NT}(\tilde{\alpha}_j-\alpha_j)}{\tilde{\sigma}_{z,j}}\leq z_{1-\delta/2}}+\mathbb{P}\del[3]{ \frac{\sqrt{NT}(\tilde{\alpha}_j-\alpha_j)}{\tilde{\sigma}_{z,j}}\leq -z_{1-\delta/2}}.
\end{align*}
Thus, taking the supremum over $\gamma\in \mathcal{F}(s_1,\nu,E)$ and letting $N$ tend to infinity yields (\ref{align honest confidence interval alpha}) via (\ref{eqn major uniform convergence}). The proof is the same for (\ref{align honest confidence interval eta}). Next, we turn to (\ref{align confidence interval contract at right rate alpha}).
\begin{align*}
&\sqrt{NT}\sup_{\gamma\in \mathcal{F}(s_1,\nu,E)}\text{diam}\del[2]{ \sbr[2]{\tilde{\alpha}_j-z_{1-\delta/2}\frac{\tilde{\sigma}_{\alpha,j}}{\sqrt{NT}}, \tilde{\alpha}_j+z_{1-\delta/2}\frac{\tilde{\sigma}_{\alpha,j}}{\sqrt{NT}} } }\\
& =2z_{1-\delta/2}\del[2]{ \sqrt{[ \Theta_{Z}\Sigma_{1,N}\Theta_{Z}] _{jj}}+o_p(1)}\leq 2z_{1-\delta/2}\del[3]{ \frac{\sqrt{\text{maxeval}(\Sigma_{1,N})}}{\text{mineval}(\Psi_Z)}+o_p(1)}=O_p(1),
\end{align*}
where the first equality is due to (\ref{eqn uniform denominator}), and the last equality is due to Assumptions \ref{assu more on eigen values}(a) and \ref{assu rates}(d). Similarly, we can prove (\ref{align confidence interval contract at right rate eta}):
\begin{align*}
&\sqrt{T}\sup_{\gamma\in \mathcal{F}(s_1,\nu,E)}\text{diam}\del[2]{ \sbr[2]{\tilde{\eta}_i-z_{1-\delta/2}\frac{\tilde{\sigma}_{\eta,i}}{\sqrt{T}}, \tilde{\eta}_i+z_{1-\delta/2}\frac{\tilde{\sigma}_{\eta,i}}{\sqrt{T}} }} =2z_{1-\delta/2}\del[2]{\sqrt{[ \Sigma_{3,N}] _{ii}}+o_p(1)}\\
&=2z_{1-\delta/2}\del[3]{ \sbr[3]{\frac{1}{T}\sum_{t=1}^{T}\mathbb{E}[\varepsilon_{i,t}^2]}^{1/2}+o_p(1)}=O_p(1),
\end{align*}
where the third equality follows from the arguments above (\ref{align T average of varepsilon square}).
\end{proof}

\section{Appendix B}

\begin{proposition}
\label{lemma difference between two matrices again}
Let $A$ and $B$ be two positive semidefinite $(p-1)\times(p-1)$ matrices and $\delta:=\max_{1\leq l,k \leq p-1}|A_{lk}-B_{lk}|$. For any integer $r\in\{1,\ldots,p-1\}$, one has
\[\kappa^2(B, r)\geq \kappa^2(A, r) -\delta16r.\]
\end{proposition}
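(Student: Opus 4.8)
The plan is to mimic the proof of Lemma \ref{lemma difference between two matrices} essentially verbatim, the only difference being that the single-index compatibility functional $\kappa^2(\cdot,r)$ imposes the cone constraint $\|x_{R^c}\|_1 \le 3\|x_R\|_1$ rather than $\|x_{R^c}\|_1 \le 4\|x_R\|_1$; this replaces the factor $(1+4)^2 = 25$ by $(1+3)^2 = 16$, which is exactly the constant appearing in the claim. To avoid clashing with the scalar $\delta$ in the statement I would denote the generic vector in the definition of $\kappa^2$ by $x$.

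First I would fix an arbitrary index set $R\subseteq\{1,\ldots,p-1\}$ with $|R|\le r$ and an arbitrary nonzero $x\in\mathbb{R}^{p-1}$ lying in the cone $\|x_{R^c}\|_1\le 3\|x_R\|_1$. The key step is to control the gap between the two quadratic forms: by Hölder's inequality and the convention that $\|A-B\|_\infty$ denotes the maximal absolute entry of $A-B$,
\[
|x'Ax-x'Bx|=|x'(A-B)x|\le \|x\|_1\,\|(A-B)x\|_\infty \le \|A-B\|_\infty\,\|x\|_1^2=\delta\,\|x\|_1^2.
\]
Invoking the cone constraint then gives
\[
\|x\|_1^2=\left(\|x_R\|_1+\|x_{R^c}\|_1\right)^2\le \left(\|x_R\|_1+3\|x_R\|_1\right)^2=16\,\|x_R\|_1^2,
\]
so that $|x'Ax-x'Bx|\le 16\,\delta\,\|x_R\|_1^2$.

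Dividing by $\tfrac{1}{r}\|x_R\|_1^2$ and using the definition of $\kappa^2(A,r)$ as the minimum of $x'Ax/(\tfrac1r\|x_R\|_1^2)$ over all admissible pairs $(R,x)$, I obtain
\[
\frac{x'Bx}{\frac{1}{r}\|x_R\|_1^2}\ge \frac{x'Ax}{\frac{1}{r}\|x_R\|_1^2}-16\,\delta\,r\ge \kappa^2(A,r)-16\,\delta\,r.
\]
Minimising the left-hand side over all nonzero $x$ in the cone and over all $R$ with $|R|\le r$ yields $\kappa^2(B,r)\ge \kappa^2(A,r)-16\,\delta\,r$, which is the assertion. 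I do not anticipate any genuine obstacle: the argument is entirely elementary and parallels Lemma \ref{lemma difference between two matrices}, the sole bookkeeping point being to track the cone constant $3$ (rather than $4$) through the computation so that the factor $16$ emerges correctly.
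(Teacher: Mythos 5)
Your proposal is correct and follows exactly the route the paper intends: the paper's own proof simply states that the argument is the same as that of Lemma \ref{lemma difference between two matrices}, and you have carried out that argument with the cone constant $3$ (from the definition of $\kappa^2(\cdot,r)$ used in the nodewise regressions) replacing $4$, which correctly turns the factor $25$ into $16$. No issues.
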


\begin{proof}
The proof is exactly the same as that of Lemma \ref{lemma difference between two matrices}.
\end{proof}

\begin{thm}[\cite{fangramaliu2012}]
Let $\alpha\in (0,1)$. Assume that $(X_i, \mathcal{F}_i)_{i= 1}^n$ is a sequence of supermartingale differences satisfying $\sup_i\mathbb{E}[e^{|X_i|^{\frac{2\alpha}{1-\alpha}}}]\leq C_1$ for some constant $C_1\in (0,\infty)$. Define $S_k:=\sum_{i=1}^{k}X_i$. Then, for all $\epsilon>0$,
\[\mathbb{P}\del[2]{\max_{1\leq k\leq n}S_k\geq n\epsilon }\leq C(\alpha,n,\epsilon)e^{-\left( \epsilon/4\right) ^{2\alpha}n^{\alpha}}, \]
where 
\[C(\alpha,n,\epsilon):=2+35C_1\sbr[4]{ \frac{1}{16^{1-\alpha}(n\epsilon^2)^{\alpha}}+\frac{1}{n\epsilon^2}\del[3]{\frac{3(1-\alpha)}{2\alpha} }^{\frac{1-\alpha}{\alpha}} }. \]
\end{thm}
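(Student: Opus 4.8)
The plan is to derive this inequality as a corollary of a Bennett--Freedman type exponential bound for supermartingales, combining the exponential-supermartingale device with Ville's maximal inequality and absorbing the heavy (Weibull-type) tails of the increments through truncation. Write $\beta := 2\alpha/(1-\alpha)$, so that the hypothesis reads $\sup_i \mathbb{E}[e^{|X_i|^{\beta}}] \le C_1$. The whole difficulty is that under this condition alone the moment generating function of $X_i$ need not be finite, so the exponential-supermartingale machinery cannot be applied to the raw increments; truncation is unavoidable.

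First I would fix a truncation level $y>0$ and split each increment as $X_i = \bar X_i + (X_i-y)_+$ with $\bar X_i := X_i \wedge y$. Truncation from above only lowers conditional means, so $\mathbb{E}[\bar X_i \mid \mathcal{F}_{i-1}] \le \mathbb{E}[X_i\mid\mathcal{F}_{i-1}] \le 0$, and the $\bar X_i$ remain supermartingale differences that are now bounded above by $y$. Since $X_i = \bar X_i$ for every $i$ on the event that no increment exceeds $y$, one has the inclusion $\{\max_k S_k \ge n\epsilon\} \subseteq \{\max_k \bar S_k \ge n\epsilon\} \cup \{\exists\, i\le n:\ X_i > y\}$, where $\bar S_k := \sum_{i\le k}\bar X_i$. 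Markov applied to the Orlicz hypothesis gives $\mathbb{P}(X_i > y) \le \mathbb{P}(e^{|X_i|^{\beta}} \ge e^{y^{\beta}}) \le C_1 e^{-y^{\beta}}$, so a union bound controls the large-jump contribution by $n C_1 e^{-y^{\beta}}$.

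For the truncated part I would run the standard tilting argument: for $\lambda>0$ bound the conditional cumulant $\log \mathbb{E}[e^{\lambda \bar X_i}\mid \mathcal{F}_{i-1}]$ by a Bennett-type function $\psi_i(\lambda)$ of $\lambda y$ and of the conditional second moment, crudely dominating the quadratic characteristic $\sum_{i\le n}\mathbb{E}[\bar X_i^2 \mid \mathcal{F}_{i-1}]$ by $n y^2$ after truncation. Then $M_k := \exp(\lambda \bar S_k - \sum_{i\le k}\psi_i(\lambda))$ is a nonnegative supermartingale with $M_0=1$, and Ville's maximal inequality yields $\mathbb{P}(\max_k \bar S_k \ge n\epsilon) \le \exp(-\lambda n\epsilon + n\bar\psi(\lambda y))$, where $\bar\psi$ is the per-step cumulant bound. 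Adding the two pieces produces a bound of the form $n C_1 e^{-y^{\beta}} + e^{-\lambda n\epsilon + n\bar\psi(\lambda y)}$.

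The main obstacle, and the step I expect to be most delicate, is the joint optimisation over the truncation level $y$ and the tilt $\lambda$. One must choose $y \asymp (\epsilon/4)^{1-\alpha} n^{(1-\alpha)/2}$, which is exactly the level that converts the large-jump exponent $y^{\beta}$ into $(\epsilon/4)^{2\alpha} n^{\alpha}$, and then select $\lambda$ so that the truncated exponential term matches that same order. Verifying that the sub-optimal $n^{\alpha}$ rate (rather than the Bernstein rate $\asymp n\epsilon^2$, unavailable here because no conditional-variance hypothesis is imposed) is genuinely the best the two competing contributions permit, and tracking every multiplicative constant through the Bennett estimate and the union bound so as to land precisely on the explicit prefactor $C(\alpha,n,\epsilon)=2+35C_1[\,\cdots\,]$, is where essentially all the work lies; the remaining manipulations are routine.
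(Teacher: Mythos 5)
First, a point of context: the paper does not actually prove this statement --- it is imported verbatim from the proof of Theorem 2.1 in Fan, Grama and Liu (2012), as the remark following the theorem in Appendix B makes explicit --- so the question is whether your sketch would reproduce the cited result. Your skeleton is the right one, and it is indeed the one used in that reference and in Lesigne and Volny (2001): truncate at a level $y$, observe that $X_i\wedge y\leq X_i$ so the truncated increments remain supermartingale differences bounded above by $y$, use the inclusion $\{\max_k S_k\geq n\epsilon\}\subseteq\{\max_k \bar S_k\geq n\epsilon\}\cup\{\exists\, i: X_i>y\}$, control the jump event by Markov and the Orlicz hypothesis, and choose $y\asymp(\epsilon/4)^{1-\alpha}n^{(1-\alpha)/2}$ so that $y^{2\alpha/(1-\alpha)}$ matches the target exponent. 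All of that is correct.

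The gap is in the middle step. You propose to ``crudely dominate the quadratic characteristic $\sum_{i\leq n}\mathbb{E}[\bar X_i^2\mid\mathcal{F}_{i-1}]$ by $ny^2$ after truncation,'' but $\bar X_i=X_i\wedge y$ is truncated only from above: its negative tail is untouched, and it cannot be truncated from below without destroying the supermartingale property, since raising the increments raises the conditional mean. Hence $\mathbb{E}[\bar X_i^2\mid\mathcal{F}_{i-1}]\leq y^2$ is false in general; more fundamentally, the hypothesis $\sup_i\mathbb{E}[e^{|X_i|^{2\alpha/(1-\alpha)}}]\leq C_1$ is an \emph{unconditional} moment bound, so no pointwise bound on the conditional second moments is available at all. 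As written, neither an Azuma-type bound (which needs two-sided boundedness) nor a Freedman/Bennett bound (which needs control of the quadratic characteristic) applies to $\bar S_k$. The fix --- and the reason the prefactor has the specific form $2+35C_1[\cdots]$ rather than a clean constant --- is an additional splitting on the event that the predictable quadratic variation exceeds a threshold $nv^2$, whose probability is then controlled separately from the unconditional Orlicz bound, after which the exponential inequality for supermartingales with increments bounded above is applied on the complementary event and the parameters $y$, $v$ and the tilt $\lambda$ are optimised jointly. Your sketch does not anticipate this extra event, and consequently could not justify the exponential bound for the truncated sum nor produce the two correction terms $\frac{1}{16^{1-\alpha}(n\epsilon^2)^{\alpha}}$ and $\frac{1}{n\epsilon^2}\left(\frac{3(1-\alpha)}{2\alpha}\right)^{\frac{1-\alpha}{\alpha}}$ appearing in $C(\alpha,n,\epsilon)$.
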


The preceding theorem is not exactly the same as Theorem 2.1 in \cite{fangramaliu2012}, but taken from the proof of Theorem 2.1 in \cite{fangramaliu2012}. This theorem generalises Theorem 3.2 in \cite{lesignevolny2001}.

\begin{proposition}
\label{prop adapation of Fan}
Let $\alpha\in (0,1)$. Assume that $(X_i, \mathcal{F}_i)_{i= 1}^n$ is a sequence of martingale differences satisfying satisfying $\sup_i\mathbb{E}[e^{D|X_i|^{\frac{2\alpha}{1-\alpha}}}]\leq C_1$ for some positive constant $D$. ($C_1$ could change with the sample size $n$.) Then, for all $\epsilon\gtrsim\frac{1}{\sqrt{n}}$,
\[\mathbb{P}\del[3]{\envert[2]{ \sum_{i=1}^{n}X_i} \geq n\epsilon }\leq AC_1e^{- K(\epsilon ^2n)^{\alpha}}, \]
for positive constants $A$ and $K$.
\end{proposition}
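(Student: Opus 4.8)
The plan is to derive Proposition~\ref{prop adapation of Fan} directly from the Fang--Grama--Liu theorem stated just above it, by performing three reductions: a rescaling to absorb the constant $D$, a symmetrisation to pass from the one-sided maximal inequality to the two-sided deviation of $|\sum_{i=1}^n X_i|$, and a bounding of the explicit prefactor $C(\alpha,n,\epsilon)$ into the clean constant $AC_1$ using the hypothesis $\epsilon\gtrsim 1/\sqrt n$.

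First I would remove the constant $D$. Writing $\beta:=\tfrac{2\alpha}{1-\alpha}$ and setting $Y_i:=D^{1/\beta}X_i=D^{(1-\alpha)/(2\alpha)}X_i$, one has $|Y_i|^{\beta}=D|X_i|^{\beta}$, so that $\sup_i\mathbb{E}[e^{|Y_i|^{\beta}}]=\sup_i\mathbb{E}[e^{D|X_i|^{\beta}}]\le C_1$. Since $(X_i,\mathcal{F}_i)$ are martingale differences, so are $(Y_i,\mathcal{F}_i)$ and $(-Y_i,\mathcal{F}_i)$, and in particular both are supermartingale differences (the conditional mean is exactly zero, hence $\le 0$). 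Thus $\pm Y_i$ satisfy the hypotheses of the cited theorem with the same $C_1$. Because $\sum_{i=1}^n X_i=D^{-1/\beta}\sum_{i=1}^n Y_i$, the event $\{|\sum_{i=1}^nX_i|\ge n\epsilon\}$ coincides with $\{|\sum_{i=1}^n Y_i|\ge n\epsilon'\}$ for $\epsilon':=D^{1/\beta}\epsilon$. Applying the theorem to $(Y_i)$ and to $(-Y_i)$ and using $S_n\le\max_{1\le k\le n}S_k$ together with the analogous bound for $-S_n$, a union bound gives
\[
\mathbb{P}\del[2]{\envert[2]{\sum_{i=1}^n X_i}\ge n\epsilon}\le 2\,C(\alpha,n,\epsilon')\,e^{-(\epsilon'/4)^{2\alpha}n^{\alpha}}.
\]
The exponent simplifies as $(\epsilon'/4)^{2\alpha}n^{\alpha}=4^{-2\alpha}D^{1-\alpha}(\epsilon^2 n)^{\alpha}$, so taking $K:=4^{-2\alpha}D^{1-\alpha}>0$ already produces the desired exponential factor $e^{-K(\epsilon^2n)^{\alpha}}$.

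It then remains to bound the prefactor $2\,C(\alpha,n,\epsilon')$ by $AC_1$ for a constant $A$ not depending on $n$ or $\epsilon$. Here the condition $\epsilon\gtrsim 1/\sqrt n$ is the crucial ingredient: it yields $n(\epsilon')^2=D^{(1-\alpha)/\alpha}n\epsilon^2\ge c_0$ for some fixed $c_0>0$, so each of the two $n,\epsilon$-dependent terms inside $C(\alpha,n,\epsilon')$ — namely $[16^{1-\alpha}(n(\epsilon')^2)^{\alpha}]^{-1}$ and $[n(\epsilon')^2]^{-1}$ — is bounded by a constant depending only on $\alpha$, $D$ and $c_0$, while the factor $\del[1]{\tfrac{3(1-\alpha)}{2\alpha}}^{(1-\alpha)/\alpha}$ depends only on $\alpha$. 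Hence $C(\alpha,n,\epsilon')\le 2+C_1 A_0$ for a constant $A_0$. Finally, noting $C_1=\sup_i\mathbb{E}[e^{D|X_i|^{\beta}}]\ge 1$ (the integrand is at least one), one absorbs the additive $2$ into the multiplicative $C_1$ to obtain $2\,C(\alpha,n,\epsilon')\le AC_1$, which completes the argument.

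The main obstacle — really the only non-mechanical point — is the control of the prefactor: the cited theorem delivers a constant $C(\alpha,n,\epsilon)$ that blows up as $n\epsilon^2\to 0$, so the clean form of the bound genuinely requires the lower restriction $\epsilon\gtrsim 1/\sqrt n$, and one must verify carefully that under this restriction (and using $C_1\ge 1$) every $n$- and $\epsilon$-dependent piece collapses into the single constant $A$. The rescaling and symmetrisation steps are routine once this is in place.
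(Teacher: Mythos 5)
Your proposal is correct and follows essentially the same route as the paper's own proof: rescale by $D^{(1-\alpha)/(2\alpha)}$ to absorb $D$, apply the Fang--Grama--Liu bound to $\pm Y_i$ with a union bound for the two-sided statement, and use $\epsilon\gtrsim 1/\sqrt{n}$ to collapse the prefactor $2\,C(\alpha,n,\delta)$ into $AC_1$. Your additional explicit computation of the exponent and the observation that $C_1\ge 1$ only make the argument more complete.
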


\begin{proof}
This proposition is a simple adaptation of preceding theorem. Note that for some positive constant $D$,
\[\mathbb{P}\del[2]{ \sum_{i=1}^{n}X_i\geq n\epsilon}= \mathbb{P}\del[2]{ \sum_{i=1}^{n}D^{\frac{1-\alpha}{2\alpha}}X_i\geq nD^{\frac{1-\alpha}{2\alpha}}\epsilon}=\mathbb{P}\del[2]{ \sum_{i=1}^{n}Y_i\geq n\delta},\]
where $Y_i:= D^{\frac{1-\alpha}{2\alpha}}X_i$ and $\delta:= D^{\frac{1-\alpha}{2\alpha}}\epsilon$. Now $(Y_i)_{i= 1}^n$ is a sequence of martingale differences satisfying $\sup_i\mathbb{E}[e^{|Y_i|^{\frac{2\alpha}{1-\alpha}}}]\leq C_1$. Invoking the preceding theorem, we have
\[\mathbb{P}\del[2]{\sum_{i=1}^{n}Y_i\geq n\delta }\leq C(\alpha,n,\delta)e^{-\left( \delta/4\right) ^{2\alpha}n^{\alpha}}. \]
$(-Y_i)_{i=1}^n$ is also a sequence of martingale differences satisfying the same exponential moment condition. Thus,
\begin{align*}
&\mathbb{P}\del[2]{\envert[2]{ \sum_{i=1}^{n}X_i} \geq n\epsilon}=\mathbb{P}\del[2]{\envert[2]{ \sum_{i=1}^{n}Y_i} \geq n\delta }\leq 2C(\alpha,n,\delta)e^{-\left( \delta/4\right) ^{2\alpha}n^{\alpha}}\\
&=2C(\alpha,n,D^{\frac{1-\alpha}{2\alpha}}\epsilon)e^{-( D^{\frac{1-\alpha}{2\alpha}}\epsilon/4) ^{2\alpha}n^{\alpha}}\leq AC_1e^{-K\epsilon ^{2\alpha}n^{\alpha}},
\end{align*}
for positive constants $A, K$, where the last inequality used that if $\epsilon\gtrsim
\frac{1}{\sqrt{n}}$ then $2C(\alpha,n,D^{\frac{1-\alpha}{2\alpha}}\epsilon)\leq AC_1$ for some positive constant $A$.
\end{proof}

\begin{proposition}
\label{prop product of L subgaussian}
Suppose we have random variables $Z_{l,i,t,j}$ uniformly subgaussian for $l=1,\ldots,L$ ($L\geq 3$ fixed), $i=1,\ldots,N$, $t=1,\ldots,T$ and $j=1,\ldots,p$. Both $p$ and $T$ increase with $N$ (functions of $N$). $Z_{l_1,i_1,t_1,j_1}$ and $Z_{l_2,i_2,t_2,j_2}$ are independent as long as $i_1\neq i_2$ regardless of the values of other subscripts. Then,
\begin{equation}
\label{eqn product of L subgaussian bounded uniform expectation}
\max_{1\leq j\leq p}\max_{1\leq t\leq T}\max_{1\leq i\leq N}\mathbb{E}\sbr[2]{ \prod_{l=1}^LZ_{l,i,t,j}}\leq A=O(1),
\end{equation}
for some positive constant $A$ and
\begin{equation}
\label{eqn product of L subgaussian rates}
\max_{1\leq j \leq p}\envert[2]{ \frac{1}{NT}\sum_{i=1}^{N}\sum_{t=1}^{T}\del[2]{ \prod_{l=1}^LZ_{l,i,t,j}-\mathbb{E}\sbr[2]{ \prod_{l=1}^LZ_{l,i,t,j}} } } =O_p\del[2]{\sqrt{\frac{(\log (pT))^{L+1}}{N}}}.
\end{equation}
\end{proposition}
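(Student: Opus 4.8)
The plan is to prove the two displays (\ref{eqn product of L subgaussian bounded uniform expectation}) and (\ref{eqn product of L subgaussian rates}) in turn, the first being a quick consequence of the generalized H\"older inequality and the second a concentration argument built on Proposition \ref{prop adapation of Fan}. For (\ref{eqn product of L subgaussian bounded uniform expectation}) I would write $\mathbb{E}\left|\prod_{l=1}^L Z_{l,i,t,j}\right|\le\prod_{l=1}^L\left(\mathbb{E}|Z_{l,i,t,j}|^{L}\right)^{1/L}$ by H\"older; since each $Z_{l,i,t,j}$ is uniformly subgaussian (as in Assumption \ref{assu subgaussian}), all of its moments, in particular the $L$th, are bounded by a constant that is independent of $(l,i,t,j)$, so the right-hand side is bounded by some $A=O(1)$, giving (\ref{eqn product of L subgaussian bounded uniform expectation}).

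For (\ref{eqn product of L subgaussian rates}), fix $j$ and set $Y_{i,t,j}=\prod_{l=1}^L Z_{l,i,t,j}$ and $\xi_{i,t,j}=Y_{i,t,j}-\mathbb{E}Y_{i,t,j}$. The first step is to record that $Y_{i,t,j}$ is uniformly sub-Weibull of order $2/L$. By the weighted AM--GM inequality with equal weights $1/L$ applied to $b_l=|Z_{l,i,t,j}|^2$, one has $|Y_{i,t,j}|^{2/L}=\prod_{l}|Z_{l,i,t,j}|^{2/L}\le\frac1L\sum_{l}|Z_{l,i,t,j}|^2$, whence $\exp\left(D|Y_{i,t,j}|^{2/L}\right)\le\prod_{l}\exp\left(\tfrac DL|Z_{l,i,t,j}|^2\right)$. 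Bounding the expectation of this product by H\"older and using that $\mathbb{E}\exp(D|Z_{l,i,t,j}|^2)\le 2$ for $D$ small enough (subgaussianity) yields $\mathbb{E}\exp(D|Y_{i,t,j}|^{2/L})\le 2$ uniformly in $(i,t,j)$. Subtracting the mean, which is bounded by $A$ from the first part, and using subadditivity of $x\mapsto x^{2/L}$ gives the same type of bound for $\xi_{i,t,j}$.

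The delicate point is the passage to the average over $t$. For $L\ge 3$ the Orlicz function $\psi_{2/L}$ is non-convex, so the usual Jensen-plus-subadditivity averaging argument (used, e.g., in the proof of Lemma \ref{lemma lower bound on Ant} where the products were only $\psi_1$) is unavailable, and here there is no independence across $t$ to exploit either. I would instead use the crude bound $\left|\tfrac1T\sum_t\xi_{i,t,j}\right|^{2/L}\le\left(\max_t|\xi_{i,t,j}|\right)^{2/L}\le\sum_t|\xi_{i,t,j}|^{2/L}$, so that $\exp\left(D|\tfrac1T\sum_t\xi_{i,t,j}|^{2/L}\right)\le\sum_t\exp\left(D|\xi_{i,t,j}|^{2/L}\right)$ and hence
\begin{align}
\mathbb{E}\exp\!\left(D\left|\tfrac1T\sum_{t=1}^T\xi_{i,t,j}\right|^{2/L}\right)\le BT \label{align bound exponential moments}
\end{align}
for positive constants $D,B$. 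The price is a constant $C_1=BT$ growing (only) linearly in $T$, which Proposition \ref{prop adapation of Fan} explicitly allows.

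Finally, the variables $X_{i,j}:=\tfrac1T\sum_t\xi_{i,t,j}$ are independent across $i$ by the independence hypothesis of the proposition and have mean zero, so they form a martingale difference sequence in $i$. Choosing $\alpha=1/(L+1)$, so that $2\alpha/(1-\alpha)=2/L$, Proposition \ref{prop adapation of Fan} with $n=N$ and $C_1=BT$ from (\ref{align bound exponential moments}) gives $\mathbb{P}\left(\left|\tfrac1{NT}\sum_{i,t}\xi_{i,t,j}\right|\ge\epsilon\right)\le ABT\exp\left(-K(\epsilon^2N)^{1/(L+1)}\right)$ for every $\epsilon\gtrsim N^{-1/2}$. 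A union bound over $j=1,\dots,p$ together with the choice $\epsilon=\sqrt{M(\log(pT))^{L+1}/N}$ turns the exponential factor into $(pT)^{-KM^{1/(L+1)}}$, so the overall bound is at most $ABpT(pT)^{-KM^{1/(L+1)}}$, which tends to zero for $M$ large; this delivers the stated rate $O_p\!\left(\sqrt{(\log(pT))^{L+1}/N}\right)$ and proves (\ref{eqn product of L subgaussian rates}). The main obstacle is precisely the non-convexity of $\psi_{2/L}$; once it is circumvented by permitting $C_1$ to grow with $T$, the rest is routine bookkeeping with Proposition \ref{prop adapation of Fan}.
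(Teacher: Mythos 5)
Your proof is correct and follows essentially the same route as the paper's: both reduce the problem to a uniform exponential-moment bound $\mathbb{E}\exp\bigl(D\bigl|T^{-1}\sum_{t}\xi_{i,t,j}\bigr|^{2/L}\bigr)\leq BT$ and then invoke Proposition \ref{prop adapation of Fan} with $\alpha=1/(L+1)$ and $C_1=BT$, followed by a union bound over $j$ and the choice $\epsilon=\sqrt{M(\log(pT))^{L+1}/N}$; the only (immaterial) difference is that the paper obtains the $BT$ bound from a tail estimate for the product, a union bound over $t$ at the tail level, and Fubini, whereas you work directly with exponential moments via AM--GM/H\"older and $|T^{-1}\sum_t\xi_{i,t,j}|\leq\max_t|\xi_{i,t,j}|$. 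One cosmetic remark: your intermediate inequality $\leq\sum_t|\xi_{i,t,j}|^{2/L}$ is superfluous and, taken literally, would yield a product rather than a sum of exponentials --- the displayed bound $\exp\bigl(D|T^{-1}\sum_t\xi_{i,t,j}|^{2/L}\bigr)\leq\sum_t\exp\bigl(D|\xi_{i,t,j}|^{2/L}\bigr)$ follows correctly from the max, so nothing is lost.
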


\begin{proof}
For every $\epsilon\geq 0$, $\mathbb{P}\del[1]{ \envert[0]{\prod_{l=1}^LZ_{l,i,t,j} } \geq\epsilon}\leq  \sum_{l=1}^{L}\mathbb{P}\del[1]{ \envert[0]{ Z_{l,i,t,j}} \geq\epsilon^{1/L}}\leq L\frac{K}{2}e^{-C\epsilon^{2/L}}$ for positive constants $K, C$. Next, using Hölder's inequaliy, we have
\begin{align*}
&\max_{1\leq j\leq p}\max_{1\leq t\leq T}\max_{1\leq i\leq N}\mathbb{E}\sbr[2]{ \prod_{l=1}^LZ_{l,i,t,j}}
\leq
\max_{1\leq j\leq p}\max_{1\leq t\leq T}\max_{1\leq i\leq N}\prod_{l=1}^{L}\del[2]{ \mathbb{E}\left[ |Z_{l,i,t,j}|^{L}\right]}^{\frac{1}{L}}  .
\end{align*}
Uniform subgaussianity implies that $\del[2]{ \mathbb{E}\left[ |Z_{l,i,t,j}|^{L}\right]}^{\frac{1}{L}}$ is uniformly bounded. That is, $\del[2]{ \mathbb{E}\left[ |Z_{l,i,t,j}|^{L}\right]}^{\frac{1}{L}}\leq L!\|Z_{l,i,t,j}\|_{\psi_1}\leq L!(\log 2)^{-1/2}\|Z_{l,i,t,j}\|_{\psi_2}\leq L!(\log 2)^{-1/2}\del[1]{ \frac{1+K/2}{C}}^{1/2}$, where the first two inequalities are taken from p95 of \cite{vandervaartWellner1996}, and the third inequality is due to Lemma 2.2.1 in \cite{vandervaartWellner1996}. (\ref{eqn product of L subgaussian bounded uniform expectation}) then follows. 

For every $\epsilon\geq 0$,
\begin{align*}
&\mathbb{P}\del[2]{ \envert[2]{ \frac{1}{T}\sum_{t=1}^{T}\del[2]{ \prod_{l=1}^LZ_{l,i,t,j}-\mathbb{E}\sbr[2]{ \prod_{l=1}^LZ_{l,i,t,j}} }} \geq \epsilon} \leq \mathbb{P}\del[2]{ \max_{1\leq t\leq T}\envert[2]{ \prod_{l=1}^LZ_{l,i,t,j}}\geq\epsilon-A}\\
& \leq \sum_{t=1}^{T}\mathbb{P}\del[2]{ \envert[2]{ \prod_{l=1}^LZ_{l,i,t,j}}\geq\epsilon-A\wedge \epsilon } \leq\frac{L}{2}TKe^{-C(\epsilon-A\wedge\epsilon)^{2/L}}\leq \frac{L}{2}TKe^{-C[\epsilon^{2/L}-(A\wedge\epsilon)^{2/L}]}\leq TK'e^{-C\epsilon^{2/L}},
\end{align*}
for $K'=\frac{L}{2}Ke^{CA^{2/L}}$ and where the second last inequality is due to subadditivity of the concave function: $(x+y)^{2/L}\leq x^{2/L}+y^{2/L}$ for $x,y\geq 0$, $L\geq 3$. Let $X_{i,j}$ denote $\frac{1}{T}\sum_{t=1}^{T}\del[1]{ \prod_{l=1}^LZ_{l,i,t,j}-\mathbb{E}[ \prod_{l=1}^LZ_{l,i,t,j}] }$. Consider some positive constant $D<C$. 
\begin{align}
&\mathbb{E}\left[ e^{D|X_{i,j}|^{2/L}}\right]=\int_{x\in\mathbb{R}}\int_{0}^{|x|^{2/L}}De^{Ds}ds P(dx)+1=\int_{0}^{\infty}De^{Ds}\mathbb{P}(|X_{i,j}|>s^{L/2})ds+1\nonumber\\
&\leq \int_{0}^{\infty}TK'De^{(D-C)s}ds+1=\frac{TK'D}{C-D}+1\leq BT,\label{align bound exponential moments}
\end{align}
for some positive constant $B$, where the second equality is by Fubini's theorem. Then we can use independence across $i$ to invoke Proposition \ref{prop adapation of Fan} in Appendix B with $\alpha=\frac{1}{L+1}$ and $C_1=BT$, for $\epsilon\gtrsim \frac{1}{\sqrt{N}}$,
\begin{align*}
\mathbb{P}\del[2]{\envert[2]{ \sum_{i=1}^{N}\frac{1}{T}\sum_{t=1}^{T}\del[2]{ \prod_{l=1}^LZ_{l,i,t,j}-\mathbb{E}\sbr[2]{\prod_{l=1}^LZ_{l,i,t,j}} }} \geq N\epsilon }\leq A'Te^{- K\left( \epsilon ^2N\right) ^{\frac{1}{L+1}}}
\end{align*}
for positive constants $A'$ and $K$. Setting $\epsilon=\sqrt{\frac{M(\log (pT))^{L+1}}{N}}\left(\gtrsim \frac{1}{\sqrt{N}}\right)$ for some $M>0$, we have
\begin{align*}
&\mathbb{P}\del[2]{\max_{1\leq l\leq p}\envert[2]{ \sum_{i=1}^{N}\frac{1}{T}\sum_{t=1}^{T}\del[2]{  \prod_{l=1}^LZ_{l,i,t,j}-\mathbb{E}\sbr[2]{ \prod_{l=1}^LZ_{l,i,t,j}}} } \geq N\epsilon }\leq pA'Te^{- K\left( \epsilon ^2N\right) ^{\frac{1}{L+1}}}=A'(pT)^{1-KM^{\frac{1}{L+1}}}.
\end{align*}
The upper bound of the preceding probability becomes arbitrarily small for $N$ and $M$ sufficiently large. Hence (\ref{eqn product of L subgaussian rates}) follows.
\end{proof}

\begin{lemma}
\label{lemma vandegeer 6.1}
Let $A$ be a symmetric $p\times p$ matrix, and $\hat{v}$ and $v\in \mathbb{R}^p$. Then
\[|\hat{v}'A\hat{v}-v'Av|\leq \|A\|_{\infty}\|\hat{v}-v\|_1^2+2\|Av\|\|\hat{v}-v\|.\]
\end{lemma}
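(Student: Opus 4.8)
The plan is to expand the quadratic form around $v$ and control the resulting terms with two different norm pairings. First I would introduce the shorthand $w := \hat{v}-v$ and write $\hat{v} = v + w$, so that
\[
\hat{v}'A\hat{v} = (v+w)'A(v+w) = v'Av + v'Aw + w'Av + w'Aw.
\]
Since $A$ is symmetric, the two cross terms coincide, $v'Aw = w'Av$, and hence $\hat{v}'A\hat{v} - v'Av = 2w'Av + w'Aw$. Applying the triangle inequality gives $|\hat{v}'A\hat{v} - v'Av| \leq |w'Aw| + 2|w'Av|$, and it then suffices to bound the two pieces by the two summands on the right-hand side of the claimed inequality.

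For the second-order term $w'Aw$ I would use the Hölder pairing of the $\ell_1$ and $\ell_\infty$ norms: $|w'Aw| \leq \|w\|_1 \|Aw\|_\infty$. The key elementary estimate is that, with the paper's convention that $\|A\|_\infty$ denotes the maximal absolute entry of $A$, one has $\|Aw\|_\infty \leq \|A\|_\infty \|w\|_1$, because each coordinate $(Aw)_i = \sum_j A_{ij}w_j$ satisfies $|(Aw)_i| \leq \max_{i,j}|A_{ij}| \sum_j |w_j|$. Combining these yields $|w'Aw| \leq \|A\|_\infty \|w\|_1^2$.

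For the cross term I would instead use the $\ell_2$ pairing via Cauchy--Schwarz, treating $Av$ as a single vector: $|w'Av| = |w'(Av)| \leq \|w\| \, \|Av\|$, so that $2|w'Av| \leq 2\|Av\|\,\|\hat{v}-v\|$. Adding the two bounds and recalling $w = \hat{v}-v$ gives exactly $|\hat{v}'A\hat{v} - v'Av| \leq \|A\|_\infty \|\hat{v}-v\|_1^2 + 2\|Av\|\,\|\hat{v}-v\|$. There is no real obstacle here; the only points requiring a little care are to invoke the symmetry of $A$ when merging the cross terms, and to pair the quadratic and linear remainders with the correct norms ($\ell_1/\ell_\infty$ for the $w'Aw$ piece so as to produce $\|A\|_\infty$, versus $\ell_2$ for the $w'Av$ piece so as to produce $\|Av\|$), matching the mixed form of the stated bound.
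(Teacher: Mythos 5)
Your proof is correct: the expansion $\hat{v}'A\hat{v}-v'Av = 2w'Av + w'Aw$ with $w=\hat{v}-v$ (using symmetry of $A$), followed by the H\"older bound $|w'Aw|\leq \|A\|_{\infty}\|w\|_1^2$ and Cauchy--Schwarz for the cross term, is exactly the standard argument for this inequality. The paper itself does not write out a proof but merely cites Lemma 6.1 of the working-paper version of van de Geer et al., so your self-contained derivation supplies the details and matches the approach one would find there.
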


\begin{proof}
See Lemma 6.1 in the working-paper version of \cite{vandegeerbuhlmannritovdezeure2014}.
\end{proof}

\begin{thm}[\cite{mcleish1974}]
\label{thm mcleish clt}
Let $\{X_{n,i}, i=1,...,k_n\}$ be a martingale difference array with respect to the triangular array of $\sigma$-algebras $\{\mathcal{F}_{n,i}, i=0,...,k_n\}$ (i.e., $X_{n,i}$ is $\mathcal{F}_{n,i}$-measurable and $\mathbb{E}[X_{n,i}|\mathcal{F}_{n,i-1}]=0$ almost surely for all $n$ and $i$) satisfying $\mathcal{F}_{n,i-1}\subseteq \mathcal{F}_{n,i}$ for all $n\geq 1$. Assume,
\begin{enumerate}[(i)]
\item $\max_{i\leq k_n}|X_{n,i}|$ is uniformly bounded in $L_2$ norm,
\item $\max_{i\leq k_n}|X_{n,i}|\xrightarrow{p}0$, and
\item $\sum_{i=1}^{k_n}X_{n,i}^2\xrightarrow{p} 1$.
\end{enumerate}
Then, $S_n=\sum_{i=1}^{k_n}X_{n,i}\xrightarrow{d} N(0,1)$ as $n\to \infty$.
\end{thm}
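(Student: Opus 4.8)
The plan is to prove the statement through characteristic functions and L\'evy's continuity theorem, i.e. to show $\mathbb{E}[e^{itS_n}]\to e^{-t^2/2}$ for each fixed $t\in\mathbb{R}$. Following McLeish's device, I would compare $e^{itS_n}$ with the complex product $T_n:=\prod_{i=1}^{k_n}(1+itX_{n,i})$. Two structural facts drive everything. First, because $(X_{n,i})$ is a martingale difference array, successive conditioning gives $\mathbb{E}[(1+itX_{n,i})\mid\mathcal{F}_{n,i-1}]=1$, so the partial products $\prod_{i\le k}(1+itX_{n,i})$ form a complex martingale and $\mathbb{E}[T_n]=1$ \emph{exactly}. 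Second, since $|e^{iu}/(1+iu)|=(1+u^2)^{-1/2}\le1$, I may factor $e^{itS_n}=T_nR_n$ with $R_n:=\prod_i e^{itX_{n,i}}/(1+itX_{n,i})$ and $|R_n|\le1$.

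Next I would show $R_n\xrightarrow{p}e^{-t^2/2}$. Taking logarithms and using $iu-\log(1+iu)=-u^2/2+O(|u|^3)$ (valid once $\max_i|X_{n,i}|$ is small) gives $\log R_n=-\tfrac{t^2}{2}\sum_iX_{n,i}^2+\text{rem}_n$, where $|\text{rem}_n|\le C|t|^3\max_i|X_{n,i}|\sum_iX_{n,i}^2$. Condition (ii) sends the remainder to $0$ in probability and condition (iii) sends $\sum_iX_{n,i}^2$ to $1$, so $R_n\xrightarrow{p}e^{-t^2/2}$.

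The main obstacle is controlling $T_n$, which need not be bounded in $L^2$: a single large increment can inflate $|T_n|^2=\prod_i(1+t^2X_{n,i}^2)$. I would circumvent this by truncation. Fix $\epsilon>0$ and let $\sigma_n$ be the bounded stopping time equal to the first index at which $\sum_{j\le k}X_{n,j}^2$ exceeds $1+\epsilon$, capped at $k_n$. Working with the stopped objects $\tilde S_n:=\sum_{i\le\sigma_n}X_{n,i}$ and $\tilde T_n:=\prod_{i\le\sigma_n}(1+itX_{n,i})$, optional stopping preserves $\mathbb{E}[\tilde T_n]=1$, while the stopping rule forces the accumulated variance before $\sigma_n$ below $1+\epsilon$, so that the overshoot is at most one squared increment. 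This yields $|\tilde T_n|^2\le e^{t^2(1+\epsilon)}(1+t^2\max_iX_{n,i}^2)$; taking expectations and invoking condition (i), the uniform $L^2$-boundedness of $\max_i|X_{n,i}|$, bounds $\mathbb{E}|\tilde T_n|^2$ uniformly in $n$, so $\{\tilde T_n\}$ is uniformly integrable. This truncation step, where the cap on the conditional variance must be arranged so that condition (i) can be brought to bear on the final increment, is the delicate part of the argument.

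Finally I would assemble the pieces. The stopped ratio $\tilde R_n:=\prod_{i\le\sigma_n}e^{itX_{n,i}}/(1+itX_{n,i})$ still satisfies $|\tilde R_n|\le1$ and, on the event $\{\sigma_n=k_n\}$ whose probability tends to one, coincides with $R_n$, so $\tilde R_n\xrightarrow{p}e^{-t^2/2}$. The decomposition $\mathbb{E}[e^{it\tilde S_n}]=\mathbb{E}[\tilde T_n\tilde R_n]=e^{-t^2/2}\mathbb{E}[\tilde T_n]+\mathbb{E}[\tilde T_n(\tilde R_n-e^{-t^2/2})]$ then gives $\mathbb{E}[e^{it\tilde S_n}]\to e^{-t^2/2}$, the second term vanishing by uniform integrability of $\tilde T_n$ combined with $\tilde R_n-e^{-t^2/2}\xrightarrow{p}0$ (bounded). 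Since condition (iii) gives $\mathbb{P}(\sigma_n<k_n)\le\mathbb{P}(\sum_iX_{n,i}^2>1+\epsilon)\to0$, we have $\tilde S_n-S_n\xrightarrow{p}0$; Slutsky's lemma transfers the limit to $S_n$, and L\'evy's continuity theorem delivers $S_n\xrightarrow{d}N(0,1)$.
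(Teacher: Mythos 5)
This statement is not proved in the paper at all: it is quoted verbatim from McLeish (1974) and placed in Appendix B as an imported auxiliary result, so there is no in-paper argument to compare yours against. What you have written is, in substance, a correct reconstruction of McLeish's own proof: the factorisation $e^{itS_n}=T_nR_n$ with the product martingale $T_n=\prod_i(1+itX_{n,i})$, the elementary bound $|e^{iu}/(1+iu)|=(1+u^2)^{-1/2}\leq 1$, the Taylor expansion showing $R_n\xrightarrow{p}e^{-t^2/2}$ via (ii) and (iii), and — the genuinely delicate step, which you correctly identify — the stopping-time truncation at accumulated variance $1+\epsilon$ so that $|\tilde T_n|^2\leq e^{t^2(1+\epsilon)}(1+t^2\max_iX_{n,i}^2)$ and condition (i) delivers uniform integrability of the stopped product. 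The final assembly via $\mathbb{E}[\tilde T_n\tilde R_n]=e^{-t^2/2}\mathbb{E}[\tilde T_n]+\mathbb{E}[\tilde T_n(\tilde R_n-e^{-t^2/2})]$ and Slutsky is also right. One small imprecision: your opening claim that $\mathbb{E}[T_n]=1$ \emph{exactly} for the unstopped product is not justified as stated, since a product of $k_n$ square-integrable factors need not be integrable; the identity $\mathbb{E}[\tilde T_n]=1$ should be asserted only for the stopped product, whose integrability follows from the very $L^2$ bound you derive. Since your assembly uses only the stopped version, this does not damage the argument, but the remark about the unstopped $T_n$ should be deleted or qualified.
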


\bibliographystyle{chicagoa}
\bibliography{DynPanel_Biblio}

\end{document}